\documentclass[letterpaper,10pt]{amsart}
\usepackage{tikz, tikz-cd, stmaryrd}
\usetikzlibrary{arrows}
\usepackage[font=footnotesize,labelfont=bf]{caption}
\usepackage{subcaption} 
\usepackage{blkarray}
\usepackage{enumitem}
\usepackage{bbold}
\usepackage{comment} 
\usepackage{latexsym,array,delarray,epsfig,setspace,mathtools,amssymb,mathrsfs}
\usepackage[colorlinks,backref=page]{hyperref}
\hypersetup{citecolor={blue}}
\usepackage{cleveref}
\usepackage{xcolor}

\definecolor{light}{gray}{.75}
\definecolor{med}{gray}{.5}
\definecolor{dark}{gray}{.25}

\numberwithin{equation}{section}

\newtheorem{theorem}{Theorem}
\numberwithin{theorem}{section}
\newtheorem{proposition}[theorem]{Proposition}

\newtheorem{lemma}[theorem]{Lemma}

\newtheorem{question}[theorem]{Question}
\theoremstyle{definition}
\newtheorem{definition}[theorem]{Definition}
\newtheorem{remark}[theorem]{Remark}

\newtheorem{example}[theorem]{Example}

\newcommand{\C}{{\mathbb C}}

\newcommand{\Q}{{\mathbb Q}}
\newcommand{\R}{{\mathbb R}}
\newcommand{\Z}{{\mathbb Z}}

\newcommand{\A}{{\mathbb A}}
\newcommand{\Aa}{\mathcal{A}}
\newcommand{\K}{{\mathbb K}}
\newcommand{\B}{\mathbb{B}}
\newcommand{\bb}{\mathbb{b}}
\newcommand{\E}{\mathcal{E}}

\newcommand{\HH}{\mathcal{H}}
\newcommand{\PP}{\mathcal{P}}

\newcommand{\Sp}{\textup{Sp}}

\newcommand{\hh}{\mathbb{h}}

\newcommand{\w}{{\sf w}}
\renewcommand{\v}{{\sf v}}

\newcommand{\fv}{\mathfrak{v}}

\newcommand{\supp}{\textup{supp}}
\newcommand{\ptconv}{\textup{pt-conv}}

\newcommand{\Spec}{\textup{Spec}}

\newcommand{\Hom}{\textup{Hom}}

\newcommand{\NN}{\mathcal{N}}
\newcommand{\MM}{\mathcal{M}}

\newcommand{\relint}{{\textup{rel-int}}}

\title{A construction of polyptych lattices via a pair of Gorenstein PL cones} 

\author{Laura Escobar}
\address{Mathematics Department, University of California Santa Cruz, USA} 
\email{lauraescobar@ucsc.edu}

\author{Megumi Harada}
\address{Dept.\ of Mathematics and Statistics, McMaster University, 
1280 Main Street West, 
Hamilton, Ontario L8S 4K1, Canada}
\email{haradam@mcmaster.ca}

\author{Christopher Manon} 
\address{Dept.\ of Mathematics, University of Kentucky, USA} 
\email{chris.manon@gmail.com}

\date{\today}

\keywords{Polyptych lattices, Gorenstein cones, Gorenstein-Fano polytopes, cluster algebras, mutation, tropicalization, piecewise linearity}
\subjclass[2020]{Primary: 13F60, 52B20;  Secondary: 14M15, 14T10}

\begin{document}

\maketitle

\begin{abstract} 
The theory of polyptych lattices seeks to incorporate, into a single combinatorial framework, the piecewise-linear bijections (mutations) that appear in many theories that generalize toric geometry, such as Newton-Okounkov bodies, toric degenerations, cluster varieties, and other areas. The link to algebraic geometry comes from the accompanying notion of detropicalizations of polyptych lattices, and their associated compactifications. The main result of this manuscript gives a concrete construction of a strict dual pair $(\mathcal{E}, \mathcal{F})$ of polyptych lattices, which we call a \textbf{Gorenstein PL cone extension}.  To define our construction, we introduce the notion of Gorenstein PL cones, which are a polyptych analogue of a Gorenstein cone in the classical setting. If the original polyptych lattices are detropicalizable, then the new polyptych lattices $\mathcal{E},\mathcal{F}$ are also detropicalizable, via a simple explicit formula. Our Gorenstein PL cone extensions give a rich source of examples of strict dual pairs of polyptych lattices. In particular, any pair of Gorenstein-Fano polytopes $\Delta,\Delta'$ of dimension $n,n'$ respectively, gives rise to a strict dual pair $(\mathcal{E}(\Delta,\Delta'), \mathcal{F}(\Delta,\Delta'))$ of detropicalizable polyptych lattices of rank $n+n'+1$. Other examples can be built from cluster data. 
\end{abstract}

{
  \hypersetup{linkcolor=black}
  \tableofcontents
}

\section{Introduction}

The notion of polyptych lattices was first introduced in \cite{EscobarHaradaManon-PL}. The authors were motivated by the appearance of mutations (piecewise-linear bijections) in many theories that generalize toric geometry, such as Newton-Okounkov bodies \cite{KavehManon-Siaga, EscobarHarada}, toric degenerations and toric flat families \cite{KavehManon-PL, KavehManon-PL-part2, ChristophersenIlten2016}, $T$-varieties \cite{Ilten2012, IltenVollmert}, cluster varieties \cite{RW, BCMNC, FriasMedina-Magee}, among other areas.  In \cite{EscobarHaradaManon-PL} we connected our definition of polyptych lattices to algebraic geometry through our notion of a detropicalization $\Aa_\MM$ of a polyptych lattice $\MM$ \cite[Definition 6.3]{EscobarHaradaManon-PL} and an accompanying theory of their associated compactifications $X_{\Aa_\MM}(\PP)$ with respect to a PL polytope $\PP$ \cite[Section 7.2]{EscobarHaradaManon-PL}. Furthermore, we established some geometric properties of these compactifications; in particular, \cite[Theorem 7.19]{EscobarHaradaManon-PL} shows that if $\Aa_\MM$ is a UFD, then $X_{\Aa_\MM}(\PP)$ has finitely generated Cox ring.

We have already seen that the theory of polyptych lattices yields interesting examples that are connected to other research areas.  Detropicalizations of polyptych lattices are a rich source of algebras which admit a combinatorial description in a similar fashion to cluster algebras.  Indeed, in forthcoming work \cite{FMEHMM} we will show that, in certain cases, a cluster algebra gives rise to a polyptych lattice which is self-dual. The construction which we introduce in the present paper provides an additional way to produce many new explicit examples of detropicalizations (see Example \ref{example: classical GF pairs} and Example \ref{example: cluster}). Moreover, examples that have already been studied are connected to other research areas. 
In \cite{CookEscobarHaradaManon2024}
we studied a specific family of rank $2$ examples; these are self-dual in an appropriate sense, and we saw that their detropicalizations can be UFDs and also be non-UFDs. 
More recently, Fujita and Higashitani introduced and studied a polyptych lattice coming from  marked chain-order polytopes; these are related to the Gelfand–Tsetlin and FFLV polytopes arising in representation theory \cite{FujitaHigashitani}. In addition, Oda studied in \cite{Oda2025} the rank-$2$ polyptych lattices from \cite{CookEscobarHaradaManon2024} in the context of log-Calabi-Yau geometry.

The theory of polyptych lattices is still in early stages, and the main result in this manuscript describes a concrete construction, which we call a \textbf{Gorenstein PL cone extension}, which gives a rich source of new examples of polyptych lattices.  As part of our construction, we introduce the definition of a \textbf{Gorenstein PL cone} (Definition~\ref{definition: Gorenstein PL cone}), which is a polyptych generalization of the notion of a Gorenstein cone in the classical setting. We give the rough statement below; the precise statement is Theorem~\ref{theorem: main}.

\medskip
\noindent \textbf{Main Theorem.} (Theorem~\ref{theorem: main}) 
Let $(\MM_1,\NN_1),(\MM_2,\NN_2)$ be strict dual pairs of finite polyptych lattices, of ranks $r_1$ and $r_2$ respectively with $r_1,r_2\ge 2$. Let $\PP_1,\PP_2$ be strongly convex Gorenstein PL cones in $\MM_1,\MM_2$ respectively in the sense of Definition~\ref{definition: Gorenstein PL cone}. 
Then: 
\begin{itemize} 
\item there exists a strict dual pair $(\mathcal{E},\mathcal{F}, \v,\w)$ of polyptych lattices of rank $r := r_1+r_2-1$ constructed from this data, 
\item if $\MM_i,\NN_i$ for $i=1,2$ are detropicalizable, with detropicalizations that are equipped with convex adapted bases, then there is an explicit construction for  detropicalizations of $\mathcal{E}$ and $\mathcal{F}$ in terms of those of $\MM_i, \NN_i$, and there exists a construction of convex adapted bases for the detropicalizations of $\mathcal{E}$ and $\mathcal{F}$. 
\end{itemize}

We emphasize that our formula for the detropicalizations of $\mathcal{E}, \mathcal{F}$ are very concrete and computable; indeed, if $\Aa_{\MM_1}, \Aa_{\NN_2}$ are detropicalizations of $\MM_1, \NN_2$ respectively, then our detropicalization $\Aa_{\mathcal{E}}$ of $\mathcal{E}$ is given as $\Aa_{\mathcal{E}} := \Aa_{\PP_1}\otimes \Aa_{\NN_2}/\langle \mathbb{h}\rangle$ where $\Aa_{\PP_1}$ is a subalgebra of $\Aa_{\MM_1}$ defined in terms of the PL cone $\PP_1$, and $\mathbb{h}$ is an element of $\Aa_{\PP_1}\otimes \Aa_{\NN_2}$ also concretely constructed from the PL cone data. The formula for $\Aa_{\mathcal{F}}$ is similar. Both Example \ref{example: classical GF pairs} and Example \ref{example: cluster} illustrate the explicit nature of our computations.

We proceed to show in Section~\ref{section: examples} that the polyptych lattices $\MM_{d,r}$ studied in \cite[Section 8]{EscobarHaradaManon-PL} can be constructed by our Gorenstein PL cone extension, in a special case where the initial data consist of trivial polyptych lattices and the PL cones are positive orthants.  
The fact that such a simple choice of initial data yields a non-trivial strict dual pair of polyptych lattices suggests that our Gorenstein PL cone extension construction can yield a rich class of as-yet unexplored examples. 
Indeed, we give several potentially rich classes of examples in Section~\ref{section: examples}. In fact, our construction yields a new example of a polyptych lattice for any pair $\Delta_1, \Delta_2$ of Gorenstein-Fano polytopes. As mentioned above, we also have examples arising from cluster algebras (see Example~\ref{example: cluster}).

The structure of the paper is as follows. In Section~\ref{sec: background} we prepare some preliminaries, and in particular, define a Gorenstein PL cone. The construction of the Gorenstein PL cone extension, the result of which is a strict dual pair $(\mathcal{E},\mathcal{F},\v,\w)$ of polyptych lattices, is given in Section~\ref{sec: cone extensions}.  It is shown that (under the hypothesis that the original data satisfies detropicalizability assumptions)  both $\mathcal{E}$ and $\mathcal{F}$ are detropicalizable, and explicit formulas are given for a choice of such. We also show that, if the original Gorenstein PL cones have the property that their duals are also Gorenstein, then the cone extensions also come equipped with Gorenstein PL cones. This opens the possibility of iterating our Gorenstein PL cone construction. Finally, in Section~\ref{section: examples} we suggest families of examples worth further study, and record some questions which we intend to pursue in future work.

\subsection*{Statement on AI Use.} 
The authors acknowledge the use of AI tools to assist with proofreading this manuscript and making improvements to exposition. Generative AI was not used to formulate the theorems or to (initially) construct the proofs.

\subsection*{Acknowledgements}
This material is partly based upon work supported by the National Science Foundation under Grant No. DMS-1928930, while the authors were in residence at the Simons Laufer Mathematical Sciences Institute in Berkeley, California, during the summer of 2025. We thank the Institute for its support and hospitality. LE was supported in part by a Fields Research Fellowship from the Fields Institute for Research in the Mathematical Sciences. LE is also supported by an NSF CAREER grant DMS-2142656 and DMS-2521270. MH was supported by a Canada Research Chair Award (Tier 2) and NSERC Discovery Grant 2019-06567. CM is supported by NSF DMS grant 2101911 and 2501468.

\section{Preliminaries and first results}\label{sec: background}

In this section, we collect some definitions and preliminaries. For details we refer to \cite{EscobarHaradaManon-PL}. 
We begin with some key definitions. 

\begin{definition}\cite[Definition 2.1]{EscobarHaradaManon-PL}\label{definition of polyptich lattice}
Let $r$ be a positive integer and let $\Z \subseteq F \subseteq \R$ be a subring of $\R$. A \textbf{polyptych lattice (PL) of rank $r$ over $F$} is a pair $\mathcal{M} := (\{M_\alpha\}_{\alpha \in \mathcal{I}}, \{\mu_{\alpha,\beta}: M_\alpha \to M_\beta\}_{\alpha,\beta \in \mathcal{I}})$ consisting of a collection $\{M_\alpha\}_{\alpha \in \mathcal{I}}$ of free $F$-modules, each of rank $r$ and indexed by a set $\mathcal{I}$, and a collection of piecewise-linear maps $\mu_{\alpha,\beta}: M_\alpha \to M_\beta$ for every pair $(\alpha,\beta)$ of indices, satisfying the following conditions:
\begin{enumerate} 
\item $\mu_{\alpha,\alpha} = \mathrm{Id}_{M_\alpha}$ is the identity map for all $\alpha \in \mathcal{I}$, 
\item $\mu_{\alpha,\beta} = \mu_{\beta,\alpha}^{-1}$ for all pairs $\alpha,\beta \in \mathcal{I}$, and 
\item $\mu_{\beta,\gamma} \circ \mu_{\alpha,\beta} = \mu_{\alpha,\gamma}$ for all triples $\alpha,\beta,\gamma \in \mathcal{I}$. 
\end{enumerate} 
Note in particular that the requirement (2) above implies that all the maps $\mu_{\alpha,\beta}$ are invertible. We call the maps $\mu_{\alpha,\beta}$ \textbf{mutations}, and we call  $M_\alpha$ a \textbf{chart} of $\mathcal{M}$.
The \textbf{$\alpha$-th chart map} is the association 
\begin{equation*}\label{eq: def alpha coordinate} 
\pi_\alpha: \MM \to M_\alpha, \quad m \mapsto m_\alpha,
\end{equation*}
taking an equivalence class $m \in \MM$ to its unique representative $m_\alpha$ in $M_\alpha$.

 When $\mathcal{I}$ is finite, we say $\mathcal{M}$ is a \textbf{finite} polyptych lattice. 
\end{definition} 

We sometimes denote the indexing set $\mathcal{I}$ associated to a polyptych lattice $\MM = (\{M_\alpha\}_{\alpha \in \mathcal{I}}, \{\mu_{\alpha,\beta}\}_{\alpha,\beta \in \mathcal{I}})$ by $\pi(\MM) := \mathcal{I}$. By slight abuse of notation we also use $\pi(\MM)$ to denote the set of charts $\{M_\alpha\mid \alpha\in\mathcal{I}\}$.

We also need the PL (polyptych lattice) analogue of a dual lattice. 

\begin{definition}\cite[Definition 3.1]{EscobarHaradaManon-PL}\label{definition: space of points}
Let $\MM = (\{M_\alpha\}_{\alpha \in \mathcal{I}}, \{\mu_{\alpha,\beta}:M_\alpha \to M_\beta\}_{\alpha,\beta \in \mathcal{I}}\})$ be a polyptych lattice over $F$. A \textbf{point of $\MM$} is a  function $p: \MM \to F$ such that 
\begin{equation}\label{eq: def point min} 
p(m) + p(m') = \min\{p(m +_\alpha m') \,\mid\, \alpha \in \mathcal{I} \} \, \, \textup{ for all } \, m, m' \in \MM
\end{equation} 
and
\begin{equation}\label{eq: def point F homog}
p(\lambda m) = \lambda p(m) \, \, \textup{ for all } \, m \in \MM, \lambda \in F_{\geq 0}. 
\end{equation} 
The set of all such $p: \MM \to F$ is called \textbf{the space of points of $\MM$} and denoted $\Sp(\MM)$. For $F \subset F'$, an \textbf{$F'$-point of $\MM$} is a point of $\MM \otimes_F F'$. We denote the set of $F'$-points as $\Sp_{F'}(\MM)$. (For the base ring $F$, we often drop the subscript.) 
\end{definition} 

\begin{definition}\cite[Definition 3.9]{EscobarHaradaManon-PL}
For $\alpha \in \mathcal{I}$, we write $p_\alpha:=p\circ\pi_\alpha^{-1}$ and
we let $\Sp(\MM,\alpha) \subset \Sp(\MM)$ denote the subset of points of $\MM$ such that $p_\alpha: M_\alpha \to F$ is $F$-linear. 
\end{definition}

The following is the PL analogue of the classical pairing between a lattice and its dual.

\begin{definition}\cite[Definition 4.1]{EscobarHaradaManon-PL}
\label{def_dual}
Let $\MM, \NN$ be finite polyptych lattices of rank $r$ over $F$ with associated PL fans $\Sigma(\MM)$ and $\Sigma(\NN)$ respectively. Let $F \subseteq F' \subseteq \R$. We say that a pair of continuous maps $\v: \MM_\R \to \Sp_\R(\NN)$ and $\w: \NN_\R \to \Sp_\R(\MM)$ is a \textbf{strict dual $F'$-pairing} if: 
\begin{enumerate} 
\item[(1)] $\v$ and $\w$ restrict respectively to maps $\v: \MM_{F'} \to \Sp_{F'}(\NN)$ and $\w: \NN_{F'} \to \Sp_{F'}(\MM)$ (by abuse of notation we denote the restrictions also by $\v$ and $\w$),
\item[(2)] $\v(m)(n) = \w(n)(m)$ for all $n \in \NN_{F'}, m \in \MM_{F'}$, 
\item[(3)] $\v: \MM_{F'} \to \Sp_{F'}(\NN)$ and $\w: \NN_{F'} \to \Sp_{F'}(\MM)$ are both bijections, and
\item[(4)] the preimages $\v^{-1}\Sp_{\R}(\NN,\gamma)$ (respectively $\w^{-1}\Sp_{\R}(\MM,\alpha)$) are precisely the maximal-dimensional cones of $\Sigma(\MM)$ (respectively $\Sigma(\NN)$), as $\gamma$ ranges over $\pi(\NN)$ (respectively, $\alpha$ ranges over $\pi(\MM)$), giving a bijection between the index set $\pi(\NN)=\mathcal{J}$ of charts of $\NN$ and the set of maximal-dimensional faces of $\Sigma(\MM)$ (respectively, between $\pi(\MM)$ and the maximal-dimensional faces of $\Sigma(\NN)$). 
\end{enumerate} 
In the above setting, 
we say that $(\MM, \NN, \v,\w)$ is a \textbf{strict dual ($F'$)-pair} of polyptych lattices; 
more informally, we also refer to $\NN_{F'}$ as a \textbf{strict ($F'$-)dual to $\MM$}.
If $(\MM,\NN,\v,\w)$ satisfies only the axioms (1)-(3), we call it a \textbf{dual $(F')$-pair}, and $\NN_{F'}$ a \textbf{($F'$-) dual to} $\MM$. If $\MM$ has a strict dual (respectively dual) pairing with itself, we say that $\MM$ is \textbf{strictly self-dual} (respectively \textbf{self-dual}). 
\end{definition}

\begin{remark}\label{remark: abuse of notation for strict duals} 
The Definition~\ref{def_dual} is stated in some generality; however, the terminology and notation can become cumbersome. For our discussion here and in the following, we note that: (i) In practice, we are often interested in the situation where $F = F'=\Z$. \emph{In particular, in this manuscript, our polyptych lattices are over $\Z$. In the definition of the space of points, we take $F=\Z$ as our coefficients unless otherwise specified. }
(ii) When the coefficients $F$ and/or $F'$ are understood from context, we frequently drop the reference to the coefficients. 
(iii) As noted in condition (1) of Definition~\ref{def_dual}, by slight abuse of notation we use the same $\v$ and $\w$ to refer to the restrictions $\v: \MM \to \Sp(\NN), \w: \NN \to \Sp(\MM)$. Indeed, for purposes of developing the theory, it is often the above restrictions to $\MM$ and $\NN$ respectively which take center stage. Motivated by this, and continuing with the slight abuse of notation, in this manuscript we will often refer to the data $(\MM,\NN,\v: \MM \to \Sp(\NN), \w: \NN \to \Sp(\MM))$ as a strict dual pair, with the understanding that $\v$ and $\w$ naturally extend to $\R$ coefficients.  
\end{remark}

We also recall the definition of PL cone.
\begin{definition}\cite[Definition 2.8]{EscobarHaradaManon-PL}\label{definition: PL cone} 
Let $\MM$ be a polyptych lattice over $F$. 
A \textbf{PL cone over $F$} is a subset $\mathcal{C} $ of $\MM_{\R}$ such that $\pi_\alpha(\mathcal{C}) \subseteq M_\alpha \otimes_{F} \R$ is an $F$-rational polyhedral cone for each $\alpha \in \pi(\MM)$. 
\end{definition}

  As in \cite{EscobarHaradaManon-PL}, in this manuscript we focus our attention on \textbf{finite} polyptych lattices. 
 In this case, we have the \textbf{mutation fan of $\MM$}, which is the PL fan $\Sigma(\MM)$ obtained by considering the common refinement of the regions of linearity of the $\mu_{\alpha,\beta}$ where $\alpha,\beta\in\mathcal{I}$ (cf. \cite[Lemma 2.11]{EscobarHaradaManon-PL}). 

 \begin{definition}
     \label{definition: lineality space}
     The \textbf{lineality space} of $\MM$ is the intersection of all cones in the mutation fan $\Sigma(\MM)$.
 \end{definition}

Recall that for every chart $\alpha \in \pi(\MM)$ we have a chart addition in $\MM$ with respect to the $\alpha$-th chart $M_\alpha$ (cf. \cite[Equation (2)]{EscobarHaradaManon-PL}). The following is a straightforward consequence of definitions. 

\begin{lemma}\label{lemma: properties of lineality space}
    Following the notation and terminology above, suppose $\mathbb{l}$ lies in the lineality space of $\MM$. Then: 
    \begin{enumerate} 
    \item Addition with any scalar multiple of $\mathbb{l}$, is well-defined for any $m \in \MM$. More precisely, for any $\lambda \in \Z$, $m \in \MM$, and $\alpha, \alpha' \in \pi(\MM)$, we have $m +_\alpha \lambda \mathbb{l} = m+_{\alpha'} \lambda \mathbb{l}$. Since the chart additions are all the same, we denote by $m + \lambda \mathbb{l}$ the result of addition with respect to any chart. 
    \item for any $u\in \MM$, $p \in \Sp(\MM)$ and $\lambda\in \Z$, we have $$p(u+\lambda\mathbb{l})
= 
 p(u) + \lambda p(\mathbb{l}).$$  
 \end{enumerate}
 The same statements hold when replacing $\MM$ and $\Z$ by $(\MM)_{F'}, \Sp_{F'}(\MM)$ and $F'$, for $\Z \subseteq F' \subseteq \R$.
\end{lemma}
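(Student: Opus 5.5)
The plan is to reduce everything to a single fact: adding a multiple of $\mathbb{l}$ in any chart agrees with adding it in a fixed reference chart, transported by the mutations. Recall that $m +_\alpha m' := \pi_\alpha^{-1}(\pi_\alpha(m)+\pi_\alpha(m'))$.

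For (1), the key observation is that each mutation $\mu_{\alpha,\alpha'}$ is piecewise linear, and its domains of linearity are unions of cones of $\Sigma(\MM)$. Since $\mathbb{l}$ lies in the lineality space, i.e.\ in every cone of $\Sigma(\MM)$, the whole line $\R\,\pi_\alpha(\mathbb{l})$ lies in every maximal cone. (The set of cones is closed under negation of lineality elements, since the lineality space is a linear subspace: the intersection of all cones of a complete fan is the common lineality subspace.) Consequently each maximal cone $\sigma$ of $\pi_\alpha(\Sigma(\MM))$ satisfies $\sigma + \R\,\pi_\alpha(\mathbb{l}) = \sigma$.

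Now fix $m$ and choose a maximal cone $\sigma$ containing $\pi_\alpha(m)$. On $\sigma$, the map $\mu_{\alpha,\alpha'}$ agrees with a linear map $L_\sigma$. Since $\pi_\alpha(m)+\lambda\pi_\alpha(\mathbb{l})\in\sigma$, we get
\[
\mu_{\alpha,\alpha'}\bigl(\pi_\alpha(m)+\lambda\pi_\alpha(\mathbb{l})\bigr)=L_\sigma\pi_\alpha(m)+\lambda L_\sigma\pi_\alpha(\mathbb{l})=\pi_{\alpha'}(m)+\lambda\pi_{\alpha'}(\mathbb{l}).
\]
The last equality uses that $\pi_{\alpha}(\mathbb{l})\in\sigma$, so $L_\sigma\pi_\alpha(\mathbb{l})=\mu_{\alpha,\alpha'}(\pi_\alpha(\mathbb{l}))$, and linearity gives $L_\sigma(\lambda \cdot)=\lambda L_\sigma$. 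Applying $\pi_{\alpha'}^{-1}$ yields $m+_\alpha\lambda\mathbb{l}=m+_{\alpha'}\lambda\mathbb{l}$. The only mild subtlety here is checking that $\lambda\pi_\alpha(\mathbb{l})=\pi_\alpha(\lambda\mathbb{l})$ for negative $\lambda$, which again follows because the line is in every linearity domain.

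For (2), apply the defining identity \eqref{eq: def point min} with $m=u$ and $m'=\lambda\mathbb{l}$. By part (1), all terms in the minimum coincide, so $p(u)+p(\lambda\mathbb{l})=p(u+\lambda\mathbb{l})$. For $\lambda\ge0$, homogeneity \eqref{eq: def point F homog} gives $p(\lambda\mathbb{l})=\lambda p(\mathbb{l})$.

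For $\lambda<0$, I need $p(-\mathbb{l})=-p(\mathbb{l})$, and this step is the main obstacle. I would take $u=\mathbb{l}$ and $\lambda=-1$ in the additivity just proved. This gives $p(\mathbb{l})+p(-\mathbb{l})=p(0)=0$, where $p(0)=0$ follows from homogeneity with scalar $0$, and $\mathbb{l}+(-\mathbb{l})=0$ holds in every chart by (1). Hence $p(\lambda\mathbb{l})=|\lambda|\,p(-\mathrm{sgn}(\lambda)\cdot(-\mathbb{l}))=\lambda p(\mathbb{l})$.

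The same arguments work verbatim over $F'$ and $\R$.
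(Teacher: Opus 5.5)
Your proposal is correct and follows essentially the same route as the paper. Part (1) rests on the lineality space lying in every cone of $\Sigma(\MM)$, where all chart additions agree, and part (2) on the point axiom collapsing to a minimum over a single element; your extra step deriving $p(-\mathbb{l})=-p(\mathbb{l})$ from $p(\mathbb{l})+p(-\mathbb{l})=p(0)=0$ is a worthwhile addition, since the paper's proof applies homogeneity for all $\lambda\in\Z$ although the axiom only covers $\lambda\geq 0$.
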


\begin{proof}
    The claim (1) holds because addition is well-defined within any cone of $\Sigma(\MM)$, and the lineality space is contained in every cone.
    For (2), note that since $p$ is a point on $\MM$, by Definition~\ref{definition: space of points} it follows that $p(u+u') = \min_{\alpha}\{p(u+_\alpha u')\}$ for any $u,u'$ lying in a single cone of $\Sigma(\MM)$. By (1), all the chart additions agree for $u' = \lambda \mathbb{l}$. Thus, the RHS is a minimum over a singleton set and hence
\begin{equation*}
p(u+\lambda\mathbb{l})
= p(u) + p(\lambda \mathbb{l}) = 
 p(u) + \lambda p(\mathbb{l}).
\end{equation*}
The extension by tensoring with $F'$ is straightforward. 
\end{proof}

We first establish some notation and hypotheses for the PL cones we will consider. 
The \textbf{PL-half-space} associated to $p\in\Sp(\MM)$ and $a\in\Z$ is
\begin{equation*}\label{eq: def M half space} 
\HH_{p, a} := \{ m \in \MM_\R \, \mid \, p(m) \geq a\} \subset \MM_\R. 
\end{equation*} 
  Suppose 
$\PP \subset \MM_{\R}$ is the PL cone 
\begin{equation}\label{eq: def PPi}
\PP = \bigcap_{j=1}^{s} \HH_{p_j, 0} = \{m \in \MM_{\R} \, \mid \, p_j(m) \geq 0, \, \textup{ for all } \, j \in [s]\}
\end{equation}
where $p_j \in \Sp(\MM)$. In the setting where we have a strict dual pair $(\MM,\NN,\v,\w)$, when the $p_j$ may be written as $p_j=\w(n_j)$ for some $n_j \in \NN$, then the set $\{n_1,\cdots, n_{s}\} \subset \NN$ can be thought of as the PL analogue of inner normal vectors, and $s$ is a positive integer recording the number of such normal vectors.

The following lemma makes explicit in the PL setting that the interior of PL cones can be obtained by changing inequalities to strict inequalities. 

\begin{lemma}\label{lemma: interior of PL cone}
Let $(\MM,\NN,\v,\w)$ be a strict dual pair of finite polyptych lattices and let
$\PP = \bigcap_{j=1}^{s} \HH_{p_j,0} \subset \MM_\R$ be a PL cone as
in~\eqref{eq: def PPi}, with $p_j \neq 0$ for all $j \in [s]$. Then the interior of $\PP$ is
\begin{equation}\label{eq: interior of PL cone}
\mathrm{int}(\PP) = \{ m \in \MM_\R \, \mid \, p_j(m) > 0 \textup{ for all } j \in [s] \}.
\end{equation}
In particular, a lattice point $m \in \MM$ lies in $\mathrm{int}(\PP)$ if and
only if $p_j(m) \geq 1$ for all $j \in [s]$.
\end{lemma}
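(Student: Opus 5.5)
We prove the two inclusions separately, working chart by chart. The topology on $\MM_\R$ is the one transported from any chart $M_\alpha\otimes\R$; the mutations are piecewise-linear homeomorphisms, so all charts induce the same topology. Each point $p_j$ is continuous: in any chart it is a minimum of finitely many linear functions, by the point axiom together with finiteness of $\MM$. Granting this, the set $U:=\{m \mid p_j(m)>0 \textup{ for all } j\}$ is a finite intersection of open sets. It is therefore open, and it is contained in $\PP$, so $U\subseteq \mathrm{int}(\PP)$.

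For the reverse inclusion, let $m\in\mathrm{int}(\PP)$ and suppose, for contradiction, that $p_j(m)=0$ for some $j$. We want to move $m$ slightly so that $p_j$ becomes negative. Since $p_j\neq 0$ and $p_j$ is positively homogeneous, there is some $u$ with $p_j(u)\neq 0$.

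The first case is $p_j(u)<0$. Here we would like $p_j(m+\epsilon u)<0$ for small $\epsilon>0$, but additivity fails in general, so we instead use concavity. Take $\alpha$ with $m,u$ in a common cone of linearity of $(p_j)_\alpha$? A cleaner route is the point axiom: $p_j(m)+p_j(\epsilon u)=\min_\beta p_j(m+_\beta \epsilon u)$. The left side equals $\epsilon p_j(u)<0$, so some chart sum $m+_\beta\epsilon u$ has $p_j<0$. As $\epsilon\to 0$ these points tend to $m$, because each chart addition is continuous. Since there are finitely many $\beta$, we obtain points arbitrarily close to $m$ lying outside $\PP$. This contradicts $m\in\mathrm{int}(\PP)$.

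The second case is when every $u$ has $p_j(u)\ge 0$. We then need a different witness. Here strict duality enters: write $p_j=\w(n_j)$ with $n_j\neq 0$, and use the bijectivity of $\v$ to find $u$ with $\v(u)(n_j)=p_j(u)<0$. Concretely, $\v(-n)$-type reasoning is unavailable, so we instead argue as follows. If $p_j\ge 0$ everywhere, then in a chart $\alpha$ with $p_j\in\Sp(\MM,\alpha)$, the map $(p_j)_\alpha$ is a nonnegative linear functional. Such a functional is zero, which contradicts $p_j\neq0$. A chart $\alpha$ with $p_j$ linear exists by condition (4) of Definition~\ref{def_dual}: $n_j$ lies in some maximal cone of $\Sigma(\NN)$, and $\w$ maps that cone into $\Sp(\MM,\alpha)$. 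This case analysis is the main obstacle: ensuring that $p_j$ takes a negative value somewhere. The argument via a linear chart handles it, which is why the strict dual pair hypothesis is needed.

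For the lattice statement, a point $m\in\MM$ gives $p_j(m)\in\Z$, because points take values in $F=\Z$. Hence $p_j(m)>0$ if and only if $p_j(m)\ge 1$.
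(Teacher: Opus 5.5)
Your proof is correct, but at the key step it takes a different route from the paper. The paper reduces to a single half-space via $\mathrm{int}(\bigcap_j A_j)=\bigcap_j \mathrm{int}(A_j)$. It then transports the question to a chart $M_\alpha\otimes\R$ by the homeomorphism $\pi_\alpha$, and concludes from the fact that $(\pi_\alpha^{-1})^*p$ is a minimum of finitely many linear forms. You instead get $\{p_j>0 \ \forall j\}\subseteq\mathrm{int}(\PP)$ from openness. For the reverse inclusion you perturb $m$ intrinsically: the point axiom applied to $m$ and $\epsilon u$, with $p_j(u)<0$, gives for some $\beta$ a chart sum $m+_\beta\epsilon u$ arbitrarily close to $m$ with $p_j<0$.

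The advantage of your route is that it genuinely uses that $p_j$ is a point, and this is needed. A nonzero minimum of linear forms need not satisfy the claim: for $f=\min(0,\ell)$ one has $\mathrm{int}\{f\ge 0\}=\{\ell>0\}$, while $\{f>0\}=\emptyset$. So the paper's last sentence tacitly requires either your argument or a specific choice of $\alpha$: a chart in which $p$ is linear, which exists by strict duality exactly as in your Case~2. With that choice the paper's route is the shorter one, since it reduces to the interior of a closed linear half-space.

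Your Case~2 does not actually need strict duality. In any single chart the point axiom gives $p(m)+p(m')\le p(m+_\alpha m')$. Taking $\pi_\alpha(m')=-\pi_\alpha(m)$ yields $p(m)+p(m')\le p(0)=0$, so a nonnegative point is identically zero. Thus your argument proves the lemma for every finite polyptych lattice, and your remark that this case is why the strict dual pair hypothesis is needed is inaccurate.

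In a final write-up, delete the exploratory fragments: the question about a common cone of linearity, and the aside about $\v(-n)$-type reasoning and using bijectivity of $\v$.
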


\begin{proof}
Since the interior of a finite intersection is the intersection of the
interiors, it suffices to show $\mathrm{int}(\HH_{p,0}) = \{p > 0\}$ for a
single $0 \neq p \in \Sp(\MM)$.  Note that for the purposes of this argument we are considering $p$ as an element of $\Sp_\R(\MM)$. From the assumptions on points, it follows that $p$ is 
continuous. Moreover, each chart $\pi_\alpha: \MM_\R \to M_\alpha \otimes \R$ is a homeomorphism. Thus it suffices to show $\mathrm{int}(\pi_\alpha(\HH_{p,0})) = \mathrm{int}(\{(\pi_\alpha^{-1})^*p \geq 0\}) = \{(\pi_\alpha^{-1})^*p > 0\}$ in $M_\alpha \otimes \R$. Since $(\pi_\alpha^{-1})^*p$ is a minimum of finitely many linear functions on $M_\alpha \otimes \R$, the result follows. 
The last statement follows because $p_j$ takes integer values on $\MM$ by definition.
\end{proof}

We will also need the following. 
\begin{definition}\label{definition: strongly convex}
Let $(\MM,\NN,\v,\w)$ be a strict dual pair of finite polyptych lattices over $\Z$ and let
$\PP \subset \MM_\R$ be a PL cone as in~\eqref{eq: def PPi}. 
We say that $\PP$ is \textbf{strongly chart-convex} if for every $\alpha \in \pi(\MM)$, the chart image $\pi_\alpha(\PP) \subset M_\alpha \otimes \R$ is strongly convex in the classical sense, i.e., $\pi_\alpha(\PP)$ contains no nonzero linear subspace.
\end{definition} 

The following will be used below. 
\begin{lemma}\label{lemma: strongly chart convex}
Following the notation of Definition~\ref{definition: strongly convex}, if $\PP$ is strongly chart-convex and $\dim(\PP) \geq 2$, then $s \geq 2$. 
\end{lemma}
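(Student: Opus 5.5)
We argue by contradiction: assume $s \le 1$ and show that some chart image $\pi_\alpha(\PP)$ contains a nonzero linear subspace. The degenerate case $s=0$ is immediate. Then $\PP = \MM_\R$, so $\pi_\alpha(\PP) = M_\alpha\otimes\R$ for every $\alpha$. Since $\dim(\PP)\ge 2 > 0$, the rank is positive, and $M_\alpha \otimes \R$ is a nonzero linear subspace. This contradicts strong chart-convexity.

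The main case is $s=1$, so $\PP = \HH_{p,0}$ for a single point $p\in\Sp(\MM)$. If $p=0$, then $\PP=\MM_\R$ and we are done as above, so assume $p \neq 0$. Fix a chart $\alpha$. By the definition of a point (compare the proof of Lemma~\ref{lemma: interior of PL cone}), the function $p_\alpha = (\pi_\alpha^{-1})^*p$ on $M_\alpha\otimes\R$ is a minimum of finitely many linear functionals,
\[
p_\alpha = \min\{\ell_1,\dots,\ell_k\}.
\]
Indeed, $p_\alpha$ is continuous and piecewise linear with respect to the mutation fan, and it is concave: the min-formula~\eqref{eq: def point min} with $\alpha$ among the indices gives $p(m)+p(m')\le p(m+_\alpha m')$, and $F_{\ge0}$-homogeneity then yields concavity. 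A concave, positively homogeneous, piecewise linear function is exactly such a minimum. Consequently
\[
\pi_\alpha(\PP) = \{x : \ell_i(x)\ge 0 \text{ for all } i\},
\]
and this set contains the linear subspace $L := \bigcap_i \ker \ell_i$. Its dimension is at least $r-k$, which is not obviously positive, so counting kernels alone does not finish the argument. The real point is to show that a single PL half-space $\HH_{p,0}$ cannot be strongly chart-convex when $\dim \ge 2$.

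To get around this, I would use the complementary half-space $\{p\le 0\}$, or better, exploit the structure of the whole of $\MM_\R$. The function $p_\alpha$ is concave, so $\{p_\alpha \ge 0\}$ is a convex cone $C$. Suppose $C$ is pointed with $\dim C = \dim\PP\ge 2$. Choose $x$ in the interior of $C$ and $y$ with $p_\alpha(y) < 0$. Then I would seek a contradiction by combining the point condition across charts. A point satisfies $p(m)+p(-m)\le 0$, with the inequality holding in each chart, while the strict dual pairing identifies $p=\w(n)$, which gives symmetry between $\MM$ and $\NN$. I would use this to show that the recession behaviour of $C$ forces a line inside it.

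Honestly, the cleanest route I would try first is this. Concavity of $p_\alpha$ implies that the set $C=\{p_\alpha\ge0\}$ contains $\{\ell_1\ge0\}\cap\dots\cap\{\ell_k\ge0\}$, which is nonempty of full dimension. For $C$ to be pointed of dimension $\ge 2$, one needs $k\ge 2$ with the $\ell_i$ spanning a space of dimension equal to the rank. I expect the main obstacle to be ruling this out: in a single chart, $\min$ of several linear forms can certainly cut out a pointed cone. So the contradiction must come from comparing charts, or from the implicit assumption in~\eqref{eq: def PPi} that each $p_j$ is (or should be taken to be) linear on some chart, i.e.\ lies in some $\Sp(\MM,\gamma)$. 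Under that reading the argument is short. Take $\alpha=\gamma$. Then $\pi_\gamma(\PP)$ is a classical half-space $\{\ell\ge0\}$, which contains the hyperplane $\ker\ell$. This hyperplane has dimension $r-1\ge \dim\PP-1\ge 1$, so it is a nonzero linear subspace, contradicting strong chart-convexity.

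So in the write-up I would reduce to a chart in which the single defining point $p$ is linear. Such a chart exists via the strict duality axiom (4): take $p=\w(n)$ with $n$ in a maximal cone of $\Sigma(\NN)$, which corresponds to a chart $\alpha$ with $\w(n)\in\Sp(\MM,\alpha)$. In that chart $\pi_\alpha(\PP)$ is a closed half-space of dimension at least $2$, and its boundary hyperplane gives the required nonzero line.
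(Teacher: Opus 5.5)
Your final argument is correct and is essentially the paper's proof: the paper also picks a chart $\alpha$ in which $(\pi_\alpha^{-1})^*\w(n)$ is linear. It cites \cite[Lemma 4.2]{EscobarHaradaManon-PL} for this, which is exactly the consequence of axioms (3)--(4) of Definition~\ref{def_dual} that you derive directly. Then $\pi_\alpha(\PP)$ is a half-space containing the nonzero subspace $\ker \ell$ once the rank is at least $2$. The exploratory middle portion (concavity, recession behaviour, comparing charts) is unnecessary and should be cut from a write-up.
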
 

\begin{proof} 
It suffices to see that if $s=1$, then $\PP$ is not strongly chart-convex. Suppose $\PP = \HH_{\w(n),0}$ for $n \in \NN$. Since $(\MM,\NN,\v,\w)$ is a strict dual pair, by \cite[Lemma 4.2]{EscobarHaradaManon-PL} there exists some chart $M_\alpha$ on which $(\pi_\alpha^{-1})^*\w(n)$ is linear. Then $\pi_\alpha(\PP) = \pi_\alpha(\HH_{n,0}) = \{(\pi_\alpha^{-1})^*\w(n) \geq 0\}$ is a half-space, which is not strongly convex if $\dim(\PP) \geq 2$. Thus $s$ cannot be $1$, and therefore $s \geq 2$.  
\end{proof}

We now define one of the main objects in this paper. 

\begin{definition}\label{definition: Gorenstein PL cone} 
Suppose $(\MM,\NN,\v,\w)$ is a strict dual pair of finite polyptych lattices over $\Z$. 
We will say that a PL cone $\PP$ (together with the data of its defining elements $\{n_1,\cdots,n_s\} \subset \NN$) as in~\eqref{eq: def PPi} is a \textbf{Gorenstein PL cone} if the following are satisfied: 
\begin{enumerate} 
\item[(P-1)] For each $j \in [s]$, the set $\relint(F_j) = \{m\in\MM_\R: \w(n_t)(m) > 0 \textup{ for } t \neq j, \, \w(n_j)(m)=0\}$, i.e., the interior of the facet $F_j= \{m\in\PP: \w(n_j)(m)=0\}$ of $\PP$ corresponding to $n_j$, is non-empty and codim-$1$ in $\MM_\R$. 

\item[(P-2)] There exists $\mathbb{g} \in \MM$ with the properties that: 
\begin{enumerate} 
\item $\mathbb{g}$ is in the lineality space of $\MM$, and in particular, both $\mathbb{g}$ and its negative $-\mathbb{g}$ are contained in every cone of $\Sigma(\MM)$ 
\item for any $j \in [s]$, we have $\w(n_j)(\mathbb{g}) = 1$.  
\end{enumerate} 
\end{enumerate}
\end{definition} 

The following is immediate from the definition, and in particular, property (P-2)(b). 

\begin{lemma}\label{lemma: nonzero and primitive} 
Following the notation of Definition~\ref{definition: Gorenstein PL cone}, each $n_j$ is non-zero and primitive. 
\end{lemma}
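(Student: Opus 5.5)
We derive both claims from property (P-2)(b), namely $\w(n_j)(\mathbb{g}) = 1$ for every $j \in [s]$, together with the structure of the strict dual pair $(\MM,\NN,\v,\w)$.

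For non-vanishing, we use that $\w$ is a bijection onto $\Sp(\NN)$-type data and satisfies $\w(0) = 0$. Here $0 \in \NN$ is the common zero of every chart; mutations are piecewise-linear and fix the origin. Condition (2) of Definition~\ref{def_dual} gives $\w(0)(m) = \v(m)(0)$. Since $\v(m)$ is a point of $\NN$, it is homogeneous by \eqref{eq: def point F homog}, so $\v(m)(0) = \v(m)(0\cdot 0) = 0\cdot \v(m)(0) = 0$. Hence $\w(0)$ is the zero function. If some $n_j$ were $0$, we would get $\w(n_j)(\mathbb{g}) = 0 \neq 1$, contradicting (P-2)(b). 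So each $n_j \neq 0$.

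For primitivity, suppose $n_j = k n'$ for some $n' \in \NN$ and integer $k \geq 1$. This scalar multiplication is well defined in $\NN$, since each chart map is positively homogeneous. We then compute
\[
\w(n_j)(\mathbb{g}) = \v(\mathbb{g})(k n') = k\,\v(\mathbb{g})(n') = k\,\w(n')(\mathbb{g}).
\]
The first and last equalities use condition (2) of Definition~\ref{def_dual}, and the middle one uses homogeneity \eqref{eq: def point F homog} of the point $\v(\mathbb{g}) \in \Sp(\NN)$. The value $\w(n')(\mathbb{g})$ lies in $\Z$ because points are $\Z$-valued on $\MM$ (Remark~\ref{remark: abuse of notation for strict duals}). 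Therefore $1 = k\cdot(\text{integer})$, which forces $k = 1$.

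The only delicate point is fixing the notion of ``primitive'' in a polyptych lattice. We take it to mean that $n_j$ is not a positive integer multiple $k n'$, with $k \geq 2$, of any $n' \in \NN$. Because mutations commute with positive scaling, this is equivalent to $\pi_\gamma(n_j)$ being primitive in $N_\gamma$ for every chart $\gamma$, and also for some single chart. Negative multiples need no separate treatment: primitivity in a free $\Z$-module only concerns positive divisors, since $-n'$ is primitive whenever $n'$ is. With that convention fixed, the argument above is complete.
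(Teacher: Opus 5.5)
Your argument is correct and is essentially the paper's: the paper simply declares the lemma immediate from (P-2)(b), and you have supplied exactly that unpacking, namely that $\w(0)$ vanishes identically and that $n_j = k n'$ would force $1 = k\,\w(n')(\mathbb{g})$ with $\w(n')(\mathbb{g}) \in \Z$. One small slip in wording: $\w$ maps $\NN$ to $\Sp(\MM)$ (not to ``$\Sp(\NN)$-type data''), and its bijectivity is never actually used in your proof.
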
 

We will also use the following.

\begin{lemma}\label{proposition: P is full-dimensional}
    Following the notation of Definition~\ref{definition: Gorenstein PL cone}, suppose $\PP \subset \MM_\R$ is a Gorenstein PL cone. Then $\PP$ is full-dimensional and $\mathbb{g}\in \mathrm{int}(\PP)$.
\end{lemma}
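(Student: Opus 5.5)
The plan is to prove $\mathbb{g}\in\mathrm{int}(\PP)$ directly; full-dimensionality of $\PP$ then follows, since a subset of $\MM_\R$ with non-empty interior is full-dimensional in $\MM_\R$, and hence so is its image in every chart.

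The first step is to check that none of the defining points $\w(n_j)$ is zero. This holds because $\w(n_j)(\mathbb{g})=1\neq 0$ by (P-2)(b); equivalently, it is Lemma~\ref{lemma: nonzero and primitive} together with the injectivity of $\w$ and $\w(0)=0$. So the hypothesis $p_j\neq 0$ of Lemma~\ref{lemma: interior of PL cone} is satisfied with $p_j=\w(n_j)$. That lemma then gives
\[
\mathrm{int}(\PP)=\{m\in\MM_\R \mid \w(n_j)(m)>0 \text{ for all } j\in[s]\}.
\]
Since $\w(n_j)(\mathbb{g})=1>0$ for every $j$, we get $\mathbb{g}\in\mathrm{int}(\PP)$. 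Equivalently, by the lattice-point criterion in Lemma~\ref{lemma: interior of PL cone}, $\mathbb{g}\in\MM$ satisfies $\w(n_j)(\mathbb{g})\ge 1$ for all $j$.

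I also want an argument that does not rely on Lemma~\ref{lemma: interior of PL cone} treating the $\R$-extended points carefully. For this I will use the lineality property (P-2)(a) to produce an explicit neighborhood of $\mathbb{g}$. Fix any chart $\alpha$. The function $p_{j,\alpha}=\w(n_j)\circ\pi_\alpha^{-1}$ is a minimum of finitely many linear functions on $M_\alpha\otimes\R$, so it is continuous. Hence there is an open neighborhood $U$ of $\pi_\alpha(\mathbb{g})$ on which all $p_{j,\alpha}>0$. Because $\pi_\alpha$ is a homeomorphism, $\pi_\alpha^{-1}(U)$ is an open neighborhood of $\mathbb{g}$ contained in $\PP$.

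Alternatively, Lemma~\ref{lemma: properties of lineality space}(2) gives $\w(n_j)(u+\lambda\mathbb{g})=\w(n_j)(u)+\lambda$. So for any $u$ and large enough $\lambda$, the point $u+\lambda\mathbb{g}$ lies in the interior. This shows again that the interior is non-empty and in fact absorbing along $\mathbb{g}$.

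The main obstacle is purely bookkeeping: making sure that the points $\w(n_j)$, defined on $\MM$, are taken in their $\R$-extensions $\Sp_\R(\MM)$ via the strict dual pairing, so that continuity and Lemma~\ref{lemma: interior of PL cone} apply. Once this is noted, the argument is immediate.
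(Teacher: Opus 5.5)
Your proof is correct and follows the paper's argument: you check $\w(n_j)\neq 0$ so that Lemma~\ref{lemma: interior of PL cone} applies, deduce $\mathbb{g}\in\mathrm{int}(\PP)$ from $\w(n_j)(\mathbb{g})=1>0$, and get full-dimensionality from the non-empty interior. The extra continuity and $\mathbb{g}$-translation arguments are valid but not needed.
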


\begin{proof}
    By Lemma~\ref{lemma: nonzero and primitive}, the interior of $\PP$ is given by Lemma~\ref{lemma: interior of PL cone}. By part (b) of (P-2) we have that $\mathbb{g}\in\mathrm{int}(\PP)$ since $\w(n_j)(\mathbb{g})>0$ for all $j\in[s]$. Thus, $\PP$ is a subset of $\MM_\R$ with non‑empty interior so it must be full‑dimensional.
\end{proof}

\begin{remark}  
 It may be useful for some readers to recall that there are multiple viewpoints on the Gorenstein condition. Our perspective stems from that of \cite{BrunsRomer}, which points out that the condition for an affine semigroup ring $\C[S]$ (where $S$ is given as the lattice points in a cone $C$ over a polytope) to be Gorenstein is equivalent to the condition that $\mathrm{int}(C) \cap S$ is of the form $y + S$ for some $y \in S$. 
In Lemma~\ref{lemma: Gorenstein def equivalence} we will see the condition just stated is equivalent to (P-2) in Definition~\ref{definition: Gorenstein PL cone} above. We have chosen to state the definition in terms of (P-2) because it is more convenient for our arguments.

\end{remark}

The classical version of the following lemma is well known. A minor adjustment to the argument is necessary in our PL setting. 

\begin{lemma}\label{lemma: Gorenstein def equivalence}
In the setting of Definition~\ref{definition: Gorenstein PL cone}, the condition (P-2) holds if and only if there exists $m_\PP \in \mathrm{int}(\PP) \cap \MM$ such that $m_\PP$ is in the lineality space of $\MM$, and, for every $m \in \mathrm{int}(\PP) \cap \MM$, there exists $m' \in \PP \cap \MM$ with $m = m_\PP + m'$. 
\end{lemma}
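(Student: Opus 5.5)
We prove the two implications separately. In both directions the basic tool is Lemma~\ref{lemma: properties of lineality space}(2): for any $p\in\Sp(\MM)$, any $u\in\MM$ and any $\mathbb{l}$ in the lineality space, $p(u+\lambda\mathbb{l})=p(u)+\lambda p(\mathbb{l})$. We also use Lemma~\ref{lemma: interior of PL cone}, which says that a lattice point $m$ lies in $\mathrm{int}(\PP)$ exactly when $\w(n_j)(m)\ge 1$ for all $j$. That lemma needs each $\w(n_j)\neq 0$; in the $(\Leftarrow)$ direction this holds because $(\MM,\NN,\v,\w)$ is a strict dual pair, so $\w$ is a bijection, provided no $n_j=0$. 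If some $n_j=0$, its inequality is vacuous and can be discarded; note that (P-2)(b) itself rules this case out.

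\textbf{($\Rightarrow$).} Assume (P-2) and set $m_\PP:=\mathbb{g}$. By Lemma~\ref{proposition: P is full-dimensional}, $\mathbb{g}\in\mathrm{int}(\PP)\cap\MM$, and $\mathbb{g}$ is in the lineality space by (P-2)(a). Take $m\in\mathrm{int}(\PP)\cap\MM$ and put $m':=m+(-1)\mathbb{g}$, which is well defined by Lemma~\ref{lemma: properties of lineality space}(1). Then
\[
\w(n_j)(m')=\w(n_j)(m)-\w(n_j)(\mathbb{g})=\w(n_j)(m)-1\ge 0
\]
for every $j$, using Lemma~\ref{lemma: interior of PL cone}, so $m'\in\PP\cap\MM$. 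Finally $m'+\mathbb{g}=m$, because adding $\mathbb{g}$ and adding $-\mathbb{g}$ are inverse operations in any single chart, and by Lemma~\ref{lemma: properties of lineality space}(1) all chart additions agree.

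\textbf{($\Leftarrow$).} Let $m_\PP$ be as in the statement and set $\mathbb{g}:=m_\PP$. Condition (P-2)(a) holds by hypothesis. Since $m_\PP\in\mathrm{int}(\PP)$, Lemma~\ref{lemma: interior of PL cone} gives $a_j:=\w(n_j)(m_\PP)\ge 1$ for all $j$. It remains to show $a_j\le 1$. This is where (P-1) enters, and it is the main obstacle: we need a lattice point in $\mathrm{int}(\PP)$ on which $\w(n_j)$ equals exactly $1$.

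To produce such a point, we first use (P-1) to find a lattice point $u\in\relint(F_j)\cap\MM$. The set $\relint(F_j)$ is a nonempty relatively open rational PL cone of codimension $1$. Working in a chart $M_\alpha$ on which $\w(n_j)$ is linear (such a chart exists by \cite[Lemma 4.2]{EscobarHaradaManon-PL}), its image meets the rational hyperplane $\{\w(n_j)=0\}$ in an open subset of that hyperplane. Open subsets of a rational hyperplane contain rational points, and scaling a rational point gives an integral one, so such $u$ exists. Now set $m:=u+m_\PP$. By Lemma~\ref{lemma: properties of lineality space}(2), $\w(n_t)(m)=\w(n_t)(u)+a_t$, which is $\ge 1$ for all $t$, so $m\in\mathrm{int}(\PP)\cap\MM$, and $\w(n_j)(m)=a_j$.

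We would rather have a point where $\w(n_j)$ takes the value $1$, so we adjust the construction. We need a lattice point $v$ with $\w(n_j)(v)=1$ and $\w(n_t)(v)\ge 1$ for $t\ne j$. To get it, move from $u$ slightly off the facet: in the chart $M_\alpha$, primitivity of $n_j$ (Lemma~\ref{lemma: nonzero and primitive}) gives $\w(n_j)$ values $1$ on some lattice vector $e$. Replace $u$ by a large multiple $Nu$, which makes $\w(n_t)(Nu)$ large for $t\ne j$, and take $v:=Nu+_\alpha e$. Then $\w(n_j)(v)=1$ because $\w(n_j)$ is linear on $M_\alpha$. For $t\ne j$, the point $p=\w(n_t)$ satisfies $p(Nu+_\alpha e)\ge p(Nu)+\min_{\beta}p(\pm e\text{-type terms})$, using that points are superadditive under chart addition (the min condition in Definition~\ref{definition: space of points}). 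So $\w(n_t)(v)\ge N\,\w(n_t)(u)-C$ for a constant $C$, which is $\ge 1$ once $N$ is large. Hence $v\in\mathrm{int}(\PP)\cap\MM$.

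By hypothesis $v=m_\PP+m'$ with $m'\in\PP\cap\MM$. Applying Lemma~\ref{lemma: properties of lineality space}(2),
\[
1=\w(n_j)(v)=a_j+\w(n_j)(m')\ge a_j,
\]
so $a_j=1$. Since $j$ was arbitrary, (P-2)(b) holds.
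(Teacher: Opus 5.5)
Your proof is correct and follows essentially the paper's argument. The forward direction is identical, and in the reverse direction you likewise build a lattice point of $\mathrm{int}(\PP)$ with $\w(n_j)$-value exactly $1$ by adding an element of $\w(n_j)$-value $1$ to a large multiple of a lattice point of $\relint(F_j)$; the only difference is that you do the addition in a chart where $\w(n_j)$ is linear, whereas the paper uses a chart that minimizes $\w(n_j)$ over the chart sums. (As in the paper, your reverse direction tacitly uses primitivity of $n_j$ via Lemma~\ref{lemma: nonzero and primitive}, which is itself derived from (P-2)(b), so there primitivity should really be read as a standing hypothesis.)
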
 

\begin{proof} 
We begin with the forward direction of the equivalence. Set $m_\PP:=\mathbb{g}$. By assumption, $m_\PP$ is in the lineality space. Suppose $m \in \mathrm{int}(\PP) \cap \MM$. A lattice point lies in the interior of $\PP$ precisely if $\w(n_j)(m) \geq 1$ for all $j$. Set $m' = m - m_\PP$ where the difference is well-defined since $m_\PP$ lies in the lineality space. We compute $\w(n_j)(m')=\w(n_j)(m-m_\PP) = \w(n_j)(m) - \w(n_j)(m_\PP) = \w(n_j)(m) - 1$ using Lemma~\ref{lemma: properties of lineality space}. Since $m$ lies in the interior of $\PP$, we must have $\w(n_j)(m) \geq 1$, from which it follows $\w(n_j)(m')\geq 0$. Thus $m' \in \PP$ as required. 

For the reverse implication, suppose $m_\PP$ with the stated properties is given. Set $\mathbb{g}:=m_\PP$. Again by assumption, $\mathbb{g}$ is in the lineality space. Since $\mathbb g$ is in $\mathrm{int}(\PP)$ we know $\w(n_j)(\mathbb g)\ge1$ for all $i$. To see the reverse inequality, first fix $j$.  Since $n_j$ is primitive, there exists $u\in\MM$ with $\w(n_j)(u)=1$. Also there exists, and we may choose, a lattice point $v$ in the relative interior of the facet $F_j$, so $\w(n_i)(v)>0$ for $i\ne j$ and $\w(n_j)(v)=0$. Now choose $N>0, N \in \Z$ such that $\w(n_i)(u)+N > 0$ for all $i\neq j$. Choose a chart $\alpha \in \pi(\MM)$ for which $\w(n_j)(u +_\alpha Nv)$ achieves the minimum among $\{\w(n_j)(u +_\beta Nv): \beta \in \pi(\MM)\}$. 
Set $w:=u+_\alpha Nv$. By the definition of point (\cite[Definition 3.1]{EscobarHaradaManon-PL}) we have $\w(n_j)(w)=1$.
Note also that for $i\neq j$ we have that 
\[
\w(n_i)(w)\ge \min_{\beta\in\pi(\MM)}\{\w(n_i)(u+_\beta Nv)\}=\w(n_i)(u)+N\w(n_i)(v)\ge \w(n_i)(u)+N>0
.\]
Then $w:=u+_\alpha Nv$ is an interior lattice point. By assumption on $\mathbb{g}=m_\PP$, $w-\mathbb{g}$ lies in $\PP$, which by Lemma~\ref{lemma: properties of lineality space} means $\w(n_j)(w-\mathbb{g}) = \w(n_j)(w)-\w(n_j)(\mathbb{g})=1 - \w(n_j)(\mathbb{g}) \geq 0$. This forces $\w(n_j)(\mathbb{g}) \leq 1$. Since we have both inequalities, we conclude $\w(n_j)(\mathbb{g})=1$, as desired. 
\end{proof}

 Following the above discussion, we will say that a rational polyhedral cone $C$ in $M_\alpha \otimes \R$ is Gorenstein in the classical sense, if $\mathrm{int}(C)\cap M_\alpha$ is of the form $y + (C \cap M_\alpha)$ for some $y \in \mathrm{int}(C) \cap M_\alpha$.
The next lemma shows that our terminology in Definition~\ref{definition: Gorenstein PL cone} is compatible with the classical notion in each of its charts. 

\begin{lemma} 
Let $\PP$ be a PL cone as in~\eqref{eq: def PPi}. Suppose $\PP$ is Gorenstein in the sense of Definition~\ref{definition: Gorenstein PL cone}. Then all of its chart images are Gorenstein, i.e., for all $\alpha \in \pi(\MM)$, the image $\pi_\alpha(\PP) \subset M_\alpha \otimes \R$ is a Gorenstein cone in the classical sense. 
\end{lemma}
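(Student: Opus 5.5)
Fix a chart $\alpha \in \pi(\MM)$ and take $y := \pi_\alpha(\mathbb{g}) \in M_\alpha$ as the candidate Gorenstein point. Since $\pi_\alpha$ is a homeomorphism $\MM_\R \to M_\alpha\otimes\R$ that restricts to a bijection $\MM \to M_\alpha$, we have $\pi_\alpha(\mathrm{int}(\PP)) = \mathrm{int}(\pi_\alpha(\PP))$ and $\pi_\alpha(\PP \cap \MM) = \pi_\alpha(\PP)\cap M_\alpha$. By Lemma~\ref{proposition: P is full-dimensional}, $\mathbb{g}\in \mathrm{int}(\PP)$, so $y \in \mathrm{int}(\pi_\alpha(\PP))\cap M_\alpha$. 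It then remains to show
\[
\mathrm{int}(\pi_\alpha(\PP))\cap M_\alpha = y + (\pi_\alpha(\PP)\cap M_\alpha).
\]

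The key observation is that addition by $\mathbb{g}$ is compatible with the chart. By Lemma~\ref{lemma: properties of lineality space}(1), for any $m\in\MM$ and any $\lambda\in\Z$, the sum $m+\lambda\mathbb{g}$ is well-defined and equals $m+_\alpha \lambda\mathbb{g}$. By the definition of chart addition, $\pi_\alpha(m + \lambda\mathbb{g}) = \pi_\alpha(m) + \lambda y$ in $M_\alpha$. Hence translation by $y$ in $M_\alpha$ corresponds, under $\pi_\alpha$, to the well-defined translation by $\mathbb{g}$ in $\MM$. Note that the intrinsic addition is independent of the chart, so no chart-dependence enters here.

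With this, the two inclusions come from Lemma~\ref{lemma: Gorenstein def equivalence}, together with the computation in its proof.

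For ``$\supseteq$'': given $m'\in \PP\cap\MM$, Lemma~\ref{lemma: properties of lineality space}(2) and (P-2)(b) give
\[
\w(n_j)(m'+\mathbb{g}) = \w(n_j)(m') + 1 \ge 1 \quad \text{for all } j.
\]
By Lemma~\ref{lemma: interior of PL cone} (applicable since each $\w(n_j)\neq 0$ by Lemma~\ref{lemma: nonzero and primitive}), $m'+\mathbb{g}\in\mathrm{int}(\PP)$. Applying $\pi_\alpha$ gives $\pi_\alpha(m')+y \in \mathrm{int}(\pi_\alpha(\PP))$.

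For ``$\subseteq$'': a lattice point of $\mathrm{int}(\pi_\alpha(\PP))$ has the form $\pi_\alpha(m)$ with $m\in\mathrm{int}(\PP)\cap\MM$. By Lemma~\ref{lemma: Gorenstein def equivalence} with $m_\PP=\mathbb{g}$, we have $m = \mathbb{g} + m'$ with $m' = m-\mathbb{g} \in \PP\cap\MM$. Then $\pi_\alpha(m) = y + \pi_\alpha(m')$ by the observation above.

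The only delicate point is the compatibility $\pi_\alpha(m+\mathbb{g}) = \pi_\alpha(m)+\pi_\alpha(\mathbb{g})$. This requires checking that the chart addition $+_\alpha$, defined via $M_\alpha$, literally agrees with linear addition in $M_\alpha$, and that the lineality hypothesis (P-2)(a) makes this sum independent of $\alpha$. Both follow directly from the definitions and Lemma~\ref{lemma: properties of lineality space}. Everything else is transport of structure through the homeomorphism $\pi_\alpha$.
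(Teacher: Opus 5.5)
Your proposal is correct and follows essentially the same route as the paper: use $\pi_\alpha(\mathbb{g})$ (the paper's $m_\PP$ from Lemma~\ref{lemma: Gorenstein def equivalence}) as the Gorenstein point. You get one inclusion from $\w(n_j)(\mathbb{g}+m') = 1 + \w(n_j)(m') > 0$, and the other by applying $\pi_\alpha$ to the decomposition $m = \mathbb{g} + m'$, using that $\pi_\alpha$ is a homeomorphism carrying $\MM$ onto $M_\alpha$ and respecting addition by lineality-space elements.
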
 

\begin{proof} 
 From Lemma~\ref{lemma: Gorenstein def equivalence} we know that there exists $m_\PP$ in the lineality space of $\MM$ with the property that any $m \in \mathrm{int}(\PP) \cap \MM$ can be written as $m_\PP + m'$ for some $m' \in \PP \cap \MM$.
 First note that since 
 \[
 \w(n_j)(m_\PP+m')=1+\w(n_j)(m')>0
 \]
 for any $j\in[s]$ and $m' \in \PP \cap \MM$,
 $\pi_\alpha(m_\PP)+\pi_\alpha(\PP)\cap M_\alpha$ is contained in $\mathrm{int}(\pi_\alpha(\PP))\cap M_\alpha$.
 For the converse containment, take $\pi_\alpha$ of both sides of the equation in Lemma~\ref{lemma: Gorenstein def equivalence}. This means $\pi_\alpha(m) = \pi_\alpha(m_\PP) + \pi_\alpha(m')$. Here we use that $\pi_\alpha$ preserves addition with an element in the lineality space. The map $\pi_\alpha$ is a homeomorphism and takes $\MM$ to the lattice $M_\alpha$, so $\pi_\alpha(m_\PP)$ satisfies the condition needed to make $\pi_\alpha(\PP)$ a classical Gorenstein cone. 
\end{proof}

We will also need a PL analogue of dual cones. Unlike the classical case, in our PL setting, the definition requires a choice of strict dual. 

\begin{definition}\label{definition: dual of PL cone}
Let $(\MM,\NN,\v,\w)$ be a strict dual pair of polyptych lattices in the sense of Definition~\ref{def_dual}. 
Let $\PP \subset \MM_\R$ be a PL cone as in~\eqref{eq: def PPi}. We define the \textbf{dual PL cone to $\PP$ (with respect to $\NN$)} as 
\begin{equation}\label{eq: def dual PL cone} 
\PP^\vee := \{n \in \NN_\R\, \mid \, \w(n)(u) \geq 0, \textup{ for all} \, u \in \PP\} \subset \NN_\R.
\end{equation} 
\end{definition}

The lemma below justifies the terminology.

\begin{lemma}\label{lemma: dual of PL cone is a PL cone}
Let $(\MM,\NN,\v,\w)$ be a strict dual pair of finite polyptych lattices over $\Z$ and let
$\PP \subset \MM_\R$ be a PL cone as in~\eqref{eq: def PPi}. Then:
\begin{enumerate}
\item The dual $\PP^\vee$ of Definition~\ref{definition: dual of PL cone} is a PL cone in $\NN_\R$.
\item If the $n_j$ defining $\PP$ lie in $\NN$, then $\PP^\vee$ is cut out by PL half-spaces
associated to elements of $\MM$.
\item $(\PP^\vee)^\vee=\PP$.
\end{enumerate}
\end{lemma}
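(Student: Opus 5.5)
The plan is to reduce all three statements to the classical theory of polyhedral cones inside the charts of $\NN$ and $\MM$. The key tool is the relation $\v(m)(n)=\w(n)(m)$ from Definition~\ref{def_dual}(2), together with the bijectivity of $\v$ and $\w$.

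For (1), rewrite the defining condition of $\PP^\vee$. For $n\in\NN_\R$, the condition $\w(n)(u)\ge 0$ for all $u\in\PP$ becomes $\v(u)(n)\ge0$ for all $u\in\PP$. So
\[
\PP^\vee=\bigcap_{u\in\PP}\HH_{\v(u),0}.
\]
Each $\v(u)$ is a point of $\NN$, and its pullback to a chart $N_\gamma\otimes\R$ is a minimum of finitely many linear functions, hence piecewise linear and concave. The set $\{\v(u)\ge0\}$ is therefore a convex polyhedral cone in each chart. The difficulty is that the intersection runs over infinitely many $u$.

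To reduce to finitely many $u$, I would use that $\PP$ is a finite union of cones $\PP\cap\sigma$, where $\sigma$ ranges over the maximal cones of $\Sigma(\MM)$. Pushed into a chart $M_\alpha$, each $\pi_\alpha(\PP\cap\sigma)$ is a rational polyhedral cone, so it is generated by finitely many lattice rays $u_1,\dots,u_k$. I would then show that $\v(\cdot)(n)=\w(n)(\cdot)$ is linear on $\sigma$ in the $\alpha$-chart, for every $n$. This is the step I expect to be the main obstacle. The most direct route is Definition~\ref{def_dual}(4): $\sigma$ is the preimage $\v^{-1}\Sp_\R(\NN,\gamma)$ for some $\gamma$, and by duality this should translate into $\w(n)$ being linear on $\pi_\alpha(\sigma)$. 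If that direct translation fails, I would instead use that a point is a minimum of linear functions in each chart, so $\w(n)$ is concave on $\pi_\alpha(\sigma)$. Concavity gives $\w(n)(\sum c_iu_i)\ge\sum c_i\w(n)(u_i)$, and this inequality is already enough: it shows nonnegativity on the generators implies nonnegativity on the whole cone. Either way, $\PP^\vee=\bigcap_{\sigma,i}\HH_{\v(u_i),0}$ is a finite intersection. Each chart image is then an intersection of finitely many rational polyhedral cones, namely the regions where a minimum of integral linear forms is nonnegative, so it is rational polyhedral. This proves (1).

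For (2), the generators $u_i$ can be taken in $\MM$, being primitive lattice generators. So $\PP^\vee$ is cut out by the half-spaces $\HH_{\v(u_i),0}=\{n : \w(n)(u_i)\ge0\}$ with $u_i\in\MM$, as required.

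For (3), the inclusion $\PP\subseteq(\PP^\vee)^\vee$ is tautological from symmetry of the pairing. For the reverse inclusion, note that $n_j\in\PP^\vee$ for each $j$, since $\w(n_j)(u)\ge0$ on $\PP$ by definition. Now take $m\in(\PP^\vee)^\vee$, where the double dual is taken with respect to the pair $(\NN,\MM,\w,\v)$. Then $\v(n)(m)\ge0$ for all $n\in\PP^\vee$, and in particular $\w(n_j)(m)=\v(n_j)(m)\ge0$ for all $j$. Hence $m\in\PP$ by \eqref{eq: def PPi}. Here the defining normals themselves witness the double dual, so no PL Farkas lemma is needed.
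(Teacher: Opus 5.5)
Your proof is correct and essentially matches the paper's. The paper also writes $\PP^\vee=\bigcap_{u\in\PP}\HH_{\v(u),0}$ and takes lattice generators $g_i$ of a single chart image $\pi_{\alpha_0}(\PP)$, with no decomposition into the pieces $\PP\cap\sigma$. It then uses exactly your fallback: $(\pi_{\alpha_0}^{-1})^*\w(n)$ is a minimum of linear functions, so nonnegativity on the $g_i$ propagates to the whole cone. Part (3) is argued identically via $n_j\in\PP^\vee$.

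Two small corrections. First, linearity of $\w(n)$ on a cone of $\Sigma(\MM)$ follows directly from the point axiom, because all chart additions agree on such a cone; it does not come from Definition~\ref{def_dual}(4). Second, in (3) the condition should read $\w(n)(m)=\v(m)(n)\ge 0$, since $\v$ is only defined on $\MM$.
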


\begin{proof} 
We will show (1) and (2) together. To do so, we wish to find a finite set $S$ of elements in $\MM$ such that $\PP^\vee = \bigcap_{u' \in S} \HH_{\v(u'), 0}$. First, 
by Definition~\ref{definition: dual of PL cone} we immediately obtain 
\begin{equation}\label{eq: dual as intersection}
  \PP^\vee=\bigcap_{u\in\PP}\HH_{\v(u),0}.
\end{equation}

Choose a chart $\alpha_0\in\pi(\MM)$. The PL cone $\PP$ is defined in \eqref{eq: def PPi} as an intersection of PL half-spaces $\HH_{\w(n_j),0}$ where $n_j \in \NN$. Since $\w(n_j)$ is a point in $\Sp(\MM)$, the chart image $\pi_{\alpha_0}(\HH_{\w(n_j),0})
=\{\w(n_j)_{\alpha_0}\ge 0\}$ is an intersection of linear half-spaces in $M_{\alpha_0} \otimes \R$. By intersecting over all $j$, $1 \leq j \leq s$, it follows that 
$\pi_{\alpha_0}(\PP)$ is also an intersection of linear half-spaces and hence a rational polyhedral cone in $M_{\alpha_0}\otimes \R$. Let $g_1,\dots,g_\ell\in M_{\alpha_0}$ be a set of cone generators of $\pi_{\alpha_0}(\PP)$ (note that we may without loss of generality assume the $g_i$ are primitive lattice elements). 
Define $u'_i:=\pi_{\alpha_0}^{-1}(g_i)\in \MM$, for $1 \leq i \leq \ell$. We claim
\begin{equation}\label{eq: dual finite}
  \PP^\vee=\bigcap_{i=1}^\ell \HH_{\v(u'_i),0}.
\end{equation}
To see this, note first that the inclusion of the LHS of~\eqref{eq: dual finite} in the RHS is immediate from \eqref{eq: dual as intersection}. For the reverse inclusion, 
suppose $n$ is in the RHS of~\eqref{eq: dual finite} and let $u\in \PP$. We need to prove that $\w(n)(u) \geq 0$. Consider $\pi_{\alpha_0}(u) \in \pi_{\alpha_0}(\PP)$. Since $\{g_1,\cdots,g_\ell\}$ were chosen to be cone generators, we know there exist $\lambda_i \in \R_{\geq 0}$ such that $\pi_{\alpha_0}(u) = \sum_i \lambda_i g_i \in M_{\alpha_0} \otimes \R$. Since $n \in \mathcal{H}_{\v(u'_i),0}$ for all $i$, this implies $\v(u'_i)(n)=\w(n)(u'_i)=\w(n)(\pi_{\alpha_0}^{-1}(g_i)) = (\pi_{\alpha_0}^{-1})^*\w(n)(g_i) \geq 0$ for all $i$. Since $\w(n) \in \Sp_\R(\MM)$, the pullback $(\pi_{\alpha_0}^{-1})^*\w(n)$ is a min-combination of linear functions on $M_{\alpha_0} \otimes \R$, and it follows that if $(\pi_{\alpha_0}^{-1})^*\w(n)(g_i) \geq 0$ for all $i$, then $(\pi_{\alpha_0}^{-1})^*\w(n)(\sum_i \lambda_i g_i) = (\pi_{\alpha_0}^{-1})^*\w(n)(\pi_{\alpha_0}(u)) = \w(n)(u) \geq 0$. This proves the reverse inclusion. 
Setting $S := \{u'_1,\cdots,u'_\ell\} \subset \MM$ proves both (1) and (2).

Now we prove (3). 
By definition, $(\PP^\vee)^\vee=\{u\in\MM_\R : \w(n)(u)\ge 0\text{ for all } n\in\PP^\vee\}
$. Each $u\in\PP$ satisfies $\w(n)(u)\ge 0$ for all
$n\in \PP^\vee$ by definition of the dual, so it is immediate that $\PP\subseteq(\PP^\vee)^\vee$. Next recall that $\PP = \bigcap_{j=1}^s \mathcal{H}_{\w(n_j),0}$ by definition, and in particular it follows that $n_1,\cdots, n_s$ lie in $\PP^\vee$. Therefore, $(\PP^\vee)^\vee$ must lie in $\bigcap_j \HH_{\w(n_j),0}$, but this is $\PP$.  The result follows. 
\end{proof}

 We close the background section with some technical lemmas concerning the set of points $\Sp(\MM)$ and strict dual pairs (following notation as in Remark~\ref{remark: abuse of notation for strict duals}). Suppose that $(\MM, \NN, \v: \MM \to \Sp(\NN), \w: \NN \to \Sp(\MM))$ form a strict dual pair of finite polyptych lattices over $\Z$.

We begin with a refinement of \cite[Lemma 3.7]{EscobarHaradaManon-PL}.

\begin{lemma}\label{lemma: res cone surj}
Let $(\MM, \NN, \v: \MM \to \Sp(\NN), \w: \NN \to \Sp(\MM))$ be a pair of strict dual finite polyptych lattices over $\Z$ and $C$ a maximal cone in $\Sigma(\MM)$. The restriction map $L_C: \Sp(\MM) \to \Hom(C\cap\MM, \Z)$ is bijective. 
\end{lemma}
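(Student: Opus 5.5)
We have to show that for a maximal cone $C$ of $\Sigma(\MM)$, restricting points to $C\cap\MM$ gives a bijection $L_C:\Sp(\MM)\to\Hom(C\cap\MM,\Z)$. Here $\Hom(C\cap\MM,\Z)$ denotes the additive maps on the monoid $C\cap \MM$; addition there is well defined because all chart additions agree within a cone of $\Sigma(\MM)$. The plan is to get injectivity from the strict duality and surjectivity from a counting/extension argument in a chart.

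First, $L_C$ is well defined. If $p\in\Sp(\MM)$ and $u,u'\in C\cap\MM$, then all chart sums $u+_\alpha u'$ coincide, since $C$ is a domain of linearity of every mutation. So the minimum in the defining identity of a point is over a single element, giving $p(u)+p(u')=p(u+u')$. Hence $L_C(p)$ is a monoid homomorphism. Moreover, $\pi_\alpha(C)$ is a full-dimensional rational cone, so $C\cap\MM$ generates $M_\alpha$ as a group. Thus $\Hom(C\cap\MM,\Z)\cong \Hom(M_\alpha,\Z)$ for any fixed chart $\alpha$, and restriction to $C$ amounts to assigning a linear functional on $M_\alpha$.

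\textbf{Injectivity.} Suppose $p,q\in\Sp(\MM)$ agree on $C\cap\MM$. Write $p=\w(n)$ and $q=\w(n')$, using surjectivity of $\w$ (axiom (3)). Then $\v(u)(n)=\v(u)(n')$ for all $u\in C\cap\MM$. By axiom (4), $C=\v^{-1}\Sp_\R(\NN,\gamma)$ for some chart $\gamma$ of $\NN$, so each $\v(u)$ with $u\in C$ is linear on $N_\gamma$. The key claim is that $\{\v(u)_\gamma : u\in C\cap\MM\}$ spans $\Hom(N_\gamma,\R)$. Granting it, $\pi_\gamma(n)=\pi_\gamma(n')$, so $n=n'$ and $p=q$.

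To prove the claim, I would use that $\v|_C$ is a continuous bijection of $C$ (extended over $\R$ and homogeneous) onto $\Sp_\R(\NN,\gamma)\subset\Hom(N_\gamma,\R)$. Since $C$ is full-dimensional in an $r$-dimensional space, invariance of domain forces the image to have nonempty interior in the $r$-dimensional space $\Hom(N_\gamma,\R)$. Its rational points $\v(C\cap\MM)$ are then dense in it up to scaling, so they span.

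\textbf{Surjectivity.} Given $\phi\in\Hom(C\cap\MM,\Z)$, extend it to a linear $\phi_\alpha\in\Hom(M_\alpha,\Z)$. We must produce a point restricting to it. By \cite[Lemma 3.7]{EscobarHaradaManon-PL}, which I expect already gives injectivity or surjectivity over a rational/real version, there is some $p\in\Sp_\R(\MM)$ restricting to $\phi$. Alternatively, argue via the dual: $p=\w(n)$ with $n$ ranging over $\NN_\R$. The map $n\mapsto L_C(\w(n))$ is, on the chart $N_\gamma$, the linear map $n_\gamma\mapsto (u\mapsto \v(u)_\gamma(n_\gamma))$, which by the spanning claim is an injective linear map $N_\gamma\otimes\R\to\Hom(M_\alpha,\R)$ between $r$-dimensional spaces, hence an isomorphism.

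\textbf{Main obstacle: integrality.} We still need this isomorphism to carry $N_\gamma$ onto $\Hom(M_\alpha,\Z)$, not merely into it, so that $\phi$ has a $\Z$-preimage $n$ and $\w(n)\in\Sp(\MM)$. The map sends lattice to lattice, because $\w(n)$ is $\Z$-valued on $\MM$. For the reverse direction, I would apply the symmetric argument with the roles of $\MM$ and $\NN$ swapped, using the cone $\w^{-1}\Sp_\R(\MM,\alpha)$ in $\Sigma(\NN)$, to get a lattice map the other way. I would then check that the two maps are mutually inverse, using $\v(m)(n)=\w(n)(m)$ on the cones where both are bilinear. If that symmetric argument fails, I would instead use that $\v:\MM\to\Sp(\NN)$ is a bijection onto $\Z$-points, together with the injectivity already shown, to compare indices of the lattices.
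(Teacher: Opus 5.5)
Your well-definedness, injectivity and real-surjectivity steps are acceptable substitutes for what the paper does. The paper cites \cite[Lemma 3.7]{EscobarHaradaManon-PL} for injectivity and \cite[Proposition 4.12]{EscobarHaradaManon-PL} for a real point $\tilde p\in\Sp_\R(\MM)$ restricting to $\bar\varphi$. Your spanning claim is easier than you make it: $\v$ is linear on $C$ and injective on $C\cap\MM$, so its linear extension is an injective integer matrix. Invariance of domain would need injectivity of $\v$ over $\R$, which axiom (3) with $F'=\Z$ does not literally grant.

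The genuine gap is the integrality step, which is the real content of the lemma, and your main plan for it cannot work. The symmetric argument gives a map $M_\alpha\to\Hom(D\cap\NN,\Z)$ with $D=\w^{-1}\Sp_\R(\MM,\alpha)$. It is induced by the same bilinear form $(n,u)\mapsto \w(n)(u)=\v(u)(n)$ as your map $N_\gamma\to\Hom(C\cap\MM,\Z)$, since the two forms agree on $D\times C$. So the two maps are transposes of each other, not inverses. An integer matrix and its transpose have the same determinant, so nothing about the index can be extracted this way. Your fallback of ``comparing indices'' is not yet an argument, and the ingredient it names (injectivity) is not the relevant one.

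The missing idea, which the paper uses, is that the real preimage point is linear on \emph{some chart globally}, not merely on $C$. Given $n\in\NN_\R$ with $L_C(\w(n))=\phi$, the element $n$ lies in some maximal cone of $\Sigma(\NN)$. By axiom (4) that cone is $\w^{-1}\Sp_\R(\MM,\beta)$ for some $\beta\in\pi(\MM)$; this is the fullness of $\MM$, \cite[Lemma 4.2]{EscobarHaradaManon-PL}. So $\w(n)\circ\pi_\beta^{-1}$ is a linear functional on $(M_\beta)_\R$ that is integral on the lattice points of the full-dimensional cone $\pi_\beta(C)$. Take $u$ in the interior of $\pi_\beta(C)$ and a basis $e_1,\dots,e_r$ of $M_\beta$; then $Nu$ and $Nu+e_i$ lie in the cone for $N\gg 0$, so the functional is integral on all of $M_\beta$. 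Hence $\w(n)$ is $\Z$-valued on $\MM$, i.e.\ it lies in $\Sp(\MM)$, which finishes surjectivity.

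If you prefer to stay on your $N_\gamma$ side, the index comparison can be made rigorous, but via \emph{surjectivity} of $\v:\MM\to\Sp(\NN)$ together with axiom (4). These show that every lattice point of the full-dimensional rational cone $\{\v(u)\circ\pi_\gamma^{-1}: u\in C\}\subset\Hom((N_\gamma)_\R,\R)$ equals $\v(u)\circ\pi_\gamma^{-1}$ for some $u\in C\cap\MM$. Hence these functionals generate $\Hom(N_\gamma,\Z)$, which forces $\pi_\gamma(n)\in N_\gamma$ whenever $\w(n)$ is integral on $C\cap\MM$. One of these two arguments has to be supplied.
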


\begin{proof}
    By \cite[Lemma 3.7]{EscobarHaradaManon-PL} we know $L_C$ is injective so we only need to prove surjectivity. Let $\varphi \in \Hom(C\cap\MM,\Z)$ be given. To prove the lemma, we need to find $p \in \Sp(\MM)$ such that $L_C(p)=\varphi$. Let $\bar{\varphi}$ denote the unique extension of $\varphi$ to $\Hom(C, \R)$. Since $\MM,\NN$ are finite and $(\MM, \NN, \v, \w)$ is a strict dual pair, we know from \cite[Proposition 4.12]{EscobarHaradaManon-PL} that there exists $\tilde{p} \in \Sp_\R(\MM)$ with $L_{C}(\tilde{p}) = \bar{\varphi}$. To complete the argument, it would suffice to show that $\tilde{p}$ takes integral values on the lattice. We do so by finding $\alpha \in \pi(\MM)$ such that $\tilde{p}\circ \pi_\alpha^{-1}(M_\alpha)\subseteq\Z$.
    
    To see this, recall that $\MM_\R$ is full since $\MM_\R$ has a strict dual \cite[Lemma 4.2]{EscobarHaradaManon-PL},  so there exists some $\alpha \in \pi(\MM)$ such that $\tilde{p} \circ \pi_\alpha^{-1}$ is linear on $(M_\alpha)_\R$. Since $L_{C}(\tilde{p}) = \bar{\varphi}$ we know the values of $\tilde{p} \circ \pi_\alpha^{-1}$ agree with those of $\bar{\varphi} \circ \pi_\alpha^{-1}$ on $\pi_\alpha(C)$, and in particular, are integral on lattice points in $\pi_\alpha(C)$. Thus it suffices now to show that if $D$ is a full-dimensional cone in an $\R$-vector space $V$ with lattice $M$ and $\hat{\varphi} \in \Hom(V, \R)$ has the property that $\hat{\varphi}$ takes integral values on $D \cap M $, then $\hat{\varphi}$ takes integral values on $M$. Fix a basis $\{e_1,\ldots, e_n\}$ for $M$. Since $D$ is full-dimensional, there exists a lattice point $u \in M$ in the interior of $D$.
    Now, for sufficiently large $N \in \Z$, we have that both $Nu \in D \cap M$ and also $Nu + e_i \in D \cap M$ for all $i$. Then by assumption $\hat{\varphi}(Nu) \in \Z$ and $\hat{\varphi}(Nu + e_i) \in \Z$, and since $\hat{\varphi}$ is linear we conclude $\hat{\varphi}(e_i) \in \Z$ for all $i$. Since the $e_i$ were chosen to be a basis of $M$, the claim follows. 
\end{proof}

The following is simple linear algebra but it will be useful to state it explicitly. 

\begin{lemma}\label{lemma: injectivity} 
Following the notation of Lemma~\ref{lemma: res cone surj}, suppose $r$ is the rank of $\MM$ and $\{m_1,\cdots,m_r\} \subset C \cap \MM$ is a subset which maps to a $\Q$-basis of $(M_\alpha)_\Q$ for each $\alpha$. Then the evaluation map $\Hom(C\cap \MM,\Z) \to \Z^r, f \mapsto (f(m_1),\cdots,f(m_r))$ is injective. 
\end{lemma}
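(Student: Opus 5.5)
The plan is to reduce injectivity to the classical statement that a linear functional vanishing on a basis is zero. The evaluation map is a group homomorphism, since it is $\Z$-linear in $f$. So it suffices to show its kernel is trivial. Suppose $f \in \Hom(C\cap\MM,\Z)$ satisfies $f(m_i)=0$ for all $i \in [r]$; we must show $f(m)=0$ for every $m \in C\cap\MM$.

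First, fix any chart $\alpha \in \pi(\MM)$. Because $C$ is a cone of the mutation fan $\Sigma(\MM)$, the chart maps restricted to $C$ are compatible with addition. Concretely, $\pi_\alpha$ restricted to $C$ agrees with a linear map composed with $\pi_\beta$ for every $\beta$, and chart addition is well-defined on $C$ (cf.\ the proof of Lemma~\ref{lemma: properties of lineality space}). So $C\cap\MM$ is identified, via $\pi_\alpha$, with the monoid $\pi_\alpha(C)\cap M_\alpha$ inside the lattice $M_\alpha$. Under this identification, a homomorphism $f\in\Hom(C\cap\MM,\Z)$ is a monoid homomorphism $\pi_\alpha(C)\cap M_\alpha\to\Z$. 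As in the proof of Lemma~\ref{lemma: res cone surj}, it extends uniquely to a linear functional $\bar f \in \Hom((M_\alpha)_\R,\R)$.

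Concretely, $\pi_\alpha(C)$ is full-dimensional because $C$ is maximal, so the monoid $\pi_\alpha(C)\cap M_\alpha$ generates $M_\alpha$ as a group, and $f$ extends to a group homomorphism $M_\alpha \to \Z$. If needed, the extension can also be built directly: write any lattice point as a difference of two points of $\pi_\alpha(C)\cap M_\alpha$, using an interior point $u$ and large multiples $Nu$.

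Then $\bar f$ vanishes on $\pi_\alpha(m_1),\dots,\pi_\alpha(m_r)$. These form a $\Q$-basis of $(M_\alpha)_\Q$, hence also an $\R$-basis of $(M_\alpha)_\R$. So $\bar f=0$, and therefore $f=\bar f\circ\pi_\alpha|_{C\cap\MM}=0$. This proves injectivity.

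The only point requiring care is the linear extension of $f$, that is, that homomorphisms from the cone monoid are restrictions of linear functionals. This is the same argument already used in Lemma~\ref{lemma: res cone surj}, so I expect no real obstacle. Note also that only one chart is actually needed; the hypothesis that the $m_i$ give a basis in every chart is automatic once it holds in one, since the transition maps are linear on $C$.
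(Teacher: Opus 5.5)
Your argument is correct and is the routine linear-algebra argument the paper has in mind; the paper states this lemma without proof, calling it simple linear algebra. You pass to a single chart, where elements of $\Hom(C\cap\MM,\Z)$ are restrictions of linear functionals on $(M_\alpha)_\R$ because $\pi_\alpha(C)$ is full-dimensional, and then use that a functional vanishing on a $\Q$-basis is zero. Your closing remark is also right: on $C$ the transition maps restrict to invertible integral linear maps, so the basis hypothesis in one chart implies it in all charts.
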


 For the next lemma we define 
\begin{equation*}\label{eq: def psi alpha}
\psi_\alpha: M_\alpha \to \Hom(C_\alpha \cap \NN,\Z), \quad m_\alpha \in M_\alpha \mapsto\left( y \in C_\alpha\cap\NN \mapsto (\w(y)_\alpha(m_\alpha) \right).
\end{equation*}
We have the following. 
\begin{lemma}\label{lemma: psi alpha properties}
Following the notation above, we have: 
\begin{enumerate} 
\item $\psi_\alpha$ is well-defined, i.e. the image is in $\Hom(C_\alpha \cap \NN,\Z)$,  
\item 
the diagram 
\begin{equation}\label{eq: psi alpha comm diagram}
\begin{tikzcd}
\MM \arrow[r, "\v"] \arrow[d, "\pi_\alpha"]
& \Sp(\NN) \arrow[d, "L_{C_\alpha}"] \\
M_\alpha \arrow[r, "\psi_\alpha"]
& \Hom(C_\alpha \cap \NN, \Z)
\end{tikzcd}
\end{equation}
commutes, and
\item $\psi_\alpha: M_\alpha \to \Hom(C_\alpha \cap \NN,\Z)$ is a linear isomorphism of lattices. 
\end{enumerate} 
\end{lemma}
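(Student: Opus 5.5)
The plan is to unwind definitions. The key input is the strict duality axiom (4) of Definition~\ref{def_dual}: $C_\alpha$ is the maximal cone of $\Sigma(\NN)$ that corresponds to the chart $\alpha \in \pi(\MM)$. That is, $C_\alpha = \w^{-1}\Sp_\R(\MM,\alpha)$, so $\w(y)_\alpha = \w(y)\circ\pi_\alpha^{-1}$ is linear on $M_\alpha$ for every $y \in C_\alpha$. I then prove (2) first, and derive (1) and (3) from it together with Lemma~\ref{lemma: res cone surj}.

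For (2), fix $m \in \MM$ and $y \in C_\alpha \cap \NN$. Axiom (2) of Definition~\ref{def_dual} gives
\[
L_{C_\alpha}(\v(m))(y) = \v(m)(y) = \w(y)(m) = \w(y)_\alpha(\pi_\alpha(m)) = \psi_\alpha(\pi_\alpha(m))(y),
\]
so the square \eqref{eq: psi alpha comm diagram} commutes at the level of functions on $C_\alpha \cap \NN$.

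This immediately yields (1). Since $\pi_\alpha$ is a bijection, $\psi_\alpha(m_\alpha) = L_{C_\alpha}(\v(\pi_\alpha^{-1}(m_\alpha)))$. By \cite[Lemma 3.7]{EscobarHaradaManon-PL} (or Lemma~\ref{lemma: res cone surj}), $L_{C_\alpha}$ takes values in $\Hom(C_\alpha\cap\NN,\Z)$. The two facts underlying this are that points are integer-valued on $\NN$ and are additive on a single cone of the mutation fan $\Sigma(\NN)$. If one prefers to check (1) directly, integrality comes from $\w(y) \in \Sp(\MM)$ being $\Z$-valued on $\MM$. Additivity in $y$ uses that $y \mapsto \w(y)(m) = \v(m)(y)$ is the restriction of the point $\v(m)$ to a cone of $\Sigma(\NN)$, where chart additions agree and the min in \eqref{eq: def point min} is over a single value.

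For (3), linearity of $\psi_\alpha$ in $m_\alpha$ holds pointwise. For each $y \in C_\alpha\cap\NN$ we have $\w(y) \in \Sp(\MM,\alpha)$, so $m_\alpha \mapsto \w(y)_\alpha(m_\alpha)$ is $\Z$-linear, and hence $\psi_\alpha(m_\alpha + m'_\alpha) = \psi_\alpha(m_\alpha)+\psi_\alpha(m'_\alpha)$. Bijectivity follows from the commutative square, $\psi_\alpha = L_{C_\alpha}\circ \v \circ \pi_\alpha^{-1}$. Here $\pi_\alpha$ is a bijection, $\v$ is a bijection by axiom (3) of Definition~\ref{def_dual}, and $L_{C_\alpha}$ is a bijection by Lemma~\ref{lemma: res cone surj} applied to the strict dual pair with the roles of $\MM$ and $\NN$ swapped. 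A bijective $\Z$-linear map of lattices is an isomorphism.

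The only step needing care is the identification of $C_\alpha$ as the cone on which every $\w(y)$ is $\alpha$-linear. This is exactly what makes $\psi_\alpha$ linear rather than merely piecewise linear, and it is supplied by axiom (4). Everything else is formal.
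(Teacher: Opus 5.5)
Your proposal is correct and follows essentially the same route as the paper. The commutativity in (2) is the same one-line computation via $\v(m)(y)=\w(y)(m)$. Linearity in (3) comes, as in the paper, from $C_\alpha=\w^{-1}(\Sp_\R(\MM,\alpha))$, and bijectivity is read off the square using Lemma~\ref{lemma: res cone surj} (with the roles of $\MM$ and $\NN$ swapped, as you correctly note).

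The only difference is ordering. You derive (1) from (2) by observing that $L_{C_\alpha}$ lands in $\Hom(C_\alpha\cap\NN,\Z)$, whereas the paper checks directly that $y\mapsto \v(\pi_\alpha^{-1}(m_\alpha))(y)$ is additive on $C_\alpha$. Both rest on the same fact: points are additive on a cone of $\Sigma(\NN)$. Your alternative direct argument for (1) spells this out and matches the paper.
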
 

\begin{proof} 
For (1), let $y,y' \in C_\alpha$. Since $C_\alpha$ is a cone in $\Sigma(\NN)$, addition is well-defined on $C_\alpha$ and we may compute for any $m_\alpha \in M_\alpha$ that 
$$
\w(y+y')_\alpha(m_\alpha) = \w(y+y')(\pi_\alpha^{-1}(m_\alpha)) = \v(\pi_\alpha^{-1}(m_\alpha))(y+y') = \v(\pi_\alpha^{-1}(m_\alpha))(y) + \v(\pi_\alpha^{-1}(m_\alpha))(y')
$$
where the last equality uses that $\v(\pi_\alpha^{-1}(m_\alpha))$ is a point. Moreover, the last quantity is equal to $(\pi_\alpha^{-1})^*\w(y)(m_\alpha)+(\pi_\alpha^{-1})^*\w(y')(m_\alpha)$. This shows $\psi_\alpha(m_\alpha)$ is indeed a homomorphism, as claimed. 
 
To show (2), let $m \in \MM$. We wish to show $L_{C_\alpha} \circ \v(m) = \psi_\alpha \circ \pi_\alpha(m) \in \Hom(C_\alpha \cap \NN,\Z)$.
To see this, let $y \in C_\alpha \cap \NN$. We may compute the LHS as 
\begin{equation}
L_{C_\alpha} \circ \v(m)(y)  = \v(m) \vert_{C_\alpha} (y) 
 = \v(m)(y)  = \w(y)(m).
\end{equation}
The RHS may be computed using the definition of $\psi_\alpha$ as 
\begin{equation}
  (\psi_\alpha \circ \pi_\alpha(m))(y)  = (\pi_\alpha^{-1})^* \w(y)(\pi_\alpha(m)) 
 = \w(y)(\pi_\alpha^{-1} \circ \pi_\alpha(m)) 
 = \w(y)(m) 
\end{equation}
which shows that the RHS and LHS agree on $y \in C_\alpha \cap \NN$, as desired. 

Finally, for (3),  
we first show that $\psi_\alpha$ is linear. Recall that, from the assumption of strict duality, we have $C_\alpha = \w^{-1}(\Sp(\MM, \alpha))$ for $\alpha \in \pi(\MM)$ Definition~\ref{def_dual}.  Thus, for any $y \in C_\alpha$, we have $\w(y) \in \Sp(\MM,\alpha)$, so  $(\pi_\alpha^{-1})^* \w(y)$ is linear on $M_\alpha$. Let $m_\alpha, m'_\alpha \in M_\alpha$. Then 
\begin{equation}
    \begin{split} 
\psi_\alpha(m_\alpha + m'_\alpha)(y)
& = (\pi_\alpha^{-1})^*\w(y)(m_\alpha + m'_\alpha) \\
& = (\pi_\alpha^{-1})^*\w(y)(m_\alpha) + (\pi_\alpha^{-1})^*\w(y)(m'_\alpha) \\
& = \psi_\alpha(m_\alpha)(y) + \psi_\alpha(m'_\alpha)(y)
    \end{split} 
\end{equation}
as desired. To see that $\psi_\alpha$ is an isomorphism, notice that the diagram in \eqref{eq: psi alpha comm diagram} commutes. We know from Lemma~\ref{lemma: res cone surj} that $L_{C_\alpha}$ is a bijection, and $\v$ and $\pi_\alpha$ are also bijections. This implies $\psi_\alpha$ is also a bijection, as desired.   
\end{proof} 

The following lemma a direct consequence of  
Lemma~\ref{lemma: psi alpha properties}(3) but it is useful to state it in this form. 

\begin{lemma}\label{lemma: M alpha addition formula}
Following the notation of the previous lemmas, let $\alpha \in \pi(\MM)$ be a coordinate chart of $\MM$. Given $m, m' \in \MM$, we have 
\begin{equation}\label{eq: M alpha addition formula} 
\v(m) \vert_{C_\alpha} + \v(m') \vert_{C_\alpha} = \v(m +_\alpha m') \vert_{C_\alpha}. 
\end{equation} 
\end{lemma}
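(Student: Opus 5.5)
The plan is to read the identity \eqref{eq: M alpha addition formula} through the commutative square \eqref{eq: psi alpha comm diagram} of Lemma~\ref{lemma: psi alpha properties}. Here $C_\alpha$ is the maximal cone of $\Sigma(\NN)$ corresponding to the chart $\alpha$, and the restriction $\v(m)\vert_{C_\alpha}$ means the image $L_{C_\alpha}(\v(m)) \in \Hom(C_\alpha\cap\NN,\Z)$. Both sides of the identity are elements of this group, and the sum on the left is pointwise addition of homomorphisms. So it suffices to check that they agree on every $y \in C_\alpha \cap \NN$.

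First I use part (2) of Lemma~\ref{lemma: psi alpha properties} to rewrite each restriction:
\[
\v(m)\vert_{C_\alpha} = \psi_\alpha(\pi_\alpha(m)), \qquad \v(m')\vert_{C_\alpha} = \psi_\alpha(\pi_\alpha(m')), \qquad \v(m+_\alpha m')\vert_{C_\alpha} = \psi_\alpha(\pi_\alpha(m+_\alpha m')).
\]
Next I recall the definition of chart addition, $m +_\alpha m' = \pi_\alpha^{-1}(\pi_\alpha(m) + \pi_\alpha(m'))$. This gives $\pi_\alpha(m+_\alpha m') = \pi_\alpha(m) + \pi_\alpha(m')$ in the lattice $M_\alpha$.

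Finally, part (3) of Lemma~\ref{lemma: psi alpha properties} says $\psi_\alpha$ is linear. Hence
\[
\psi_\alpha(\pi_\alpha(m)+\pi_\alpha(m')) = \psi_\alpha(\pi_\alpha(m)) + \psi_\alpha(\pi_\alpha(m')),
\]
which is exactly \eqref{eq: M alpha addition formula}.

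There is no real obstacle. The only point needing care is bookkeeping: $C_\alpha$ denotes the cone $\w^{-1}(\Sp(\MM,\alpha))$ in $\NN_\R$, which is maximal in $\Sigma(\NN)$ by strict duality. This matches the setup in which $\psi_\alpha$ was defined, so Lemma~\ref{lemma: psi alpha properties} applies verbatim.
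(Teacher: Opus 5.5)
Your proposal is correct and is exactly the argument the paper has in mind: it records this lemma as a direct consequence of Lemma~\ref{lemma: psi alpha properties}, namely the commutative square in part (2) combined with the linearity of $\psi_\alpha$ in part (3). Your one extra observation, that $\pi_\alpha(m+_\alpha m') = \pi_\alpha(m)+\pi_\alpha(m')$ by definition of chart addition, is the step that connects these two facts.
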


\section{Construction of a Gorenstein PL  cone extension}\label{sec: cone extensions}

The goal of this section is to construct a new strict dual pair of polyptych lattices $\mathcal{E}$ and $\mathcal{F}$ over $\Z$, using the data of \emph{two} pairs of strict dual polyptych lattices over $\Z$, and two Gorenstein PL cones. We will refer to this as a \textbf{Gorenstein PL cone extension}, or simply a \textbf{cone extension} (of the given PL data). Moreover, we will show that certain properties of the original data are inherited by $\mathcal{E}$ and $\mathcal{F}$. For instance, in Section~\ref{sec: E and F detrop}, we will show that, under some extra hypotheses on the detropicalizability of the original data, the PLs $\mathcal{E}$ and $\mathcal{F}$ are also detropicalizable, where their detropicalizations can be explicitly described in terms of the detropicalizations of the original data.

\subsection{Preliminary notation and definitions}\label{subsec: prelim notation and def} 

We first set some notation. Let $(\MM_1, \NN_1, \v_1: \MM_1 \to \Sp(\NN_1), \w_1: \NN_1 \to \Sp(\MM_1))$ and $(\MM_2, \NN_2, \v_2: \MM_2 \to \Sp(\NN_2), \w_2: \NN_2 \to \Sp(\MM_2))$ be two strict dual pairs over $\Z$ of finite polyptych lattices. In particular, we assume that $\MM_i, \NN_i, i=1,2$ are polyptych lattices defined over $\Z$. We also denote by $r_1 \in \Z_{>0}$ the rank of $\MM_1$ and $\NN_1$, and by $r_2 \in \Z_{>0}$ the rank of $\MM_2$ and $\NN_2$. We assume that $r_i \geq 2$ for both $i=1,2$.

We assume to be given a PL cone $\PP_i$ in $(\MM_i)_\R$ for both $i=1$ and $i=2$, with associated elements $\{n^i_j\}_{j\in [s_i]}$ as in~\eqref{eq: def PPi}. We assume that both $\PP_1$ and $\PP_2$ are strongly chart-convex in the sense of Definition~\ref{definition: strongly convex}. We additionally assume that both $\PP_1$ and $\PP_2$ (with their associated sets $\{n^i_j\}$ for $i=1,2$) are Gorenstein PL cones in the sense of Definition~\ref{definition: Gorenstein PL cone}. 
Associated to each cone $\PP_i$ we also define the following function: 
\begin{equation}\label{eq: def supp tildeP_i}
\Psi_{\PP_i}: \MM_i \to \Z, \quad \Psi_{\PP_i} := \min_{j \in [s_i]} \{\w_i(n^i_j)\}
\end{equation} 
for $i=1,2$. (By abuse of notation, we will also denote by $\Psi_{\PP_i}$ the natural extension to a function $(\MM_i)_\R \to \R$. The meaning should be clear from context.) With this definition, it is clear from~\eqref{eq: def PPi} that $\PP_i = \{\Psi_{\PP_i} \geq 0\}$, its interior is $\mathrm{int}(\PP_i) = \{\Psi_{\PP_i} >0\}$, and its boundary is $\partial \PP_i = \{\Psi_{\PP_i} = 0\}$, where the subsets are considered in $(\MM_i)_\R$. Moreover, being the min-combination of a finite collection of points, $\Psi_{\PP_i}$ is a piecewise-linear function on $(\MM_i)_{\R}$. 

For $i=1,2$ we fix once and for all an element $\mathbb{g}_i\in\mathrm{int}(\PP_i)\cap \MM_i$ satisfying (P-2).
We will use the following lemma repeatedly.  
Note that by Lemma~\ref{lemma: properties of lineality space}, the expression $u+\lambda\mathbb{g}_i \in \MM_i$ is well-defined for any scalar $\lambda$. 

\begin{lemma}\label{lemma: psi of a sum}
    Suppose $i=1,2$, $j\in[s_i]$,  $u\in \MM_i$, and $\lambda\in \Z$. Then $$\w_i(n^i_j)(u+\lambda\mathbb{g}_i)
= 
 \w_i(n^i_j)(u) + \lambda \quad \textup{and} \quad \Psi_{\PP_i}(u + \lambda\mathbb{g}_i) = \Psi_{\PP_i}(u) + \lambda.$$ The same statement holds when replacing $\MM_i$ and $\Z$ by $(\MM_i)_\R$ and $\R$ respectively. 
\end{lemma}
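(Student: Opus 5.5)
The plan is to reduce both identities to Lemma~\ref{lemma: properties of lineality space}(2) together with property (P-2)(b) of Definition~\ref{definition: Gorenstein PL cone}.

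First, since $\mathbb{g}_i$ was fixed to satisfy (P-2), it lies in the lineality space of $\MM_i$. Hence, by Lemma~\ref{lemma: properties of lineality space}(1), the sum $u+\lambda\mathbb{g}_i$ is independent of the chart. Next, fix $j\in[s_i]$. Because $(\MM_i,\NN_i,\v_i,\w_i)$ is a strict dual pair and $n^i_j\in\NN_i$, the function $p:=\w_i(n^i_j)$ is an element of $\Sp(\MM_i)$. Applying Lemma~\ref{lemma: properties of lineality space}(2) with $\mathbb{l}=\mathbb{g}_i$ gives
\[
\w_i(n^i_j)(u+\lambda\mathbb{g}_i)=\w_i(n^i_j)(u)+\lambda\,\w_i(n^i_j)(\mathbb{g}_i).
\]
By (P-2)(b), $\w_i(n^i_j)(\mathbb{g}_i)=1$, and the first identity follows.

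For the second identity, take the minimum over $j\in[s_i]$ in the definition~\eqref{eq: def supp tildeP_i} of $\Psi_{\PP_i}$. The shift by $\lambda$ is the same constant for every $j$, so it commutes with the minimum:
\[
\Psi_{\PP_i}(u+\lambda\mathbb{g}_i)=\min_{j}\{\w_i(n^i_j)(u)+\lambda\}=\Psi_{\PP_i}(u)+\lambda.
\]

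For the real version, we use the final sentence of Lemma~\ref{lemma: properties of lineality space} with $F'=\R$, applied to the extensions $\w_i(n^i_j)\in\Sp_\R(\MM_i)$ (Remark~\ref{remark: abuse of notation for strict duals}). The value $\w_i(n^i_j)(\mathbb{g}_i)=1$ is unchanged because $\mathbb{g}_i\in\MM_i$, and the extension of $\Psi_{\PP_i}$ is still the minimum of the extended points.

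The only point needing a little care is this last one: that the lineality space and the identity in Lemma~\ref{lemma: properties of lineality space}(2) persist after tensoring with $\R$. This is already asserted in that lemma, so no step is a genuine obstacle.
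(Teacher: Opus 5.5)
Your proposal is correct and follows essentially the same route as the paper: the first identity from Lemma~\ref{lemma: properties of lineality space}(2) combined with (P-2)(b), the second by pulling the constant shift $\lambda$ out of the minimum defining $\Psi_{\PP_i}$, and the real version via the $F'=\R$ extension of that lemma.
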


\begin{proof}
The first equation is a straightforward consequence of Lemma~\ref{lemma: properties of lineality space} using the assumption $\w_i(n^i_j)(\mathbb{g}_i)=1$ for all $j \in [s_i]$. For the second equation we have 
\begin{equation*}
\Psi_{\PP_i}(u+\lambda\mathbb{g}_i) %& = \min_{j \in [s_i]} \{ \w_i(n^i_j)(u) + \lambda \w_i(n^i_j)(\mathbb{g}_i)\}\\
 = \min_{j \in [s_i]} \{\w_i(n^i_j)(u) + \lambda\} 
 = \left(\min_{j \in [s_i]} \{\w_i(n^i_j)(u)\}\right) + \lambda 
 = \Psi_{\PP_i}(u) + \lambda
\end{equation*}
by \eqref{eq: def supp tildeP_i}. The extension by tensoring with $\R$ is straightforward. 
\end{proof}

We define 
\begin{equation}\label{eq: def X}
X := \{ (u,v) \in \MM_1 \times \NN_2 \, \mid \, \w_2(v)(\mathbb{g}_2) = \Psi_{\PP_1}(u) \} \subset \MM_1 \times \NN_2
\end{equation}
and the same equation interpreted over $\R$ yields $X_\R := \{(u,v) \in (\MM_1 \times \NN_2)_\R \, \mid \, \w_2(v)(\mathbb{g}_2)=\Psi_{\PP_1}(u)\} \subset (\MM_1 \times \NN_2)_\R$. 
We also define 
\begin{equation}\label{eq: def Y} 
Y := \{(u,v) \in \MM_1 \times \NN_2 \, \mid \, 0 = \Psi_{\PP_1}(u) \}\subset \MM_1 \times \NN_2
\end{equation} 
and it is clear that by extending to $\R$ coefficients we have $Y_\R = \partial \PP_1 \times \NN_2$.

The next lemma provides a way to take arbitrary elements in $\MM_1 \times \NN_2$ and map them to either $X$ or $Y$. Let $(u,v) \in \MM_1 \times \NN_2$. Define two maps $\MM_1 \times \NN_2 \to \MM_1 \times \NN_2$ as follows: 
\begin{equation}\label{eq: def phi} 
\phi(u,v) := (u - \Psi_{\PP_1}(u)\cdot \mathbb{g}_1, v)
\end{equation} 
and
\begin{equation}\label{eq: def phi inverse} 
\psi(u,v) := (u + (\w_2(v)(\mathbb{g}_2) - \Psi_{\PP_1}(u))\cdot \mathbb{g}_1, v).
\end{equation}

\begin{lemma}\label{lem-2sets}
Consider the maps $\phi, \psi$ defined in~\eqref{eq: def phi} and~\eqref{eq: def phi inverse}. Then
\begin{enumerate} 
\item $\phi$ maps $\MM_1\times \NN_2$ onto $Y$,
\item $\phi$, when restricted to $X$, is a bijection from $X$ to $Y$, and $\phi \vert_X$ can be expressed alternatively as 
\begin{equation}\label{eq: def phi on X}
\phi(u,v) := (u-\w_2(v)(\mathbb{g}_2)\cdot \mathbb{g}_1, v),
\end{equation}
\item $\phi$, when restricted to $Y$, is the identity on $Y$, 
\item $\psi$ maps $\MM_1 \times \NN_2$ onto $X$, 
\item $\psi$, when restricted to $Y$, is a bijection from $Y$ to $X$ and is the inverse of $\phi$ from $X$ to $Y$ (i.e. $\psi \vert_Y = (\phi \vert_X)^{-1}$), 
\item $\psi$, when restricted to $X$, is the identity on $X$.
 \end{enumerate} 
 When the definitions of $\phi$ and $\psi$ are extended to $\R$ coefficients, the analogous statements (1)-(6) hold over $\R$ for $X_\R, Y_\R, (\MM_1 \times \NN_2)_\R$. 

\end{lemma}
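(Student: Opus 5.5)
The whole argument is driven by the shift formula of Lemma~\ref{lemma: psi of a sum}: $\Psi_{\PP_1}(u+\lambda\mathbb{g}_1)=\Psi_{\PP_1}(u)+\lambda$ for every $\lambda\in\Z$ (or $\R$). The additions $u\pm\lambda\mathbb{g}_1$ are well defined by Lemma~\ref{lemma: properties of lineality space}, since $\mathbb{g}_1$ lies in the lineality space. Both $\phi$ and $\psi$ fix the second coordinate $v$, so each claim reduces to a computation in the first coordinate. We also use that $\Psi_{\PP_1}$ takes integer values on $\MM_1$ and $\w_2(v)(\mathbb{g}_2)\in\Z$, so the scalars $\lambda$ appearing are integers.

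\textbf{Claims (1) and (3).} By the shift formula,
\[
\Psi_{\PP_1}\bigl(u-\Psi_{\PP_1}(u)\mathbb{g}_1\bigr)=\Psi_{\PP_1}(u)-\Psi_{\PP_1}(u)=0,
\]
so $\phi(u,v)\in Y$. If $(u,v)\in Y$, then $\Psi_{\PP_1}(u)=0$ and $\phi(u,v)=(u,v)$. This gives (3), and (3) immediately gives surjectivity onto $Y$ in (1).

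\textbf{Claims (4) and (6).} Similarly,
\[
\Psi_{\PP_1}\Bigl(u+\bigl(\w_2(v)(\mathbb{g}_2)-\Psi_{\PP_1}(u)\bigr)\mathbb{g}_1\Bigr)=\w_2(v)(\mathbb{g}_2),
\]
so $\psi(u,v)\in X$. On $X$ the coefficient $\w_2(v)(\mathbb{g}_2)-\Psi_{\PP_1}(u)$ vanishes, so $\psi$ is the identity there, giving (6) and hence (4).

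\textbf{Claims (2) and (5).} On $X$ we have $\Psi_{\PP_1}(u)=\w_2(v)(\mathbb{g}_2)$, which is exactly the alternative formula~\eqref{eq: def phi on X}. For bijectivity I would check the two compositions directly.
\begin{itemize}
\item For $(u,v)\in Y$, $\psi(u,v)=(u+\w_2(v)(\mathbb{g}_2)\mathbb{g}_1,v)$. Then $\phi\circ\psi(u,v)=(u,v)$, using $(u+a\mathbb{g}_1)-a\mathbb{g}_1=u$.
\item For $(u,v)\in X$, $\psi\circ\phi(u,v)=(u,v)$ by the analogous cancellation.
\end{itemize}
The only point needing care is this cancellation. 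Since $\mathbb{g}_1$ lies in the lineality space, addition of multiples of $\mathbb{g}_1$ is the same in every chart, so $(u+a\mathbb{g}_1)+b\mathbb{g}_1=u+(a+b)\mathbb{g}_1$ can be verified in a single chart $M_\alpha$, where it is ordinary vector addition. This gives $\psi|_Y=(\phi|_X)^{-1}$, and both maps are bijections.

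\textbf{Real coefficients.} The same computations apply verbatim over $\R$, using the $\R$-versions of Lemma~\ref{lemma: properties of lineality space} and Lemma~\ref{lemma: psi of a sum}.

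I expect no real obstacle. The only subtlety is justifying the associativity of repeated additions of multiples of $\mathbb{g}_1$, which is handled by the chart-independence above.
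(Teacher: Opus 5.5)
Your proposal is correct and is essentially the paper's own argument: the paper proves this lemma by a direct computation from Lemma~\ref{lemma: psi of a sum} and the definitions \eqref{eq: def X} and \eqref{eq: def Y}, and you carry out that computation claim by claim. Your extra remark covers a small point the paper leaves implicit. That remark is that repeated additions of multiples of $\mathbb{g}_1$ associate, because by Lemma~\ref{lemma: properties of lineality space} the identity can be checked in a single chart.
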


\begin{proof}
This is a computation using Lemma~\ref{lemma: psi of a sum}, ~\eqref{eq: def X}, and~\eqref{eq: def Y}.
\end{proof}

Swapping the role of $\PP_1$ now with $\PP_2$, and replacing $\MM_1$ with $\MM_2$ and $\NN_2$ with $\NN_1$, we define 
\begin{equation}\label{eq: def X vee}
X^\vee := \{ (y,x) \in \MM_2 \times \NN_1 \, \mid \, \w_1(x)(\mathbb{g}_1) = \Psi_{\PP_2}(y) \} \subset \MM_2 \times \NN_1
\end{equation}
and  
\begin{equation}\label{eq: def Y vee} 
Y^\vee := \{(y,x) \in \MM_2 \times \NN_1 \, \mid \, 0 = \Psi_{\PP_2}(y) \} \subset \MM_2 \times \NN_1.
\end{equation} 
As above, by extending to $\R$ coefficients we have analogous definitions of $X^\vee$ and $Y^\vee = \partial \PP_2 \times \NN_1$. 
We also define maps $\phi^\vee, \psi^\vee$ in the analogous manner as follows: 
\begin{equation}\label{eq: phi vee} 
\phi^\vee: \MM_2 \times \NN_1 \to \MM_2 \times \NN_1, \quad (y,x) \mapsto (y - \Psi_{\PP_2}(y) \cdot \mathbb{g}_2,x)
\end{equation} 
and 
\begin{equation}\label{eq: psi vee} 
\psi^\vee: \MM_2 \times \NN_1 \to \MM_2 \times \NN_1, \quad (y,x) \mapsto  (y + (\w_1(x)(\mathbb{g}_1)-\Psi_{\PP_2}(y)) \cdot \mathbb{g}_2, x).
\end{equation} 

The same proof as for Lemma~\ref{lem-2sets} gives the following. 

\begin{lemma}\label{lemma: phivee psivee bijections}
Consider the maps $\phi^\vee, \psi^\vee$ defined in~\eqref{eq: phi vee} and~\eqref{eq: psi vee}. Then
\begin{enumerate} 
\item $\phi^\vee$ maps $\MM_2\times \NN_1$ onto $Y^\vee$,
\item $\phi^\vee$, when restricted to $X^\vee$, is a bijection from $X^\vee$ to $Y^\vee$, and $\phi^\vee \vert_{X^\vee}$ can be expressed alternatively as 
\begin{equation}\label{eq: def phi on X vee}
\phi^\vee(y,x) := (y-\w_1(x)(\mathbb{g}_1)\cdot \mathbb{g}_2, x),
\end{equation}
\item $\phi^\vee$, when restricted to $Y^\vee$, is the identity on $Y^\vee$,
\item $\psi^\vee$ maps $\MM_2 \times \NN_1$ onto $X^\vee$, 
\item $\psi^\vee$, when restricted to $Y^\vee$, is a bijection from $Y^\vee$ to $X^\vee$ and is the inverse of $\phi^\vee$ from $X^\vee$ to $Y^\vee$ (i.e. $\psi^\vee \vert_{Y^\vee} = (\phi^\vee \vert_{X^\vee})^{-1}$), 
\item $\psi^\vee$, when restricted to $X^\vee$, is the identity on $X^\vee$.
 \end{enumerate} 
 When the definitions of $\phi^\vee$ and $\psi^\vee$ are extended to $\R$ coefficients, the analogous statements (1)-(6) hold over $\R$ for $X^\vee_\R, Y^\vee_\R, (\MM_2 \times \NN_1)_\R$. 
\end{lemma}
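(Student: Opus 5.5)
The plan is to mirror the proof of Lemma~\ref{lem-2sets} with the roles exchanged: $\MM_2$ in place of $\MM_1$, $\NN_1$ in place of $\NN_2$, $\PP_2$ and $\mathbb{g}_2$ in place of $\PP_1$ and $\mathbb{g}_1$, and $\w_1(x)(\mathbb{g}_1)$ in place of $\w_2(v)(\mathbb{g}_2)$. Every step rests on Lemma~\ref{lemma: psi of a sum} with $i=2$: for $y\in\MM_2$ and $\lambda\in\Z$,
\[
\Psi_{\PP_2}(y+\lambda\mathbb{g}_2)=\Psi_{\PP_2}(y)+\lambda .
\]
Here $y+\lambda\mathbb{g}_2$ is chart-independent by Lemma~\ref{lemma: properties of lineality space}, since $\mathbb{g}_2$ lies in the lineality space by (P-2)(a). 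The scalars $\Psi_{\PP_2}(y)$ and $\w_1(x)(\mathbb{g}_1)$ are integers because points take integer values on lattice elements, so $\phi^\vee$ and $\psi^\vee$ land in $\MM_2\times\NN_1$.

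First I would verify the four pointwise identities.
\begin{itemize}
\item For (1): $\Psi_{\PP_2}\bigl(y-\Psi_{\PP_2}(y)\mathbb{g}_2\bigr)=0$, so $\phi^\vee$ lands in $Y^\vee$.
\item For (4): $\Psi_{\PP_2}\bigl(y+(\w_1(x)(\mathbb{g}_1)-\Psi_{\PP_2}(y))\mathbb{g}_2\bigr)=\w_1(x)(\mathbb{g}_1)$, so $\psi^\vee$ lands in $X^\vee$. The $\NN_1$-coordinate $x$ is unchanged in both maps, so the defining condition of $X^\vee$ is checked against the same $x$.
\item For (3): on $Y^\vee$ we have $\Psi_{\PP_2}(y)=0$, so $\phi^\vee$ is the identity there.
\item For (6): on $X^\vee$ the coefficient $\w_1(x)(\mathbb{g}_1)-\Psi_{\PP_2}(y)$ vanishes, so $\psi^\vee$ is the identity there.
\end{itemize}
Surjectivity in (1) and (4) then follows from (3) and (6), since each set is fixed pointwise.

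Next, for (2) I would note that on $X^\vee$ we may substitute $\Psi_{\PP_2}(y)=\w_1(x)(\mathbb{g}_1)$, which gives the alternative formula~\eqref{eq: def phi on X vee}. For (5) I would compute both composites directly. For $(y,x)\in Y^\vee$,
\[
\phi^\vee\psi^\vee(y,x)=\bigl(y+\w_1(x)(\mathbb{g}_1)\mathbb{g}_2-\w_1(x)(\mathbb{g}_1)\mathbb{g}_2,\,x\bigr)=(y,x).
\]
For $(y,x)\in X^\vee$, $\psi^\vee\phi^\vee(y,x)=(y,x)$ by a similar cancellation. This uses the additivity $(y+a\mathbb{g}_2)+b\mathbb{g}_2=y+(a+b)\mathbb{g}_2$, which holds because addition of multiples of $\mathbb{g}_2$ is computed in any single chart, where it is ordinary vector addition (Lemma~\ref{lemma: properties of lineality space}(1)). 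The two composites give the mutually inverse bijections claimed in (2) and (5).

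Finally, the real-coefficient version follows verbatim using the $\R$-version of Lemma~\ref{lemma: psi of a sum}. The only point needing any care is the well-definedness and additivity of repeated addition of multiples of $\mathbb{g}_2$, which the lineality hypothesis (P-2)(a) supplies. I do not expect a genuine obstacle.
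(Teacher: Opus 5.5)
Your proposal is correct and follows the paper's route. The paper's proof is just ``the same proof as for Lemma~\ref{lem-2sets},'' namely a direct computation from Lemma~\ref{lemma: psi of a sum} and the defining equations of $X^\vee$ and $Y^\vee$; you carry out exactly that computation and additionally spell out the integrality and lineality-additivity points the paper leaves implicit.
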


From \cite[Lemma 4.6]{EscobarHaradaManon-PL} we know that $\MM_1 \times \NN_2$ and $\MM_2 \times \NN_1$ form a strict dual pair. We use $\langle -, - \rangle$ to denote the strict pairing between $\MM_1 \times \NN_2$ and $\MM_2 \times \NN_1$; this pairing can be written explicitly in multiple ways.
Indeed, for $(u,v) \in \MM_1 \times \NN_2, (y,x) \in \MM_2 \times \NN_1$ we have 
\begin{equation}\label{eq: explicit pairing}
\langle (u,v),(y,x) \rangle  = 
\v_1(u)(x) + \v_2(y)(v) 
 = \v_1(u)(x) + \w_2(v)(y) 
 = \w_1(x)(u) + \v_2(y)(v)  
 = \w_1(x)(u) + \w_2(v)(y).
\end{equation} 
The pairing is symmetric in the indices and we sometimes write $\langle (y,x),(u,v)\rangle$ (which is equal to $\langle (u,v),(y,x)\rangle$). 
Our next lemma shows a relationship between this pairing and our bijections $\phi$ and $\phi^\vee$ discussed above.

\begin{lemma}\label{lem-pairing}
Let $\langle -, - \rangle: (\MM_1 \times \NN_2) \times (\MM_2 \times \NN_1) \to \Z$ denote the strict dual pairing defined in four equivalent ways in~\eqref{eq: explicit pairing} above. 
For any $(u,v) \in X$ and $(y,x) \in X^\vee$ we have
\begin{equation}\label{eq: pairing X Xvee}
\langle (u,v), \phi^\vee(y,x) \rangle = \langle \phi(u,v), (y,x)\rangle. 
\end{equation}
Moreover, the same statement holds when the pairing is extended to $\R$ coefficients to be defined on $X$ and $X^\vee$. 
\end{lemma}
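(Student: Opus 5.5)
We compute both sides of \eqref{eq: pairing X Xvee} directly, choosing in each case the form of the pairing in \eqref{eq: explicit pairing} that lets the shift by a multiple of the relevant $\mathbb{g}_i$ be pulled out.

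We begin with the left-hand side. By Lemma~\ref{lemma: phivee psivee bijections}(2), since $(y,x)\in X^\vee$ we have $\phi^\vee(y,x) = (y - \w_1(x)(\mathbb{g}_1)\cdot\mathbb{g}_2, x)$. We use the form $\langle (u,v),(y',x)\rangle = \w_1(x)(u) + \w_2(v)(y')$ with $y' = y - \w_1(x)(\mathbb{g}_1)\mathbb{g}_2$. The element $\mathbb{g}_2$ lies in the lineality space of $\MM_2$, and $\w_2(v)\in \Sp(\MM_2)$. Hence Lemma~\ref{lemma: properties of lineality space}(2), applied with $p=\w_2(v)$ and $\lambda = -\w_1(x)(\mathbb{g}_1)\in\Z$, gives
\[
\w_2(v)(y') = \w_2(v)(y) - \w_1(x)(\mathbb{g}_1)\,\w_2(v)(\mathbb{g}_2).
\]
Therefore the left-hand side equals $\w_1(x)(u) + \w_2(v)(y) - \w_1(x)(\mathbb{g}_1)\,\w_2(v)(\mathbb{g}_2)$.

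The right-hand side is symmetric. By Lemma~\ref{lem-2sets}(2), $\phi(u,v) = (u - \w_2(v)(\mathbb{g}_2)\cdot\mathbb{g}_1, v)$ for $(u,v)\in X$. We use the same form of the pairing, $\w_1(x)(u') + \w_2(v)(y)$ with $u' = u - \w_2(v)(\mathbb{g}_2)\mathbb{g}_1$. Lemma~\ref{lemma: properties of lineality space}(2), applied in $\MM_1$ with $p=\w_1(x)$ and the lineality element $\mathbb{g}_1$, gives
\[
\w_1(x)(u') = \w_1(x)(u) - \w_2(v)(\mathbb{g}_2)\,\w_1(x)(\mathbb{g}_1).
\]
So the right-hand side is $\w_1(x)(u) + \w_2(v)(y) - \w_2(v)(\mathbb{g}_2)\,\w_1(x)(\mathbb{g}_1)$. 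This agrees with the left-hand side, since the two products of integers coincide.

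For the $\R$-version, we repeat the computation verbatim, using the $F'=\R$ statement of Lemma~\ref{lemma: properties of lineality space} together with the $\R$-versions of Lemmas~\ref{lem-2sets} and~\ref{lemma: phivee psivee bijections}.

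The only point requiring care is justifying the applications of Lemma~\ref{lemma: properties of lineality space}. It needs (i) that the scalar multiple $\lambda\mathbb{g}_i$ is added to an arbitrary element, which is well-defined because $\mathbb{g}_i$ lies in the lineality space by (P-2)(a); and (ii) that the relevant maps, $\w_2(v)$ and $\w_1(x)$, are genuinely points of $\MM_2$ and $\MM_1$ respectively, which holds by the definition of $\w_i$. Notably, we never need the values $\w_i(n^i_j)(\mathbb{g}_i)=1$ here. The membership conditions defining $X$ and $X^\vee$ enter only through the simplified formulas \eqref{eq: def phi on X} and \eqref{eq: def phi on X vee} for $\phi$ and $\phi^\vee$.
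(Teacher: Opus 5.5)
Your proposal is correct and is exactly the direct computation the paper intends, since its own proof reads only ``This is a direct computation.'' Expanding both sides with the form $\w_1(x)(\cdot)+\w_2(v)(\cdot)$ of the pairing and applying Lemma~\ref{lemma: properties of lineality space}(2), each side reduces to $\w_1(x)(u)+\w_2(v)(y)-\w_1(x)(\mathbb{g}_1)\,\w_2(v)(\mathbb{g}_2)$; membership in $X$ and $X^\vee$ enters only through the simplified formulas for $\phi$ and $\phi^\vee$, as you observe.
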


\begin{proof} 
 This is a direct computation. 
\end{proof}

We next set some notation for the coordinate charts and the maximal cones in the mutation fans of both $\MM_1 \times \NN_2$ and $\MM_2 \times \NN_1$.  Recall first that, by the definition of strict duality Definition~\ref{def_dual}, there is a bijective correspondence between the set of coordinate charts on a polyptych lattice, and the set of maximal cones of the mutation fan of its strict dual. 
In what follows we use the notation $\tau \in \pi(\MM_1)$ and $\sigma \in \pi(\NN_2)$ for the (indices of the) coordinate charts in $\MM_1, \NN_2$ respectively. 
By the strict duality mentioned above, these charts correspond to maximal cones in the corresponding strict duals; specifically, there is a maximal cone $C_\tau := \w_1^{-1}(\Sp(\MM_1, \tau)) \subset (\NN_1)_\R$ (respectively $C_\sigma := \v_2^{-1}(\Sp(\NN_2, \sigma)) \subset (\MM_2)_\R$) in the mutation fan $\Sigma(\NN_1)$ (respectively $\Sigma(\MM_2)$) corresponding to $\tau$ (respectively $\sigma$).  In particular, this means that the pair $(\sigma,\tau)$ corresponds to the maximal cone $C_\sigma \times C_\tau$ in $\Sigma(\MM_2 \times \NN_1)$. 

We will use the following lemmas multiple times. 

\begin{lemma}\label{lemma: X and Y meet interior of cones}
Let $X^\vee_\R, Y^\vee_\R$ be as defined in~\eqref{eq: def X vee} and~\eqref{eq: def Y vee}. Let $\sigma \in \pi(\NN_2), \tau \in \pi(\MM_1)$ and $C_\sigma \times C_\tau \subset (\MM_2 \times \NN_1)_\R$  the corresponding maximal cone, as above. Then: 
\begin{enumerate} 
\item $X^\vee_\R$ meets the interior of $C_\sigma \times C_\tau$ in $(\MM_2 \times \NN_1)_\R$, 
\item $\{\Psi_{\PP_2}=0\} \subset (\MM_2)_\R$ meets the interior of $C_\sigma$ in $(\MM_2)_\R$, 
\item $Y^\vee_\R$ meets the interior of $C_\sigma \times C_\tau$ in $(\MM_2 \times \NN_1)_\R$. 
\end{enumerate} 
Moreover, by appropriately changing indices, the same statements hold for $X_\R, Y_\R$ as defined in~\eqref{eq: def X} and~\eqref{eq: def Y}, $\alpha \in \pi(\NN_1), \beta \in \pi(\MM_2)$, and $C_\alpha \times C_\beta$ in $(\MM_1 \times \NN_2)_\R$. 
\end{lemma}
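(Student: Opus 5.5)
Since the interior of $C_\sigma \times C_\tau$ is $\mathrm{int}(C_\sigma)\times \mathrm{int}(C_\tau)$, everything reduces to statements about the factors. I will prove (2) first, then deduce (1) and (3) from it together with the freedom to translate by $\mathbb{g}_2$ and to choose $x$ freely.

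For (2), I use that $\mathbb{g}_2$ lies in the lineality space of $\MM_2$, hence in every cone of $\Sigma(\MM_2)$. Pick any $y_0 \in \mathrm{int}(C_\sigma)$; it is nonempty since $C_\sigma$ is a maximal cone. By Lemma~\ref{lemma: psi of a sum} (over $\R$), $y_0 + \lambda\mathbb{g}_2$ satisfies $\Psi_{\PP_2}(y_0+\lambda \mathbb{g}_2) = \Psi_{\PP_2}(y_0)+\lambda$. Taking $\lambda = -\Psi_{\PP_2}(y_0)$ gives a point on $\{\Psi_{\PP_2}=0\}$.

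The main obstacle is checking that $y_0+\lambda\mathbb{g}_2$ stays in $\mathrm{int}(C_\sigma)$. I would argue chartwise. In a chart $\pi_\alpha$, the cone $\pi_\alpha(C_\sigma)$ is a polyhedral cone containing the line $\R\pi_\alpha(\mathbb{g}_2)$, because $\pm\mathbb{g}_2$ lie in every cone of $\Sigma(\MM_2)$. Moreover, addition of multiples of $\mathbb{g}_2$ is chart-independent by Lemma~\ref{lemma: properties of lineality space}, so $\pi_\alpha(y_0+\lambda\mathbb{g}_2)=\pi_\alpha(y_0)+\lambda\pi_\alpha(\mathbb{g}_2)$. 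A polyhedral cone containing a line $L$ is invariant under translation by $L$, and so is its interior. Since $\pi_\alpha$ is a homeomorphism, the translated point remains in $\mathrm{int}(C_\sigma)$. (If one prefers to avoid the lineality-space description of $C_\sigma$, one can instead use directly that every facet inequality of $\pi_\alpha(C_\sigma)$ vanishes on $\pm\pi_\alpha(\mathbb{g}_2)$.)

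Given (2), statement (3) is immediate: take $y\in \mathrm{int}(C_\sigma)$ with $\Psi_{\PP_2}(y)=0$, and any $x\in\mathrm{int}(C_\tau)$. Then $(y,x)\in Y^\vee_\R\cap \mathrm{int}(C_\sigma\times C_\tau)$. For (1), fix any $x\in\mathrm{int}(C_\tau)$ and take $y = y_0 + (\w_1(x)(\mathbb{g}_1)-\Psi_{\PP_2}(y_0))\mathbb{g}_2$. This is exactly $\psi^\vee(y_0,x)$, which lies in $X^\vee_\R$ by Lemma~\ref{lemma: phivee psivee bijections}(4). By the translation-invariance argument above, $y$ is still in $\mathrm{int}(C_\sigma)$. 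The statements for $X_\R, Y_\R$ follow by the identical argument with the indices swapped, now using $\mathbb{g}_1$ in the lineality space of $\MM_1$, the maximal cone $C_\alpha\subset(\MM_1)_\R$, and $\psi$ in place of $\psi^\vee$.
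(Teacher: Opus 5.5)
Your proposal is correct and follows the paper's argument. You translate a point of $\mathrm{int}(C_\sigma)$ by a real multiple of $\mathbb{g}_2$ (via Lemma~\ref{lemma: psi of a sum}, or equivalently $\psi^\vee$) to land in $X^\vee_\R$ or in $\{\Psi_{\PP_2}=0\}$, and such translations stay in $\mathrm{int}(C_\sigma)$ because $\mathbb{g}_2$ lies in the lineality space. The differences are only that you prove the items in a different order, and that you justify explicitly, chart by chart, that translation along the line $\R\pi_\alpha(\mathbb{g}_2)$ preserves the interior of a polyhedral cone containing that line, which the paper simply asserts.
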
 

\begin{proof}  
Let $(y_0,x_0) \in \mathrm{int}(C_\sigma \times C_\tau)$. By~Lemma~\ref{lemma: psi of a sum} it follows that $(y_0 + (\w_1(x_0)(\mathbb{g}_1) - \Psi_{\PP_2}(y_0))\mathbb{g}_2, x_0)$ lies in $X^\vee$. Since $\mathbb{g}_2$ lies in the lineality space of $\MM_2$, $(y_0 + (\w_1(x_0)(\mathbb{g}_1) - \Psi_{\PP_2}(y_0))\mathbb{g}_2, x_0)$ is also in the interior of $C_\sigma\times C_\tau$. This proves (1). A similar argument shows that for $y_0 \in \mathrm{int}(C_\sigma)$, the element $y'_0 = y_0 - \Psi_{\PP_2}(y_0)\mathbb{g}_2$ also lies in the interior and satisfies $\Psi_{\PP_2}(y'_0)=0$, showing (2). The last item (3) follows immediately from (2). 
\end{proof}

\begin{lemma}
Let $X^\vee_\R, Y^\vee_\R$ be as defined in~\eqref{eq: def X vee} and~\eqref{eq: def Y vee}. Then 
$X^\vee_\R$ and $Y^\vee_\R$ are each a finite union of codimension-$1$ cones in $(\MM_2 \times \NN_1)_\R$. Moreover, by appropriately changing indices, the same statements hold for $X_\R$ and $Y_\R$ in $(\MM_1 \times \NN_2)_\R$. 
\end{lemma}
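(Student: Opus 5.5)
We work one chart at a time. Fix $\sigma \in \pi(\NN_2)$ and $\tau \in \pi(\MM_1)$, and the maximal cone $C_\sigma \times C_\tau$ of $\Sigma(\MM_2 \times \NN_1)$. Since $\Sigma(\MM_2\times\NN_1)$ is a finite fan covering $(\MM_2\times\NN_1)_\R$, it suffices to show that $X^\vee_\R \cap (C_\sigma \times C_\tau)$ and $Y^\vee_\R \cap (C_\sigma \times C_\tau)$ are each a finite union of codimension-$1$ polyhedral cones. We verify this after transporting to a linear chart: choose a chart of $\MM_2 \times \NN_1$ in which this cone is linear, for instance $\pi_\beta \times \pi_\alpha$ with $\beta,\alpha$ chosen so that $C_\sigma \subset (\MM_2)_\R$ and $C_\tau\subset (\NN_1)_\R$ map to polyhedral cones on which addition is the chart addition.

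\textbf{Step 1 (linearity of the defining functions on the cone).} On $C_\tau$, the map $x \mapsto \w_1(x)(\mathbb{g}_1) = \v_1(\mathbb{g}_1)(x)$ is the restriction of a point of $\NN_1$ to a maximal cone of $\Sigma(\NN_1)$. By Lemma~\ref{lemma: M alpha addition formula} and Lemma~\ref{lemma: psi alpha properties}, this restriction is additive on $C_\tau$, and hence linear in the chart. On $C_\sigma$, the function $\Psi_{\PP_2} = \min_j \w_2(n^2_j)$ is a minimum of points of $\MM_2$. Each $\w_2(n^2_j)$ restricted to the maximal cone $C_\sigma$ is linear by the same argument, so $\Psi_{\PP_2}|_{C_\sigma}$ is a minimum of finitely many linear functions $\ell_1,\dots,\ell_{s_2}$. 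We subdivide $C_\sigma$ into the finitely many polyhedral cones $D_k = C_\sigma \cap \{\ell_k \le \ell_t \ \forall t\}$. On each $D_k \times C_\tau$, the equation defining $X^\vee_\R$ reads $f(x) - \ell_k(y) = 0$, a single linear equation; likewise $Y^\vee_\R$ is cut out by $\ell_k(y)=0$. Hence each intersection is a polyhedral cone of codimension at least $1$.

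\textbf{Step 2 (nondegeneracy and exact codimension).} We must rule out the linear form $L_k(y,x)=f(x)-\ell_k(y)$ (respectively $\ell_k$) vanishing identically, which would make a piece of codimension $0$. Lemma~\ref{lemma: psi of a sum} gives $\w_2(n^2_k)(y + \lambda\mathbb{g}_2) = \w_2(n^2_k)(y) + \lambda$, and $\mathbb{g}_2$ lies in the lineality space, so $\ell_k(\mathbb{g}_2)=1\neq 0$ in the chart. Therefore $L_k$ and $\ell_k$ are nonzero linear forms, and each piece lies in a hyperplane, giving codimension $\ge 1$.

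For the union to be a union of codimension-$1$ cones, we discard pieces of smaller dimension, or absorb them into top-dimensional ones. To see that codimension-$1$ pieces actually occur and that the union is covered by them, we use the translation trick of Lemma~\ref{lemma: X and Y meet interior of cones}. Given any $(y,x) \in X^\vee_\R\cap (C_\sigma\times C_\tau)$, points near it in the ambient space can be pushed back onto $X^\vee_\R$ along the line $\R\mathbb{g}_2$ via $\psi^\vee$ (Lemma~\ref{lemma: phivee psivee bijections}). This exhibits $X^\vee_\R$ locally as the graph of the continuous piecewise-linear function
\[
(\bar y,x)\mapsto \w_1(x)(\mathbb{g}_1)-\Psi_{\PP_2}(\bar y)
\]
over the quotient by $\R\mathbb{g}_2$. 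A graph of a PL function over an $(r-1)$-dimensional space is a finite union of $(r-1)$-dimensional polyhedral cones, namely the graphs over its domains of linearity. The same applies to $Y^\vee_\R = \partial\PP_2\times \NN_1$, using that $\partial \PP_2$ is the graph of $-\Psi_{\PP_2}$ modulo $\mathbb{g}_2$.

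We expect this last step to be the main obstacle: making precise the \emph{graph over the quotient by $\mathbb{g}_2$} description inside the chart, and checking that it is compatible with the fan subdivision. The remaining step is to argue for $X_\R,Y_\R$ by symmetry of indices.
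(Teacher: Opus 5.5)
Your proposal is correct, but it takes a different route from the paper. The paper's argument is global and short.
\begin{itemize}
\item $Y^\vee_\R=\partial\PP_2\times(\NN_1)_\R$ is the boundary of the full-dimensional cone $\PP_2\times(\NN_1)_\R$. Full-dimensionality comes from $\mathbb{g}_2\in\mathrm{int}(\PP_2)$, Lemma~\ref{proposition: P is full-dimensional}. Hence $Y^\vee_\R$ is a union of facets.
\item $X^\vee_\R=\psi^\vee(Y^\vee_\R)$ inherits this property because $\psi^\vee$ is a piecewise-linear bijection.
\end{itemize}
You instead work locally on each maximal cone $C_\sigma\times C_\tau$. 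You subdivide by the linearity domains $D_k$ of $\Psi_{\PP_2}$ and note that each piece is cut out by one linear equation, nonzero because $\ell_k(\mathbb{g}_2)=1$. You then get exact codimension from translation along $\mathbb{g}_2$. This is the same mechanism as $\psi^\vee$, but used directly rather than as a transport map.

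The paper's version is briefer. It leaves implicit that $\psi^\vee$ must be refined to its domains of linearity, which are exactly your pieces, and that ``cone'' is meant chartwise. Yours yields explicit pieces $(D_k\times C_\tau)\cap X^\vee_\R$ and $(D_k\cap\partial\PP_2)\times C_\tau$ that sit inside single maximal cones of $\Sigma(\MM_2\times\NN_1)$, and so are polyhedral in every chart. These are precisely the cones $(C_{\sigma,k}\times C_\tau)\cap X^\vee_\R$ and $(\PP_2(k)\cap C_\sigma)\times C_\tau$, which the paper only analyzes later in Lemma~\ref{lemma: A sigma equivalence} and Proposition~\ref{prop-conescover}.

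The step you flag as the main obstacle is routine. Note first that $\Psi_{\PP_2}(\bar y)$ is not well defined on the quotient by $\R\mathbb{g}_2$, so you need a section.
\begin{itemize}
\item Since $\pm\mathbb{g}_2$ lie in every cone of $\Sigma(\MM_2)$, and every $\ell_t$ equals $1$ on the chart image $g$ of $\mathbb{g}_2$, both the chart image of $C_\sigma$ and each $D_k$ are invariant under translation by $\R g$.
\item Choose a linear complement $W$ of $\R g$. Then $D_k=(D_k\cap W)\oplus\R g$.
\item $(D_k\times C_\tau)\cap X^\vee_\R$ is exactly the graph of $\lambda=\v_1(\mathbb{g}_1)(x)-\ell_k(w)$ over $(D_k\cap W)\times C_\tau$. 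This has dimension $r_1+r_2-1$ whenever $D_k$ is full-dimensional in $C_\sigma$. For $Y^\vee_\R$, use $\lambda=-\ell_k(w)$ instead.
\end{itemize}
You also do not need to discard or absorb lower-dimensional pieces. By Lemma~\ref{lemma: union of csigmaj}, $C_\sigma$ is already the union of the full-dimensional $D_k$, namely those with $k\in A_\sigma$.
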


\begin{proof} 
Note that $\{\Psi_{\PP_2}=0\}$ is the boundary $\partial \PP_2$ of $\PP_2 = \{\Psi_{\PP_2} \geq 0\}$ in $(\MM_2)_\R$. From the definition~\eqref{eq: def Y vee} it follows that $Y^\vee_\R = \partial \PP_2 \times (\NN_1)_\R$. Also notice that $\PP_2 \times (\NN_1)_\R$ is a full-dimensional cone in $(\MM_2 \times \NN_1)_\R$, since $\PP_2$ is full-dimensional in $(\MM_2)_\R$ by Proposition~\ref{proposition: P is full-dimensional}. Hence its boundary $Y^\vee_\R = \partial \PP_2 \times (\NN_1)_\R$ is a union of codimension-$1$ cones. From~Lemma~\ref{lemma: phivee psivee bijections} and~\eqref{eq: psi vee} we know that $\psi^\vee$ is a piecewise-linear bijection from $Y^\vee_\R$ to $X^\vee_\R$. From this it follows that if $Y^\vee_\R$ is a finite union of codimension-$1$ cones, so is $X^\vee_\R$. 
\end{proof}

We now refine our analysis to the domains of linearity of $\Psi_{\PP_2}$, which is defined in~\eqref{eq: def supp tildeP_i} as a minimum of the set $\{\w_2(n^2_j)\}_{j \in [s_2]}$ of points defining $\PP_2$. We define $C_{\sigma, j}$ to be the subcone of $C_{\sigma}$ on which the minimum is achieved by $\w_2(n^2_j)$. More precisely we have 
\begin{equation}\label{eq: C sigma' tau' j}
C_{\sigma,j} := \{y \in C_{\sigma} \, \mid \, \w_2(n^2_j)(y) \leq \w_2(n^2_t)(y), \forall t \in [s_2]\} \subset (\MM_2)_\R.
\end{equation}
Note that it may not be true for a given index $j \in [s_2]$ that the subset $C_{\sigma,j}$ is full-dimensional within $C_{\sigma}$. To address this, we first define 
\begin{equation}\label{eq: def A sigma tau}
A_{\sigma} := \{ j \in [s_2] \, \mid \, C_{\sigma,j} \, \textup{ is full-dimensional in} \, C_\sigma \} 
\end{equation} 
and henceforth, when we write the notation $C_{\sigma,j}$, we are assuming that $j \in A_{\sigma}$. Note that $j \in A_{\sigma}$ also implies that $C_{\sigma,j}$ meets the interior of $C_\sigma$. 

The following lemma motivates the definition of the set $A_\sigma$.
\begin{lemma}\label{lemma: union of csigmaj}
   For any $\sigma\in\pi(\NN_2)$ we have that $C_\sigma=\bigcup_{j\in A_\sigma}C_{\sigma,j}$. Moreover, $A_\sigma$ is in fact the unique minimal set for which the equation holds. 
\end{lemma}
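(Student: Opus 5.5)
Working in the chart coordinates that make everything on $C_\sigma$ linear, the first claim is a closure argument over a finite cover by closed polyhedral cones, and the second comes from producing, for each $j\in A_\sigma$, an open set on which $j$ is the unique minimizer.

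\textbf{Setup.} Since $C_\sigma=\v_2^{-1}(\Sp(\NN_2,\sigma))$ is a maximal cone of $\Sigma(\MM_2)$, addition and scaling are well defined on it. For each $j$, the function $\w_2(n^2_j)(y)=\v_2(y)(n^2_j)$ is linear on $C_\sigma$. This follows from Lemma~\ref{lemma: M alpha addition formula} applied on the $\MM_2$ side: restricted to a maximal cone of $\Sigma(\MM_2)$, points of $\MM_2$ are additive. More concretely, pick any chart $\beta$ of $\MM_2$. Then $\pi_\beta(C_\sigma)$ is a full-dimensional polyhedral cone, and on it each $(\pi_\beta^{-1})^*\w_2(n^2_j)$ agrees with a linear functional $\ell_j$. (If this linearity is awkward to extract, the same argument works in any chart, using only that each $\w_2(n^2_j)_\beta$ is a minimum of finitely many linear functions. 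In that case one refines $\pi_\beta(C_\sigma)$ by the common domains of linearity and runs the argument below on each piece.) So, after transporting by the homeomorphism $\pi_\beta$, we are looking at a full-dimensional cone $D$ together with finitely many linear functions $\ell_1,\dots,\ell_{s_2}$. The sets $C_{\sigma,j}$ become $D_j=\{y\in D: \ell_j(y)\le \ell_t(y)\ \forall t\}$, which are closed polyhedral cones.

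\textbf{Covering.} Every $y\in D$ lies in some $D_j$, because the minimum is attained. Hence $D=\bigcup_{j\in[s_2]}D_j$. Let $U=\mathrm{int}(D)\setminus\bigcup_{j\notin A_\sigma}D_j$. Each $D_j$ with $j\notin A_\sigma$ is a polyhedral cone that is not full-dimensional, so it lies in a hyperplane. A finite union of such cones is therefore closed and nowhere dense, and $U$ is dense in $D$. Every point of $U$ lies in some $D_j$ with $j\in A_\sigma$. Since the union $\bigcup_{j\in A_\sigma}D_j$ is closed and contains the dense set $U$, it contains $D$. The reverse inclusion is trivial, which gives $C_\sigma=\bigcup_{j\in A_\sigma}C_{\sigma,j}$.

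\textbf{Minimality and uniqueness.} Suppose $B\subseteq[s_2]$ with $C_\sigma=\bigcup_{j\in B}C_{\sigma,j}$. We claim $A_\sigma\subseteq B$; this shows $A_\sigma$ is the least such set, hence the unique minimal one. Fix $j\in A_\sigma$. I expect this to be the main point. It is not enough that $D_j$ has interior: we need an open set on which $j$ is the \emph{only} minimizer, so that no other $D_t$ can cover it.

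The plan is to discard duplicates. If $\ell_t=\ell_j$ on $D$ with $t\ne j$, then $D_t=D_j$, and indices must be treated up to this identification. To make uniqueness hold literally, note that the $n^2_j$ are distinct defining elements. If $\ell_t=\ell_j$ on the full-dimensional cone $D$, then $\v_2(y)(n^2_t)=\v_2(y)(n^2_j)$ for all $y\in C_\sigma$. Lemma~\ref{lemma: res cone surj}, applied to $\NN_2$ through the bijection $\v_2$ and the strict-duality identification of $C_\sigma$, then forces $n^2_t=n^2_j$. Granting this, the $\ell_t$ are pairwise distinct on $D$.

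With distinct $\ell_t$, each set $\{\ell_t=\ell_j\}$ for $t\neq j$ is a hyperplane. Removing these hyperplanes from $\mathrm{int}(D_j)$, which is nonempty since $j\in A_\sigma$, leaves a nonempty open set $V$. On $V$ we have $\ell_j<\ell_t$ for all $t\ne j$, so $V\cap D_t=\emptyset$ for every $t\neq j$. Since $V\subseteq D=\bigcup_{t\in B}D_t$, it follows that $j\in B$.
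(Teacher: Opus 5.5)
Your proof is correct and follows essentially the same route as the paper. The covering part is the same closure argument that discards the pieces $C_{\sigma,j}$ with $j\notin A_\sigma$ because they have empty interior. The minimality part is the same observation that the restrictions $\w_2(n^2_t)|_{C_\sigma}$ are pairwise distinct linear functionals by Lemma~\ref{lemma: res cone surj}, so pairwise overlaps lie in hyperplanes. Your direct proof that $A_\sigma\subseteq B$ for every cover $B$ is a slightly more streamlined version of the paper's reduction to $A''\cap A_\sigma$.

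The one point you leave as an assertion is that the $n^2_t$ are pairwise distinct. The paper derives this from (P-1): if $n^2_t=n^2_j$ with $t\neq j$, the relative interior of the facet $F_j$ would be empty. You should cite (P-1) at that step, since without it the uniqueness claim fails.
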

\begin{proof}
    Let $\mathfrak{A}=\bigcup_{j\in A_\sigma}C_{\sigma,j}$ and $\mathfrak{B}=\bigcup_{j\notin A_\sigma}C_{\sigma,j}$.
    By definition of $C_{\sigma,j}$ we have that $\mathfrak{A}$ and $\mathfrak{B}$ are closed sets in $C_\sigma$ and that $C_\sigma=\mathfrak{A}\cup\mathfrak{B}$.
    To see that $C_\sigma=\mathfrak{A}$, note that if $j\notin A_\sigma$ then $C_{\sigma,j}$ has empty interior since it is contained in a proper linear subspace of the span of $C_\sigma$. Since a finite union of sets with these properties has an empty interior, $\mathfrak{B}$ has empty interior. Since $\mathfrak{A}$ is closed it follows that $C_\sigma=\overline{C_\sigma\setminus\mathfrak{B}}\subseteq \overline{\mathfrak{A}}=\mathfrak{A}\subset C_\sigma$. 
Finally, we show that $A_\sigma$ is the unique minimal subset of $[s_2]$ for which the equation holds. Let $j\ne j'$ be in $A_\sigma$. By the hypothesis (P-1), we know $n^2_j\neq n^2_{j'}$, hence $\w_2(n^2_j)\ne\w_2(n^2_{j'})$. By Lemma~\ref{lemma: res cone surj}, their restrictions to $C_\sigma$ are distinct linear functionals. Thus $C_{\sigma,j}\cap C_{\sigma,j'}\subseteq \{\w_2(n^2_j)=\w_2(n^2_{j'})\}\cap C_\sigma$ is contained in a hyperplane. Since $C_{\sigma,j}$ is full-dimensional it cannot be contained in the finite union $\bigcup_{j'\ne j}C_{\sigma,j'}$. Hence, no proper subset of $A_\sigma$ covers $C_\sigma$. Finally, suppose $A''\subseteq[s_2]$ covers $C_\sigma$; by the argument above, $A''\cap A_\sigma$ covers $C_\sigma$, which implies $A''\cap A_\sigma=A_\sigma$. This shows that $A_\sigma$ is the unique minimal subset for which the equation holds. 
\end{proof}

The next results form another step toward the definition of the coordinate charts of the cone extension. 

\begin{lemma}\label{lemma: A sigma equivalence}
Let $\sigma \in \pi(\NN_2)$ and $\tau\in \pi(\MM_1)$.
For any $j\in[s_2]$, $(C_{\sigma,j} \times C_\tau) \cap X^\vee_\R$ is a codimension-$1$ rational polyhedral subcone of $C_{\sigma,j}\times C_\tau$ defined by the linear equation $\w_2(n^2_j)(y) = \v_1(\mathbb{g}_1)(x)$.
Moreover, we have that the following are equivalent. 
\begin{enumerate}
\setcounter{enumi}{0}
\item $j \in A_\sigma$, i.e., $C_{\sigma,j}$ is full-dimensional in $C_\sigma$, and
\item $(C_{\sigma,j} \times C_\tau) \cap X^\vee_\R$ is full-dimensional in $(C_\sigma \times C_\tau)\cap X^\vee_\R$. In particular, $(C_{\sigma,j} \times C_\tau) \cap X^\vee_\R$ is codimension-$1$ in $C_\sigma \times C_\tau$.  
\end{enumerate}
\end{lemma}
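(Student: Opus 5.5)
The plan is to work entirely in a single chart where everything is linear, and then use the lineality direction $\mathbb{g}_2$ to move between $C_{\sigma,j}\times C_\tau$ and its intersection with $X^\vee_\R$.

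\emph{Linearity on the cone.} By strict duality, on $C_\sigma$ each point $\w_2(n^2_t)$ restricts to a linear functional. This follows from Lemma~\ref{lemma: psi alpha properties} applied to the pair $(\NN_2,\MM_2)$, or from the fact that $C_\sigma$ is a cone of $\Sigma(\MM_2)$, where addition is well defined and points are additive. Likewise, on $C_\tau\subset(\NN_1)_\R$ the map $x\mapsto \w_1(x)(\mathbb{g}_1)=\v_1(\mathbb{g}_1)(x)$ is linear, since $\v_1(\mathbb{g}_1)$ is a point of $\NN_1$ and $C_\tau$ is a cone of $\Sigma(\NN_1)$. On $C_{\sigma,j}$ the minimum defining $\Psi_{\PP_2}$ is attained by $\w_2(n^2_j)$. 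Therefore
\[
(C_{\sigma,j}\times C_\tau)\cap X^\vee_\R=\{(y,x)\in C_{\sigma,j}\times C_\tau \mid \w_2(n^2_j)(y)=\v_1(\mathbb{g}_1)(x)\}.
\]
This is a rational polyhedral cone cut out of the polyhedral cone $C_{\sigma,j}\times C_\tau$ by one linear equation, with $C_{\sigma,j}$ itself defined by the linear inequalities \eqref{eq: C sigma' tau' j} inside $C_\sigma$. Rationality comes from integrality of points on lattice elements.

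\emph{Codimension one.} To see the equation is nontrivial, translate by the lineality element $\mathbb{g}_2$. Given any $(y,x)\in C_{\sigma,j}\times C_\tau$, the element $y+\lambda\mathbb{g}_2$ stays in $C_\sigma$, because $\mathbb{g}_2$ lies in every cone of $\Sigma(\MM_2)$. It also stays in $C_{\sigma,j}$, because by Lemma~\ref{lemma: psi of a sum} all the values $\w_2(n^2_t)$ shift by the same amount $\lambda$. Moreover $\w_2(n^2_j)(y+\lambda\mathbb{g}_2)=\w_2(n^2_j)(y)+\lambda$. Taking $\lambda=\v_1(\mathbb{g}_1)(x)-\w_2(n^2_j)(y)$ shows the map $\psi^\vee$ sends $C_{\sigma,j}\times C_\tau$ onto the slice. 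Hence, as a set,
\[
C_{\sigma,j}\times C_\tau=\bigl((C_{\sigma,j}\times C_\tau)\cap X^\vee_\R\bigr)+\R\cdot(\mathbb{g}_2,0),
\]
and the direction $(\mathbb{g}_2,0)$ is transverse to the hyperplane because $\w_2(n^2_j)(\mathbb{g}_2)=1\neq 0$. So $\dim(\text{slice})=\dim(C_{\sigma,j}\times C_\tau)-1$, which gives codimension one relative to $C_{\sigma,j}\times C_\tau$.

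\emph{The equivalence.} By the same argument with $j$ dropped, using $\Psi_{\PP_2}$ and Lemma~\ref{lemma: X and Y meet interior of cones}, the set $(C_\sigma\times C_\tau)\cap X^\vee_\R$ has dimension $\dim(C_\sigma\times C_\tau)-1=r_1+r_2-1$.
\begin{itemize}
\item (1)$\Rightarrow$(2): if $C_{\sigma,j}$ is full-dimensional in $C_\sigma$, the slice has dimension $r_1+r_2-1$, so it is full-dimensional in $(C_\sigma\times C_\tau)\cap X^\vee_\R$ and codimension one in $C_\sigma\times C_\tau$.
\item (2)$\Rightarrow$(1): if the slice has dimension $r_1+r_2-1$, then adding the line $\R(\mathbb{g}_2,0)$ gives $\dim(C_{\sigma,j}\times C_\tau)=r_1+r_2$. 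Since $C_\tau$ has dimension $r_1$, $C_{\sigma,j}$ must have dimension $r_2$, i.e.\ $j\in A_\sigma$.
\end{itemize}

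The main obstacle is the dimension bookkeeping, specifically making rigorous that dimension drops by exactly one under slicing. The decomposition above handles this: the slice is the image of $C_{\sigma,j}\times C_\tau$ under the linear projection along $(\mathbb{g}_2,0)$ onto the hyperplane. Here "linear" means linear in the chart coordinates of $C_\sigma$, which holds because $\mathbb{g}_2$ lies in the lineality space. A minor degenerate case is $C_{\sigma,j}$ empty or lower-dimensional; the statement then reads with the convention that the slice is a face of dimension one less, and the same translation argument covers it.
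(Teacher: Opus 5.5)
Your proof is correct and follows essentially the same route as the paper. Both arguments use the invariance of $C_{\sigma,j}$ under translation by $\R\mathbb{g}_2$ (Lemma~\ref{lemma: psi of a sum}) to show that the defining equation $\w_2(n^2_j)(y)=\v_1(\mathbb{g}_1)(x)$ is linear and nontrivial on $C_{\sigma,j}\times C_\tau$, and then compare the resulting dimension counts for $C_{\sigma,j}$ and $C_\sigma$. Your decomposition of $C_{\sigma,j}\times C_\tau$ as the slice plus the transverse line $\R(\mathbb{g}_2,0)$ is a slightly more explicit form of the paper's step ``the hyperplane meets the relative interior.'' Your degenerate-case remark is unnecessary, since $C_{\sigma,j}\supseteq\R\mathbb{g}_2$ is never empty and the decomposition argument already covers every $j\in[s_2]$.
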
 

\begin{proof} 

Let $j\in [s_2]$, not necessarily in $A_\sigma$.
Note that since $\w_2(n^2_t)(\mathbb{g}_2)=1$ for all $t \in [s_2]$ by hypothesis, it follows that if $y_0 \in C_{\sigma,j}$, i.e. $\w_2(n^2_j)(y_0) \leq \w_2(n^2_t)(y_0)$ for all $t\in [s_2]$, then any $y_0 + \lambda \mathbb{g}_2$ for $\lambda \in \R$ is also in $C_{\sigma,j}$. Then the proof of~Lemma~\ref{lemma: X and Y meet interior of cones} also shows that $X^\vee_\R$ meets the relative interior of $C_{\sigma,j}\times C_\tau$. On $C_{\sigma,j} \times C_\tau$ the equation defining $X^\vee_\R$ is the non-trivial equation $\w_2(n^2_j)(y)=\w_1(x)(\mathbb{g}_1)=\v_1(\mathbb{g}_1)(x)$. In fact, the equation is given by points, which are $\R_{\geq 0}$-linear on maximal cones. It follows that the subset cut out by the equation is a rational polyhedral subcone as claimed. We have also just seen that the hyperplane defining its zero set intersects the relative interior of $C_{\sigma,j} \times C_\tau$. Since the equation $\w_2(n_j^2)(y)=\v_1(\mathbb{g}_1)(x)$ is not identically zero, we have 
\begin{equation*}
\dim(X^\vee_\R \cap (C_{\sigma,j} \times C_\tau))=\dim(C_{\sigma,j})+\dim(C_\tau)-1
.\end{equation*}
A similar argument shows that
\begin{equation*}
\dim(X^\vee_\R \cap (C_{\sigma} \times C_\tau))=\dim(C_{\sigma})+\dim(C_\tau)-1
.\end{equation*}
The lemma then follows by combining the two preceding equations.
%}
\end{proof}

For $j \in [s_2]$ we may also consider $\PP_2(j) \subset \partial \PP_2$, the facet of $\PP_2$ determined by $n^2_j \in \NN_2$, i.e., 
\begin{equation*}\label{eq: def PP2 j} 
\PP_2(j) := \{ y \in (\MM_2)_\R \, \mid \, \w_2(n^2_t)(y) \geq 0, \forall t \in [s_2], \textup{ and } \w_2(n^2_j)(y) = 0\}. 
\end{equation*} 

 In the following proposition, the key players are the intersections $(C_{\sigma,j} \times C_\tau) \cap X^\vee$ appearing in~Lemma~\ref{lemma: A sigma equivalence}.

\begin{proposition}\label{prop-conescover}
Let $C_{\sigma} \times C_{\tau}$ be a maximal cone in $\Sigma(\MM_2 \times \NN_1)$ as above. Let $j \in A_{\sigma}$. Then: 
\begin{enumerate}  
\item the bijection $\phi^\vee: X^\vee_\R \to Y^\vee_\R$ restricts to a bijection between $(C_{\sigma,j} \times C_\tau) \cap X^\vee_\R$ and $(\PP_2(j) \cap C_{\sigma}) \times C_{\tau} \subset Y^\vee_\R \subset (\MM_2 \times \NN_1)_\R$,
\item the restriction of $\phi^\vee$ to $(C_{\sigma,j} \times C_\tau) \cap X^\vee_\R$ is $\R_{\geq 0}$-linear, 

\item $(\PP_2(j) \cap C_\sigma) \times C_\tau$ is a codimension-$1$ subcone of $C_\sigma \times C_\tau$, and,  
\item the collection $\{(\PP_2(j) \cap C_\sigma) \times C_\tau\mid \sigma\in\pi(\NN_2), \tau\in\pi(\MM_1), j \in A_\sigma\}$ cover $Y^\vee_\R$, and, 
\item the collection $\{(C_{\sigma,j} \times C_\tau) \cap X^\vee \, \mid \, \sigma\in\pi(\NN_2), \tau\in\pi(\MM_1), j \in A_\sigma\}$ cover $X^\vee_\R$. 
\end{enumerate}  
\end{proposition}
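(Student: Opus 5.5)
The plan is to work entirely with the explicit formula for $\phi^\vee$ on $X^\vee_\R$ from Lemma~\ref{lemma: phivee psivee bijections}(2), namely $\phi^\vee(y,x) = (y - \w_1(x)(\mathbb{g}_1)\,\mathbb{g}_2, x)$, together with Lemma~\ref{lemma: psi of a sum}. The key observation is that translation by any multiple of $\mathbb{g}_2$ preserves $C_\sigma$, since $\pm\mathbb{g}_2$ lies in the lineality space and hence in every cone of $\Sigma(\MM_2)$. It also preserves each $C_{\sigma,j}$, because all $\w_2(n^2_t)$ shift by the same amount $\lambda$, as in the proof of Lemma~\ref{lemma: A sigma equivalence}.

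For (1), take $(y,x) \in (C_{\sigma,j}\times C_\tau)\cap X^\vee_\R$. On $C_{\sigma,j}$ we have $\Psi_{\PP_2}(y)=\w_2(n^2_j)(y)$, and the defining equation of $X^\vee_\R$ gives $\w_2(n^2_j)(y) = \w_1(x)(\mathbb{g}_1)$. Put $y' = y - \w_1(x)(\mathbb{g}_1)\mathbb{g}_2$. Then $y' \in C_{\sigma,j}\subset C_\sigma$ by the observation above, and $\w_2(n^2_t)(y') = \w_2(n^2_t)(y) - \w_2(n^2_j)(y) \ge 0$ for all $t$, with equality at $t=j$. Hence $y'\in\PP_2(j)\cap C_\sigma$. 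Conversely, for $(y',x)\in(\PP_2(j)\cap C_\sigma)\times C_\tau$, apply $\psi^\vee$, which here is $y'\mapsto y'+\w_1(x)(\mathbb{g}_1)\mathbb{g}_2$ because $\Psi_{\PP_2}(y')=0$. The point $y'$ lies in $C_{\sigma,j}$, since $\w_2(n^2_j)(y')=0\le \w_2(n^2_t)(y')$, and translation keeps it there, so the image lands in $(C_{\sigma,j}\times C_\tau)\cap X^\vee_\R$. Bijectivity then follows from Lemma~\ref{lemma: phivee psivee bijections}(5).

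For (2), the map $x\mapsto \w_1(x)(\mathbb{g}_1)=\v_1(\mathbb{g}_1)(x)$ is $\R_{\ge0}$-linear on the maximal cone $C_\tau$, because it is linear there by strict duality: $C_\tau=\w_1^{-1}\Sp(\MM_1,\tau)$, and evaluation at a fixed element is additive on the cone by Lemma~\ref{lemma: psi alpha properties}. Addition by multiples of $\mathbb{g}_2$ is well defined and chart-independent by Lemma~\ref{lemma: properties of lineality space}, and it is linear in every chart of $C_\sigma$. Composing these gives linearity.

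For (3), I would use that $\phi^\vee$ and its inverse are continuous and piecewise linear, so by (1) and (2) they preserve dimension. Lemma~\ref{lemma: A sigma equivalence} then shows that $(C_{\sigma,j}\times C_\tau)\cap X^\vee_\R$ has codimension $1$ in $C_\sigma\times C_\tau$ when $j\in A_\sigma$. A more direct route is to pick $y_0\in\mathrm{int}(C_{\sigma,j})$ and translate it to $y_0-\Psi_{\PP_2}(y_0)\mathbb{g}_2$; this shows the facet meets the relative interior, and the single linear equation $\w_2(n^2_j)=0$ on $C_{\sigma,j}$ cuts the dimension by exactly one.

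For (4), let $(y,x)\in Y^\vee_\R$. Choose $\sigma,\tau$ with $y\in C_\sigma$ and $x\in C_\tau$; the maximal cones cover everything. By Lemma~\ref{lemma: union of csigmaj}, $y\in C_{\sigma,j}$ for some $j\in A_\sigma$. Then $\w_2(n^2_j)(y)=\Psi_{\PP_2}(y)=0$, so $y\in\PP_2(j)\cap C_\sigma$. Item (5) follows the same way: for $(y,x)\in X^\vee_\R$, choose $\sigma,\tau,j$ as before, and the point lies directly in $(C_{\sigma,j}\times C_\tau)\cap X^\vee_\R$. Alternatively, (5) is (4) transported by the bijection in (1).

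The main obstacle is (3), where one has to make sure the dimension count really gives codimension $1$ in $C_\sigma\times C_\tau$ rather than something smaller. The hypothesis $j\in A_\sigma$ together with Lemma~\ref{lemma: A sigma equivalence} should settle this. A secondary subtlety is in (2): one must check that the translation $y\mapsto y+\lambda\mathbb{g}_2$ with $\lambda$ depending linearly on $x$ is genuinely linear in a chart, which follows because $\mathbb{g}_2$ lies in the lineality space.
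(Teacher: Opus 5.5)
Your proposal is correct and follows essentially the same route as the paper. Item (1) uses the same explicit computation with $\phi^\vee$ and $\psi^\vee$, (2) uses linearity of $\v_1(\mathbb{g}_1)$ on $C_\tau$, (3) uses Lemma~\ref{lemma: A sigma equivalence} together with (1)--(2), and (4) uses Lemma~\ref{lemma: union of csigmaj}. Your direct covering argument for (5) and the alternative translation-by-$\mathbb{g}_2$ argument for (3) are valid minor shortcuts compared with the paper's transport via $\phi^\vee$.
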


\begin{proof}
To prove (1), since $\phi^\vee$ is a bijection  it would suffice to show that $\phi^\vee((C_{\sigma,j} \times C_\tau)\cap X^\vee_\R) = (\PP_2(j) \cap C_{\sigma}) \times C_{\tau}$. We begin by showing one inclusion, namely $\phi^\vee((C_{\sigma,j} \times C_\tau)\cap X^\vee_\R) \subseteq (\PP_2(j) \cap C_{\sigma}) \times C_{\tau}$. 
If $(y,x) \in (C_{\sigma,j} \times C_\tau) \cap X^\vee_\R$, then we have $\w_1(x)(\mathbb{g}_1) = \w_2(n^2_j)(y)$. To prove the claim, we must first show that $y-\w_1(x)(\mathbb{g}_1)\mathbb{g}_2 \in C_\sigma$, but this follows from the fact that $\mathbb{g}_2$ is in the lineality space. Next we need to prove that $y - \w_1(x)(\mathbb{g}_1)\mathbb{g}_2$ lies in $\PP_2(j)$.  
Let $t\in[s_2]$ and compute
\begin{align*}
\w_2(n^2_t)(y - \w_1(x)(\mathbb{g}_1)\mathbb{g}_2)
    &= 
    \w_2(n^2_t)(y) - \w_1(x)(\mathbb{g}_1)\, \, \textup{ by Lemma~\ref{lemma: psi of a sum}} \\
& = \w_2(n^2_t)(y) - \w_2(n_j^2)(y)  
\,\,\textup{ since $(y,x) \in X^\vee_\R$, } 
\\&\ge 0\,\,\textup{ since $y\in C_{\sigma,j}$ }
\end{align*} 
and it is equal to $0$ if $t=j$, showing that $y-\w_1(x)(\mathbb{g}_1)\mathbb{g}_2$ lies in $\PP_2(j)$.
For the converse inclusion, let $(y,x) \in (\PP_2(j) \cap C_{\sigma}) \times C_{\tau}$ and consider the inverse $\psi^\vee$ to $\phi^\vee$ defined in~\eqref{eq: psi vee}. 
Since $\Psi_{\PP_2}(y) = 0$, $\psi^\vee(y,x) = (y + \w_1(x)(\mathbb{g}_1)\mathbb{g}_2, x)$ and this lies in $X^\vee_\R$ by Lemma~\ref{lemma: phivee psivee bijections}(4). 
Moreover, $y + \w_1(x)(\mathbb{g}_1)\mathbb{g}_2$ lies in $C_{\sigma,j}$ since by Lemma~\ref{lemma: psi of a sum} we have that for all $t \in [s_2]$
\[\w_2(n^2_j)(y + \w_1(x)(\mathbb{g}_1)\mathbb{g}_2) = \w_2(n^2_j)(y) + \w_1(x)(\mathbb{g}_1) \le \w_2(n^2_t)(y) + \w_1(x)(\mathbb{g}_1)=\w_2(n^2_t)(y+ \w_1(x)(\mathbb{g}_1)\mathbb{g}_2).\]
Thus proves (1). 

To see (2), first recall that addition is well-defined on a maximal-dimensional cone in the mutation fan of a polyptych lattice, so it makes sense to discuss $\R_{\geq 0}$-linearity on $C_{\sigma} \times C_\tau$. In addition, by Lemma~\ref{lemma: A sigma equivalence} we know that $(C_{\sigma,j} \times C_\tau) \cap X^\vee_\R$ is a subcone of $C_{\sigma,j} \times C_\tau$ (which is itself a subcone of $C_{\sigma} \times C_\tau$), hence is closed under addition and $\R_{\geq 0}$-scalar multiplication. The definition of $\phi^\vee$ gives $\phi^\vee(y,x) := (y-\w_1(x)(\mathbb{g}_1)(\mathbb{g}_2), x) = (y-\v_1(\mathbb{g}_1)(x)\mathbb{g}_2, x)$. Since $\v_1(\mathbb{g}_1)$ is a point, it is linear on $C_\tau$, and it follows that $\phi^\vee$ is also linear.

The claim (3) follows from Lemma~\ref{lemma: A sigma equivalence} together with (1) and (2).

To see (4), 
let $(y,x)\in Y^\vee_\R$. Since the maximal-dimensional cones $C_\sigma \times C_\tau$ cover all of $(\MM_2 \times \NN_1)_\R$, there exist $\sigma, \tau$ such that $(y,x)\in C_\sigma\times C_\tau$. By Lemma~\ref{lemma: union of csigmaj}, there exists $j\in A_\sigma$ such that $(y,x)\in C_{\sigma,j}\times C_\tau$. Since $(y,x)\in Y^\vee_\R$ we have that $\Psi_{\mathcal{P}_2}(y)=0$ and it follows that $\w_2(n_j^2)(y)=0$. We conclude that $(y,x)\in (\mathcal{P}_2(j)\cap C_\sigma)
\times C_\tau$.
%}

Finally, claim (5) follows from (1) and (4) using the bijection $\phi^\vee$.
\end{proof}

Proposition~\ref{prop-conescover} gives us a set of cones $(\PP_2(j) \cap C_\sigma) \times C_\tau$ which cover $Y^\vee_\R$. As we will show below, each such cone will provide us with a bijection from $X$ to a lattice. We will then interpret these bijections as coordinate-chart maps of a new polyptych lattice $\mathcal{E}$ (whose set of elements can be identified with $X$), to be defined in Section~\ref{subsec: E and F definition}.

 By swapping indices, we may also give analogous constructions as above, using $X$ and $Y$ (and $X_\R$ and $Y_\R$). These will provide a collection of bijections from $X^\vee$ to a lattice, thus allowing us to define a polyptych lattice $\mathcal{F}$ (with set of elements identified with $X^\vee$) which will be shown to be a strict dual to $\mathcal{E}$.  Let $\alpha \in \pi(\NN_1)$ and $\beta \in \pi(\MM_2)$. Then, as before, these correspond uniquely to maximal cones $C_\alpha$ in $\Sigma(\MM_1)$ and $C_\beta$ in $\Sigma(\NN_2)$ respectively, so $C_{\alpha} \times C_{\beta}$ is a maximal-dimensional cone in $\Sigma(\MM_1 \times \NN_2)$. Let $i \in [s_1]$ and let $\PP_1(i)$ denote the facet of $\PP_1$ determined by $n^1_i \in \NN_1 \cong \Sp(\MM_1)$. As in the case of $X^\vee$ we define 
 \begin{equation}\label{eq: def C sigma tau i}
C_{\alpha, i} := \{ u \in C_\alpha \, \mid \, \w_1(n^1_i)(u) \leq \w_1(n^1_t)(u), \forall t \in [s_1]\} \subset (\MM_1)_\R.
 \end{equation} 
  Let $A_{\alpha} \subset [s_1]$ denote the set of indices in $[s_1]$ for which $C_{\alpha,i}$ is full-dimensional in $C_\alpha$. Essentially the same proofs (obtained by changing indices) yield the analogues of Lemma~\ref{lemma: union of csigmaj}, Lemma~\ref{lemma: A sigma equivalence}, and Proposition~\ref{prop-conescover}. We summarize this below.

 \begin{proposition}\label{proposition: cones cover Y}
 Following the notation above, we have: 
\begin{enumerate} 
\item $(C_{\alpha,i} \times C_\beta) \cap X_\R$ is a codimension-$1$ rational polyhedral subcone of $C_{\alpha,i}\times C_\beta$ defined by the linear equation $\w_1(n^1_i)(u) = \v_2(\mathbb{g}_2)(v)$.
\item $C_\alpha = \bigcup_{i \in A_\alpha} C_{\alpha,i}$, and $A_\alpha$ is minimal with this property, and, 
\item $i \in A_\alpha$ if and only if $(C_{\alpha,i} \times C_\beta) \cap X_\R$ is full-dimensional in $(C_\alpha \times C_\beta) \cap X_\R$, and, 
\item the bijection $\phi: X_\R \to Y_\R$ restricts to a bijection between $(C_{\alpha,i} \times C_\beta) \cap X_\R$ and $(\PP_1(i) \cap C_{\alpha}) \times C_{\beta} \subset Y_\R \subset (\MM_1 \times \NN_2)_\R$,
\item the restriction of $\phi$ to $(C_{\alpha,i} \times C_\beta) \cap X_\R$ is $\R_{\geq 0}$-linear, 

\item $(\PP_1(i) \cap C_\alpha) \times C_\beta$ is a codimension-$1$ subcone of $C_\alpha \times C_\beta$, and,  
\item the collection $\{(\PP_1(i) \cap C_\alpha) \times C_\beta\mid \alpha \in\pi(\NN_1), \beta\in\pi(\MM_2), i \in A_\alpha\}$ cover $Y_\R$, and, 
\item the collection $\{(C_{\alpha,i} \times C_\beta) \cap X_\R \, \mid \, \alpha\in\pi(\NN_1), \beta\in\pi(\MM_2), i \in A_\alpha\}$ cover $X_\R$. 
\end{enumerate} 
\end{proposition}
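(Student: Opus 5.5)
The plan is to obtain Proposition~\ref{proposition: cones cover Y} from the already-proved results for $X^\vee$ by a dictionary of roles. Under the substitutions $\MM_2 \leftrightarrow \MM_1$, $\NN_1 \leftrightarrow \NN_2$, $\PP_2 \leftrightarrow \PP_1$, $\mathbb{g}_2 \leftrightarrow \mathbb{g}_1$, $\w_2 \leftrightarrow \w_1$, $\v_1 \leftrightarrow \v_2$, $(\sigma,\tau)\leftrightarrow(\alpha,\beta)$ and $j \leftrightarrow i$:
\begin{itemize}
\item $X^\vee$ becomes $X$ and $Y^\vee$ becomes $Y$;
\item $\phi^\vee,\psi^\vee$ become $\phi,\psi$;
\item $C_{\sigma,j}$ becomes $C_{\alpha,i}$ and $A_\sigma$ becomes $A_\alpha$.
\end{itemize}
Each ingredient used in the proofs of Lemma~\ref{lemma: union of csigmaj}, Lemma~\ref{lemma: A sigma equivalence} and Proposition~\ref{prop-conescover} has a symmetric counterpart already available:
\begin{itemize}
\item Lemma~\ref{lemma: psi of a sum} holds for both $i=1,2$;
\item Lemma~\ref{lem-2sets} is the counterpart of Lemma~\ref{lemma: phivee psivee bijections};
\item Lemma~\ref{lemma: X and Y meet interior of cones} is explicitly stated for $X_\R, Y_\R$ as well;
\item Lemma~\ref{lemma: res cone surj} and hypothesis (P-1) hold for the pair $(\MM_1,\NN_1)$ and $\PP_1$.
\end{itemize}
So I would rerun each argument line by line and check only the places where a specific pairing or chart convention enters.

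First, items (1)–(3). For (1), if $u\in C_{\alpha,i}$ then $u+\lambda\mathbb{g}_1\in C_{\alpha,i}$ by Lemma~\ref{lemma: psi of a sum}, because all $\w_1(n^1_t)$ shift by the same $\lambda$. Hence $X_\R$ meets the relative interior of $C_{\alpha,i}\times C_\beta$. On this cone, $X_\R$ is cut out by $\w_1(n^1_i)(u)=\w_2(v)(\mathbb{g}_2)=\v_2(\mathbb{g}_2)(v)$. Both sides are linear on the cone: $\w_1(n^1_i)$ is linear on $C_\alpha$ because $C_\alpha$ corresponds to the chart $\alpha\in\pi(\NN_1)$, and $\v_2(\mathbb{g}_2)$ is a point of $\NN_2$, so it is linear on the maximal cone $C_\beta$. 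The equation is nontrivial, which gives codimension one. Item (2) is the proof of Lemma~\ref{lemma: union of csigmaj} verbatim. That proof needs two facts: distinct $n^1_i$ give distinct linear functionals on $C_\alpha$, by (P-1) together with Lemma~\ref{lemma: res cone surj} applied to $(\MM_1,\NN_1)$; and lower-dimensional pieces have empty interior. Item (3) is the dimension count of Lemma~\ref{lemma: A sigma equivalence}: compare $\dim C_{\alpha,i}+\dim C_\beta-1$ with $\dim C_\alpha+\dim C_\beta-1$.

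Next, items (4)–(8), following the proof of Proposition~\ref{prop-conescover}. For (4), take $(u,v)\in(C_{\alpha,i}\times C_\beta)\cap X_\R$. Then $\phi(u,v)=(u-\w_2(v)(\mathbb{g}_2)\mathbb{g}_1,v)$ by Lemma~\ref{lem-2sets}(2). Lemma~\ref{lemma: psi of a sum} gives $\w_1(n^1_t)(\phi(u,v)_1)=\w_1(n^1_t)(u)-\w_1(n^1_i)(u)\ge0$, with equality at $t=i$. The first component stays in $C_\alpha$ because $\mathbb{g}_1$ lies in the lineality space, so $\phi(u,v)$ lies in $\PP_1(i)\cap C_\alpha$ in the first factor. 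For the reverse inclusion, apply $\psi$ to a point with $\Psi_{\PP_1}(u)=0$ and check it lands in $C_{\alpha,i}$ by the same shift computation. For (5), note that on this cone $\phi(u,v)=(u-\v_2(\mathbb{g}_2)(v)\mathbb{g}_1,v)$, and $\v_2(\mathbb{g}_2)$ is linear on $C_\beta$. Item (6) follows from (1), (3), (4) and (5). For (7), cover $Y_\R$ by maximal cones and use (2) to choose $i$ with $u\in C_{\alpha,i}$; then $\Psi_{\PP_1}(u)=0$ forces $\w_1(n^1_i)(u)=0$. Item (8) follows by pulling (7) back through the bijection $\phi$.

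The only genuine point of care is bookkeeping of which pairing makes which function linear on which cone. Here $C_\alpha\subset(\MM_1)_\R$ is indexed by a chart of $\NN_1$, and $C_\beta\subset(\NN_2)_\R$ is indexed by a chart of $\MM_2$. So the linearity of $\w_1(n^1_i)$ on $C_\alpha$ and of $\v_2(\mathbb{g}_2)$ on $C_\beta$ must be justified by strict duality (Definition~\ref{def_dual}(4)), not by the first-factor conventions used in the $X^\vee$ case. I expect this to be the main obstacle, though only a notational one.
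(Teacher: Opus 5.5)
Your proposal is correct and follows the paper's approach exactly. The paper proves this proposition only by remarking that the proofs of Lemma~\ref{lemma: union of csigmaj}, Lemma~\ref{lemma: A sigma equivalence} and Proposition~\ref{prop-conescover} go through after changing indices. Your dictionary ($X^\vee\mapsto X$, $C_{\sigma,j}\mapsto C_{\alpha,i}$, $\phi^\vee\mapsto\phi$, and so on), together with the item-by-item rerun, is that argument written out, including the correct check that $\w_1(n^1_i)$ and $\v_2(\mathbb{g}_2)$ are linear on the maximal cones $C_\alpha\in\Sigma(\MM_1)$ and $C_\beta\in\Sigma(\NN_2)$.
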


\subsection{Definition of the polyptych lattices $\mathcal{E}$ and $\mathcal{F}$}\label{subsec: E and F definition} 

We are now in a position to construct two new polyptych lattices over $\Z$, denoted $\mathcal{E}$ and $\mathcal{F}$, which we call the ``Gorenstein cone extension(s)'' of the data above. To define the mutation maps for both $\mathcal{E}$ and $\mathcal{F}$, we follow a strategy which we also used in \cite[Section 8]{EscobarHaradaManon-PL}. Concretely, to define $\mathcal{E}$, we first define a collection of bijections from the set $X$ of \eqref{eq: def X} to multiple lattices. We then interpret the composition of these bijections with their inverses (in various combinations) as the mutation maps defining $\mathcal{E}$. In this setting, the set $X$ serves as a natural model for the set of elements of the polyptych lattice $\mathcal{E}$. We then follow a similar strategy with $X^\vee$ to define $\mathcal{F}$. 

 We start with the definition of the polyptych lattice $\mathcal{E}$ over $\Z$. 
Following the setup and notation of Section~\ref{subsec: prelim notation and def}, consider a maximal-dimensional cone $C_{\sigma} \times C_{\tau}$ in $\Sigma(\MM_2 \times \NN_1)$, let $j \in A_{\sigma}$, and let $X$ be the set defined in~\eqref{eq: def X}. 
Let $\kappa_{\sigma,\tau,j}$ denote the following composition: 
\begin{equation}\label{eq: def kappa sigma tau j}
\begin{tikzcd}
X \arrow[r,""]&\Sp(\MM_2 \times \NN_1) \arrow[r,"L_{\sigma \times \tau}"] & \Hom((C_{\sigma} \times C_{\tau}) \cap (\MM_2 \times \NN_1),\Z) \arrow[r, "\mathrm{res}_j"] &  \Hom(((\PP_2(j) \cap C_\sigma) \times C_\tau) \cap (\MM_2 \times \NN_1),\Z)
\end{tikzcd}
\end{equation}
where the first arrow is the inclusion map to $\MM_1\times \NN_2$ followed by the strict duality map $(u,v) \mapsto (\w_2(v),\v_1(u))$, the map $L_{\sigma \times \tau}$ is obtained by restricting a point to a cone in the fan of $\MM_2 \times \NN_1$. (We had previously notated this $L_{C_\sigma \times C_\tau}$ but for simplicity of notation we choose, here and below, to write $L_{\sigma \times \tau}$ instead.) and $\mathrm{res}_j$ restricts it further to the subcone $(\PP_2(j) \cap C_\sigma) \times C_\tau$.
For the last statement of the proposition below, we remind the reader that by Proposition~\ref{proposition: cones cover Y}(1), $(C_{\alpha,i}\times C_\beta)\cap X$ is a
subcone of $C_{\alpha,i}\times C_\beta$, hence is closed under addition and $\Z_{\geq0}$-scalar
multiplication. Thus, a statement about $\Z_{\geq 0}$-linearity on $(C_{\alpha,i}\times C_\beta)\cap X$ is well-defined.

\begin{proposition}\label{proposition: phi sigma tau j bijection}
Let $j\in A_\sigma$.
The map $\kappa_{\sigma,\tau,j}$ defined in \eqref{eq: def kappa sigma tau j} is a bijection. Moreover, the restriction of $\kappa_{\sigma,\tau,j}$ to $(C_{\alpha,i}\times C_\beta)\cap X$ for any maximal-dimensional $C_{\alpha} \times C_{\beta} \in \Sigma(\MM_1 \times \NN_2)$ and $i \in A_{\alpha}$ is $\Z_{\geq 0}$-linear. Moreover, the natural extension $\kappa_{\sigma,\tau,j,\R}$ restricted to $(C_{\alpha,i} \times C_\beta) \cap X_\R$ is $\R_{\geq 0}$-linear. 
\end{proposition}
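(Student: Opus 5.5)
The plan is to factor $\kappa_{\sigma,\tau,j}$ through the bijection $\phi$ and the pairing identity of Lemma~\ref{lem-pairing}. For $(u,v)\in X$ and $(y,x)\in(\PP_2(j)\cap C_\sigma)\times C_\tau\subset Y^\vee$, Lemma~\ref{lemma: phivee psivee bijections} gives $(y,x)=\phi^\vee(y',x')$ with $(y',x')=\psi^\vee(y,x)\in (C_{\sigma,j}\times C_\tau)\cap X^\vee$ (Proposition~\ref{prop-conescover}(1)). Then Lemma~\ref{lem-pairing} yields
\[
\kappa_{\sigma,\tau,j}(u,v)(y,x)=\langle (u,v),\phi^\vee(y',x')\rangle=\langle \phi(u,v),(y',x')\rangle .
\]
Alternatively, and more directly, we use the explicit formula $\phi|_X(u,v)=(u-\w_2(v)(\mathbb{g}_2)\mathbb{g}_1,v)$ and expand the pairing via \eqref{eq: explicit pairing} and Lemma~\ref{lemma: psi of a sum}. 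This rewrites $\langle(u,v),(y,x)\rangle$ on the facet $\Psi_{\PP_2}(y)=0$ in terms of $\phi(u,v)\in Y$.

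\emph{Injectivity.} Suppose $\kappa_{\sigma,\tau,j}(u,v)=\kappa_{\sigma,\tau,j}(u',v')$. First restrict to the subcone $\{0\}\times C_\tau\subset(\PP_2(j)\cap C_\sigma)\times C_\tau$. On it the values are $\v_1(u)|_{C_\tau}$, so Lemma~\ref{lemma: res cone surj} forces $u=u'$. Next, the facet cone $\PP_2(j)\cap C_\sigma$ is codimension one (Proposition~\ref{prop-conescover}(3)) and does not contain $\mathbb{g}_2$. Hence $\PP_2(j)\cap C_\sigma$ together with $\mathbb{g}_2$ spans $C_\sigma$ rationally. The value $\w_2(v)(\mathbb{g}_2)=\Psi_{\PP_1}(u)$ is determined by $u$, because $(u,v)\in X$. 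Therefore $\w_2(v)|_{C_\sigma}$ is determined, using linearity on $C_\sigma$ and Lemma~\ref{lemma: injectivity}. Lemma~\ref{lemma: res cone surj} then gives $v=v'$.

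\emph{Surjectivity.} This is the step I expect to be the main obstacle. Let $f\in\Hom(((\PP_2(j)\cap C_\sigma)\times C_\tau)\cap(\MM_2\times\NN_1),\Z)$. Its restriction to $C_\tau$ comes, by Lemma~\ref{lemma: res cone surj} and Lemma~\ref{lemma: psi alpha properties}, from a unique $u\in\MM_1$. We must then extend $f|_{\PP_2(j)\cap C_\sigma}$ to a homomorphism on $C_\sigma\cap\MM_2$ with prescribed value $\Psi_{\PP_1}(u)$ on $\mathbb{g}_2$. This needs the lattice $\MM_2$, in chart $\sigma$-adapted coordinates, to split as $\Z\mathbb{g}_2$ plus the lattice spanned by the facet. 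That splitting is exactly where $\w_2(n^2_j)(\mathbb{g}_2)=1$ enters. Via $\psi_\sigma$ we identify $C_\sigma$ linearly with a cone in a lattice on which $\w_2(n^2_j)$ is a primitive linear functional taking value $1$ on $\mathbb{g}_2$. So every lattice point $y\in C_\sigma$ has the form $y_0+\lambda\mathbb{g}_2$ with $y_0$ in the hyperplane $\{\w_2(n^2_j)=0\}$ and $\lambda\in\Z$. The group generated by lattice points of the facet cone is the full hyperplane lattice, since the cone is full-dimensional in that hyperplane. Hence the extension exists and is integral. Lemma~\ref{lemma: res cone surj} (surjectivity) then gives $v\in\NN_2$. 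By construction $\w_2(v)(\mathbb{g}_2)=\Psi_{\PP_1}(u)$, so $(u,v)\in X$.

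\emph{Linearity.} For the last statements, fix $(C_{\alpha,i}\times C_\beta)\cap X$. On this cone $\phi$ is $\Z_{\geq0}$-linear by Proposition~\ref{proposition: cones cover Y}(5). Moreover, $\v_1(u)$ and $\w_2(v)$ evaluated at a fixed $(y,x)$ are points, hence linear in $(u,v)$ on maximal cones $C_\alpha\times C_\beta$ by Lemma~\ref{lemma: M alpha addition formula}. So each evaluation $(u,v)\mapsto\langle(u,v),(y,x)\rangle$ is additive on $C_\alpha\times C_\beta$, and $\kappa_{\sigma,\tau,j}$ is $\Z_{\geq0}$-linear there. The same computation over $\R$ gives $\R_{\geq0}$-linearity of $\kappa_{\sigma,\tau,j,\R}$.
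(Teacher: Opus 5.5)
Your proposal is correct. Injectivity and the linearity claims go essentially as in the paper: you pin down $u$ on $\{0\}\times C_\tau$, then use that $\PP_2(j)\cap C_\sigma$ together with $\mathbb{g}_2$ spans $C_\sigma$ and that $\w_2(v)(\mathbb{g}_2)=\Psi_{\PP_1}(u)$; linearity comes from points being linear on cones of the mutation fan.

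Your surjectivity argument takes a genuinely different route. The paper first shows that $\mathrm{res}_j$ and $L_{\sigma\times\tau}$ are surjective and takes an \emph{arbitrary} preimage $(u,v)\in\MM_1\times\NN_2$ of $f$. It then pushes this preimage into $X$ by replacing $v$ with $v+_\sigma A\,n^2_j$ or $v+_\sigma B\,\mathsf{p}^2_{j,\sigma}$, where $\w_2(\mathsf{p}^2_{j,\sigma})$ lifts $-L_{C_\sigma}(\w_2(n^2_j))$. It checks that these chart additions do not change the values on $(\PP_2(j)\cap C_\sigma)\times C_\tau$ but shift $\w_2(v)(\mathbb{g}_2)$ by the needed amount, and this requires a case split on the sign of $\w_2(v)(\mathbb{g}_2)-\Psi_{\PP_1}(u)$.

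You instead build the preimage directly. You read $u$ off from $f|_{\{0\}\times C_\tau}$, then define $\w_2(v)|_{C_\sigma}$ on the splitting $\Z\mathbb{g}_2\oplus\ker\ell$, where $\ell$ is the linear extension of $\w_2(n^2_j)|_{C_\sigma}$. On the facet part it is given by $f$, and on $\mathbb{g}_2$ it takes the value $\Psi_{\PP_1}(u)$.

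\begin{itemize}
\item Your route avoids chart additions, the auxiliary elements $\mathsf{p}^2_{j,\sigma}$ and the sign case split. It also makes surjectivity and injectivity two faces of the same decomposition $y=(y-\lambda\mathbb{g}_2)+\lambda\mathbb{g}_2$.
\item The paper's route has its own payoff: the explicit correction $v+_\sigma A\,n^2_j$ is exactly what is reused to derive the chart addition formula in Lemma~\ref{lemma: chart addition formula for calE}.
\end{itemize}

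Some small fixes:
\begin{itemize}
\item $\psi_\sigma$ identifies the chart $N_{2,\sigma}$ with $\Hom(C_\sigma\cap\MM_2,\Z)$; it does not identify $C_\sigma$ with a cone in a lattice. For that, use any chart of $\MM_2$ (each is linear on $C_\sigma$) or the group generated by $C_\sigma\cap\MM_2$.
\item Your opening factorization through $\phi$ and Lemma~\ref{lem-pairing} is never used.
\item You should state explicitly that $f(y,x)=f(y,0)+f(0,x)$, so that agreement on the two factors gives $\kappa_{\sigma,\tau,j}(u,v)=f$.
\end{itemize}
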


\begin{proof} 
We first show that $\kappa_{\sigma,\tau,j}$ is surjective. As a first step, we claim that $\mathrm{res}_j$ is surjective. By Proposition~\ref{prop-conescover} we know that the lattice generated by $((\PP_2(j) \cap C_\sigma) \times C_\tau) \cap (\MM_2 \times \NN_1)$ is obtained from the lattice generated by $(C_\sigma\times C_\tau) \cap (\MM_2 \times \NN_1)$ by setting a (non-trivial) linear equation to $0$.  Thus, any semigroup homomorphism in $\Hom((\PP_2(j) \cap C_\sigma \times C_\tau) \cap (\MM_2 \times \NN_1), \Z)$ can be extended to $\Hom((C_\sigma \times C_\tau) \cap \MM_2 \times \NN_1,\Z)$, i.e., $\mathrm{res}_j$ is surjective.
Moreover, by Lemma~\ref{lemma: res cone surj}, the map $L_{\sigma \times \tau}$ is also surjective.
Strict duality implies that $\MM_1 \times \NN_2 \to \Sp(\MM_2 \times \NN_1)$ is a bijection. From this it follows that for any $f \in \Hom(((\PP_2(j) \cap C_\sigma) \times C_\tau) \cap \MM_2 \times \NN_1,\Z)$, there exists $(u,v) \in \MM_1 \times \NN_2$ mapping to $f$ under the composition of the strict duality map with $\mathrm{res}_j \circ L_{\sigma \times \tau}$. 
To prove the surjectivity of $\kappa_{\sigma,\tau,j}$, it therefore remains to show that we can find $(u_0,v_0) \in X$ which also maps to $f$.

In order to move an arbitrary preimage of $f$ into $X$ we will adjust the $\NN_2$-coordinate. 
We need some notation. Consider $L_{C_\sigma}(\w_2(n^2_j)) \in \Hom(C_\sigma\cap\MM_2,\Z)$. Since $\Hom(C_\sigma\cap\MM_2,\Z)$ is a $\Z$-module we also have that its negative $-L_{C_\sigma}(\w_2(n^2_j) )$ is contained in $\Hom(C_\sigma\cap\MM_2,\Z)$. Now let $\mathsf{p}^2_{j,\sigma}\in \NN_2$ be such that $\w_2(\mathsf{p}^2_{j,\sigma})$ is the lift of $-L_{C_\sigma}(\w_2(n^2_j))$ to $\Sp(\MM_2)$, which exists by Lemma~\ref{lemma: res cone surj}. Note that this lift depends on $\sigma$.  Also recall that $\mathbb{g}_2 \in C_{\sigma,j}$ because $\mathbb{g}_2$ lies in the lineality space of $\MM_2$. Moreover, $\w(n^2_t)$ evaluate to $1$ on $\mathbb{g}_2$ by condition (P-2)(b) of Definition~\ref{definition: Gorenstein PL cone}. From this it follows that 
$$
\w_2(n^2_j)(\mathbb{g}_2) = 1 
\quad 
\textup{ and } 
\quad 
\w_2(\mathsf{p}^2_{j,\sigma})(\mathbb{g}_2) = -1 
$$
for all $\sigma,j$. 

Next, we claim that if $(u,v) \in \MM_1 \times \NN_2$ maps to $f$, as is assumed above, then for $A \in \Z, A \geq 0$, the element $(u,v +_\sigma A n^2_j) \in \MM_1 \times \NN_2$ also maps to $f$. To see this, we must show that they evaluate to the same value against any $(y,x) \in (\PP_2(j) \cap C_\sigma) \times C_\tau$. For such $(y,x)$ we have 
\begin{equation}\label{eq: finding u0 v0}
    \begin{split}
    \langle (u,v+_\sigma A n^2_j),(y,x) \rangle & =     \w_2(v +_\sigma A n^2_j)(y) + \v_1(u)(x) \\
    & = \v_2(y)(v +_\sigma A n^2_j) + \v_1(u)(x) \\
        & = \v_2(y)(v) + \v_2(y)(A n^2_j) + \v_1(u)(x) \, \textup{ since $y \in C_\sigma = \v_2^{-1}(\Sp(\NN_2,\sigma))$}\\
        & = \w_2(v)(y) + A \v_2(y)(n^2_j) + \v_1(u)(x)\, \textup{ since points are $\Z_{\geq 0}$-linear} \\ 
        & = \w_2(v)(y) + A \w_2(n^2_j)(y) + \v_1(u)(x)\\
        & = \w_2(v)(y) + \v_1(u)(x) \, \textup{ since $y \in \PP_2(j)$} \\
        & = f(y,x) \, \textup{ by assumption, since $(u,v)$ maps to $f$}. 
    \end{split}
\end{equation}
Thus $(u,v +_\sigma A n^2_j) \in {\MM}_1 \times {\NN}_2$ also maps to $f$ as claimed.  A similar computation shows that for $B \in \Z, B \geq 0$, the element $(u, v +_\sigma B \mathsf{p}^2_{j,\sigma})$ also maps to $f$, where we use the fact that $y\in C_\sigma$ and $y \in \PP_2(j)$ to deduce that $\w_2(\mathsf{p}_{j,\sigma}^2)(y)=-\w_2(n_{j}^2)(y)=0$.

Now we may find $(u_0,v_0) \in X$ mapping to $f$. Given $(u,v)$ as above, we may compute $\w_2(v)(\mathbb{g}_2) - \Psi_{\PP_1}(u)$. If this quantity is $0$, then by definition of $X$, the element $(u,v)$ is in $X$, so we are done. If it is negative, then consider 
$$
(u_0,v_0) := (u, v +_\sigma (\Psi_{\PP_1}(u) - \w_2(v)(\mathbb{g}_2))n^2_j).
$$
We have already seen above that $(u_0,v_0)$ thus defined maps to $f$. A computation similar to~\eqref{eq: finding u0 v0} shows that 
\[
\w_2(v_0)(\mathbb{g}_2)= \w_2(v)(\mathbb{g}_2) + \Psi_{\PP_1}(u) \w_2(n^2_j)(\mathbb{g}_2) - \w_2(v)(\mathbb{g}_2) \w_2(n^2_j)(\mathbb{g}_2)
.\]
Since $\w_2(n^2_j)(\mathbb{g}_2)=1$ we have that $(u_0,v_0) \in X$. Finally, if $\w_2(v)(\mathbb{g}_2) - \Psi_{\PP_1}(u)$ is positive, then we take $(u_0,v_0) := (u, v+_\sigma (\w_2(v)(\mathbb{g}_2) - \Psi_{\PP_1}(u)) \mathsf{p}^2_{j,\sigma})$ and a similar argument, now using $\w_2(\mathsf{p}^2_{j,\sigma})(\mathbb{g}_2)=-1$, shows that this maps to $f$ and is contained in $X$. This completes the proof that $\kappa_{\sigma,\tau,j}$ is surjective.

We now wish to show that $\kappa_{\sigma,\tau,j}$ is injective. We observe first that $\{0_{\MM_2}\} \times C_\tau$ and $(\PP_2(j) \cap C_\sigma) \times \{0_{\NN_1}\}$ are subsets of $(\PP_2(j) \cap C_\sigma) \times C_\tau$, and any semigroup homomorphism from $((\PP_2(j) \cap C_\sigma)\times C_\tau)\cap (\MM_2\times\NN_1)$ to $\Z$ is determined by its restriction to these subsemigroups. We first examine the $\MM_2$ factor. We claim that if two homomorphisms $f,f': C_\sigma\cap\MM_2 \to \Z$ agree on $\PP_2(j) \cap C_\sigma$ and on $\mathbb{g}_2$, then $f = f'$.
To see this, consider the codimension-1 subcone $H :=\{y\in C_\sigma\mid \w_2(n_j^2)(y)=0\}$ of $C_\sigma$. By Proposition~\ref{prop-conescover}(3), $\mathcal{P}_2(j)\cap C_\sigma$ is a full-dimensional subcone of $H$ and it follows that if two semigroup homomorphisms from $H\cap\MM_2$ to $\Z$ agree on $\mathcal{P}_2(j)\cap C_\sigma\cap \MM_2$ then they agree on $H\cap\MM_2$.
To prove our claim, it suffices to show that any $y\in C_\sigma\cap\MM_2$ can be written as a sum of an element in $H\cap \MM_2$ and an integral multiple of $\mathbb{g}_2$. Let $A\in\Z$ be such that $\w_2(n^2_j)(y)=A$. Consider $y - A \mathbb{g}_2$; this is well-defined and still in $C_\sigma$ since $\mathbb{g}_2$ is in the lineality space of $\MM_2$. Then by Lemma~\ref{lemma: psi of a sum}, $\w_2(n^2_j)(y - A \mathbb{g}_2) = 0$ which means $y - A \mathbb{g}_2 \in H$ and thus $y= (y- A \mathbb{g}_2) + A \mathbb{g}_2$, showing that $y$ can be written as desired. This shows that $f = f'$ if they agree on $\PP_2(j) \cap C_\sigma$ and $\mathbb{g}_2$, as claimed.

To finish the proof that $\kappa_{\sigma,\tau,j}$ is injective, suppose $(u,v),(u',v') \in X$ and suppose they give rise to the same function $f:(({\PP}_2(j) \cap C_\sigma) \times C_\tau)\cap(\MM_2\times\NN_1) \to \Z$. In particular, $(u,v),(u',v')$ agree on $\{0_{\MM_2}\} \times C_\tau$, so from Lemma~\ref{lemma: res cone surj} it follows that $u=u'$. Moreover, since they also agree on $(\PP_2(j) \cap C_\sigma) \times \{0_{\NN_1}\}$, we have $\w_2(v) \mid_{{\PP}_2(j) \cap C_\sigma} = \w_2(v') \mid_{{\PP}_2(j) \cap C_\sigma}$. From the condition that $(u,v),(u',v')$ are elements of $X$ we can additionally conclude 
$$
{\w}_2(v)(\mathbb{g}_2) = \Psi_{\PP_1}(u) = \Psi_{\PP_1}(u') = {\w}_2(v')(\mathbb{g}_2)
$$
where the second equality is because $u=u'$.  Thus $\w_2(v),\w_2(v')$ agree on $\PP_2(j)\cap C_\sigma$ and $\mathbb{g}_2$, and thus $\w_2(v) = \w_2(v')$ by the paragraph above, interpreted as elements of $\Hom(C_\sigma\cap\MM_2,\Z)$. Again from  Lemma~\ref{lemma: res cone surj}, we conclude $\w_2(v)=\w_2(v')$ as elements in $\Sp(\MM_2)$, hence also $\NN_2$. This shows $\kappa_{\sigma,\tau,j}$ is injective.

Lastly, we show that the restriction of $\kappa_{\sigma,\tau,j}$ to $(C_{\alpha,i}\times C_\beta)\cap X$ is
$\Z_{\geq0}$-linear. As noted before the statement of the proposition, addition and scalar multiplication are well-defined on $(C_{\alpha,i}\times C_\beta)\cap X$. 
Let $(u,v),(u',v')\in(C_{\alpha,i}\times C_\beta)\cap X$. We have 
\[
\kappa_{\sigma,\tau,j}\big((u,v)+(u',v')\big)
=\mathrm{res}_j\circ L_{\sigma\times\tau}\big(\w_2(v+v'),\v_1(u+u')\big).
\]
By \cite[Lemma 4.2]{EscobarHaradaManon-PL}, $\v_1$ is linear on $C_\alpha\in\Sigma(\MM_1)$ and $\w_2$ is
linear on $C_\beta\in\Sigma(\NN_2)$, so $\v_1(u+u')=\v_1(u)+\v_1(u')$ and $\w_2(v+v')=\w_2(v)+\w_2(v')$.
Since $L_{\sigma\times\tau}$ and $\mathrm{res}_j$ are restrictions of functions, they preserve addition, so it
follows that $\kappa_{\sigma,\tau,j}((u,v)+(u',v'))=\kappa_{\sigma,\tau,j}(u,v)+\kappa_{\sigma,\tau,j}(u',v')$.
A similar argument using $\Z_{\geq0}$-homogeneity of $\v_1,\w_2$
\cite[Lemma 4.3]{EscobarHaradaManon-PL}, gives
$\kappa_{\sigma,\tau,j}(\lambda(u,v))=\lambda\,\kappa_{\sigma,\tau,j}(u,v)$ for $\lambda\in\Z_{\geq0}$. The same argument above is valid also with $\R$ coefficients. This completes the proof. 
\end{proof}

With these bijections established, we can now use them to define the mutations for a new polyptych lattice. Below, for simplicity we use the notation 
\begin{equation}\label{eq: def charts of calE}
E_{\sigma,\tau,j} := \Hom(((\PP_2(j) \cap C_\sigma) \times C_\tau) \cap (\MM_2 \times \NN_1), \Z)
\end{equation}
for $\sigma \in \pi(\NN_2), \tau \in \pi(\MM_1)$, and $j \in A_\sigma$.

\begin{definition}\label{definition: cone extension calE}
We follow the setup and notation of Section~\ref{subsec: prelim notation and def}. 
Consider the set of lattices $\{E_{\sigma,\tau,j} \, \mid \, \sigma \in \pi(\NN_2), \tau \in \pi(\MM_1), j \in A_\sigma\}$ where the $E_{\sigma,\tau,j}$ are defined in~\eqref{eq: def charts of calE}. For a pair of such indices $(\sigma_1,\tau_1,j_1)$ and $(\sigma_2,\tau_2,j_2)$, consider the map
\begin{equation*}\label{eq: mutations calE} 
\mu_{(\sigma_2,\tau_2,j_2),(\sigma_1,\tau_1,j_1)} := \kappa_{\sigma_2,\tau_2,j_2} \circ \kappa^{-1}_{\sigma_1,\tau_1,j_1}: E_{\sigma_1,\tau_1,j_1} \to E_{\sigma_2,\tau_2,j_2}
\end{equation*} 
which is a bijection and piecewise-linear by Proposition~\ref{proposition: phi sigma tau j bijection}. We define a polyptych lattice $\mathcal{E}$ by this collection $\{E_{\sigma,\tau,j} \, \mid \, \sigma \in \pi(\NN_2), \tau \in \pi(\MM_1), j \in A_\sigma\}$ of lattices with mutation maps $\{\mu_{(\sigma_2,\tau_2,j_2),(\sigma_1,\tau_1,j_1)}\}$. 
\end{definition}

Our first task is to prove that $\mathcal{E}$ is well-defined as a polyptych lattice.

\begin{proposition}\label{proposition: calE well defined}
Following the notation of Definition~\ref{definition: cone extension calE}, the data $\mathcal{E} = (\{E_{\sigma,\tau,j} \, \mid \, \sigma \in \pi(\NN_2), \tau \in \pi(\MM_1), j \in A_\sigma\}, \{\mu_{(\sigma_2,\tau_2,j_2),(\sigma_1,\tau_1,j_1)}: E_{\sigma_1,\tau_1,j_1} \to E_{\sigma_2,\tau_2,j_2}\}_{(\sigma_1,\tau_1,j_1),(\sigma_2,\tau_2,j_2)})$ in Definition~\ref{definition: cone extension calE} defines a finite polyptych lattice over $\Z$ of rank $r_1+r_2-1$.  Moreover, the set $X$ is in bijection with the set of elements of $\mathcal{E}$.
\end{proposition}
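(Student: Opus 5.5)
The plan is to verify the axioms of Definition~\ref{definition of polyptych lattice} directly from the fact that every mutation factors through the single set $X$. By Proposition~\ref{proposition: phi sigma tau j bijection}, each $\kappa_{\sigma,\tau,j}: X \to E_{\sigma,\tau,j}$ is a bijection. Hence $\mu_{(\sigma,\tau,j),(\sigma,\tau,j)} = \kappa_{\sigma,\tau,j}\circ\kappa_{\sigma,\tau,j}^{-1} = \mathrm{Id}$. The inverse property $\mu_{a,b} = \mu_{b,a}^{-1}$ and the cocycle condition $\mu_{b,c}\circ\mu_{a,b} = \mu_{a,c}$ follow formally, because the inner $\kappa^{-1}\circ\kappa$ cancels. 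The same formal argument shows that $X$ is in bijection with the set of elements of $\mathcal{E}$. An equivalence class in $\bigsqcup E_{\sigma,\tau,j}$ under the mutations is exactly an orbit $\{\kappa_{\sigma,\tau,j}(x)\}$ for a unique $x\in X$, so the map $x\mapsto [\kappa_{\sigma,\tau,j}(x)]$, which is independent of the chosen chart, is a bijection. Finiteness is immediate, since $\pi(\NN_2)$, $\pi(\MM_1)$ and each $A_\sigma\subseteq[s_2]$ are finite. The index set is nonempty because $A_\sigma\neq\emptyset$ by Lemma~\ref{lemma: union of csigmaj}.

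Next I would check that each $E_{\sigma,\tau,j}$ is a free $\Z$-module of rank $r_1+r_2-1$. By Proposition~\ref{prop-conescover}(3), $(\PP_2(j)\cap C_\sigma)\times C_\tau$ is a codimension-$1$ rational polyhedral subcone of the full-dimensional cone $C_\sigma\times C_\tau$, whose ambient dimension is $r_2+r_1$. Its lattice points therefore form a finitely generated semigroup whose group completion is a lattice of rank $r_1+r_2-1$. The semigroup homomorphisms to $\Z$ are exactly the homomorphisms from that group completion. Concretely, this lattice is the subgroup of the lattice generated by $C_\sigma\times C_\tau$ cut out by the primitive linear form $\w_2(n^2_j)$; the form is primitive by Lemma~\ref{lemma: nonzero and primitive}. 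So $E_{\sigma,\tau,j}\cong\Z^{r_1+r_2-1}$. One subtlety to handle here is that the group generated by the lattice points of the subcone equals all lattice points of its linear span. This holds for any full-dimensional rational cone in a saturated sublattice, and that is the argument already used in the proof of Lemma~\ref{lemma: res cone surj}. The computation does not depend on $(\sigma,\tau,j)$, which gives the uniform rank.

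The main obstacle is showing that each mutation $\mu$ is piecewise-linear, and in particular that it has finitely many domains of linearity covering the whole chart. I would take the domains of linearity of $\mu_{b,a}$ on $E_a$ to be the images $\kappa_a\big((C_{\alpha,i}\times C_\beta)\cap X\big)$, indexed by $\alpha\in\pi(\NN_1)$, $\beta\in\pi(\MM_2)$ and $i\in A_\alpha$. By Proposition~\ref{proposition: cones cover Y}(8) these sets cover $X_\R$. By Proposition~\ref{proposition: phi sigma tau j bijection}, both $\kappa_a$ and $\kappa_b$ are $\R_{\geq0}$-linear on each of these cones.

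It remains to see that $\kappa_a$ carries each such cone onto a rational polyhedral cone in $E_a\otimes\R$ and is injective there. Injectivity follows from the injectivity of the extended map $\kappa_{a,\R}$, which I would argue exactly as in the proof of Proposition~\ref{proposition: phi sigma tau j bijection} over $\R$. Since $\kappa_{a,\R}$ is linear and injective on each cone $(C_{\alpha,i}\times C_\beta)\cap X_\R$, it is the restriction of an invertible linear map of the $(r_1+r_2-1)$-dimensional spans. Consequently $\mu_{b,a}$ restricted to $\kappa_a$ of that cone is the linear map $\kappa_b\circ\kappa_a^{-1}$. These finitely many cones cover $E_a\otimes\R$ because $\kappa_{a,\R}$ is surjective. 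This gives piecewise-linearity, and completes the verification that $\mathcal{E}$ is a finite polyptych lattice of rank $r_1+r_2-1$.
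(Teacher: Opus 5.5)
Your proposal is correct and follows essentially the same route as the paper. You compute the rank of each chart from the codimension-$1$ subcone $(\PP_2(j)\cap C_\sigma)\times C_\tau$ of Proposition~\ref{prop-conescover}(3), get the axioms and the bijection with $X$ because every mutation factors through the bijections $\kappa_{\sigma,\tau,j}$, and get piecewise-linearity from the cover of $X$ by the cones $(C_{\alpha,i}\times C_\beta)\cap X$, on which every $\kappa$ is linear. Your extra check that $\kappa_{\sigma,\tau,j,\R}$ is injective and surjective just makes explicit why the images of these cones are full-dimensional and why $\kappa^{-1}$ is linear on them, which the paper leaves implicit.
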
 

\begin{proof} 

We can readily verify that the conditions of Definition~\ref{definition of polyptich lattice} hold.
First, by Proposition~\ref{prop-conescover}(3), $(\PP_2(j)\cap C_\sigma)\times C_\tau$ is a codimension-$1$ rational polyhedral subcone of the full-dimensional cone $C_\sigma\times C_\tau \subset (\MM_2\times\NN_1)_\R$, so the lattice it generates in $\MM_2\times\NN_1$ has rank $r_1+r_2-1$. Since every $\Z_{\geq 0}$-linear semigroup homomorphism to $\Z$ extends uniquely to that lattice, this implies that each chart $E_{\sigma,\tau,j}$ is isomorphic to $\Z^{r_1+r_2-1}$. The index set is finite because $\MM_1,\NN_2$ are finite by assumption, and $A_\sigma\subseteq[s_2]$.

By Proposition~\ref{proposition: phi sigma tau j bijection} each $\kappa_{\sigma,\tau,j}: X\to E_{\sigma,\tau,j}$ is a bijection.
This immediately implies that the elements of $\mathcal{E}$ are in bijection with the elements of $X$.
We also have that each $\mu_{(\sigma_2,\tau_2,j_2),(\sigma_1,\tau_1,j_1)}=\kappa_{\sigma_2,\tau_2,j_2}\circ\kappa^{-1}_{\sigma_1,\tau_1,j_1}$ is also a bijection. For piecewise linearity, recall from Proposition~\ref{proposition: cones cover Y} that the finitely many cones $(C_{\alpha,i}\times C_\beta)\cap X$ cover $X$, while by the last statement of Proposition~\ref{proposition: phi sigma tau j bijection} every $\kappa_{\sigma,\tau,j}$ is $\Z_{\geq0}$-linear on each such cone. Hence the images $\kappa_{\sigma_1,\tau_1,j_1}\big((C_{\alpha,i}\times C_\beta)\cap X\big)$ form a finite cone cover of $E_{\sigma_1,\tau_1,j_1}$ on each of which $\mu_{(\sigma_2,\tau_2,j_2),(\sigma_1,\tau_1,j_1)}$ is a composition of $\Z_{\geq0}$-linear maps, hence linear.

Finally, conditions (1)--(3) of Definition~\ref{definition of polyptich lattice} are immediate. 
\end{proof}

By switching roles of $X$ and $X^\vee$, we also obtain a polyptych lattice $\mathcal{F}$.  For $\alpha \in \pi(\NN_1), \beta \in \pi(\MM_2),$ and $i \in A_\alpha$ we define the map $\kappa_{\alpha,\beta,i}$ as the composition
\begin{equation}\label{eq: def kappa alpha beta i}
\begin{tikzcd}
X^\vee \arrow[r,""]& \Sp(\MM_1 \times \NN_2) \arrow[r,"L_{\alpha \times \beta}"] & \Hom((C_{\alpha} \times C_{\beta}) \cap (\MM_1 \times \NN_2),\Z) \arrow[r, "\mathrm{res}_i"] &  \Hom(((\PP_1(i) \cap C_\alpha) \times C_\beta) \cap (\MM_1 \times \NN_2),\Z)
\end{tikzcd}
\end{equation}
and also define 
\begin{equation}\label{eq: def charts of calF}
F_{\alpha,\beta,i} := \Hom(((\PP_1(i) \cap C_\alpha) \times C_\beta) \cap (\MM_1 \times \NN_2), \Z).
\end{equation}

The analogous arguments as for Proposition~\ref{proposition: phi sigma tau j bijection}, obtained by switching indices, gives the following. 
\begin{proposition}\label{proposition: kappa sigma beta i bijection}
Let $i\in A_\alpha$.
The map $\kappa_{\alpha,\beta,i}$ defined in \eqref{eq: def kappa alpha beta i} is a bijection. Moreover, the restriction of $\kappa_{\alpha,\beta,i}$ to $(C_{\sigma,j}\times C_\tau)\cap X^\vee$ for any maximal-dimensional $C_{\sigma} \times C_{\tau} \in \Sigma(\MM_2 \times \NN_1)$ and $j \in A_{\sigma}$ is $\Z_{\geq 0}$-linear. Moreover, the natural extension $\kappa_{\alpha,\beta,i,\R}$ restricted to $(C_{\sigma,j} \times C_\tau) \cap X^\vee_\R$ is $\R_{\geq 0}$-linear. 
\end{proposition}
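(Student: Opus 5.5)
The plan is to transcribe the proof of Proposition~\ref{proposition: phi sigma tau j bijection} under the index swap $\MM_1\leftrightarrow\MM_2$, $\NN_1\leftrightarrow\NN_2$, $\PP_1\leftrightarrow\PP_2$, $\mathbb{g}_1\leftrightarrow\mathbb{g}_2$, $(\sigma,\tau,j)\leftrightarrow(\alpha,\beta,i)$, $X\leftrightarrow X^\vee$. Here Proposition~\ref{proposition: cones cover Y} takes the place of Proposition~\ref{prop-conescover}. The map in \eqref{eq: def kappa alpha beta i} sends $(y,x)\in X^\vee$ to the restriction of the point $(\w_1(x),\v_2(y))\in\Sp(\MM_1\times\NN_2)$ to the codimension-$1$ subcone $(\PP_1(i)\cap C_\alpha)\times C_\beta$.

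\emph{Surjectivity.} First, $\mathrm{res}_i$ is surjective. By Proposition~\ref{proposition: cones cover Y}(6), the lattice generated by $((\PP_1(i)\cap C_\alpha)\times C_\beta)\cap(\MM_1\times\NN_2)$ is the zero set of one nontrivial linear equation inside the lattice generated by the maximal cone, so homomorphisms extend. Second, $L_{\alpha\times\beta}$ is surjective by Lemma~\ref{lemma: res cone surj}, and the duality map $\MM_2\times\NN_1\to\Sp(\MM_1\times\NN_2)$ is bijective. Together these give some $(y,x)\in\MM_2\times\NN_1$ mapping to a prescribed $f$.

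To move this preimage into $X^\vee$, adjust the $\NN_1$-coordinate by $x\mapsto x+_\alpha A n^1_i$ or $x\mapsto x+_\alpha B\mathsf{p}^1_{i,\alpha}$. Here $\mathsf{p}^1_{i,\alpha}\in\NN_1$ is chosen via Lemma~\ref{lemma: res cone surj} so that $\w_1(\mathsf{p}^1_{i,\alpha})$ restricts to $-L_{C_\alpha}(\w_1(n^1_i))$ on $C_\alpha$. The computation analogous to \eqref{eq: finding u0 v0} runs as follows. For $u\in C_\alpha$, the point $\v_1(u)$ lies in $\Sp(\NN_1,\alpha)$, so it is additive for $+_\alpha$. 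Also $\w_1(n^1_i)(u)=0$ on $\PP_1(i)$. Hence these adjustments do not change $f$. Since $\w_1(n^1_i)(\mathbb{g}_1)=1$ and $\w_1(\mathsf{p}^1_{i,\alpha})(\mathbb{g}_1)=-1$ (note $\mathbb{g}_1\in C_\alpha$), a suitable choice of $A$ or $B$ makes $\w_1(x)(\mathbb{g}_1)=\Psi_{\PP_2}(y)$.

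\emph{Injectivity.} Suppose $(y,x),(y',x')\in X^\vee$ have the same image. Restricting to $\{0\}\times C_\beta$ and applying Lemma~\ref{lemma: res cone surj} gives $y=y'$. Restricting to $(\PP_1(i)\cap C_\alpha)\times\{0\}$ shows that $\w_1(x)$ and $\w_1(x')$ agree on $\PP_1(i)\cap C_\alpha$. Membership in $X^\vee$ gives $\w_1(x)(\mathbb{g}_1)=\Psi_{\PP_2}(y)=\w_1(x')(\mathbb{g}_1)$. Now every $u\in C_\alpha\cap\MM_1$ decomposes as $(u-A\mathbb{g}_1)+A\mathbb{g}_1$ with $A=\w_1(n^1_i)(u)$, where the first summand lies in the hyperplane $\{\w_1(n^1_i)=0\}\cap C_\alpha$. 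Since $\PP_1(i)\cap C_\alpha$ is full-dimensional in that hyperplane, $\w_1(x)$ and $\w_1(x')$ agree on all of $C_\alpha$. Lemma~\ref{lemma: res cone surj} then gives $x=x'$.

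\emph{Linearity.} On $(C_{\sigma,j}\times C_\tau)\cap X^\vee$, which is closed under addition by Lemma~\ref{lemma: A sigma equivalence}, the maps $\v_2$ (linear on $C_\sigma$) and $\w_1$ (linear on $C_\tau$) are $\Z_{\ge0}$-linear by \cite[Lemmas 4.2, 4.3]{EscobarHaradaManon-PL}. Restrictions of functions preserve sums, so $\kappa_{\alpha,\beta,i}$ is $\Z_{\ge0}$-linear there, and the same argument works over $\R$.

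The main obstacle is bookkeeping: checking carefully that every step of the original proof uses only hypotheses that are symmetric under the swap. In particular, the adjustment step needs $\v_1(u)\in\Sp(\NN_1,\alpha)$ for $u\in C_\alpha$, which holds because $C_\alpha=\v_1^{-1}(\Sp(\NN_1,\alpha))$ by strict duality.
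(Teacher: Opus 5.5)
Your proposal is correct and is exactly the paper's approach: the paper proves this statement only by saying that the proof of Proposition~\ref{proposition: phi sigma tau j bijection} goes through with indices switched. Your transcription tracks that swap faithfully at each step. In particular, you adjust the $\NN_1$-coordinate with $+_\alpha$, using $C_\alpha=\v_1^{-1}(\Sp(\NN_1,\alpha))$, and for injectivity you use $\mathbb{g}_1$ together with the codimension-$1$ face $\PP_1(i)\cap C_\alpha$.
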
 

We define the polyptych lattice $\mathcal{F}$ analogously, as well. 

\begin{definition}\label{definition: cone extension calF}
Following the same setup of Definition~\ref{definition: cone extension calE}, 
consider the set of lattices $\{F_{\alpha,\beta,i} \, \mid \, \alpha \in \pi(\NN_1), \beta \in \pi(\MM_2), i \in A_\alpha\}$. For a pair of such indices $(\alpha_1,\beta_1,i_1)$ and $(\alpha_2,\beta_2,i_2)$, consider the map
\begin{equation*}\label{eq: mutations calF} 
\mu_{(\alpha_2,\beta_2,i_2),(\alpha_1,\beta_1,i_1)} := \kappa_{\alpha_2,\beta_2,i_2} \circ \kappa^{-1}_{\alpha_1,\beta_1,i_1}: F_{\alpha_1,\beta_1,i_1} \to F_{\alpha_2,\beta_2,i_2}
\end{equation*} 
which is a bijection and piecewise linear by Proposition~\ref{proposition: kappa sigma beta i bijection}. We define a polyptych lattice $\mathcal{F}$ by this collection $\{F_{\alpha,\beta,i} \, \mid \, \alpha \in \pi(\NN_1), \beta \in \pi(\MM_2), i \in A_\alpha\}$ of lattices with mutation maps $\{\mu_{(\alpha_2,\beta_2,i_2),(\alpha_1,\beta_1,i_1)}\}$ as defined above. 

\end{definition} 

The analogous proof as for Proposition~\ref{proposition: calE well defined} yields well-definedness.

\begin{proposition}\label{proposition: calF well defined}
Following the notation of Definition~\ref{definition: cone extension calF}, the data $\mathcal{F} = (\{F_{\alpha,\beta,i} \, \mid \, \alpha \in \pi(\NN_1), \beta \in \pi(\MM_2), i \in A_\alpha\}, \{\mu_{(\alpha_2,\beta_2,i_2),(\alpha_1,\beta_1,i_1)}: F_{\alpha_1,\beta_1,i_1} \to F_{\alpha_2,\beta_2,i_2}\}_{(\alpha_1,\beta_1,i_1),(\alpha_2,\beta_2,i_2)})$ in Definition~\ref{definition: cone extension calF} defines a finite polyptych lattice over $\Z$ of rank $r_1+r_2-1$. Moreover, the set $X^\vee$ is in bijection with the set of elements of $\mathcal{F}$. 

\end{proposition}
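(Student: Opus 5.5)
The plan is to mirror the proof of Proposition~\ref{proposition: calE well defined}, exchanging the roles of $(\MM_1,\NN_2,\PP_1,X)$ and $(\MM_2,\NN_1,\PP_2,X^\vee)$. The substantive input is Proposition~\ref{proposition: kappa sigma beta i bijection}, which we take as established by the index-swapped version of the proof of Proposition~\ref{proposition: phi sigma tau j bijection}. In that swapped argument, surjectivity adjusts the $\NN_1$-coordinate $x$ of a preimage by $A n^1_i$ or $B\mathsf{p}^1_{i,\alpha}$, added in the chart $\alpha$. Here $\mathsf{p}^1_{i,\alpha}\in\NN_1$ lifts $-L_{C_\alpha}(\w_1(n^1_i))$, which is possible by Lemma~\ref{lemma: res cone surj}. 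Injectivity uses that $\mathbb{g}_1$ together with $\PP_1(i)\cap C_\alpha$ determines a homomorphism on $C_\alpha\cap\MM_1$. With these in hand, the remaining verification is formal.

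\emph{Step 1: the charts have the right rank.} By the analogue of Proposition~\ref{prop-conescover}(3), recorded as Proposition~\ref{proposition: cones cover Y}(6), each $(\PP_1(i)\cap C_\alpha)\times C_\beta$ is a codimension-$1$ rational polyhedral subcone of the full-dimensional cone $C_\alpha\times C_\beta\subset(\MM_1\times\NN_2)_\R$, whose dimension is $r_1+r_2$. Its lattice points therefore generate a sublattice of rank $r_1+r_2-1$. Every semigroup homomorphism to $\Z$ extends uniquely to the group it generates, so $F_{\alpha,\beta,i}\cong\Z^{r_1+r_2-1}$.

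\emph{Step 2: the index set is finite.} The index set $\{(\alpha,\beta,i)\}$ is finite because $\NN_1$ and $\MM_2$ are finite and $A_\alpha\subseteq[s_1]$.

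\emph{Step 3: elements and mutations.} Since each $\kappa_{\alpha,\beta,i}\colon X^\vee\to F_{\alpha,\beta,i}$ is a bijection, every element of $\mathcal{F}$ has a unique representative in each chart, so the elements of $\mathcal{F}$ are identified with $X^\vee$. Each mutation $\kappa_{\alpha_2,\beta_2,i_2}\circ\kappa_{\alpha_1,\beta_1,i_1}^{-1}$ is a bijection. The identities $\mu_{\gamma,\gamma}=\mathrm{Id}$, $\mu_{\gamma,\delta}=\mu_{\delta,\gamma}^{-1}$ and $\mu_{\delta,\epsilon}\circ\mu_{\gamma,\delta}=\mu_{\gamma,\epsilon}$ follow immediately from writing each mutation as $\kappa\circ\kappa^{-1}$.

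\emph{Step 4: piecewise linearity.} This is the only point needing care. By Proposition~\ref{prop-conescover}(5), the finitely many cones $(C_{\sigma,j}\times C_\tau)\cap X^\vee$ cover $X^\vee$. By Proposition~\ref{proposition: kappa sigma beta i bijection}, every $\kappa_{\alpha,\beta,i}$ is $\Z_{\ge0}$-linear on each of these cones, and is $\R_{\ge0}$-linear after extension to $\R$. Hence the images of these cones under $\kappa_{\alpha_1,\beta_1,i_1}$ form a finite cover of $F_{\alpha_1,\beta_1,i_1}$ by cones. On each such cone the mutation is a composition of an inverse of a conewise-linear bijection with a linear map, and so is linear.

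\emph{Main obstacle.} The main check is that this cover is by genuine rational polyhedral cones. This holds because the $\R$-extension of $\kappa_{\alpha_1,\beta_1,i_1}$ is a linear bijection on each piece, so it carries rational cones to rational cones. Beyond this, no new difficulty arises relative to $\mathcal{E}$, since the argument is symmetric under the swap of indices.
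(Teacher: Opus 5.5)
Your proposal is correct and follows the paper's approach. The paper proves this proposition by saying it is analogous to Proposition~\ref{proposition: calE well defined}. Your Steps 1--4 are that proof with the indices swapped, and they invoke the right analogues: Proposition~\ref{proposition: cones cover Y}(6) for the rank, Proposition~\ref{prop-conescover}(5) for the finite cone cover of $X^\vee$, and Proposition~\ref{proposition: kappa sigma beta i bijection} for bijectivity and conewise linearity.
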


It will be useful in the later arguments to have chart addition formulas for $\mathcal{E}$ and $\mathcal{F}$. We start with $\mathcal{E}$. Recall that, in our notation below, we have $C_\sigma \subset \MM_2$ and $C_\tau \subset \NN_1$, which means $\sigma \in \pi(\NN_2)$ and $\tau \in \pi(\MM_1)$ are indices of coordinate charts of $\NN_2$ and $\MM_1$ respectively.

\begin{lemma}\label{lemma: chart addition formula for calE} 
Let $\mathcal{E}$ be the polyptych lattice of Definition~\ref{definition: cone extension calE}, with the set of elements identified with the set $X$ of \eqref{eq: def X} and with coordinate charts $\kappa_{\sigma,\tau,j}$ given in \eqref{eq: def kappa sigma tau j}. Suppose $(u,v),(u',v') \in X \subset \MM_1 \times \NN_2$ (viewed as elements of $\mathcal{E}$). Then the chart addition of $(u,v)$ and $(u',v')$ with respect to the chart $\kappa_{\sigma,\tau,j}$ is given by 
\begin{equation}\label{eq: chart addition formula sigma tau j case geq 0}
    (u,v) +_{\sigma,\tau,j} (u',v') =
(u+_\tau u', v+_\sigma v' +_\sigma (\Psi_{\PP_1}(u +_\tau u') - \w_2(v +_\sigma v')(\mathbb{g}_2))n^2_j). 
\end{equation}

\end{lemma}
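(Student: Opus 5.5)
The plan is to use the definition of chart addition in a polyptych lattice: $(u,v)+_{\sigma,\tau,j}(u',v')$ is the unique element of $X$ whose image under $\kappa_{\sigma,\tau,j}$ equals $\kappa_{\sigma,\tau,j}(u,v)+\kappa_{\sigma,\tau,j}(u',v')$ in the lattice $E_{\sigma,\tau,j}$. Since $\kappa_{\sigma,\tau,j}$ is a bijection by Proposition~\ref{proposition: phi sigma tau j bijection}, it suffices to check two things about the proposed right-hand side $(u'',v'') := (u+_\tau u', v+_\sigma v' +_\sigma c\, n^2_j)$, where $c := \Psi_{\PP_1}(u +_\tau u') - \w_2(v +_\sigma v')(\mathbb{g}_2)$:
\begin{itemize}
\item[(a)] $(u'',v'')\in X$;
\item[(b)] $\kappa_{\sigma,\tau,j}(u'',v'')=\kappa_{\sigma,\tau,j}(u,v)+\kappa_{\sigma,\tau,j}(u',v')$.
\end{itemize}

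For (b), I first compute on the ambient product, before imposing membership in $X$. Take $(y,x)\in((\PP_2(j)\cap C_\sigma)\times C_\tau)\cap(\MM_2\times\NN_1)$. Using~\eqref{eq: explicit pairing}, the pairing of $(u+_\tau u', v+_\sigma v')$ with $(y,x)$ equals $\v_1(u+_\tau u')(x)+\v_2(y)(v+_\sigma v')$.
\begin{itemize}
\item Since $x\in C_\tau$, Lemma~\ref{lemma: M alpha addition formula} (applied to $\MM_1$ with chart $\tau$) gives $\v_1(u+_\tau u')(x)=\v_1(u)(x)+\v_1(u')(x)$.
\item Since $y\in C_\sigma=\v_2^{-1}(\Sp(\NN_2,\sigma))$, the point $\v_2(y)$ is linear on the chart $\sigma$. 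Hence $\v_2(y)(v+_\sigma v')=\v_2(y)(v)+\v_2(y)(v')$.
\end{itemize}
Thus $(u+_\tau u', v+_\sigma v')$ restricts to the sum of the two functionals on the chart cone $(\PP_2(j)\cap C_\sigma)\times C_\tau$.

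Next I correct the $\NN_2$-coordinate into $X$, reusing the computation~\eqref{eq: finding u0 v0} from the proof of Proposition~\ref{proposition: phi sigma tau j bijection}. Adding $c\,n^2_j$ in chart $\sigma$ does not change the restriction to $(\PP_2(j)\cap C_\sigma)\times C_\tau$, because $\w_2(n^2_j)(y)=0$ there and $\v_2(y)$ is linear on chart $\sigma$.

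The main obstacle is that $c$ may be negative, while~\eqref{eq: finding u0 v0} was written for $A\ge 0$. I handle this directly: $\v_2(y)$ is $\Z$-linear on the whole chart $M_\sigma$ (not only on a semigroup), so $\v_2(y)(v+_\sigma c\,n^2_j)=\v_2(y)(v)+c\,\w_2(n^2_j)(y)$ holds for any $c\in\Z$. Alternatively, for $c<0$ I could use $\mathsf{p}^2_{j,\sigma}$ as in that proof, but then the formula would need reconciling with the one stated. The direct linearity argument avoids this and gives the stated form.

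For (a), I need $\w_2(v'')(\mathbb{g}_2)=\Psi_{\PP_1}(u'')$. Since $\mathbb{g}_2$ lies in the lineality space, it lies in every cone of $\Sigma(\MM_2)$, in particular in $C_\sigma$. Therefore $\v_2(\mathbb{g}_2)$ is linear on chart $\sigma$, and
$$\w_2(v'')(\mathbb{g}_2)=\w_2(v+_\sigma v')(\mathbb{g}_2)+c\,\w_2(n^2_j)(\mathbb{g}_2).$$
By (P-2)(b), $\w_2(n^2_j)(\mathbb{g}_2)=1$. Substituting the value of $c$ gives $\w_2(v'')(\mathbb{g}_2)=\Psi_{\PP_1}(u+_\tau u')=\Psi_{\PP_1}(u'')$, so $(u'',v'')\in X$. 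By injectivity of $\kappa_{\sigma,\tau,j}$, the formula~\eqref{eq: chart addition formula sigma tau j case geq 0} follows.
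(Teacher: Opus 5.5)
Your argument is correct and follows the same skeleton as the paper's proof:
\begin{enumerate}
\item lift $\kappa_{\sigma,\tau,j}(u,v)+\kappa_{\sigma,\tau,j}(u',v')$ to $(u+_\tau u', v+_\sigma v')$ via linearity on $C_\tau$ and $C_\sigma$;
\item move the $\NN_2$-coordinate into $X$ by adding a multiple of $n^2_j$;
\item conclude by injectivity of $\kappa_{\sigma,\tau,j}$.
\end{enumerate}

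The one place you differ is the sign of the coefficient $c$, and the obstacle you anticipate never actually arises. Since $(u,v),(u',v')\in X$ and $\v_2(\mathbb{g}_2)$ is linear on chart $\sigma$ (the same fact you use in (a)), one has $c=\Psi_{\PP_1}(u+_\tau u')-\Psi_{\PP_1}(u)-\Psi_{\PP_1}(u')$. This is $\geq 0$: each $\w_1(n^1_i)$ is a point, so $\w_1(n^1_i)(u+_\tau u')\geq \w_1(n^1_i)(u)+\w_1(n^1_i)(u')$, and taking the minimum over $i$ gives the inequality for $\Psi_{\PP_1}$.

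The paper proves exactly this nonnegativity and then reuses the nonnegative-multiple computation~\eqref{eq: finding u0 v0}. You instead invoke full $\Z$-linearity of $\v_2(y)$ on chart $\sigma$. That is valid, but for negative $c$ it forces you to read $c\,n^2_j$ as a multiple taken inside chart $\sigma$. Your version is a bit shorter and needs no superadditivity. The paper's extra information $c\geq 0$, however, makes $c\,n^2_j$ chart-independent and is used afterwards. For instance, in the proof of Proposition~\ref{proposition: dual map F to Sp of calE}, $y$ need not lie in $C_\sigma$, so only superadditivity of $\v_2(y)$ and the inequality $c\,\w_2(n^2_j)(y)\geq 0$ are available.
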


\begin{proof} 
Recall that $\kappa_{\sigma,\tau,j}$ is the composition of the 
inclusion $X \hookrightarrow \MM_1 \times \NN_2$ with the strict duality map 
$(u,v) \mapsto (\w_2(v),\v_1(u))$, followed by $\mathrm{res}_j \circ L_{\sigma \times \tau}: 
\Sp(\MM_2 \times \NN_1) \to \Hom(((\PP_2(j) \cap C_\sigma) \times C_\tau) \cap (\MM_2 \times \NN_1), \Z)$. 
To compute the chart addition we must take $\kappa_{\sigma,\tau,j}(u,v) + \kappa_{\sigma,\tau,j}(u',v')$ 
in $\Hom(((\PP_2(j) \cap C_\sigma) \times C_\tau) \cap (\MM_2 \times \NN_1), \Z)$ and apply $\kappa_{\sigma,\tau,j}^{-1}$. 
It is useful to compute this inverse in two stages, allowing us to use the argument in the proof of Proposition~\ref{proposition: phi sigma tau j bijection} that $\kappa_{\sigma,\tau,j}$ is surjective.

As a first step, observe that Lemma~\ref{lemma: M alpha addition formula} implies (where we take $\MM = \MM_1 \times \NN_2$ and $\v = (\v_1, \w_2)$) that $(u +_\tau u', v +_\sigma v') \in \MM_1 \times \NN_2$ maps, under $L_{\sigma\times \tau} \circ (\w_2,\v_1)$, to an element of $\Hom((C_\sigma \times C_\tau)  \cap (\MM_2 \times \NN_1),\Z)$ which in turn maps under $\mathrm{res}_j$ to $\kappa_{\sigma,\tau,j}(u,v) + \kappa_{\sigma,\tau,j}(u',v')$.  With this established, we may now use the argument in the proof of Proposition~\ref{proposition: phi sigma tau j bijection} to change $(u +_\tau u', v +_\sigma v')$ into an element of $X$ that also maps to the same homomorphism $\kappa_{\sigma,\tau,j}(u,v) + \kappa_{\sigma,\tau,j}(u',v')$ in $\Hom(((\PP_2(j) \cap C_\sigma) \times C_\tau) \cap \MM_2 \times \NN_1,\Z)$. 

Take $A:=\Psi_{\PP_1}(u +_\tau u') - \w_2(v +_\sigma v')(\mathbb{g}_2)$ and let us show that $A\ge 0$. Recall that $(u,v),(u',v')$ are in $X$, hence $\Psi_{\PP_1}(u) = \w_2(v)(\mathbb{g}_2)$ and $\Psi_{\PP_1}(u') = \w_2(v')(\mathbb{g}_2)$. Moreover, since $\mathbb{g}_2$ is in the lineality space of $\MM_2$, it is contained in every cone of mutation fan of $\MM_2$ and consequently $\w_2(v+_\sigma v')(\mathbb{g}_2) = \w_2(v)(\mathbb{g}_2) + \w_2(v')(\mathbb{g}_2)$. Putting these together, in order to prove the desired inequality it would suffice to prove $\Psi_{\PP_1}(u+_\tau u') - \Psi_{\PP_1}(u) - \Psi_{\PP_1}(u') \geq 0$. Now recall that $\Psi_{\PP_1} := \min_{j \in [s_1]} \{\w_1(n^1_j)\}$ where each $\w_1(n^1_j)$ is a point, which means it satisfies $\w_1(n^1_j)(u+_\tau u') \geq \w_1(n^1_j)(u) + \w_1(n^1_j)(u')$. We can compute 
\begin{equation}
    \begin{split} 
\Psi_{\PP_1}(u +_\tau u') & = \min_{j \in [s_1]} \{\w_1(n^1_j)(u +_\tau u')\} \\
& \geq \min_{j \in [s_1]} \{\w_1(n^1_j)(u) + \w_1(n^1_j)(u')\} \\
& \geq \min_{j \in [s_1]} \{\w_1(n^1_j)(u)\} + \min_{j \in [s_1]} \{\w_1(n^1_j)(u')\} \\
& = \Psi_{\PP_1}(u) + \Psi_{\PP_1}(u') 
    \end{split} 
\end{equation}
which is what we wanted to show.

Using the argument in Proposition~\ref{proposition: phi sigma tau j bijection} we have that $(u +_\tau u', v +_\sigma v' +_\sigma A\, n^2_j)$ lies in $X$ and also maps to $\kappa_{\sigma,\tau,j}(u,v) + \kappa_{\sigma,\tau,j}(u',v')$.
Since $\kappa_{\sigma,\tau,j}$ is injective by Proposition~\ref{proposition: phi sigma tau j bijection}, we conclude that $(u +_\tau u', v +_\sigma v' +_\sigma A\, n^2_j)$ is the chart sum $(u,v) +_{\sigma,\tau,j} (u',v')$, which is the formula 
in~\eqref{eq: chart addition formula sigma tau j case geq 0}. 
\end{proof} 
 
We will also need the chart addition formula in $Y$. The proof is a computation using the bijection $\phi: X \to Y$ and is omitted. 

\begin{lemma}\label{lemma: chart addition in Y}
Let the notation be as in Lemma~\ref{lemma: chart addition formula for calE}. Using the  bijection $\phi: X \to Y$, let the set of elements of $\mathcal{E}$ be represented by $Y$. Then the chart addition of $(u,v)$ and $(u',v')$ in $Y$, with respect to $\kappa_{\sigma,\tau,j}$, is given by 
%obtained by $\phi^{-1}(\phi(u,v)+_{\sigma,\tau,j} \phi(u',v'))$. For $(u,v),(u',v') \in Y$ we have 
\begin{equation}\label{eq: chart addition in Y}
(u,v)+_{\sigma,\tau,j} (u',v') = (u+_\tau u' - \Psi_{\PP_1}(u+_\tau u')\cdot \mathbb{g}_1, v+_\sigma v' +_\sigma \Psi_{\PP_1}(u+_\tau u')\cdot n^2_j) \in Y.
\end{equation} 
\end{lemma}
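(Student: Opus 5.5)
The plan is to transport the addition formula of Lemma~\ref{lemma: chart addition formula for calE} through the bijection $\phi|_X : X \to Y$ of Lemma~\ref{lem-2sets}, whose inverse is $\psi|_Y$. By definition, the chart addition on $Y$ with respect to $\kappa_{\sigma,\tau,j}$ is
\[
(u,v)+_{\sigma,\tau,j}(u',v') = \phi\bigl(\psi(u,v) +_{\sigma,\tau,j} \psi(u',v')\bigr),
\]
where the inner sum is the one computed in $X$. So we fix $(u,v),(u',v') \in Y$ and compute this composite in three steps.

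\emph{Step 1: apply $\psi$.} Since $\Psi_{\PP_1}(u)=0$, \eqref{eq: def phi inverse} gives $\psi(u,v) = (u + a\,\mathbb{g}_1, v)$ with $a := \w_2(v)(\mathbb{g}_2)$. Likewise $\psi(u',v') = (u'+a'\mathbb{g}_1, v')$ with $a' := \w_2(v')(\mathbb{g}_2)$.

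\emph{Step 2: add in $X$ using \eqref{eq: chart addition formula sigma tau j case geq 0}.} The first coordinate is $(u+a\mathbb{g}_1)+_\tau(u'+a'\mathbb{g}_1)$. Because $\mathbb{g}_1$ lies in the lineality space, Lemma~\ref{lemma: properties of lineality space}(1) lets us pull the $\mathbb{g}_1$-terms out, so this equals $u+_\tau u' + (a+a')\mathbb{g}_1$. Concretely, in the chart $\tau$ the element $\mathbb{g}_1$ maps to a vector fixed by all mutations, so the chart addition is ordinary vector addition there.

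Next, Lemma~\ref{lemma: psi of a sum} gives $\Psi_{\PP_1}$ of this first coordinate as $\Psi_{\PP_1}(u+_\tau u') + a + a'$. Since $\mathbb{g}_2$ lies in every cone of $\Sigma(\MM_2)$, the point $\v_2(\mathbb{g}_2)$ is linear on the cone of $\NN_2$ corresponding to the chart $\sigma$. Hence $\w_2(v+_\sigma v')(\mathbb{g}_2) = a + a'$. The coefficient of $n^2_j$ in \eqref{eq: chart addition formula sigma tau j case geq 0} is therefore $\Psi_{\PP_1}(u+_\tau u')$, and the $X$-sum is
\[
\bigl(u+_\tau u' + (a+a')\mathbb{g}_1,\; v+_\sigma v' +_\sigma \Psi_{\PP_1}(u+_\tau u')\, n^2_j\bigr).
\]

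\emph{Step 3: apply $\phi$.} By \eqref{eq: def phi on X}, this subtracts $\w_2(\text{second coordinate})(\mathbb{g}_2)\cdot\mathbb{g}_1$ from the first coordinate. By the same linearity as in Step 2, together with $\w_2(n^2_j)(\mathbb{g}_2)=1$, we get
\[
\w_2\bigl(v+_\sigma v' +_\sigma \Psi_{\PP_1}(u+_\tau u')\, n^2_j\bigr)(\mathbb{g}_2) = a + a' + \Psi_{\PP_1}(u+_\tau u').
\]
The first coordinate therefore becomes $u+_\tau u' - \Psi_{\PP_1}(u+_\tau u')\,\mathbb{g}_1$, which is exactly \eqref{eq: chart addition in Y}.

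The main point needing care is the well-definedness and linearity claims. First, that the $\mathbb{g}_1$-shifts commute with $+_\tau$; this is Lemma~\ref{lemma: properties of lineality space}. Second, that evaluation at $\mathbb{g}_2$ is additive with respect to $+_\sigma$, including the coefficient $\Psi_{\PP_1}(u+_\tau u')$, which may be negative. For the second point I would argue via the duality identity $\w_2(\cdot)(\mathbb{g}_2) = \v_2(\mathbb{g}_2)(\cdot)$ together with $\Z$-linearity of $\v_2(\mathbb{g}_2)$ on the chart $N_\sigma$. This linearity holds because $\mathbb{g}_2$ lies in $C_\sigma$ for every $\sigma$, so $\v_2(\mathbb{g}_2) \in \Sp(\NN_2,\sigma)$.
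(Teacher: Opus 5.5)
Your proof is correct and is exactly the computation the paper leaves out as routine: you conjugate the $X$-addition of Lemma~\ref{lemma: chart addition formula for calE} by $\psi|_Y$ and $\phi|_X$, using that shifts by $\mathbb{g}_1$ commute with $+_\tau$ and that $\v_2(\mathbb{g}_2)\in\Sp(\NN_2,\sigma)$ for every $\sigma$. One minor aside: the coefficient $\Psi_{\PP_1}(u+_\tau u')$ cannot be negative, because $\Psi_{\PP_1}$ is superadditive under chart addition and $\Psi_{\PP_1}(u)=\Psi_{\PP_1}(u')=0$, so your caveat about negative coefficients is unnecessary.
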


Similar formulas hold for $\mathcal{F}$, as we state below. The proof is similar to that of Lemma~\ref{lemma: chart addition formula for calE} and Lemma~\ref{lemma: chart addition in Y}. 

\begin{lemma}\label{lemma: chart addition formula for calF} 
Let $\mathcal{F}$ be the polyptych lattice of Definition~\ref{definition: cone extension calF}, with the set of elements identified with the set $X^\vee$ of \eqref{eq: def X vee} and with coordinate charts $\kappa_{\alpha,\beta,i}$ given in \eqref{eq: def kappa alpha beta i}. Suppose $(y,x),(y',x') \in X^\vee \subset \MM_2 \times \NN_1$ (viewed as elements of $\mathcal{F}$). Then the chart addition of $(y,x)$ and $(y',x')$ in $X^\vee$ with respect to the chart $\kappa_{\alpha,\beta,i}$ is given by
\begin{equation}\label{eq: chart addition formula alpha beta i case geq 0}
    (y,x) +_{\alpha,\beta,i} (y',x') =
(y+_\alpha y', x+_\beta x' +_\beta (\Psi_{\PP_2}(y +_\alpha y') - \w_1(x +_\beta x')(\mathbb{g}_1))n^1_i). 
\end{equation}

\end{lemma}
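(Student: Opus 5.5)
The plan is to mirror the proof of Lemma~\ref{lemma: chart addition formula for calE}, with the roles of $(\MM_1,\NN_2,\PP_1,\mathbb{g}_1)$ and $(\MM_2,\NN_1,\PP_2,\mathbb{g}_2)$ exchanged. Here $\alpha\in\pi(\NN_1)$ indexes a chart of $\NN_1$; via strict duality it corresponds to the cone $C_\alpha\subset(\MM_1)_\R$. Likewise $\beta\in\pi(\MM_2)$ indexes a chart of $\MM_2$ and corresponds to $C_\beta\subset(\NN_2)_\R$. Accordingly, I read $y+_\alpha y'$ as the addition in $\MM_2$ determined by the cone attached to the relevant chart, and I will keep careful track of which chart addition in $\MM_2\times\NN_1$ corresponds to the maximal cone $C_\alpha\times C_\beta$ of $\Sigma(\MM_1\times\NN_2)$.

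\textbf{Step 1.} By definition, $\kappa_{\alpha,\beta,i}$ is the strict duality map $(y,x)\mapsto(\v_1(x)\text{-type point},\w_2\text{-type point})$ into $\Sp(\MM_1\times\NN_2)$, followed by $L_{\alpha\times\beta}$ and then $\mathrm{res}_i$. I would apply Lemma~\ref{lemma: M alpha addition formula} to the strict dual pair $(\MM_2\times\NN_1,\ \MM_1\times\NN_2)$. This shows that the chart sum in the appropriate chart of $\MM_2\times\NN_1$, namely $(y+_\alpha y',x+_\beta x')$, has restriction to $C_\alpha\times C_\beta$ equal to the sum of the restrictions. After applying $\mathrm{res}_i$, it therefore maps to $\kappa_{\alpha,\beta,i}(y,x)+\kappa_{\alpha,\beta,i}(y',x')$.

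\textbf{Step 2.} This element generally does not lie in $X^\vee$, so I would correct it as in the surjectivity argument of Proposition~\ref{proposition: kappa sigma beta i bijection}, the analogue of Proposition~\ref{proposition: phi sigma tau j bijection}. For $A\ge 0$, adding $A\,n^1_i$ in the $\beta$-chart of $\NN_1$ does not change the image under $\mathrm{res}_i\circ L_{\alpha\times\beta}$. The reason is the analogue of computation~\eqref{eq: finding u0 v0}: on $\PP_1(i)\cap C_\alpha$ we have $\w_1(n^1_i)=0$, and $\v_1(u)$ is linear on the relevant cone. I would then take
\[
A:=\Psi_{\PP_2}(y+_\alpha y')-\w_1(x+_\beta x')(\mathbb{g}_1).
\]
Since $\w_1(n^1_i)(\mathbb{g}_1)=1$ by (P-2)(b), and using Lemma~\ref{lemma: properties of lineality space}, the element $(y+_\alpha y',\,x+_\beta x'+_\beta A\,n^1_i)$ lies in $X^\vee$.

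\textbf{Step 3, the main point.} I need $A\ge 0$, so that only $n^1_i$ is required and the lifts $\mathsf p$ are not. Since $\mathbb{g}_1$ lies in the lineality space of $\MM_1$, the function $\w_1(\cdot)(\mathbb{g}_1)=\v_1(\mathbb{g}_1)$ is additive, giving $\w_1(x+_\beta x')(\mathbb{g}_1)=\w_1(x)(\mathbb{g}_1)+\w_1(x')(\mathbb{g}_1)$. The $X^\vee$ conditions rewrite this as $\Psi_{\PP_2}(y)+\Psi_{\PP_2}(y')$. It then suffices to prove superadditivity, $\Psi_{\PP_2}(y+_\alpha y')\ge\Psi_{\PP_2}(y)+\Psi_{\PP_2}(y')$. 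This follows because each $\w_2(n^2_j)$ is a point, so it satisfies $\w_2(n^2_j)(y+y')\ge\w_2(n^2_j)(y)+\w_2(n^2_j)(y')$ by the min in Definition~\ref{definition: space of points}, and a minimum of superadditive functions is superadditive.

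Finally, injectivity of $\kappa_{\alpha,\beta,i}$ identifies the corrected element with the chart sum $\kappa_{\alpha,\beta,i}^{-1}\bigl(\kappa_{\alpha,\beta,i}(y,x)+\kappa_{\alpha,\beta,i}(y',x')\bigr)$, which is exactly~\eqref{eq: chart addition formula alpha beta i case geq 0}. The only delicate part is the index bookkeeping in Step 1, namely confirming which chart additions on $\MM_2$ and $\NN_1$ correspond to the cone $C_\alpha\times C_\beta$. Once that is settled, the rest is a verbatim transcription of the $\mathcal{E}$ case.
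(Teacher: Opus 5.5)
Your proposal is correct and follows the paper's route: the paper proves this by transcribing the proof of Lemma~\ref{lemma: chart addition formula for calE}, exactly as in your Steps 1--3 (lift via Lemma~\ref{lemma: M alpha addition formula}, correct by $A\,n^1_i$, and get $A\ge 0$ from superadditivity of $\Psi_{\PP_2}$ plus additivity of $\v_1(\mathbb{g}_1)$).

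The index bookkeeping you deferred has a definite answer. Since $\alpha\in\pi(\NN_1)$ and $\beta\in\pi(\MM_2)$, the subscripts in the printed formula are swapped, and it should read $(y+_\beta y',\, x+_\alpha x'+_\alpha A\,n^1_i)$ with $A=\Psi_{\PP_2}(y+_\beta y')-\w_1(x+_\alpha x')(\mathbb{g}_1)$. In particular, Step 2 must add $A\,n^1_i$ in the $\alpha$-chart of $\NN_1$; there is no ``$\beta$-chart of $\NN_1$''. This is forced because $u\in C_\alpha=\v_1^{-1}(\Sp(\NN_1,\alpha))$ is what makes $\v_1(u)$ linear in that chart. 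Also, membership in $X^\vee$ comes from $\v_1(\mathbb{g}_1)$ being linear on every chart of $\NN_1$, not from Lemma~\ref{lemma: properties of lineality space}, since $n^1_i$ need not lie in a lineality space.
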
 

\begin{lemma}\label{lemma: chart addition in Y vee}
Let the notation be as in Lemma~\ref{lemma: chart addition formula for calF}. Using the  bijection $\phi^\vee: X^\vee \to Y^\vee$, let the set of elements of $\mathcal{F}$ be represented by $Y^\vee$. Then the chart addition of $(y,x)$ and $(y',x')$ in $Y^\vee$, with respect to $\kappa_{\alpha,\beta,i}$, is given by 
\begin{equation}\label{eq: chart addition in Y vee}
(y,x)+_{\alpha,\beta,i} (y',x') = (y+_\alpha y' - \Psi_{\PP_2}(y+_\alpha y')\cdot \mathbb{g}_2, x+_\beta x' +_\beta \Psi_{\PP_2}(y+_\alpha y')\cdot n^1_i) \in Y^\vee.
\end{equation} 
\end{lemma}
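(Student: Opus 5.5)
The statement to prove is Lemma~\ref{lemma: chart addition in Y vee}. I would deduce it from Lemma~\ref{lemma: chart addition formula for calF} by transporting the chart addition along the bijection $\phi^\vee\colon X^\vee \to Y^\vee$ of Lemma~\ref{lemma: phivee psivee bijections}. By definition, for $(y,x),(y',x')\in Y^\vee$ the sum in the $Y^\vee$-model is
\[
\phi^\vee\big(\psi^\vee(y,x) +_{\alpha,\beta,i} \psi^\vee(y',x')\big),
\]
using $\psi^\vee|_{Y^\vee} = (\phi^\vee|_{X^\vee})^{-1}$.

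First, since $\Psi_{\PP_2}(y)=\Psi_{\PP_2}(y')=0$, we have $\psi^\vee(y,x) = (y + a\mathbb{g}_2, x)$ and $\psi^\vee(y',x') = (y'+a'\mathbb{g}_2,x')$, where $a=\w_1(x)(\mathbb{g}_1)$ and $a'=\w_1(x')(\mathbb{g}_1)$.

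Next, apply \eqref{eq: chart addition formula alpha beta i case geq 0}. Because $\mathbb{g}_2$ lies in the lineality space of $\MM_2$, Lemma~\ref{lemma: properties of lineality space}(1) lets me pull it out of the chart addition:
\[
(y+a\mathbb{g}_2)+_\alpha(y'+a'\mathbb{g}_2) = (y+_\alpha y') + (a+a')\mathbb{g}_2 .
\]
By Lemma~\ref{lemma: psi of a sum}, $\Psi_{\PP_2}$ of this element equals $\Psi_{\PP_2}(y+_\alpha y') + a + a'$. Since $\mathbb{g}_1$ lies in the lineality space of $\MM_1$, it lies in every cone of $\Sigma(\MM_1)$, and $\w_1(-)(\mathbb{g}_1)=\v_1(\mathbb{g}_1)$ is a point. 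The same reasoning as in the proof of Lemma~\ref{lemma: chart addition formula for calE} then gives $\w_1(x+_\beta x')(\mathbb{g}_1) = a+a'$. The two $a+a'$ terms cancel, so the $X^\vee$-sum is
\[
\big((y+_\alpha y') + (a+a')\mathbb{g}_2,\; x+_\beta x' +_\beta \Psi_{\PP_2}(y+_\alpha y')\, n^1_i\big).
\]

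Finally, apply $\phi^\vee$ in the form \eqref{eq: def phi on X vee}, which subtracts $\w_1(x'')(\mathbb{g}_1)\,\mathbb{g}_2$ from the first coordinate, where $x''$ is the new second coordinate. By the equivalent form \eqref{eq: phi vee}, this amount equals $\Psi_{\PP_2}$ of the first coordinate, which is $\Psi_{\PP_2}(y+_\alpha y') + a + a'$. Subtracting it leaves first coordinate $y+_\alpha y' - \Psi_{\PP_2}(y+_\alpha y')\,\mathbb{g}_2$ and second coordinate unchanged, which is exactly \eqref{eq: chart addition in Y vee}.

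The only delicate point is the additivity $\w_1(x+_\beta x')(\mathbb{g}_1)=\w_1(x)(\mathbb{g}_1)+\w_1(x')(\mathbb{g}_1)$. This holds because $\v_1(\mathbb{g}_1)$ is a point, so the point condition gives $\v_1(\mathbb{g}_1)(x+_\beta x') \ge \v_1(\mathbb{g}_1)(x)+\v_1(\mathbb{g}_1)(x')$. For the reverse inequality I would apply the same point condition to $-\mathbb{g}_1$, which is also in the lineality space, and use the relation $\v_1(-\mathbb{g}_1)=-\v_1(\mathbb{g}_1)$. I will justify that relation by the pairing symmetry $\v_1(\mathbb{g}_1)(n) = \w_1(n)(\mathbb{g}_1)$ together with Lemma~\ref{lemma: properties of lineality space}(2) applied to each $\w_1(n)$. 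Everything else is bookkeeping with Lemma~\ref{lemma: psi of a sum}.
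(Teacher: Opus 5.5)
Your proof is correct and is the computation the paper has in mind. The paper omits it, describing it as transporting Lemma~\ref{lemma: chart addition formula for calF} along $\phi^\vee$ with $\psi^\vee|_{Y^\vee}$ as the inverse. Your derivation of $\w_1(x+x')(\mathbb{g}_1)=\w_1(x)(\mathbb{g}_1)+\w_1(x')(\mathbb{g}_1)$ from the point inequality for $\v_1(\pm\mathbb{g}_1)$ is a clean, self-contained replacement for the paper's brief appeal to $\mathbb{g}_1$ lying in every cone; that appeal implicitly uses strict-duality axiom (4). One cosmetic point: you inherit the paper's chart-index slip, since $y\in\MM_2$ should be added in the chart $\beta\in\pi(\MM_2)$ and $x\in\NN_1$ in the chart $\alpha\in\pi(\NN_1)$, but this does not affect the argument.
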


With the above in place, we can compute the mutations fans of $\mathcal{E}$ and $\mathcal{F}$.

\begin{proposition}\label{proposition: mutation fans of E and F}
The maximal cones of the mutation fan $\Sigma(\mathcal{E})$ of $\mathcal{E}$ are precisely the cones $(C_{\alpha,i} \times C_\beta) \cap X_\R$ (where we use the identification of $X$ with the set of elements of $\mathcal{E}$). Similarly, the maximal cones of the mutation fan $\Sigma(\mathcal{F})$ of $\mathcal{F}$ are the cones $(C_{\sigma,j} \times C_\tau) \cap X^\vee_\R$. 
\end{proposition}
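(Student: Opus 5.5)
The mutation fan $\Sigma(\mathcal{E})$ is the common refinement of the domains of linearity of all mutations $\mu_{(\sigma_2,\tau_2,j_2),(\sigma_1,\tau_1,j_1)} = \kappa_{\sigma_2,\tau_2,j_2}\circ\kappa^{-1}_{\sigma_1,\tau_1,j_1}$. Transported to $X_\R$ via the charts, its maximal cones are the maximal regions of $X_\R$ on which every $\kappa_{\sigma,\tau,j,\R}$ is simultaneously $\R_{\geq 0}$-linear. So I will show that the cones $(C_{\alpha,i}\times C_\beta)\cap X_\R$ are exactly these maximal regions. The case of $\mathcal{F}$ follows by swapping indices, using Proposition~\ref{proposition: kappa sigma beta i bijection}.

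\emph{Step 1 (containment).} By Proposition~\ref{proposition: phi sigma tau j bijection}, every chart map $\kappa_{\sigma,\tau,j,\R}$ is $\R_{\geq 0}$-linear on each $(C_{\alpha,i}\times C_\beta)\cap X_\R$. By Proposition~\ref{proposition: cones cover Y}(1),(3),(8), these are full-dimensional (rank $r_1+r_2-1$) cones covering $X_\R$. Hence all mutations are linear on their chart images. It follows that every maximal cone of $\Sigma(\mathcal{E})$ is a union of such cones, and each $(C_{\alpha,i}\times C_\beta)\cap X_\R$ lies in a single maximal cone of $\Sigma(\mathcal{E})$.

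\emph{Step 2 (no coarsening).} The main obstacle is to show that two distinct cones of the form $(C_{\alpha,i}\times C_\beta)\cap X_\R$ that share a codimension-one wall cannot be merged. That is, for each such wall I must exhibit some pair of charts whose mutation genuinely bends there. I would pass to $Y_\R$ via $\phi$, which carries these cones to $(\PP_1(i)\cap C_\alpha)\times C_\beta$ by Proposition~\ref{proposition: cones cover Y}(4), and compute with the chart-addition formula of Lemma~\ref{lemma: chart addition formula for calE}. The failure of linearity of $\kappa_{\sigma,\tau,j}$ across a wall then splits into two sources.

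\begin{enumerate}
\item The wall separates different $C_\alpha$ or different $C_\beta$. Then $\v_1$ or $\w_2$ fails to be linear across it. This is detected already on the $\{0\}\times C_\tau$ or $(\PP_2(j)\cap C_\sigma)\times\{0\}$ part of the chart by choosing $\tau$ or $\sigma$ appropriately. Here I use strict duality (Definition~\ref{def_dual}(4)): the cones $C_\alpha$ are exactly the maximal linearity domains of $\v_1$ as a family of points. In addition, $\PP_2(j)\cap C_\sigma$ is full-dimensional in a hyperplane by Proposition~\ref{prop-conescover}(3), so evaluation there still detects nonlinearity.
\item The wall separates $C_{\alpha,i}$ from $C_{\alpha,i'}$. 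Then $\Psi_{\PP_1}$ bends there, since $\w_1(n^1_i)\ne\w_1(n^1_{i'})$ on $C_\alpha$ by (P-1) and Lemma~\ref{lemma: res cone surj}. Through the constraint defining $X$, this forces $\w_2(v)(\mathbb{g}_2)=\Psi_{\PP_1}(u)$ to bend. Pairing with $(\mathbb{g}_2,0)$, which lies in $(\PP_2(j)\cap C_\sigma)\times C_\tau$ only after a shift, shows that some $\kappa_{\sigma,\tau,j}$ is nonlinear. Concretely, I would test the additivity defect $\Psi_{\PP_1}(u+_\tau u')-\Psi_{\PP_1}(u)-\Psi_{\PP_1}(u')$ appearing in Lemma~\ref{lemma: chart addition formula for calE}, which is strictly positive for suitable $u\in C_{\alpha,i}$, $u'\in C_{\alpha,i'}$.
\end{enumerate}

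Since Lemma~\ref{lemma: union of csigmaj} and its analogue show that $A_\alpha$ is minimal, every listed cone is full-dimensional, and the two cases above exhaust all walls. Combining Steps 1 and 2 identifies the maximal cones of $\Sigma(\mathcal{E})$ as claimed.
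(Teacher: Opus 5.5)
Your overall plan matches the paper's proof. You get linearity of every $\kappa_{\sigma,\tau,j}$ on each $(C_{\alpha,i}\times C_\beta)\cap X_\R$ from Proposition~\ref{proposition: phi sigma tau j bijection}, then split into cases according to whether $\alpha$, $\beta$, or $i$ differs, using Lemma~\ref{lemma: chart addition formula for calE}. Working with adjacent cones rather than arbitrary pairs is fine.

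The genuine gap is in case (2). A mutation of $\mathcal{E}$ bends only if two \emph{different} charts of $\mathcal{E}$ give different chart sums; a single $\kappa_{\sigma,\tau,j}$ is never ``nonlinear'' on its own. The facts you invoke do not show this:
\begin{enumerate}
\item That $\Psi_{\PP_1}$ bends, and hence $X_\R$ bends inside $(\MM_1\times\NN_2)_\R$, concerns the ambient embedding, not the charts of $\mathcal{E}$.
\item Pairing with $(\mathbb{g}_2,0)$ does not help, since $(\mathbb{g}_2,0)\notin Y^\vee$ and so it is not a point of $\mathcal{E}$.
\item A positive defect $A=\Psi_{\PP_1}(u+_\tau u')-\Psi_{\PP_1}(u)-\Psi_{\PP_1}(u')$ only shows that one chart sum $(u+_\tau u',\,v+_\sigma v'+_\sigma A\,n^2_j)$ differs from the naive sum in $\MM_1\times\NN_2$.
\end{enumerate}

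The missing step is to compare two charts $(\sigma,\tau,j)$ and $(\sigma',\tau,j')$ with $j\neq j'$. Take $v=v'=0$ and $u\in C_{\alpha,i}$, $u'\in C_{\alpha,i'}$ with $\Psi_{\PP_1}(u)=\Psi_{\PP_1}(u')=0$ and $A>0$. The two chart sums are then $(u+_\tau u',A\,n^2_j)$ and $(u+_\tau u',A\,n^2_{j'})$, which differ because $n^2_j\neq n^2_{j'}$ by (P-1). This needs $s_2\ge 2$, which is exactly where strong chart-convexity of $\PP_2$ enters, via Lemma~\ref{lemma: strongly chart convex}. Your sketch never uses this hypothesis, and it cannot be dropped.

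Here is a counterexample without it. Let $\MM_2=\NN_2=\Z^{r_2}$ be trivial and $\PP_2=\{y_1\ge 0\}$, with $n^2_1=e_1$ and $\mathbb{g}_2=e_1$. This $\PP_2$ is Gorenstein but $s_2=1$. Every fact you cite in case (2) still holds. Yet $X=\{(u,v): v_1=\Psi_{\PP_1}(u)\}$, the charts are $\Z^{r_2-1}\times M_\tau$ via $(u,v)\mapsto((v_2,\dots,v_{r_2}),\pi_\tau(u))$, and the mutations are $\mathrm{id}\times\mu_{\tau'\tau}$. So the walls between $C_{\alpha,i}$ and $C_{\alpha,i'}$ are not walls of $\Sigma(\mathcal{E})$.

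There is a smaller issue in case (1). That $\PP_2(j)\cap C_\sigma$ is full-dimensional in a hyperplane does not by itself mean restriction to it detects a bend, because a nonzero functional can vanish on that hyperplane. What actually works is the following: the difference of the two candidate functionals vanishes at $\mathbb{g}_2$, since $\v_2(\mathbb{g}_2)$ is linear in every chart of $\NN_2$. On the other hand $\w_2(n^2_j)(\mathbb{g}_2)=1$, so the difference cannot be a nonzero multiple of $\w_2(n^2_j)$. The paper sidesteps this by choosing $u,u'$ so that the correction term is $0$ and comparing the full chart sums in $X$.
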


\begin{proof} 
 We will show that the cones $(C_{\alpha,i} \times C_\beta) \cap X_\R$ are the maximal cones of the mutation fan $\Sigma(\mathcal{E})$. From Proposition~\ref{proposition: phi sigma tau j bijection} we know that all mutation maps are linear on these cones.  We need to show that these cones are precisely the common domains of linearity for all mutations. To see this, fix two distinct cones $(C_{\alpha,i} \times C_\beta) \cap X_\R$ and $(C_{\alpha',i'} \times C_{\beta'}) \cap X_\R$ in $\mathcal{E}_\R$. Now we need to find two distinct charts $\kappa_{\sigma,\tau,j}$ and $\kappa_{\sigma',\tau',j'}$ with the property that the two corresponding chart additions are distinct on the two fixed cones, i.e., we must find $(u,v) \in (C_{\alpha,i} \times C_\beta) \cap X_\R$ and $(u',v') \in (C_{\alpha',i'} \times C_{\beta'}) \cap X_\R$ such that 
\begin{equation}\label{eq: non equal}
(u,v) +_{\sigma,\tau,j} (u',v') \neq (u,v) +_{\sigma',\tau',j'} (u',v').
\end{equation}
By Lemma~\ref{lemma: chart addition formula for calE} we have formulas for the chart additions. 
By assumption, $(\alpha,\beta,i ) \neq (\alpha',\beta',i')$. To show the claim, we take cases. 

First suppose that $\alpha \neq \alpha'$. Then since the $\tau$'s index the coordinate charts of $\MM_1$, and the $C_\alpha$'s are the maximal cones of the mutation fan of $\MM_1$, the assumption $\alpha \neq \alpha'$ implies that there exist $\tau, \tau'$ such that $\mu_{\tau' \tau} \vert_{C_\alpha}$ and $\mu_{\tau' \tau} \vert_{C_{\alpha'}}$, which are linear on $\pi_\tau(C_\alpha)$ and $\pi_\tau(C_{\alpha'})$ respectively, have distinct canonical linear extensions (temporarily denoted $T$ and $T'$ respectively, for simplicity) to $(\MM_1)_\tau$. Since $C_{\alpha,i}$ and $C_{\alpha',i'}$ are full-dimensional in $C_{\alpha}$ and $C_{\alpha'}$ respectively, if $T \neq T'$ then there exists $\bar{u} \in \pi_\tau(C_{\alpha,j})$ and $\bar{u}' \in \pi_\tau(C_{\alpha',j'})$ such that $\mu_{\tau'\tau}(\bar{u}+\bar{u}') \neq \mu_{\tau'\tau}(\bar{u})+\mu_{\tau'\tau}(\bar{u}')$. For $u := \pi_\tau^{-1}(\bar{u})$ and $u' = \pi_\tau^{-1}(\bar{u}')$, this means that $u+_\tau u' \neq u+_{\tau'} u'$. Choosing $v,v' \in (\NN_2)_\R$ such that $\w_2(v)(\mathbb{g}_2)=\Psi_{\PP_1}(u)$ and $\w_2(v')(\mathbb{g}_2) = \Psi_{\PP_1}(u')$ (this is possible since $\mathbb{g}_2$ is in the lineality space, and $\v_2(\mathbb{g}_2)$ is non-zero by assumption (P-2) of Gorenstein cones), we have that $(u,v) \in X_\R$ and $(u',v') \in X_\R$. Choose $\sigma$ so that $v \in C_\sigma$ and similarly choose $\sigma'$ so that $v' \in C_{\sigma'}$. Let $j \in A_\sigma, j' \in A_{\sigma'}$. From the first coordinate of the formula for chart additions in Lemma~\ref{lemma: chart addition formula for calE} we can see that for $(\sigma,\tau,j), (\sigma',\tau',j')$ as chosen, we have that 
$(u,v)+_{\sigma,\tau,j} (u',v') \neq (u,v) +_{\sigma',\tau',j'} (u',v')$ as claimed. Thus we have shown that we can make choices for \eqref{eq: non equal} to hold, in this case. 

Next we consider the case when $\alpha=\alpha'$.  
Suppose that $\beta \neq \beta'$. First note that since any $(u,v) \in C_{\alpha,j} \times C_\beta, (u',v') \in C_{\alpha,j'} \times C_\beta$ satisfies $u,u' \in C_\alpha$, and $C_\alpha$ is a cone of $\Sigma(\MM_1)$, all chart additions in $\MM_1$ will agree on $u$ and $u'$; thus we may assume without loss of generality that we choose $\tau=\tau'$. Since we assume $\beta \neq \beta'$, we know by similar reasoning as in the above paragraph there exist $\sigma,\sigma' \in \pi(\NN_2)$ and $v \in C_\beta, v' \in C_{\beta'}$ such that $v+_\sigma v' \neq v+_{\sigma'} v'$. Also by similar reasoning as in the above paragraph, we may choose $u \in C_{\alpha,i}, u' \in C_{\alpha'=\alpha,i'}$ such that $(u,v), (u',v') \in X_\R$ and also that $u+_\tau u'$ satisfies $\Psi_{\PP_1}(u+_\tau u') - \w_2(v+_\sigma v')(\mathbb{g}_2) = 0$. Now from Lemma~\ref{lemma: chart addition formula for calE} it follows that, for these choices, we have $(u,v)+_{\sigma,\tau,j} (u',v') \neq (u,v)+_{\sigma',\tau',j'} (u',v')$ as desired.

Finally, suppose $\alpha=\alpha'$ and $\beta=\beta'$ and $i \neq i'$. 
Next note that since $n^1_i \neq n^1_{i'}$ we know that $\w_1(n^1_i)\neq \w_1(n^1_{i'})$ and thus the equation $\w_1(n^1_i)=\w_1(n^1_{i'})$ defines a codimension-$1$ subset of any chart. In particular there exists an open subset $U$ in the interior of $C_{\alpha,i}$ on which $\w_1(n^1_i)=0, \w_1(n^1_t)>0$ for all $t\neq i$. An analogous open subset $U'$ can be found in $\mathrm{int}(C_{\alpha,i'})$. It follows that for $u \in U, u' \in U'$ we have $\Psi_{\PP_1}(u +_\tau u') > \Psi_{\PP_1}(u) + \Psi_{\PP_1}(u')$ for any $\tau$. By translating $u,u'$ by $\mathbb{g}_1$, which preserves the property of lying in $U$ and $U'$ respectively, we may arrange that $\Psi_{\PP_1}(u)=\Psi_{\PP_1}(u')=0$. 
Since $\PP_2$ is strongly chart-convex we know from Lemma~\ref{lemma: strongly chart convex} that $s_2 \geq 2$, so there exist $j, j' \in [s_2]$ with $j \neq j'$. By the assumption (P-1) in Definition~\ref{definition: Gorenstein PL cone} on $\PP_2$ we also know $n^2_j \neq n^2_{j'}$. Since the $C_\sigma$ cover $(\MM_2)_\R$, there exist $\sigma, \sigma'$ such that $j \in A_\sigma, j' \in A_{\sigma'}$. Choose $v=v'=0$. Since $\Psi_{\PP_1}(u)=\Psi_{\PP_1}(u')=0$, we know $(u,0),(u',0) \in X$. By hypothesis on $X$ we know $A := \Psi_{\PP_1}(u+_\tau u') - \Psi_{\PP_1}(u)-\Psi_{\PP_1}(u') = \Psi_{\PP_1}(u+_\tau u') - \w_2(v+_\sigma v')(\mathbb{g}_2) = \Psi_{\PP_1}(u+_\tau u')$, and we saw above that  $A > 0$. 
Then by Lemma~\ref{lemma: chart addition formula for calE} we have 
$$
(u,v)+_{\sigma,\tau,j}(u',v') = (u+_\tau u', A n^2_j)
$$
whereas 
$$
(u,v)+_{\sigma',\tau,j'} (u',v') = (u+_\tau u', A n^2_{j'}).
$$
Since $n^2_j \neq n^2_{j'}$, there exist choices for which~\eqref{eq: non equal} holds, as desired. 

The argument is analogous for the $(C_{\sigma,j} \times C_\tau) \cap X^\vee_\R$ and $\Sigma(\mathcal{F})$. This concludes the proof. 
\end{proof}

\subsection{The PLs $\mathcal{E}$ and $\mathcal{F}$ form a strict dual pair}\label{subsec: E and F strict dual}

Next, we wish to show that the two polyptych lattices $\mathcal{E}$ and $\mathcal{F}$, constructed above, form a strict dual pair over $\Z$. To see this, we use the identification of the set of elements of $\mathcal{E}$ with $X$ which comes from its construction. In the case of the set of elements of $\mathcal{F}$, the construction above naturally identifies it with $X^\vee$, but we additionally use the bijection $\phi^\vee: X^\vee \to Y^\vee$ defined in \eqref{eq: phi vee} to identify it with $Y^\vee$. Under these identifications, in order to show that $\mathcal{E}, \mathcal{F}$ form a strict dual pair, we must first prove that there is an identification of the elements of $\mathcal{E}$ with the space of points of $\mathcal{F}$. This is the content of Theorem~\ref{theorem: E and F strict duals} below.

Suppose $(y,x) \in Y^\vee$. Then we can construct a function on $X$ associated to $(y,x)$ by defining, for $(u,v) \in X$, 
\begin{equation}\label{eq: def f y x} 
  f_{(y,x)}: X \to \Z, \quad   f_{(y,x)}(u,v) := \langle (u,v),(y,x)\rangle 
\end{equation}
where $\langle \cdot, \cdot \rangle$ is the pairing in \eqref{eq: explicit pairing}. With this notation, we can state the following.

\begin{proposition}\label{proposition: dual map F to Sp of calE}
Following the notation above, the association $\Theta: (y,x) \in Y^\vee \mapsto f_{(y,x)}$ is a bijection from $Y^\vee$ to $\Sp(\mathcal{E})$. Moreover, if $(y,x) \in (\PP_2(j_0) \cap C_{\sigma_0}) \times C_{\tau_0}$, then $\Theta(y,x)$ lies in $\Sp(\mathcal{E},(\sigma_0,\tau_0,j_0))$, and $\Theta$ extends naturally to $\Theta_\R: Y^\vee_\R \to \Sp_\R(\mathcal{E})$. 
\end{proposition}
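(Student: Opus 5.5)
The plan is to verify the point axioms for $f_{(y,x)}$ with respect to every chart of $\mathcal{E}$ at once, using the explicit chart addition of Lemma~\ref{lemma: chart addition formula for calE}. After that we prove injectivity and surjectivity of $\Theta$ separately.

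\emph{Step 1: $f_{(y,x)}\in\Sp(\mathcal{E})$.} Fix $(y,x)\in Y^\vee$. By Proposition~\ref{prop-conescover}(4) there are $\sigma_0,\tau_0$ and $j_0\in A_{\sigma_0}$ with $(y,x)\in(\PP_2(j_0)\cap C_{\sigma_0})\times C_{\tau_0}$. Let $(u,v),(u',v')\in X$ and let $(\sigma,\tau,j)$ be any chart. By Lemma~\ref{lemma: chart addition formula for calE},
\[
f_{(y,x)}\bigl((u,v)+_{\sigma,\tau,j}(u',v')\bigr)=\w_1(x)(u+_\tau u')+\w_2\bigl(v+_\sigma v'+_\sigma A\,n^2_j\bigr)(y),
\]
where $A\ge 0$, as shown in the proof of that lemma. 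Since $\w_1(x)$ and $\v_2(y)$ are points, each summand is superadditive with respect to any chart addition. Moreover $\w_2(n^2_j)(y)\ge 0$ because $y\in\PP_2$. Together these give $f_{(y,x)}(\text{sum})\ge f_{(y,x)}(u,v)+f_{(y,x)}(u',v')$ for every chart.

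For the chart $(\sigma_0,\tau_0,j_0)$ we get equality, for three reasons. First, $x\in C_{\tau_0}=\w_1^{-1}(\Sp(\MM_1,\tau_0))$, so $\w_1(x)$ is additive for $+_{\tau_0}$. Second, $y\in C_{\sigma_0}$, so $\v_2(y)$ is additive for $+_{\sigma_0}$. Third, $\w_2(n^2_{j_0})(y)=0$ since $y\in\PP_2(j_0)$. Hence the minimum in \eqref{eq: def point min} is attained, at the chart $(\sigma_0,\tau_0,j_0)$.

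Homogeneity follows for two reasons. Scaling $(u,v)\mapsto(\lambda u,\lambda v)$ with $\lambda\ge 0$ preserves $X$, because $\Psi_{\PP_1}$ and $\w_2(\cdot)(\mathbb{g}_2)$ are positively homogeneous. And the pairing \eqref{eq: explicit pairing} is positively homogeneous.

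The same computation also shows that $f_{(y,x)}\circ\kappa^{-1}_{\sigma_0,\tau_0,j_0}$ is simply evaluation of a homomorphism in $E_{\sigma_0,\tau_0,j_0}$ at the lattice point $(y,x)$ of the cone $(\PP_2(j_0)\cap C_{\sigma_0})\times C_{\tau_0}$. This is linear, which proves $\Theta(y,x)\in\Sp(\mathcal{E},(\sigma_0,\tau_0,j_0))$. All of these arguments hold verbatim over $\R$, which yields $\Theta_\R$.

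\emph{Step 2: injectivity.} We test $f_{(y,x)}$ on special elements of $X$.
\begin{itemize}
\item Pairs $(u,0)$ with $\Psi_{\PP_1}(u)=0$ lie in $X$, and recover $\w_1(x)$ on $\partial\PP_1\cap\MM_1$.
\item The pair $(\mathbb{g}_1,n^2_{j_0})$ lies in $X$, since $\w_2(n^2_{j_0})(\mathbb{g}_2)=1=\Psi_{\PP_1}(\mathbb{g}_1)$. Because $\w_2(n^2_{j_0})(y)=0$, it recovers $\w_1(x)(\mathbb{g}_1)$.
\end{itemize}
As in the injectivity part of Proposition~\ref{proposition: phi sigma tau j bijection}, a facet $\PP_1(i)\cap C_\alpha$ together with $\mathbb{g}_1$ determines a homomorphism on $C_\alpha\cap\MM_1$. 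Hence, by Lemma~\ref{lemma: res cone surj}, these values determine $x$.

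Symmetrically, pairs $(0,v)$ with $\w_2(v)(\mathbb{g}_2)=0$, together with pairs $(\mathbb{g}_1,v)$ with $\w_2(v)(\mathbb{g}_2)=1$, determine $\v_2(y)$. We use here that $\v_2(y)$ is linear on $C_{\sigma_0}$ and that $\v_2(y)(n^2_{j_0})=0$. Thus $y$ is determined. There is one subtlety: $(y,x)$ and $(y',x')$ could a priori lie in different cones $(\PP_2(j)\cap C_\sigma)\times C_\tau$. To handle this, I would compare $f_{(y,x)}$ and $f_{(y',x')}$ on the chart where $f_{(y,x)}$ is linear. Since $f_{(y,x)}=f_{(y',x')}$, this forces the same linearity for $f_{(y',x')}$ there, and then the evaluation description of Step 1 gives $(y,x)=(y',x')$ directly.

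\emph{Step 3: surjectivity, which I expect to be the main obstacle.} Given $p\in\Sp(\mathcal{E})$, I would extend it to a function on all of $\MM_1\times\NN_2$ by setting
\[
q(u,v):=p\bigl(\psi(u,v)\bigr)-\bigl(\w_2(v)(\mathbb{g}_2)-\Psi_{\PP_1}(u)\bigr)\,t,
\qquad t:=p(\mathbb{g}_1,n^2_{j})-p(0,n^2_j)\ \text{(suitably chosen)},
\]
so that $q$ plays the role of $\langle\cdot,(y,x)\rangle$ on all of $\MM_1\times\NN_2$. The aim is then to show that $q$ is a point of $\MM_1\times\NN_2$. Granting this, strict duality of $\MM_1\times\NN_2$ with $\MM_2\times\NN_1$ (\cite[Lemma 4.6]{EscobarHaradaManon-PL}) produces $(y,x)$ with $q=\langle\cdot,(y,x)\rangle$. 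One then checks that $\Psi_{\PP_2}(y)=0$ by evaluating on the elements $(0,n^2_j)$ and their shifts, and $f_{(y,x)}=p$ follows because $q$ restricts to $p$ on $X$.

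Verifying the min-condition for $q$ will use the chart additions of $\mathcal{E}$ together with the translation identities of Lemma~\ref{lemma: psi of a sum}, and the delicate part is choosing $t$ correctly. As a fallback, I would use the following more direct route. Pick a maximal cone $(C_{\alpha,i}\times C_\beta)\cap X_\R$ of $\Sigma(\mathcal{E})$ from Proposition~\ref{proposition: mutation fans of E and F}, on which $p$ is additive. By Proposition~\ref{proposition: cones cover Y} this cone corresponds via $\phi$ to a codimension-one face of $C_\alpha\times C_\beta$. Extend $p$ linearly along $\mathbb{g}_1$ to a homomorphism on $C_\alpha\times C_\beta$, lift it to a point of $\MM_1\times\NN_2$ by Lemma~\ref{lemma: res cone surj}, and then show that the resulting $(y,x)$ lies in $Y^\vee$ and that $f_{(y,x)}$ agrees with $p$ on every maximal cone.
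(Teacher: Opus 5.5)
Your Step~1 is correct and is essentially the paper's argument. Identifying $f_{(y,x)}\circ\kappa^{-1}_{\sigma_0,\tau_0,j_0}$ with evaluation at $(y,x)$ is a tidy way to see chart-linearity.

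In Step~2, however, your treatment of the subtlety does not work as stated. Knowing that $f_{(y',x')}=f_{(y,x)}$ is linear on the chart $(\sigma_0,\tau_0,j_0)$ does not by itself place $(y',x')$ in $(\PP_2(j_0)\cap C_{\sigma_0})\times C_{\tau_0}$. That converse is exactly what the paper has to prove separately, by a three-case computation, in Theorem~\ref{theorem: E and F strict duals}. The repair is easy and needs no cones:
\begin{itemize}
\item Since $\Psi_{\PP_2}(y)=0$, you have $\min_j f_{(y,x)}(\mathbb{g}_1,n^2_j)=\w_1(x)(\mathbb{g}_1)$.
\item Together with the values on $(u,0)$ for $u\in\partial\PP_1$, and Lemma~\ref{lemma: properties of lineality space}, this recovers $\w_1(x)$ on all of $\MM_1$.
\item For any $v\in\NN_2$, put $k:=\w_2(v)(\mathbb{g}_2)$. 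Then $(k\mathbb{g}_1,v)\in X$ and $f_{(y,x)}(k\mathbb{g}_1,v)=k\,\w_1(x)(\mathbb{g}_1)+\v_2(y)(v)$, which recovers $\v_2(y)$ on all of $\NN_2$.
\end{itemize}
The paper instead reduces injectivity to that of $\kappa_{\alpha,\beta,i}$ via Lemma~\ref{lem-pairing}.

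The genuine gap is surjectivity, which is the heart of the proposition.

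Your first route has two problems. First, $t$ is not defined: $(0,n^2_j)\notin X$, since $\w_2(n^2_j)(\mathbb{g}_2)=1\neq 0=\Psi_{\PP_1}(0)$, so $p(0,n^2_j)$ is meaningless. The value forced by $\Psi_{\PP_2}(y)=0$ is $t=\min_j p(\mathbb{g}_1,n^2_j)$. Second, the claim that $q$ is a point of $\MM_1\times\NN_2$ is the entire content of surjectivity, and you simply grant it.

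Your fallback leaves open exactly the two steps that matter.
\begin{itemize}
\item \emph{Membership in $Y^\vee$.} For an arbitrary extension along $\mathbb{g}_1$, the lift $(y,x)$ does not lie in $Y^\vee$. Raising the prescribed value on $(\mathbb{g}_1,0)$ by $\delta$ adds $\delta\bigl(\w_1(n^1_i)(u)-\w_2(v)(\mathbb{g}_2)\bigr)$ on $C_\alpha\times C_\beta$. This replaces $y$ by $y-\delta\mathbb{g}_2$, and hence $\Psi_{\PP_2}(y)$ by $\Psi_{\PP_2}(y)-\delta$ (Lemma~\ref{lemma: psi of a sum}). You must choose the unique integer value that makes this $0$.
\item \emph{Agreement with $p$.} You do not need to check agreement cone by cone. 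Let $K=(C_{\alpha,i}\times C_\beta)\cap X_\R$. On $K$ all chart additions of $\mathcal{E}$ are componentwise (the correction term $A$ vanishes there), so $p|_K$ is additive. Once $(y,x)\in Y^\vee$, your Step~1 gives $f_{(y,x)}\in\Sp(\mathcal{E})$, and by construction it agrees with $p$ on the full-dimensional cone $K$. A point of a finite polyptych lattice is determined by its restriction to such a cone. Indeed, take a lattice point $c\in\mathrm{int}(K)$; for $N\gg0$ every $m+_\gamma Nc$ lies in $K$, so $p(m)=\min_\gamma p(m+_\gamma Nc)-Np(c)$.
\end{itemize}

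With these two additions your fallback becomes a valid, purely algebraic proof of surjectivity. It produces the integral preimage directly. The paper instead extends $\Theta$ to $\Theta_\R$ and deduces real surjectivity from injectivity and homogeneity by a topological (Borsuk--Ulam / invariance-of-domain) argument. It then recovers integrality chart by chart.
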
 

Put another way, the proposition above states that $Y^\vee$ (and hence $\mathcal{F}$) can be identified with the space of points $\Sp(\mathcal{E})$ of $\mathcal{E}$. This is the critical technical step in showing that $\mathcal{E}$ and $\mathcal{F}$ form a dual pair.

\begin{proof}[Proof of Proposition~\ref{proposition: dual map F to Sp of calE}]
A priori, the association $(y,x) \mapsto f_{(y,x)}$ maps to the space of functions $\mathrm{Fun}(X,\Z)$. Hence we must 
first show that $f_{(y,x)}$ is an element of $\Sp(\mathcal{E})$. To see this, we need that for any $(u,v), (u',v') \in X$, we must have
$$
f_{(y,x)}(u,v) + f_{(y,x)}(u',v') = \min_{\sigma,\tau,j} \{f_{(y,x)}((u,v) +_{\sigma,\tau,j} (u',v'))\}
$$
where the RHS takes the minimum over all coordinate charts $\kappa_{\sigma,\tau,j}$ of $X$ (or equivalently $\mathcal{E}$) and $+_{\sigma,\tau,j}$ denotes the chart addition in the $(\sigma,\tau,j)$-th chart. Equivalently, since we are using the pairing given in \eqref{eq: explicit pairing}, we need to prove 
\begin{equation}\label{eq: Yvee is a point}
\langle (u,v),(y,x)\rangle + \langle (u',v'),(y,x)\rangle = 
\min_{\sigma,\tau,j} \{ \langle (u,v) +_{\sigma,\tau,j} (u',v'), (y,x) \rangle \}
\end{equation}
for any $(u,v), (u',v') \in X$. It would suffice now to prove that for any triple $(\sigma,\tau,j)$, the quantity $\langle (u,v) +_{\sigma,\tau,j} (u',v'), (y,x) \rangle$ is greater than or equal to the LHS of \eqref{eq: Yvee is a point}, and, that there exists a triple $(\sigma_0, \tau_0, j_0)$ where equality is achieved. To implement this strategy, we first compute 
the LHS of \eqref{eq: Yvee is a point} as 
\begin{equation}\label{eq: LHS}
\begin{split}
\langle (u,v),(y,x)\rangle + \langle (u',v'),(y,x)\rangle &= \v_1(u)(x)+\w_2(v)(y)+\v_1(u')(x)+\w_2(v')(y)\\
&= \w_1(x)(u) + \w_1(x)(u') + \v_2(y)(v) + \v_2(y)(v').
\end{split}
\end{equation}
Now suppose $(\sigma,\tau,j)$ is any choice of coordinate chart for $\mathcal{E}$. For this triple, we may compute the quantity $\langle (u,v) +_{\sigma,\tau,j} (u',v'), (y,x) \rangle$ appearing in the RHS of \eqref{eq: Yvee is a point} as follows. 
Recall that we know from Proposition~\ref{prop-conescover} that there exist $\sigma_0, \tau_0, j_0$ such that $(y,x) \in (\PP_2(j_0) \cap C_{\sigma_0}) \times C_{\tau_0}$. Also recall from the assumption of strict duality that $C_{\sigma_0} = (\v_2)^{-1}(\Sp_\R(\NN_2,\sigma_0))$ and $C_{\tau_0} = (\w_1)^{-1}(\Sp_\R(\MM_1,\tau_0))$. This means that the minimum in the definition of points for $\w_1(x)$ and $\v_2(y)$ is achieved in the coordinate charts $\tau_0$ and $\sigma_0$ respectively. 
Finally, since $y \in \PP_2(j_0)$, we know $\w_2(n^2_j)(y) \geq 0$ for any $j$ and is equal to $0$ when $j=j_0$.
Together with these facts and Lemma~\ref{lemma: chart addition formula for calE} we obtain: 
\begin{equation}\label{eq: RHS}
    \begin{split} 
\langle (u,v)+_{\sigma,\tau,j} (u',v'), (y,x) \rangle &= \langle (u +_\tau u', v+_\sigma v' +_\sigma (\Psi_{\PP_1}(u+_\tau u') - \w_2(v+_\sigma v')(\mathbb{g}_2))(n^2_j)), (y,x)\rangle \\
& = \v_1(u+_\tau u')(x) + \w_2(v+_\sigma v' +_\sigma (\Psi_{\PP_1}(u+_\tau u') - \w_2(v+_\sigma v')(\mathbb{g}_2))(n^2_j)))(y) \\
& = \w_1(x)(u+_\tau u') + \v_2(y)(v+_\sigma v' +_\sigma (\Psi_{\PP_1}(u+_\tau u') - \w_2(v+_\sigma v')(\mathbb{g}_2))(n^2_j))) \\
& \geq \w_1(x)(u) + \w_1(x)(u') + \v_2(y)(v) + \v_2(y)(v') \\
& \quad + (\Psi_{\PP_1}(u+_\tau u') - \w_2(v+_\sigma v')(\mathbb{g}_2))\v_2(y)(n^2_j) \\
& = \w_1(x)(u) + \w_1(x)(u') + \v_2(y)(v) + \v_2(y)(v') \\
& \quad + (\Psi_{\PP_1}(u+_\tau u') - \w_2(v+_\sigma v')(\mathbb{g}_2))\w_2(n^2_j)(y) \\
& \geq \w_1(x)(u) + \w_1(x)(u') + \v_2(y)(v) + \v_2(y)(v')
    \end{split} 
\end{equation}
for any $(\sigma,\tau,j)$, where the first inequality holds because $\w_1(x)$ and $\v_2(y)$ are points. Moreover, when $\sigma=\sigma_0,\tau=\tau_0,j=j_0$, we see that 
$$
\langle (u,v)+_{\sigma_0,\tau_0,j_0} (u',v'), (y,x) \rangle = \w_1(x)(u) + \w_1(x)(u') + \v_2(y)(v) + \v_2(y)(v'). 
$$
This proves that the minimum in the RHS is achieved by the LHS and thus \eqref{eq: Yvee is a point} holds. We conclude that the association $(y,x) \mapsto f_{(y,x)}$ is a well-defined map $\Theta: Y^\vee \to \Sp(\mathcal{E})$. Moreover, the argument just given additionally shows that if $(y,x) \in (\PP_2(j_0) \cap C_{\sigma_0}) \times C_{\tau_0}$, then $\Theta(y,x)$ is linear on the $(\sigma_0,\tau_0,j_0)$-th chart. Homogeneity with respect to multiplication by real constants is immediate from the fact that the pairing comes from strict dual pairings on the original data. This shows that $\Theta(y,x)$ lies in $\Sp(\mathcal{E},(\sigma_0,\tau_0,j_0))$. This proves the second claim in the proposition. 

We need to show that the association $\Theta: (y,x) \mapsto f_{(y,x)}$ from $Y^\vee \to \Sp(\mathcal{E})$ is a bijection. The proof of injectivity is essentially the same as the proof of injectivity of $\kappa_{\sigma,\tau,j}$ in Proposition~\ref{proposition: phi sigma tau j bijection}, where we swap the roles of $X$ and $X^\vee$, and also use Lemma~\ref{lem-pairing} to see that the pairing between $X^\vee$ and $Y$ may be identified with the pairing between $Y^\vee$ and $X$. To see that $\Theta$ is surjective, first observe that $\Theta$ can be uniquely extended to a continuous map $\Theta_\R: Y^\vee_\R \to \Sp_\R(\mathcal{E})$ by hypotheses on strict duals.   Both the target and domain of $\Theta_\R$ are homeomorphic to a real vector space of the same dimension $r_1+r_2-1$. The arguments above extend naturally to $\R$ coefficients, proving that $\Theta_\R$ is injective. Moreover, $\Theta_\R$ respects dilations by construction. Thus $\Theta_\R$ induces an injective map of spheres of the same dimension. If $\Theta_\R$ were not onto, then by the Borsuk-Ulam theorem, $\Theta_\R$ cannot be injective, but this is a contradiction. Thus $\Theta_\R$ must be surjective. From this, we wish to conclude that $\Theta$ is also surjective. To see this, suppose that $p$ is an (integral) point in $\Sp(\mathcal{E})$. We wish to show there is an element $(y,x)$ of $Y^\vee$ with $\Theta(y,x) = p$. First observe that since a point in $\Sp(\mathcal{E})$ is also a point in $\Sp_\R(\mathcal{E})$, we know there exist $(y',x') \in Y^\vee_\R$ with $\Theta_\R(y',x')=p$. Next, by Proposition~\ref{prop-conescover}(4) we know that $(y',x')$ is contained in some $(\PP_2(j_0) \cap C_{\sigma_0})\times C_{\tau_0}$. We just saw above that this means $(y',x')$ is linear on the $(\sigma_0,\tau_0,j_0)$-th coordinate-chart of $\mathcal{E}$. Since $(y',x')$ maps to $p$ by assumption, we know that $(y',x')$ takes integral values on all (lattice) elements in the $(\sigma_0,\tau_0,j_0)$-th coordinate chart $E_{\sigma_0,\tau_0,j_0}$ of $\mathcal{E}$. Moreover, $(y',x')$ is linear on this chart. The only way this can happen is if $(y',x')$ is itself an integral element on this chart, i.e. lies in $\Hom(E_{\sigma_0,\tau_0,j_0},\Z)$. But this then implies $(y',x') \in \Sp(\mathcal{E})$, since being an integral element in one coordinate chart implies that it takes integral values on all coordinate charts. Thus $\Theta$ is surjective, as desired. 
\end{proof}

An entirely analogous narrative holds by switching indices. We will be brief. Let $(u,v) \in Y$. Define 
\begin{equation}\label{eq: def f vee u v}
f^\vee_{(u,v)}: X^\vee \to \Z, \quad f^\vee_{(u,v)}(y,x) := \langle (u,v),(y,x)\rangle. 
\end{equation} 

\begin{proposition}\label{proposition: dual map E to Sp calF}
Following the notation above, the association $(u,v) \in Y \mapsto f^\vee_{(u,v)}$ is a bijection from $Y$ to $\Sp(\mathcal{F})$. 
\end{proposition}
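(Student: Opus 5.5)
The plan is to mirror the proof of Proposition~\ref{proposition: dual map F to Sp of calE} with the roles of $(\MM_1,\NN_2,\PP_1)$ and $(\MM_2,\NN_1,\PP_2)$ interchanged. We use the chart addition formula of Lemma~\ref{lemma: chart addition formula for calF} in $X^\vee$ and Proposition~\ref{proposition: cones cover Y}(7) for the cover of $Y$.

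\textbf{Step 1: $f^\vee_{(u,v)}$ is a point.} Fix $(u,v)\in Y$. By Proposition~\ref{proposition: cones cover Y}(7), choose $\alpha_0\in\pi(\NN_1)$, $\beta_0\in\pi(\MM_2)$ and $i_0\in A_{\alpha_0}$ with $(u,v)\in(\PP_1(i_0)\cap C_{\alpha_0})\times C_{\beta_0}$. Strict duality gives $C_{\alpha_0}=\v_1^{-1}(\Sp_\R(\NN_1,\alpha_0))$ and $C_{\beta_0}=\w_2^{-1}(\Sp_\R(\MM_2,\beta_0))$. Hence $\v_1(u)$ is linear on the chart $\alpha_0$ and $\w_2(v)$ is linear on the chart $\beta_0$.

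For $(y,x),(y',x')\in X^\vee$ and any chart $(\alpha,\beta,i)$, expand
\[
\langle (y,x)+_{\alpha,\beta,i}(y',x'),(u,v)\rangle
=\w_2(v)(y+_\alpha y')+\v_1(u)\bigl(x+_\beta x'+_\beta A\,n^1_i\bigr),
\]
where $A=\Psi_{\PP_2}(y+_\alpha y')-\w_1(x+_\beta x')(\mathbb{g}_1)$. The argument in Lemma~\ref{lemma: chart addition formula for calE} shows $A\ge 0$, and $\w_1(n^1_i)(u)\ge 0$ because $u\in\PP_1$. Using the point inequalities for $\w_2(v)$ and $\v_1(u)$, the right-hand side is therefore at least $\langle (y,x),(u,v)\rangle+\langle (y',x'),(u,v)\rangle$. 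At the chart $(\alpha_0,\beta_0,i_0)$ both point inequalities become equalities and $\w_1(n^1_{i_0})(u)=0$, so the minimum is attained there. This also shows $f^\vee_{(u,v)}\in\Sp(\mathcal{F},(\alpha_0,\beta_0,i_0))$.

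\textbf{Step 2: injectivity.} We follow the injectivity argument of Proposition~\ref{proposition: phi sigma tau j bijection} with indices swapped. By Lemma~\ref{lem-pairing}, pairing $Y$ against $X^\vee$ corresponds to pairing $X$ against $Y^\vee$ via $\phi,\phi^\vee$. So it suffices to show that two elements of $Y$ agreeing on all of $X^\vee$ are equal. Suppose $(u,v),(u',v')\in Y$ agree on $X^\vee$. Testing against $(y,x)$ with $x=0$ and $\Psi_{\PP_2}(y)=0$, i.e.\ $y\in\partial\PP_2$, gives $\w_2(v)=\w_2(v')$ on $\partial\PP_2$. Testing against elements $(c\,\mathbb{g}_2,x)$ with $\w_1(x)(\mathbb{g}_1)=c$ and $x$ ranging over a maximal cone, then using Lemma~\ref{lemma: properties of lineality space}, gives $\w_1(x)(u)-\w_1(x)(u')=c\,(\w_2(v')-\w_2(v))(\mathbb{g}_2)$. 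Combining this with $\Psi_{\PP_1}(u)=\Psi_{\PP_1}(u')=0$ should force $u=u'$ and $\w_2(v)(\mathbb{g}_2)=\w_2(v')(\mathbb{g}_2)$. We then conclude $v=v'$ via Lemma~\ref{lemma: res cone surj} and the decomposition $y=(y-A\mathbb{g}_2)+A\mathbb{g}_2$. This bookkeeping is where care is needed, but it is formally dual to the paper's earlier argument.

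\textbf{Step 3: surjectivity.} This is the main obstacle, and we handle it as in Proposition~\ref{proposition: dual map F to Sp of calE}. Extend the map continuously to $Y_\R\to\Sp_\R(\mathcal{F})$; it is injective by the same arguments over $\R$ and commutes with positive dilations. Both spaces are homeomorphic to $\R^{r_1+r_2-1}$, so invariance of domain (or the Borsuk--Ulam argument on unit spheres used earlier) gives surjectivity over $\R$. For integrality, let $p\in\Sp(\mathcal{F})$ have real preimage $(u,v)\in Y_\R$. Place $(u,v)$ in some $(\PP_1(i_0)\cap C_{\alpha_0})\times C_{\beta_0}$, so $p$ is linear on the chart $F_{\alpha_0,\beta_0,i_0}$ and integral there. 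Since the chart $F_{\alpha_0,\beta_0,i_0}$ is $\Hom$ of this cone's lattice points, integrality forces $(u,v)$ to be a lattice point. This uses the same reasoning as the last paragraph of Proposition~\ref{proposition: dual map F to Sp of calE} together with Lemma~\ref{lemma: res cone surj}.
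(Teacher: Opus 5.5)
Your proposal is correct and takes the paper's route: the paper proves this statement only by calling it the index-swapped analogue of Proposition~\ref{proposition: dual map F to Sp of calE}, and your three steps are exactly that analogue. They are (i) verifying the point condition via the chart addition formula, with equality on the chart whose cone contains $(u,v)$, (ii) injectivity, and (iii) surjectivity over $\R$ by the sphere argument, followed by integrality on a chart where the point is linear.

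Two small remarks. In Step~1 the additions should read $y+_\beta y'$ and $x+_\alpha x'+_\alpha A\,n^1_i$, since $\alpha$ indexes charts of $\NN_1$ and $\beta$ charts of $\MM_2$; the displayed formula in Lemma~\ref{lemma: chart addition formula for calF} has them swapped, and the corrected indices are what make your equality at $(\alpha_0,\beta_0,i_0)$ hold. Also, your hedged ``should force'' in Step~2 does close. With $\delta=\w_2(v')(\mathbb{g}_2)-\w_2(v)(\mathbb{g}_2)$, your identity says $\v_1(u)=\v_1(u'+\delta\mathbb{g}_1)$, so $u=u'+\delta\mathbb{g}_1$, and then $\Psi_{\PP_1}(u)=\Psi_{\PP_1}(u')=0$ forces $\delta=0$.
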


Thus $\mathcal{E}$ can be identified with the space of points of $\mathcal{F}$. 
We can now state the main result of this section. Define the map $\v: \mathcal{E} \to \Sp(\mathcal{F})$ (respectively $\w: \mathcal{F} \to \Sp(\mathcal{E})$) by the composition of the bijection $\phi: X \to Y$ of Lemma~\ref{lem-2sets}(2) with the bijection $Y \rightarrow \Sp(\mathcal{F})$ of Proposition~\ref{proposition: dual map E to Sp calF} (respectively the composition of $\phi^\vee: X^\vee \to Y^\vee$ in Lemma~\ref{lemma: phivee psivee bijections}(2) with the map $Y^{\vee} \rightarrow \Sp(\mathcal{E})$ in Proposition~\ref{proposition: dual map F to Sp of calE}).

\begin{theorem}\label{theorem: E and F strict duals}
Following the notation above, the data $(\mathcal{E},\mathcal{F}, \v,\w)$ is a strict dual pair of polyptych lattices over $\Z$.  
\end{theorem}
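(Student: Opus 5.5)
We verify the four axioms of Definition~\ref{def_dual} for $\v := \Theta^\vee \circ \phi$ and $\w := \Theta \circ \phi^\vee$. Here $\Theta: Y^\vee \to \Sp(\mathcal{E})$ is the bijection of Proposition~\ref{proposition: dual map F to Sp of calE}, and $\Theta^\vee: Y \to \Sp(\mathcal{F})$ is the bijection $(u,v)\mapsto f^\vee_{(u,v)}$ of Proposition~\ref{proposition: dual map E to Sp calF}.

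Axiom (3) is immediate, since $\v$ and $\w$ are compositions of bijections. For axiom (1), we extend $\phi,\phi^\vee$ to $\R$-coefficients, which Lemmas~\ref{lem-2sets} and~\ref{lemma: phivee psivee bijections} permit. We do the same for $\Theta_\R$, as stated in Proposition~\ref{proposition: dual map F to Sp of calE}, and for the analogous $\Theta^\vee_\R$. Continuity then follows because each map is piecewise linear on the finite cone covers of Propositions~\ref{prop-conescover} and~\ref{proposition: cones cover Y}. The restriction to integral points is the integral statement already proved.

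For axiom (2), let $(u,v)\in X$ and $(y,x)\in X^\vee$. Unwinding the definitions gives
\[
\v(u,v)(y,x) = f^\vee_{\phi(u,v)}(y,x) = \langle \phi(u,v),(y,x)\rangle
\]
and
\[
\w(y,x)(u,v) = f_{\phi^\vee(y,x)}(u,v) = \langle (u,v),\phi^\vee(y,x)\rangle .
\]
These two expressions agree by Lemma~\ref{lem-pairing}. The same identity holds over $\R$.

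Axiom (4) is the main content. We must show that $\w^{-1}(\Sp_\R(\mathcal{E},(\sigma,\tau,j)))$ is exactly the maximal cone $(C_{\sigma,j}\times C_\tau)\cap X^\vee_\R$ of $\Sigma(\mathcal{F})$ from Proposition~\ref{proposition: mutation fans of E and F}, and symmetrically for $\v$.

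For the inclusion $\supseteq$, take $(y,x)\in (C_{\sigma,j}\times C_\tau)\cap X^\vee_\R$. By Proposition~\ref{prop-conescover}(1), $\phi^\vee(y,x)\in(\PP_2(j)\cap C_\sigma)\times C_\tau$. The second statement of Proposition~\ref{proposition: dual map F to Sp of calE} then shows that $\Theta_\R(\phi^\vee(y,x))$ lies in $\Sp_\R(\mathcal{E},(\sigma,\tau,j))$.

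For the reverse inclusion, suppose $\w(y,x)$ is linear in the chart $(\sigma,\tau,j)$. We compare the restriction to the $(\sigma,\tau,j)$ chart with the evaluation against $\kappa_{\sigma,\tau,j}$. Both $\w(y,x)$ and some point $\w(y',x')$ with $(y',x')$ in the cone are linear functionals on $E_{\sigma,\tau,j}$. Such a point exists because Lemma~\ref{lemma: res cone surj} and Lemma~\ref{lemma: psi alpha properties}(3), applied to the strict dual $\MM_2\times\NN_1$, show that every linear functional on that chart arises from an element of the cone. If the two functionals agree, then injectivity of $\Theta$ gives $(y,x)=(y',x')$.

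We expect the hardest point to be establishing that every $p\in\Sp_\R(\mathcal{E},(\sigma,\tau,j))$ is realized by an element of $(\PP_2(j)\cap C_\sigma)\times C_\tau$. The plan is to show that the map $(\PP_2(j)\cap C_\sigma)\times C_\tau \to \Hom(E_{\sigma,\tau,j}\otimes\R,\R)$ is onto, using that this codimension-one cone spans the dual of $E_{\sigma,\tau,j}$ by construction in \eqref{eq: def charts of calE}. In the first draft this step conflated the cone with all of $\Hom(E_{\sigma,\tau,j}\otimes\R,\R)$; the surjectivity argument needs to be filled in before this can be claimed.

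Assuming that surjectivity, one more step is needed. The maximal cones $(C_{\sigma,j}\times C_\tau)\cap X^\vee_\R$ cover $\mathcal{F}_\R$ by Proposition~\ref{prop-conescover}(5). Their images under $\w$ are the distinct sets $\Sp_\R(\mathcal{E},(\sigma,\tau,j))$, because distinct charts give distinct linear regions. Together these show that the preimage equals the maximal cone exactly, and that the correspondence between charts of $\mathcal{E}$ and maximal cones of $\Sigma(\mathcal{F})$ is a bijection. The statement for $\v$ follows by the symmetric argument with indices swapped.
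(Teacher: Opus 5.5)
\textbf{There is a genuine gap in your proof of axiom (4).}

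Your treatment of axioms (1)--(3) agrees with the paper. So does the inclusion $(C_{\sigma,j}\times C_\tau)\cap X^\vee_\R\subseteq \w^{-1}(\Sp_\R(\mathcal{E},(\sigma,\tau,j)))$, obtained via Proposition~\ref{prop-conescover}(1) and Proposition~\ref{proposition: dual map F to Sp of calE}. The gap is the reverse inclusion, which is the real content of axiom (4).

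Your justification is that every linear functional on $E_{\sigma,\tau,j}\otimes\R$ is realized by an element of $K:=(\PP_2(j)\cap C_\sigma)\times C_\tau$. This is false. By definition $E_{\sigma,\tau,j}$ is dual to the lattice generated by $K\cap(\MM_2\times\NN_1)$. So evaluation identifies $\Hom(E_{\sigma,\tau,j}\otimes\R,\R)$ with the real span of $K$, and $K$ is a proper cone inside that span. Lemma~\ref{lemma: res cone surj} and Lemma~\ref{lemma: psi alpha properties}(3) give the opposite statement: elements of $\MM_1\times\NN_2$ realize all functionals on the cone.

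The corrected claim would be that every $p\in\Sp_\R(\mathcal{E},(\sigma,\tau,j))$ equals $\Theta$ of a point of $K$. That is precisely the inclusion to be proved, so the argument is circular. Likewise, your closing claim that ``distinct charts give distinct linear regions'' is asserted rather than proved. Proposition~\ref{proposition: mutation fans of E and F} does not supply it, since it separates cones of $\Sigma(\mathcal{E})$, not charts.

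\textbf{A symptom of the gap.} Your argument never uses strong chart-convexity of $\PP_1$ or $r_1\ge 2$, yet that input is indispensable. Suppose $\Psi_{\PP_1}$ were linear on the chart $\tau$, for example if $\PP_1$ is a half-space in a trivial $\MM_1$. Then the correction term $(\Psi_{\PP_1}(u+_\tau u')-\w_2(v+_\sigma v')(\mathbb{g}_2))\,n^2_j$ of Lemma~\ref{lemma: chart addition formula for calE} vanishes on $X$. So for $j\neq j'$ in $A_\sigma$, the charts $(\sigma,\tau,j)$ and $(\sigma,\tau,j')$ have the same addition. Hence $\Sp_\R(\mathcal{E},(\sigma,\tau,j))=\Sp_\R(\mathcal{E},(\sigma,\tau,j'))$, although $(C_{\sigma,j}\times C_\tau)\cap X^\vee_\R\neq(C_{\sigma,j'}\times C_\tau)\cap X^\vee_\R$, and axiom (4) fails.

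\textbf{What is missing} is the paper's contrapositive computation. For $(y,x)\in Y^\vee_\R\setminus K$, one exhibits $(u,v),(u',v')\in X_\R$ with $\langle (u,v)+_{\sigma,\tau,j}(u',v'),(y,x)\rangle>\langle (u,v),(y,x)\rangle+\langle (u',v'),(y,x)\rangle$, using the chart-addition formula. There are three cases.

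If $x\notin C_\tau$: take $u,u'$ witnessing non-additivity of $\w_1(x)$ for $+_\tau$, shifted by multiples of $\mathbb{g}_1$ so that $\Psi_{\PP_1}\ge 0$. Take $v,v'$ to be nonnegative multiples of some $n^2_{j_0}$ with $\w_2(n^2_{j_0})(y)=0$, so that the $\NN_2$-part pairs to zero.

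If $y\notin C_\sigma$: take $v,v'$ witnessing non-additivity of $\v_2(y)$ for $+_\sigma$, and $u,u'$ suitable multiples of $\mathbb{g}_1$, so that the correction term vanishes.

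If $y\in C_\sigma$ but $\w_2(n^2_j)(y)>0$: strong chart-convexity of $\PP_1$ gives $u,u'$ with $\Psi_{\PP_1}(u+_\tau u')>\Psi_{\PP_1}(u)+\Psi_{\PP_1}(u')$. This forces a correction coefficient $A>0$, and the extra term $A\,\w_2(n^2_j)(y)>0$ breaks additivity.
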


\begin{proof} 
We check the axioms in Definition~\ref{def_dual}. The maps $\v$ and $\w$ extend naturally to $\R$ coefficients since they are induced by strict dual pairings (defined over $\R$) on the original data $(\MM_i, \NN_i)$ for $i=1,2$. We know they are bijections by Proposition~\ref{proposition: dual map F to Sp of calE} and Proposition~\ref{proposition: dual map E to Sp calF}, and $\v(u,v)(y,x)=\w(y,x)(u,v)$ follows from \eqref{eq: def f y x} and~\eqref{eq: def f vee u v} and Lemma~\ref{lem-pairing}.

It remains to show that the maximal cones of $\Sigma(\mathcal{F})$ are precisely the inverse images $\w^{-1}(\Sp_\R(\mathcal{E},(\sigma,\tau,j))$ as $(\sigma,\tau,j)$ varies, and similarly with the roles of $\mathcal{E}$ and $\mathcal{F}$ reversed. 
We will give the argument for $\w: \mathcal{F}_\R \to \Sp_\R(\mathcal{E})$. We saw in Proposition~\ref{proposition: dual map F to Sp of calE} that the association $\w=\Theta \circ \phi^\vee$ takes $(C_{\sigma,j} \times C_\tau) \cap X^\vee_\R \to \Sp_\R(\mathcal{E},(\sigma,\tau,j))$. 
By Proposition~\ref{prop-conescover} it now suffices to prove that if $(y,x) \in Y^\vee_\R$ maps to $\Sp_\R(\mathcal{E},(\sigma,\tau,j))$ under $\Theta$, then $(y,x) \in (\PP_2(j) \cap C_\sigma) \times C_\tau$. We will prove the contrapositive, i.e., if $(y,x) \not \in (\PP_2(j) \cap C_\sigma) \times C_\tau$, then $(y,x)$ is \emph{not} linear on the $(\sigma,\tau,j)$-th chart. In other words, we must show that there exist choices such that 
\begin{equation}\label{eq: strict duality first part}
\langle (y,x), \kappa_{\sigma,\tau,j}^{-1}(\kappa_{\sigma,\tau,j}(u,v) + \kappa_{\sigma,\tau,j}(u',v')) \rangle = \langle (y,x),(u,v)\rangle + \langle (y,x),(u',v')\rangle.
\end{equation}
does \emph{not} hold. 
We will take cases. Namely, the condition $(y,x) \not \in (\PP_2(j) \cap C_\sigma) \times C_\tau$ occurs if any of the following occur: (1) $x \not \in C_\tau$, (2) $y \not \in C_\sigma$, or (3) $y \in C_\sigma$ but $y \not \in \PP_2(j)$. If we can show in each of these cases that $(y,x)$ is not linear on the $(\sigma,\tau,j)$-chart, then we are done. 

Consider the case when $x \not \in C_\tau$. We wish to find $(u,v),(u',v') \in X_\R$ such that the LHS and RHS of~\eqref{eq: strict duality first part} are not equal. First, since $x \not \in C_\tau$, we know $\w_1(x)$ is not linear with respect to $+_\tau$, and hence there exist $u,u' \in (\MM_1)_\R$ such that $\w_1(x)(u+_\tau u') \neq \w_1(x)(u) + \w_1(x)(u')$. In fact, by properties of points, we must have $\w_1(x)(u+_\tau u') > \w_1(x)(u) + \w_1(x)(u')$. We may add multiples of $\mathbb{g}_1$ to $u$ and $u'$ without affecting this inequality; by~Lemma~\ref{lemma: psi of a sum} this implies that we may assume without loss of generality that $\Psi_{\PP_1}(u)\geq 0$ and $\Psi_{\PP_1}(u') \geq 0$. Given such $u$ and $u'$, we now choose $v$ and $v'$ in a specific manner. First, observe that since $y \in Y^\vee$, we know that $\Psi_{\PP_2}(y) := \min_{k \in [s_2]} \{\w_2(n^2_k)(y)\} =0$. Let $j_0$ be the index achieving the minimum, so $\w_2(n^2_{j_0})(y)=0$. 
Now choose $v = \Psi_{\PP_1}(u)\cdot n^2_{j_0}$ and $v' = \Psi_{\PP_1}(u') \cdot n^2_{j_0}$. By construction, since $\w_2(\Psi_{\PP_1}(u)\cdot n^2_{j_0})(\mathbb{g}_2)=\Psi_{\PP_1}(u) \w_2(n^2_{j_0})(\mathbb{g}_2) = \Psi_{\PP_1}(u)$, and similarly for $v'$, we see that $(u,v) \in X_\R$ and $(u',v') \in X_\R$. Notice that 
$$
\v_2(y)(v) = \v_2(y)(\Psi_{\PP_1}(u) \cdot n^2_{j_0}) = \w_2(\Psi_{\PP_1}(u) \cdot n^2_{j_0})(y) = \Psi_{\PP_1}(u) \w_2(n^2_{j_0})(y) = 0. 
$$
 A similar argument shows $\v_2(y)(v')=0$. From this and~\eqref{eq: explicit pairing} it follows that the RHS of~\eqref{eq: strict duality first part} for this choice of $(u,v),(u',v')$ is equal to 
$$
\w_1(x)(u) + \w_1(x)(u').
$$
It remains to compute the LHS of~\eqref{eq: strict duality first part} and show that it is not equal to $\w_1(x)(u) + \w_1(x)(u')$. 
By~Lemma~\ref{lemma: chart addition formula for calE}, the LHS of~\eqref{eq: strict duality first part} can be computed as 
\begin{equation*}
    \begin{split} 
\textup{LHS} &= \w_1(x)(u+_\tau u') + \v_2(y)(\Psi_{\PP_1}(u) \cdot n^2_{j_0} +_\sigma \Psi_{\PP_1}(u')\cdot n^2_{j_0} +_\sigma (\Psi_{\PP_1}(u+_\tau u') - \Psi_{\PP_1}(u) - \Psi_{\PP_1}(u'))n^2_{j_0}) \\
& = \w_1(x)(u+_\tau u') + \v_2(y)(\Psi_{\PP_1}(u +_\tau u')n^2_{j_0})\\
& = \w_1(x)(u+_\tau u') + \Psi_{\PP_1}(u+_\tau u')\v_2(y)(n^2_{j_0}) \\
& = \w_1(x)(u+_\tau u') + \Psi_{\PP_1}(u+_\tau u')\w_2(n^2_{j_0})(y) \\
& = \w_1(x)(u+_\tau u') \quad \textup{ since $\w_2(n^2_{j_0})(y)=0$} \\ 
& > \w_1(x)(u) +\w_1(x)(u') \quad \textup{ by assumption on $u,u'$}
    \end{split} 
\end{equation*}
so we conclude that the LHS is strictly greater than the RHS and hence $(y,x)$ is not linear with respect to the $(\sigma,\tau,j)$-chart addition, as desired. 

Next we consider the case $y \not \in C_{\sigma}$. Then we know there exist $v,v' \in (\NN_2)_\R$ with $\v_2(y)(v+_\sigma v') > \v_2(y)(v) + \v_2(y)(v')$. Choose $u = \w_2(v)(\mathbb{g}_2) \cdot \mathbb{g}_1$ and $u' = \w_2(v')(\mathbb{g}_2) \cdot\mathbb{g}_1$. Then since $\w_1(n^1_k)(\mathbb{g}_1)=1$ for all $i$ we know $\Psi_{\PP_1}(u) = \w_2(v)(\mathbb{g}_2)$ and $\Psi_{\PP_1}(u')=\w_2(v')(\mathbb{g}_2)$ so $(u,v),(u',v') \in X_\R$. Note that in this setting, $\Psi_{\PP_1}(u+_\tau u') - \w_2(v+_\sigma v')(\mathbb{g}_2) = \w_2(v)(\mathbb{g}_2)+\w_2(v')(\mathbb{g}_2) - \w_2(v+_\sigma v')(\mathbb{g}_2) = 0$ because $\Psi_{\PP_1}(\mathbb{g}_1)=1$. Thus the LHS of~\eqref{eq: strict duality first part} may be computed to be 
\begin{equation*}
    \begin{split} 
\mathrm{LHS} & = \langle (y,x), (u+_\tau u', v+_\sigma v')\rangle \\
& = \w_1(x)(u+_\tau u') +\v_2(y)(v+_\sigma v') \\
& = \w_1(x)(u)+\w_1(x)(u') + \v_2(y)(v+_\sigma v')
    \end{split} 
\end{equation*}
where the last equality is because $u, u'$ are both in the lineality space. 
Comparing with~\eqref{eq: LHS} we see that the assumption $\v_2(y)(v+_\sigma v') > \v_2(y)(v) + \v_2(y)(v')$ implies that the LHS is strictly greater than the RHS. Hence $(y,x)$ is not linear with respect to the $(\sigma,\tau,j)$-chart addition. 

Finally consider the case when $(y,x) \in Y^\vee_\R, x \in C_\tau, y\in C_\sigma$, but $y \not \in \PP_2(j)$. Since by assumption $\Psi_{\PP_2}(y)=0$, this implies $\w_2(n^2_j)(y) > 0$.  By the assumption of strong chart-convexity and because $r_1 \geq 2$, the pullback of $\Psi_{\PP_1}$ to the $\tau$-th chart of $(\MM_1)_\R$ is a non-linear convex function. In particular there exist $u,u' \in (\MM_1)_\R$ with the property $\Psi_{\PP_1}(u+_\tau u') > \Psi_{\PP_1}(u) + \Psi_{\PP_1}(u')$. Now following arguments similar to the proof of Proposition~\ref{proposition: mutation fans of E and F}, there exist $(u,v),(u',v') \in X_\R$ with 
$A := \Psi_{\PP_1}(u+_\tau u') - \w_2(v +_\sigma v')(\mathbb{g}_2) > 0$. 
Now the LHS of~\eqref{eq: strict duality first part} becomes 
\begin{equation*}
    \begin{split} 
\textup{ LHS} & = \w_1(x)(u+_\tau u') +\v_2(y)(v+_\sigma v' +_\sigma A n^2_j) \\
& = \w_1(x)(u)+\w_1(x)(u')+\v_2(y)(v)+\v_2(y)(v') + \v_2(y)(An^2_j) \\
& = \w_1(x)(u)+\w_1(x)(u')+\v_2(y)(v)+\v_2(y)(v') + A \cdot  \v_2(y)(n^2_j) \\ 
& > \w_1(x)(u)+\w_1(x)(u')+\v_2(y)(v)+\v_2(y)(v') 
    \end{split} 
\end{equation*}
where the last inequality is because $\v_2(y)(n^2_j)=\w_2(n^2_j)(y) > 0$ and also $A > 0$. Thus the LHS is strictly greater than the RHS. This completes the argument for all cases.

The case of $\v: \mathcal{E}_\R \to \Sp_\R(\mathcal{F})$ follows from the same argument, with indices swapped.  This completes the proof. 
\end{proof}

\section{Detropicalizations of the Gorenstein PL cone extensions $\mathcal{E}$ and $\mathcal{F}$}\label{sec: E and F detrop}

The goal of this section is to prove that the PL lattices $\mathcal{E}$ and $\mathcal{F}$ constructed in Section~\ref{sec: cone extensions} are, under some hypotheses on the original data, also detropicalizable. In particular, in this section we continue to assume all hypotheses on the objects in question, as described at the beginning of Section~\ref{subsec: prelim notation and def}. 

We begin by recalling the key definitions, which we state in a more general setting. Below, the notation $S_\MM$ refers to the canonical semialgebra of $\MM$. For details and further properties, we refer to \cite{EscobarHaradaManon-PL}. 

\begin{definition}\cite[Definition 6.3]{EscobarHaradaManon-PL}\label{definition: recall detrop}
Let $\MM$ be a finite polyptych lattice of rank $r$ over $\Z$ with associated canonical idempotent semialgebra $S_\MM$. Let $\Aa_\MM$ be a Noetherian $\mathbb{K}$-algebra which is an integral domain and $\fv: \Aa_\MM \to S_\MM$ a valuation with values in $S_\MM$. We say that the pair $(\Aa_\MM, \fv)$ is a \textbf{detropicalization of $\MM$} if every element of $\MM$ is in the image of $\fv$, and the Krull dimension of $\Aa_\MM$ equals the rank $r$ of $\MM$. 
\end{definition} 

\begin{definition}\cite[Definition 6.5]{EscobarHaradaManon-PL}\label{definition: recall CAB}
Let $\MM$ be a finite polyptych lattice over $F$. Let $(\Aa_\MM,\fv: \Aa_\MM \to S_\MM)$ be a detropicalization of $\MM$. We say that a $\K$-vector space basis $\B$ of $\Aa_\MM$ is a \textbf{convex adapted basis} for $\fv: \Aa_\MM \to S_\MM$ if 
\begin{enumerate} 
\item[(1)] $\fv(\sum \lambda_i \bb_i) = \bigoplus_i \fv(\bb_i)$, for any finite collection $\lambda_i \in \K^*$ and $\bb_i \in \B$, and
\item[(2)] $\fv(\bb) \in \MM \subset S_\MM$ for all $\bb \in \B$.
\end{enumerate}
\end{definition}
 
 We record here the detailed discussion of a detropicalization for $\mathcal{E}$. The analogous statements for $\mathcal{F}$ are obtained by swapping indices. Assume that $\MM_1$ and $\NN_2$ have detropicalizations $(\Aa_{\MM_1}, \nu_{\MM_1})$ and $(\Aa_{\NN_2}, \nu_{\NN_2})$ respectively, and are also equipped with convex adapted bases $\mathbb{B}_{\MM_1}$ and $\mathbb{B}_{\NN_2}$. In particular, we assume that $\Aa_{\MM_1}, \Aa_{\NN_2}$ are $\K$-algebras and integral domains. Under these hypotheses we show below (Theorem~\ref{theorem: main}) that a detropicalization of $\mathcal{E}$ can be constructed concretely from $\Aa_{\MM_1}$ and $\Aa_{\NN_2}$ (cf. \eqref{eq: E detrop formula}), and we can additionally give a concrete convex adapted basis for $\mathcal{E}$ in terms of those for $\MM_1$ and $\NN_2$.

We first recall that the detropicalization of a direct product can be given by the tensor product of the detropicalizations \cite[Proposition 6.19]{EscobarHaradaManon-PL}. Thus we take as the detropicalization of $\MM_1 \times \NN_2$ the following:  
$$
\Aa_{\MM_1 \times \NN_2} := \Aa_{\MM_1} \otimes \Aa_{\NN_2}. 
$$
Let $\mathbb{B}_{\MM_1} = \{\hat{\mathbb{b}}_{u}^{\MM_1}\}_{u \in \MM_1}$ denote a convex adapted basis for $\Aa_{\MM_1}$, where (following \cite{EscobarHaradaManon-PL}) the basis is naturally indexed, via $\nu_{\MM_1}$, by the elements $u$ of $\MM_1$. Similarly let $\mathbb{B}_{\NN_2}$ denote the convex adapted basis for $\Aa_{\NN_2}$, indexed by elements of $\NN_2$. We will assume that we have chosen $\hat{\mathbb{b}}^{\MM_1}_{0_{\MM_1}} = 1$ and $\hat{\mathbb{b}}^{\NN_2}_{0_{\NN_2}} = 1$, where the $1$ denotes the identity element in the respective algebras.  Again applying \cite[Proposition 6.19]{EscobarHaradaManon-PL}, we know that $\{\hat{\mathbb{b}}_{u}^{\MM_1} \otimes \hat{\mathbb{b}}_{v}^{\NN_2}\}$ form a convex adapted basis for $\Aa_{\MM_1 \times \NN_2}$. For simplicity, for $u \in \MM_1$ and $v \in \NN_2$ we introduce the notation 
\begin{equation}\label{eq: def hat b u v}
\hat{\mathbb{b}}_{(u,v)} := \hat{\mathbb{b}}_{u}^{\MM_1} \otimes \hat{\mathbb{b}}_{v}^{\NN_2} \in \Aa_{\MM_1 \times \NN_2} := \Aa_{\MM_1} \otimes \Aa_{\NN_2}.
\end{equation}
We also define a subalgebra $\Aa_{{\PP}_1 \times \NN_2}$ of $\Aa_{\MM_1 \times \NN_2}$ as follows: 
\begin{equation}
\Aa_{{\PP}_1 \times \NN_2} := \left\{ \sum_{(u,v) \in (\PP_1 \cap \MM_1) \times \NN_2} c_{(u,v)} \hat{\mathbb{b}}_{(u,v)}  \, \mid \, u \in {\PP}_1 \cap \MM_1, c_{(u,v)} \in \K, \textup{ only finitely many $c_{(u,v)} \neq 0$}  \right\}.
\end{equation}
We denote by $\hat{\mathbb{B}}_{\PP_1 \times \NN_2}$ the basis $\{\hat{\mathbb{b}}_{(u,v)}\}_{u \in \PP_1 \cap \MM_1, v \in \NN_2}$ of $\Aa_{\PP_1 \times \NN_2}$. 
 Next we note that $\PP_1$ (and hence $\PP_1\times (\NN_2)_\R$) is point-convex by \cite[Lemma 5.3]{EscobarHaradaManon-PL} (the proof in \cite{EscobarHaradaManon-PL} that PL polytopes are point-convex generalizes to the case of PL polyhedra, i.e., when the intersection of PL half-spaces is not necessarily compact). Applying \cite[Lemma 7.3]{EscobarHaradaManon-PL}, we obtain that $\Aa_{{\PP}_1 \times \NN_2}$ is a subalgebra. 
The basis element corresponding to 
\begin{equation}\label{eq: def tau}
 (\mathbb{g}_1, 0_{\NN_2}) \in \MM_1 \times \NN_2
\end{equation}
will play a crucial role in our arguments below, so we introduce the notation 
\begin{equation}\label{eq: definition tau cal E}
\zeta_{\mathcal{E}} :=\hat{\mathbb{b}}_{(\mathbb{g}_1, 0_{\NN_2})} = \hat{\mathbb{b}}^{\MM_1}_{\mathbb{g}_1} \otimes 1 \in \Aa_{\MM_1}\otimes \Aa_{\NN_2}. 
\end{equation}
Since $\mathbb{g}_1$ lies in $\mathrm{int}(\PP_1)$ by assumption, it follows that $\zeta_{\mathcal{E}}$ lies in the subalgebra $\Aa_{{\PP}_1 \times \NN_2}$. 

We will need the following technical lemma.

\begin{lemma}\label{lemma: tau E is a unit}
We may assume without loss of generality that the inverse in $\Aa_{\MM_1 \times \NN_2}$ of the element $\zeta_{\mathcal{E}}$ is $\hat{\mathbb{b}}_{(-\mathbb{g}_1,0_{\NN_2})}$. 
\end{lemma}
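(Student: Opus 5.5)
The plan is to work entirely in $\Aa_{\MM_1}$ and then tensor with $1 \in \Aa_{\NN_2}$. Since $\mathbb{g}_1$ lies in the lineality space of $\MM_1$ (property (P-2)(a)), its negative $-\mathbb{g}_1$ is a well-defined element of $\MM_1$. So the convex adapted basis $\mathbb{B}_{\MM_1}$ contains both $\hat{\mathbb{b}}^{\MM_1}_{\mathbb{g}_1}$ and $\hat{\mathbb{b}}^{\MM_1}_{-\mathbb{g}_1}$. I will first show that their product is a nonzero scalar $c \in \K^*$. I will then replace $\hat{\mathbb{b}}^{\MM_1}_{-\mathbb{g}_1}$ by $c^{-1}\hat{\mathbb{b}}^{\MM_1}_{-\mathbb{g}_1}$, which explains the phrase ``without loss of generality''.

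\emph{Step 1: the product has valuation $0$.} The valuation $\nu_{\MM_1}$ is multiplicative into $S_{\MM_1}$. Hence $\nu_{\MM_1}(\hat{\mathbb{b}}^{\MM_1}_{\mathbb{g}_1}\hat{\mathbb{b}}^{\MM_1}_{-\mathbb{g}_1})$ is the semialgebra product of $\mathbb{g}_1$ and $-\mathbb{g}_1$ in $S_{\MM_1}$. For two elements lying in a common cone of $\Sigma(\MM_1)$, this product is their (chart-independent) sum. By Lemma~\ref{lemma: properties of lineality space}(1), $\mathbb{g}_1$ and $-\mathbb{g}_1$ lie in every cone and all chart additions agree, so the product is $\mathbb{g}_1 + (-\mathbb{g}_1) = 0_{\MM_1}$.

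\emph{Step 2: the product is a scalar.} Expand the product in the convex adapted basis as $\sum_w c_w \hat{\mathbb{b}}^{\MM_1}_w$ with finitely many $c_w \in \K^*$. By condition (1) of Definition~\ref{definition: recall CAB}, $\bigoplus_w w = 0$ in $S_{\MM_1}$. I expect this to be the main obstacle: I must check from the description of $S_{\MM_1}$ (as in \cite{EscobarHaradaManon-PL}) that an $\oplus$-sum of distinct elements of $\MM_1$ equals the single element $0$ only when the index set is $\{0\}$. The natural route is to use the points of $\MM_1$. Every $p \in \Sp(\MM_1)$ is evaluated on an $\oplus$-sum by taking the minimum over the summands. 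So $\min_w p(w) = p(0) = 0$ for all $p$, and applying this to both $p$ and the points separating $w$ from $0$ forces every $w$ to equal $0$, using that $\Sp(\MM_1) \cong \NN_1$ separates elements. The sum is nonempty because the product of nonzero elements in a domain is nonzero. Hence $\hat{\mathbb{b}}^{\MM_1}_{\mathbb{g}_1}\hat{\mathbb{b}}^{\MM_1}_{-\mathbb{g}_1} = c\,\hat{\mathbb{b}}^{\MM_1}_{0} = c \cdot 1$ with $c \in \K^*$, using the normalization $\hat{\mathbb{b}}^{\MM_1}_{0_{\MM_1}} = 1$.

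\emph{Step 3: rescale and tensor.} Rescaling one basis vector by $c^{-1} \in \K^*$ preserves both conditions of Definition~\ref{definition: recall CAB}, since valuations are insensitive to nonzero scalars. This requires $-\mathbb{g}_1 \neq 0$, which holds because $\w_1(n^1_j)(\mathbb{g}_1) = 1$; so the normalization of $\hat{\mathbb{b}}^{\MM_1}_0$ is untouched. The new basis is again convex adapted, and in it $\hat{\mathbb{b}}^{\MM_1}_{\mathbb{g}_1}\hat{\mathbb{b}}^{\MM_1}_{-\mathbb{g}_1} = 1$. Finally, $\zeta_{\mathcal{E}}\cdot \hat{\mathbb{b}}_{(-\mathbb{g}_1,0_{\NN_2})} = (\hat{\mathbb{b}}^{\MM_1}_{\mathbb{g}_1}\hat{\mathbb{b}}^{\MM_1}_{-\mathbb{g}_1}) \otimes 1 = 1 \otimes 1$. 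The product basis remains convex adapted by \cite[Proposition 6.19]{EscobarHaradaManon-PL}, which gives the lemma.
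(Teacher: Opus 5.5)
Steps 1 and 3 are fine. Your remark that $-\mathbb{g}_1\neq 0$, so that the normalization $\hat{\mathbb{b}}^{\MM_1}_{0_{\MM_1}}=1$ survives the rescaling, is a detail the paper leaves implicit.

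\textbf{The gap is in Step 2}, where you expected it. From $\bigoplus_m m = 0$ in $S_{\MM_1}$ you correctly get $p(m)\ge 0$ for every $p\in\Sp(\MM_1)$ and every $m$ in the support of the product. But a point separating $m$ from $0$ only gives $p(m)\neq 0$, hence $p(m)>0$, which is no contradiction. $\Sp(\MM_1)$ is not closed under negation, so you cannot replace $p$ by $-p$ as in the classical case.

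What you need is that every nonzero $m\in\MM_1$ takes a strictly negative value at some point. This requires an additional duality input, for example as follows. By fullness \cite[Lemma 4.2]{EscobarHaradaManon-PL}, $\v_1(m)\in\Sp(\NN_1)$ is linear on some chart of $\NN_1$. Since $\v_1(m)(n)=\w_1(n)(m)\ge 0$ for all $n\in\NN_1$, this linear function is nonnegative on an entire lattice, hence zero. So $\v_1(m)=0=\v_1(0_{\MM_1})$, and $m=0$ by injectivity of $\v_1$. Alternatively, take a maximal cone $C$ of $\Sigma(\MM_1)$ containing $m$, choose an element of $\Hom(C\cap\MM_1,\Z)$ that is negative on $m$, and lift it to a point via Lemma~\ref{lemma: res cone surj}. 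With this inserted, your argument is complete.

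For comparison, the paper's proof bypasses this issue by citing the support formula $\supp(\mathbb{b}_u\mathbb{b}_{u'})=\ptconv(\{u+_\tau u'\}_{\tau})\cap\MM$ of \cite[Lemma 7.3]{EscobarHaradaManon-PL}. Every chart sum of $\mathbb{g}_1$ and $-\mathbb{g}_1$ equals $0_{\MM_1}$, so the support lies in $\ptconv(\{0\})=\{0\}$, and the domain property then forces the product to be a nonzero multiple of $1$. Your Step 2 in effect re-derives this special case of Lemma 7.3 from the convex-adapted-basis axiom. That is more self-contained, but it is exactly where the positivity fact above has to be supplied.
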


Given the above lemma, we sometimes denote $\hat{\mathbb{b}}_{(-\mathbb{g}_1,0_{\NN_2})}$ as $\zeta_{\mathcal{E}}^{-1}$. Note that $\zeta_{\mathcal{E}}$ is a unit in $\Aa_{\MM_1 \times \NN_2}$ but not in $\Aa_{\PP_1 \times \NN_2}$, since $-\mathbb{g}_1 \not \in \PP_1$.

\begin{proof} 
Recall from \cite[Lemma 7.3]{EscobarHaradaManon-PL} that in general, for $\MM$ a polyptych lattice and $\Aa_\MM$ a detropicalization with convex adapted basis $\{\mathbb{b}_u\}$, with $\nu(\mathbb{b}_u)=u$ for $u \in \MM$, we have 
\begin{equation}\label{eq: mult basis elements} 
\supp(\mathbb{b}_{u} \cdot \mathbb{b}_{u'}) = \ptconv(\{u+_\tau u')\}_{\tau \in \pi(\MM)} \cap \MM.
\end{equation} 
Next recall that since $\mathbb{g}_1$ is contained in the lineality space of $\MM_1$, its negative $-\mathbb{g}_1$ is also in $\MM_1$ and we may consider the basis element $\hat{\mathbb{b}}_{(-\mathbb{g}_1,0_{\NN_2})}$. By \eqref{eq: mult basis elements} we obtain that $\zeta_{\mathcal{E}} \cdot \hat{\mathbb{b}}_{(-\mathbb{g}_1,0_{\NN_2})} = \hat{\mathbb{b}}_{(\mathbb{g}_1, 0_{\NN_2})} \cdot \hat{\mathbb{b}}_{(-\mathbb{g}_1,0_{\NN_2})}$ has support contained in $\ptconv(\{(0_{\MM_1}, 0_{\NN_2})\})$ since $-\mathbb{g}_1+\mathbb{g}_1 = 0_{\MM_1}$ (addition is well-defined in the lineality space). Since $\Aa_{\PP_1 \times \NN_2}$ is a domain we conclude $\zeta_{\mathcal{E}} \cdot \hat{\mathbb{b}}_{(-\mathbb{g}_1,0_{\NN_2})}$ is a non-zero constant multiple of $\hat{\mathbb{b}}_{(0_{\MM_1},0_{\NN_2})} = 1$. By taking appropriate scalar multiples, we may therefore assume without loss of generality that $\zeta_{\mathcal{E}}$ has inverse $\hat{\mathbb{b}}_{(-\mathbb{g}_1,0_{\NN_2})}$. 
\end{proof}

\begin{lemma}\label{lemma: tau calE arithmetic} 
For any $k \in \Z$ and any $(u,v) \in \MM_1 \times \NN_2$, the product $\zeta_{\mathcal{E}}^k \cdot \hat{\mathbb{b}}_{(u,v)}$ is a non-zero scalar multiple of $\hat{\mathbb{b}}_{(u+k\mathbb{g}_1,v)}$. 
\end{lemma}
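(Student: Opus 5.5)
The plan is to use the support formula \eqref{eq: mult basis elements} from \cite[Lemma 7.3]{EscobarHaradaManon-PL}, as in the proof of Lemma~\ref{lemma: tau E is a unit}, together with the fact that $(\mathbb{g}_1,0_{\NN_2})$ lies in the lineality space of $\MM_1\times\NN_2$. First I handle $k=\pm1$.

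For $k=1$, I compute the support of $\zeta_{\mathcal{E}}\cdot\hat{\mathbb{b}}_{(u,v)} = \hat{\mathbb{b}}_{(\mathbb{g}_1,0)}\cdot\hat{\mathbb{b}}_{(u,v)}$. By \eqref{eq: mult basis elements}, applied to the product polyptych lattice $\MM_1\times\NN_2$ with the convex adapted basis $\{\hat{\mathbb{b}}_{(u,v)}\}$ from \cite[Proposition 6.19]{EscobarHaradaManon-PL}, the support is contained in $\ptconv(\{(\mathbb{g}_1,0)+_{\gamma}(u,v)\}_\gamma)\cap(\MM_1\times\NN_2)$. Since $\mathbb{g}_1$ lies in the lineality space of $\MM_1$, Lemma~\ref{lemma: properties of lineality space}(1) shows that all chart sums coincide and equal $(u+\mathbb{g}_1,v)$. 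The point-convex hull of a single point is that point. This holds because every point $p$ of $\Sp(\MM)$ gives a half-space containing it, and the intersection over all such half-spaces is the singleton, by injectivity of the strict duality pairing; alternatively I can cite that $\ptconv$ of a singleton is itself. Hence the support lies in $\{(u+\mathbb{g}_1,v)\}$. The product is nonzero because $\Aa_{\MM_1}\otimes\Aa_{\NN_2}$ is a domain, as the detropicalization of the product, so it is a nonzero scalar multiple of $\hat{\mathbb{b}}_{(u+\mathbb{g}_1,v)}$. The identical argument with $\hat{\mathbb{b}}_{(-\mathbb{g}_1,0)}=\zeta_{\mathcal{E}}^{-1}$, by Lemma~\ref{lemma: tau E is a unit}, together with $-\mathbb{g}_1$ also lying in the lineality space, gives the case $k=-1$.

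For general $k$ I induct on $|k|$, with $k=0$ trivial since $\zeta_{\mathcal{E}}^0=1$. Write $\zeta_{\mathcal{E}}^{k}\hat{\mathbb{b}}_{(u,v)}=\zeta_{\mathcal{E}}^{\pm1}\cdot(\zeta_{\mathcal{E}}^{k\mp1}\hat{\mathbb{b}}_{(u,v)})$. By induction the inner factor is $c\,\hat{\mathbb{b}}_{(u+(k\mp1)\mathbb{g}_1,v)}$ with $c\neq0$. Applying the $k=\pm1$ case to this element gives $c c'\hat{\mathbb{b}}_{(u+k\mathbb{g}_1,v)}$. Here I use that addition of multiples of $\mathbb{g}_1$ is well defined and associative, which again follows from Lemma~\ref{lemma: properties of lineality space}.

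The only delicate step is justifying that $\ptconv$ of a single lattice point is just that point, and that \eqref{eq: mult basis elements} applies to the product lattice with the tensor basis. I would dispatch both by citing \cite[Lemma 7.3, Proposition 6.19]{EscobarHaradaManon-PL}, exactly as in the proof of Lemma~\ref{lemma: tau E is a unit}.
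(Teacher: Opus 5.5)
This is correct and is essentially the paper's argument: the paper notes that $\zeta_{\mathcal{E}}^k$ is defined by Lemma~\ref{lemma: tau E is a unit} and reruns that lemma's support computation via \eqref{eq: mult basis elements} and the fact that $(\mathbb{g}_1,0_{\NN_2})$ lies in the lineality space. You do exactly this for $k=\pm1$ and then induct on $|k|$. One minor remark: injectivity of $\v$ alone does not give $\ptconv(\{m\})=\{m\}$, because you need $p(x)\ge p(m)$ for all $p$ to force $x=m$. That does follow by testing against $n$ and $\pi_\beta^{-1}(-\pi_\beta(n))$ for a chart $\beta$ on which $\v(m)$ is linear, and the paper uses the same fact implicitly, so citing it is fine.
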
 

\begin{proof} 
Note that $\zeta_{\mathcal{E}}^k$ is a well-defined element of $\Aa_{\MM_1 \times \NN_2}$ for any $k \in \Z$, by Lemma~\ref{lemma: tau E is a unit}. Now the claim follows from 
a similar argument as in the proof of Lemma~\ref{lemma: tau E is a unit}.
\end{proof} 

Before proceeding, we highlight that the next step uses the data of the PL cone $\PP_2$, \emph{not} $\PP_1$, which is what we had repeatedly referenced in the paragraphs above. Recall that $\PP_2$ is defined by PL half-spaces given by $\{n^2_j\}_{j=1}^{s_2}$. For $j \in [s_2]$, consider the convex basis element $\hat{\mathbb{b}}_{(0_{\MM_1},n^2_j)} \in \Aa_{{\PP}_1 \times \NN_2}$. 
We define 
\begin{equation}\label{eq: def h} 
    \hh := \zeta_{\mathcal{E}} - \sum_{j \in [s_2]} \hat{\mathbb{b}}_{(0_{\MM_1},n^2_j)} \in \Aa_{{\PP}_1\times \NN_2}.
\end{equation}
Note that $\mathbb{h}$ is non-zero and not a unit in $\Aa_{\PP_1 \times \NN_2}$ by \cite[Proposition 7.6]{CookEscobarHaradaManon2024}. 

We now define the quotient algebra
\begin{equation}\label{eq: E detrop formula} 
\Aa_{\mathcal{E}} := \Aa_{{\PP}_1 \times \NN_2} \big/ \langle \hh \rangle.
\end{equation}

The main result of this section is Theorem~\ref{theorem: main}, the 
proof of which requires multiple steps. For the argument, it is helpful to first construct a basis for $\mathcal{A}_{\mathcal{E}}$ with the properties we want. With this in mind, we begin with Lemma~\ref{lemma: B cal E is a basis} below, in which we find a $\K$-vector space basis for $\Aa_{\mathcal{E}}$.  Since $\Aa_{\mathcal{E}}$ is a quotient of $\Aa_{{\PP}_1 \times \NN_2}$, we have the natural projection $\Aa_{{\PP}_1 \times \NN_2} \to \Aa_\mathcal{E}$. For $(u,v) \in (\PP_1 \cap \MM_1) \times \NN_2$ we introduce the notation 
$$
\mathbb{b}_{(u,v)} := \textup{ the image of } \hat{\mathbb{b}}_{(u,v)} \, \textup{ in the quotient ring } \, \Aa_{\mathcal{E}}. 
$$
Now we define 
\begin{equation}\label{eq: CAB of detrop of E}
\mathbb{B}_{\mathcal{E}} := \{ \mathbb{b}_{(u,v)} \, \mid \, (u,v) \in Y \subset \MM_1 \times \NN_2 \} \subset \Aa_{\mathcal{E}}
\end{equation}
where $Y$ is defined in \eqref{eq: def Y}. 
We have the following. 

\begin{lemma}\label{lemma: B cal E is a basis}
The set $\mathbb{B}_{\mathcal{E}}$ of \eqref{eq: CAB of detrop of E} is a $\K$-vector space basis of $\Aa_{\mathcal{E}}$. 
\end{lemma}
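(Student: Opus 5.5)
We must show that the images of $\hat{\mathbb{b}}_{(u,v)}$ with $(u,v)\in Y$ span $\Aa_{\mathcal{E}}$ and are linearly independent. The underlying idea is that multiplication by $\zeta_{\mathcal{E}}$ shifts the $\MM_1$-coordinate by $\mathbb{g}_1$, and hence raises $\Psi_{\PP_1}$ by one (Lemma~\ref{lemma: psi of a sum}). Every $u\in\PP_1\cap\MM_1$ can therefore be written uniquely as $u=u_0+k\mathbb{g}_1$ with $k=\Psi_{\PP_1}(u)\ge 0$ and $\Psi_{\PP_1}(u_0)=0$. Modulo $\hh$, the relation $\zeta_{\mathcal{E}}\equiv\sum_j \hat{\mathbb{b}}_{(0,n^2_j)}$ lets us trade the $\mathbb{g}_1$-part of $u$ for elements of $\NN_2$. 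This is a normal-form (division-algorithm) argument, with $\Psi_{\PP_1}$ playing the role of a degree.

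\textbf{Spanning.} Grade $\Aa_{\PP_1\times\NN_2}$ by the filtration $F_k=\mathrm{span}\{\hat{\mathbb{b}}_{(u,v)} : \Psi_{\PP_1}(u)\le k\}$. This is a filtration: by \eqref{eq: mult basis elements} and point-convexity, the support of $\hat{\mathbb{b}}_{(u,v)}\hat{\mathbb{b}}_{(u',v')}$ lies in $\ptconv\{(u+_\tau u',v+_\sigma v')\}$, and since $\Psi_{\PP_1}$ is a min of points, it is bounded on that set by values that one should control using superadditivity together with Lemma~\ref{lemma: psi of a sum}. Actually what we need is simpler. Take $(u,v)$ with $k=\Psi_{\PP_1}(u)\ge 1$. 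By Lemma~\ref{lemma: tau calE arithmetic}, $\hat{\mathbb{b}}_{(u,v)}$ is a nonzero multiple of $\zeta_{\mathcal{E}}\,\hat{\mathbb{b}}_{(u-\mathbb{g}_1,v)}$, and $u-\mathbb{g}_1\in\PP_1$. Modulo $\hh$ this is congruent to $\sum_j \hat{\mathbb{b}}_{(0,n^2_j)}\hat{\mathbb{b}}_{(u-\mathbb{g}_1,v)}$. Because the $\MM_1$-component of $(0,n^2_j)$ is $0$, the product $\hat{\mathbb{b}}^{\MM_1}_{u-\mathbb{g}_1}\otimes(\hat{\mathbb{b}}^{\NN_2}_{n^2_j}\hat{\mathbb{b}}^{\NN_2}_{v})$ expands only in basis elements with first coordinate exactly $u-\mathbb{g}_1$, which has $\Psi_{\PP_1}=k-1$. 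Induction on $k$ then shows that every basis element of $\Aa_{\PP_1\times\NN_2}$ is congruent to a combination of $\hat{\mathbb{b}}_{(u,v)}$ with $\Psi_{\PP_1}(u)=0$, i.e.\ $(u,v)\in Y$. Since a general element is a finite combination of basis elements, this gives spanning.

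\textbf{Independence.} Suppose $g=\sum_{(u,v)\in Y}c_{(u,v)}\hat{\mathbb{b}}_{(u,v)}=\hh\cdot a$ with $a\ne 0$ in $\Aa_{\PP_1\times\NN_2}$. Write $a$ in the basis and let $k$ be the maximum of $\Psi_{\PP_1}(u)$ over its support. The term $\zeta_{\mathcal{E}}\cdot a$ contributes, for each $(u,v)$ of top degree, a nonzero multiple of $\hat{\mathbb{b}}_{(u+\mathbb{g}_1,v)}$. These basis elements are distinct for distinct $(u,v)$ (Lemma~\ref{lemma: tau calE arithmetic}) and have degree $k+1\ge 1$. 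By the computation above, the terms $\hat{\mathbb{b}}_{(0,n^2_j)}\cdot a$ only involve first coordinates already present in $a$, hence of degree $\le k$. So $\hh a$ has a nonzero component of degree $k+1\ge 1$, whereas $g$ lives in degree $0$, a contradiction. Hence $\langle\hh\rangle\cap\mathrm{span}\{\hat{\mathbb{b}}_{(u,v)}:(u,v)\in Y\}=0$.

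\textbf{Main obstacle.} The crux in both steps is the claim that $\hat{\mathbb{b}}^{\MM_1}_{u}\otimes\hat{\mathbb{b}}^{\NN_2}_{v'}$ multiplied by $1\otimes \hat{\mathbb{b}}^{\NN_2}_{n}$ stays in the span of elements with first coordinate $u$. This holds because $\Aa_{\MM_1\times\NN_2}=\Aa_{\MM_1}\otimes\Aa_{\NN_2}$ is a tensor product and $\hat{\mathbb{b}}^{\MM_1}_{0}=1$. I would state this explicitly, together with the uniqueness of the decomposition $u=u_0+\Psi_{\PP_1}(u)\mathbb{g}_1$. The remaining bookkeeping is routine.
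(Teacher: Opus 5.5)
Your proof is correct and follows the paper's argument. Spanning is done by trading $\zeta_{\mathcal{E}}$ for $\sum_j \hat{\mathbb{b}}_{(0,n^2_j)}$ modulo $\hh$; you do this one power of $\zeta_{\mathcal{E}}$ at a time by induction on $\Psi_{\PP_1}(u)$, whereas the paper does all $\Psi_{\PP_1}(u)$ powers at once. Independence is shown by looking at the top $\Psi_{\PP_1}$-level of $\hh\cdot a$, which only $\zeta_{\mathcal{E}}$ times the top part of $a$ can reach.

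The one thing to delete is the aside that $F_k$ is a (multiplicative) filtration. Superadditivity of $\Psi_{\PP_1}$ gives a bound in the wrong direction: for the positive orthant in $\Z^2$, $\Psi_{\PP_1}(e_1)=\Psi_{\PP_1}(e_2)=0$ while $\Psi_{\PP_1}(e_1+e_2)=1$, so $F_0F_0\not\subseteq F_0$. Your argument never uses this; it only needs the vector-space grading by the $\Psi_{\PP_1}$-value of the first coordinate.
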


\begin{proof} 
We begin by showing that $\mathbb{B}_{\mathcal{E}}$ spans $\Aa_{\mathcal{E}}$. Since $\{\hat{\mathbb{b}}_{(u,v)} \, \mid \, (u,v) \in ({\PP}_1 \cap \MM_1)  \times \NN_2\}$ is a basis for $\Aa_{{\PP}_1 \times \NN_2}$, it suffices to show that for any $\hat{\mathbb{b}}_{(u,v)}$, we may find a linear combination in the quotient so that 
\begin{equation*}\label{eq: B_E spans}
\mathbb{b}_{(u,v)} = \sum c_i \mathbb{b}_{(u_i,v_i)}
\end{equation*}
where the sum is finite, $c_i \in \K$, and $(u_i,v_i) \in Y$. We take cases. If $(u,v) \in Y$ then $\mathbb{b}_{(u,v)}$ is itself an element of $\mathbb{B}_\mathcal{E}$ so the claim follows. The non-trivial case is if $(u,v) \in (\mathrm{int}({\PP}_1) \cap \MM_1) \times \NN_2$, i.e. $u$ is in the interior of the cone ${\PP}_1$, which means  $\Psi_{\PP_1}(u)>0$. 
We may rewrite $\hat{\mathbb{b}}_{(u,v)}$ using the distinguished element $\zeta_{\mathcal{E}}$ of \eqref{eq: definition tau cal E} as follows: 
\begin{equation}\label{eq: rewrite with tau mathcal E} 
    \begin{split}
\hat{\mathbb{b}}_{(u,v)} & = 
\zeta_{\mathcal{E}}^{\Psi_{\PP_1}(u)} \cdot 
\zeta_{\mathcal{E}}^{- \Psi_{\PP_1}(u)}
\cdot  \hat{\mathbb{b}}_{(u,v)} \\
& = 
c \cdot \zeta_{\mathcal{E}}^{\Psi_{\PP_1}(u)} \cdot 
\hat{\mathbb{b}}_{(u - \Psi_{\PP_1}(u) \mathbb{g}_1,v)}\\
\end{split} 
\end{equation} 
where we have used Lemma~\ref{lemma: tau calE arithmetic}, and $c \neq 0, c \in \K$. From Lemma~\ref{lem-2sets}(1) we know $(u - \Psi_{\PP_1}(u) \mathbb{g}_1, v) \in Y$. Next, from \eqref{eq: def h} and \eqref{eq: E detrop formula} we see that in the quotient $\Aa_{\mathcal{E}}$, \eqref{eq: rewrite with tau mathcal E} becomes 
\begin{equation}\label{eq: b_uv in quotient}
\mathbb{b}_{(u,v)} = c \cdot 
\left(\sum_{j \in [s_2]} \mathbb{b}_{(0_{\MM_1},n^2_j)}\right)^{\Psi_{\PP_1}(u)} \cdot 
\mathbb{b}_{(u - \Psi_{\PP_1}(u) \cdot \mathbb{g}_1, v)}.
\end{equation}
We now claim that the RHS of \eqref{eq: b_uv in quotient} lies in the span of $\mathbb{B}_{\mathcal{E}}$. To see this, we work upstairs in $\Aa_{\PP_1 \times \NN_2}$. Note that $\hat{\mathbb{b}}_{(0_{\MM_1},v)} := \hat{\mathbb{b}}^{\MM_1}_{0_{\MM_1}} \otimes \hat{\mathbb{b}}^{\NN_2}_{v} = 1\otimes \hat{\mathbb{b}}^{\NN_2}_v \in \Aa_{\MM_1} \otimes \Aa_{\NN_2}$ has a $1$ in the $\Aa_{\MM_1}$ factor. The product of such elements must also have a $1$ in the left factor, which implies that the same is true of all the summands appearing in $\left(\sum_{j \in [s_2]} \hat{\mathbb{b}}_{(0,n^2_j)}\right)^{\Psi_{\PP_1}(u)}$. Now multiplying each summand by $\hat{\mathbb{b}}_{(u - \Psi_{\PP_1}(u) \cdot \mathbb{g}_1, v)}$, by~\eqref{eq: mult basis elements} and the fact that addition with $0_{\MM_1}$ is well-defined in $\MM_1$, it follows that the left factor of any basis element appearing in such a product is of the form $\hat{\mathbb{b}}_{(u-\Psi_{\PP_1}(u)\mathbb{g}_1, v')}$ for some $v' \in \NN_2$. Since $(u-\Psi_{\PP_1}(u)\mathbb{g}_1,v')$ is in $Y$, this implies $\mathbb{b}_{(u-\Psi_{\PP_1}(u)\mathbb{g}_1,v')}$ is in $\mathbb{B}_{\mathcal{E}}$. This shows that the RHS of \eqref{eq: b_uv in quotient} lies in the span of $\mathbb{B}_{\mathcal{E}}$, as claimed. 

We now claim that $\mathbb{B}_{\mathcal{E}}$ is linearly independent. Suppose $\sum_{(u,v)\in Y} c_{u,v}\mathbb{b}_{(u,v)} = 0$ in $\Aa_{\mathcal{E}}$ where the LHS is a finite sum. This means that there exists $f \in \Aa_{{\PP}_1 \times \NN_2}$ such that $\sum_{(u,v) \in Y} c_{u,v} \hat{\mathbb{b}}_{(u,v)} = f \cdot \mathbb{h}$ in $\Aa_{\PP_1\times \NN_2}$. It would suffice to show that if this equation holds, then $f = 0$, since this would imply that all coefficients $c_{u,v}=0$. To see this, it suffices to show that if $f$ is not zero, then the unique expression of $f \cdot \mathbb{h}$ in the basis $\hat{\mathbb{B}}_{\PP_1 \times \NN_2}$ must contain non-zero expressions $c_{u,v}\mathbb{b}_{(u,v)}$ for $(u,v) \not \in Y$.

So suppose $f = \sum c'_{u,v} \hat{\mathbb{b}}_{(u,v)}$, where $(u,v)$ ranges over $(\PP_1 \cap \MM_1) \times \NN_2$, and at most finite many $c'_{u,v}$ are non-zero. We assume $f \neq 0$ so at least one $c'_{u,v} \neq 0$. Since $u \in \PP_1$, we also know $\Psi_{\PP_1}(u) \geq 0$. Now decompose $f$ as $f = f_n + \cdots + f_0$ where $f_k = \sum_{\Psi_{\PP_1}(u) = k} c'_{u,v} \hat{\mathbb{b}}_{(u,v)}$, i.e. we decompose $f$ according to the value of the $u$ component under $\Psi_{\PP_1}$. For each $\hat{\mathbb{b}}_{(u,v)}$ appearing in $f_k$, we know from ~Lemma~\ref{lemma: tau calE arithmetic} that $\zeta_{\mathcal{E}}^k \hat{\mathbb{b}}_{(u-k\cdot \mathbb{g}_1, v)}$ is a non-zero scalar multiple of $\hat{\mathbb{b}}_{(u,v)}$ and, by Lemma~\ref{lem-2sets}(1), that $(u-k\cdot \mathbb{g}_1,v)$ is in $Y$. Factoring out the common $\zeta_{\mathcal{E}}^k$ from all such terms, it follows that we can express $f$ as
$$
f = \zeta_{\mathcal{E}}^n g_n + \zeta_{\mathcal{E}}^{n-1} g_{n-1} + \cdots 
+ g_0
$$
for some $n \in \Z_{\geq 0}$,
where each $g_k$ is a linear combination of basis elements $\hat{\mathbb{b}}_{(u',v')}$ where $(u',v') \in Y$.  Without loss of generality we may assume $g_n \neq 0$. Recall that $\hh = \zeta_{\mathcal{E}} - h_0$ for $h_0 := \sum_{j \in [s_2]} \hat{\mathbb{b}}_{(0_{\MM_1},n^2_j))}$. Therefore 
$$
f \hh = (\zeta_{\mathcal{E}}^{n+1} g_n + \zeta_{\mathcal{E}}^n g_{n-1} + \cdots + \zeta_{\mathcal{E}} g_0)  - 
(\zeta_{\mathcal{E}}^n g_n h_0 + \zeta_{\mathcal{E}}^{n-1} g_{n-1} h_0 + \cdots + g_0 h_0). 
$$
It follows from Lemma~\ref{lemma: tau calE arithmetic} and Lemma~\ref{lemma: psi of a sum} that if $\hat{\mathbb{b}}_{(u'',v'')}$ appears in the expression $\zeta_{\mathcal{E}}^{n+1} g_n$ then $\Psi_{\PP_1}(u'') = n+1>0$.  
On the other hand, for the basis elements $\hat{\mathbb{b}}_{(u'',v'')}$ appearing in the other summands $\zeta_{\mathcal{E}}^k g_k$ for $k<n$, we have $\Psi_{\PP_1}(u'') < n+1$. Moreover, since $h_0$ only contains basis elements with a $0_{\MM_1}$ in the first coordinate, the $\hat{\mathbb{b}}_{(u'',v'' )}$ appearing in $\zeta_{\mathcal{E}}^n g_n h_0 + \zeta_{\mathcal{E}}^{n-1} g_{n-1} h_0 + \cdots + g_0 h_0$ all satisfy $\Psi_{\PP_1}(u'') \leq n$ by assumption on $f$ and the choice of $n$ above. In particular, there is no basis element appearing in any of the terms $\zeta_{\mathcal{E}}^n g_{n-1} + \cdots + \zeta_{\mathcal{E}} g_0$ or in $\zeta_{\mathcal{E}}^n g_n h_0 + \zeta_{\mathcal{E}}^{n-1} g_{n-1} h_0 + \cdots + g_0 h_0$ that cancels any basis element in $\zeta_{\mathcal{E}}^{n+1} g_n$. Finally note that $\zeta_{\mathcal{E}}^{n+1}g_n \neq 0$ and there is no internal cancellation among basis elements within $\zeta_{\mathcal{E}}^{n+1}g_n$.  This implies that $f \mathbb{h}$ contains at least one term $\hat{\mathbb{b}}_{(u'',v'')}$ not contained in $Y$, as was to be shown.

\end{proof}

Now we begin the work required to build an appropriate valuation. (Although in Section~\ref{sec: cone extensions} we viewed the $X$ (respectively $X^\vee$) as the set of elements of $\mathcal{E}$ (respectively $\mathcal{F}$) and $Y$ (respectively $Y^\vee$) as the space of points $\Sp(\mathcal{F})$ (respectively $\Sp(\mathcal{E})$), in this section it is more convenient to do the opposite. Lemma~\ref{lem-pairing} shows that the pairing between $X$ and $Y^\vee$ is equivalent to that between $Y$ and $X^\vee$, so either choice is valid.) We first recall some general facts. Suppose given a polyptych lattice $\MM$, a point $p \in \Sp(\MM)$, a valuation $\nu: \Aa \to S_\MM$ (in the sense of \cite[Definition 6.1]{EscobarHaradaManon-PL}) and $\mathbb{B}$ a convex adapted basis for $\nu$. Then $p$ induces an idempotent semialgebra homomorphism $S_\MM \to (\Z \cup \{\infty\},\min,+)$ (also denoted $p$), the composition $p \circ \nu: \Aa \to \Z \cup \{\infty\}$ is a valuation (see e.g. \cite[Lemma 6.9]{EscobarHaradaManon-PL}), and $\mathbb{B}$ is a convex adapted basis for $p \circ \nu$. Also recall that if $(\Aa_{\MM_1},\nu_{\MM_1}), (\Aa_{\NN_2},\nu_{\NN_2})$ are detropicalizations of $\MM_1, \NN_2$ respectively, then there is a natural choice of valuation $\hat{\nu}: \Aa_{\MM_1 \times \NN_2} \to S_{\MM_1 \times \NN_2}$ given in \cite[Proposition 6.19]{EscobarHaradaManon-PL} making $(\Aa_{\MM_1 \times \NN_2},\hat{\nu})$ a detropicalization of $\MM_1 \times \NN_2$. With this in mind, let $(y,x) \in X^{\vee} \subset \MM_2 \times \NN_1$. By strict duality, $(y,x)$ defines a point in $\Sp(\MM_1 \times \NN_2)$ and hence defines (by composing with $\nu$) a $\Z$-valued valuation; we denote this composition as $\hat{\nu}_{(y,x)}: \Aa_{\MM_1 \times \NN_2} \to \Z \cup \{\infty\}$. The restriction of a valuation to a subalgebra is still a valuation, so we may consider $\hat{\nu}_{(y,x)}$ to be defined on $\Aa_{{\PP}_1 \times \NN_2}$ (which we also denote by $\hat{\nu}_{(y,x)}$). Now for any $a \in \Z$ we may define 
\begin{equation*}\label{eq: def upstairs filtration} 
\hat{F}^{(y,x)}_{\geq a} := \{ \hat{f} \in \Aa_{\PP_1 \times \NN_2} \, \mid \, \hat{\nu}_{(y,x)}(\hat{f}) \geq a \} \subseteq \Aa_{{\PP}_1 \times \NN_2}. 
\end{equation*} 
Varying over $a \in \Z$, this yields a decreasing filtration of $\Aa_{{\PP}_1 \times \NN_2}$ indexed by $\Z$. Let 
\begin{equation}\label{eq: definition quotient filtration F y x geq a}
F^{(y,x)}_{\geq a} :=  \textup{ the image of } \, \,  \hat{F}^{(y,x)}_{\geq a} \, \textup{under the quotient map to} \,  \Aa_{\mathcal{E}}. 
\end{equation}
This induces a decreasing filtration on $\Aa_{\mathcal{E}}$, and we let $\nu_{(y,x)}$ denote the associated quasivaluation defined for $f \in \Aa_{\mathcal{E}}$ by 
\begin{equation}\label{eq: definition quotient valuation nu y x}
\nu_{(y,x)}(f) := \max\left\{a: f \in F^{(y,x)}_{\geq a}\right\}.
\end{equation} 

The following is a basic exercise in definitions and is useful to remember. 

\begin{lemma}\label{lemma: equivalent conditions for adapted basis}
Let $\B$ be a vector space basis for a $\mathbb{K}$-algebra $\Aa$. Let $\nu: \Aa \to \Gamma \cup \{\infty\}$ be a quasivaluation to a totally ordered discrete group. Let $\mathcal{F} = \{F^\nu_{\geq a}\}_{a\in \Gamma}$ be the associated filtration, $F^{\nu}_{\geq a} := \{f \in \Aa : \nu(f) \geq a\}$.  Then the  following are equivalent. 
\begin{enumerate} 
\item $\mathbb{B}$ has the property that $\nu(\sum \lambda_i \mathbb{b}_i) = \min\{\nu(\mathbb{b}_i): \lambda_i \neq 0\}$, for any finite collection $\lambda_i \in \K^*$ and $\bb_i \in \B$,  
\item $\mathbb{B} \cap F^\nu_{\geq a}$ is a basis for $F^\nu_{\geq a}$ for all $a \in \Gamma$. 
\end{enumerate} 
\end{lemma}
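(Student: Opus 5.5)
We prove the two implications separately; both reduce to the fact that a finite sum $f=\sum_i \lambda_i \mathbb{b}_i$ with all $\lambda_i \neq 0$ has $\nu(f)\ge a$ exactly when $f\in F^\nu_{\geq a}$. We also use the quasivaluation inequality $\nu(g+h)\ge \min\{\nu(g),\nu(h)\}$ and the scaling rule $\nu(\lambda g)=\nu(g)$ for $\lambda\in\K^*$. Together these make each $F^\nu_{\geq a}$ a $\K$-subspace and give $\nu(\sum\lambda_i\mathbb{b}_i)\ge \min_i \nu(\mathbb{b}_i)$ in general.

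For (1)$\Rightarrow$(2), fix $a\in\Gamma$. The set $\mathbb{B}\cap F^\nu_{\geq a}$ is linearly independent, being a subset of a basis, so only spanning needs proof. Take $f\in F^\nu_{\geq a}$ and expand it uniquely as $f=\sum\lambda_i\mathbb{b}_i$ with all $\lambda_i\neq 0$. By (1), $a\le \nu(f)=\min_i\nu(\mathbb{b}_i)$. Hence every $\mathbb{b}_i$ appearing lies in $F^\nu_{\geq a}$, and so $f$ lies in the span of $\mathbb{B}\cap F^\nu_{\geq a}$.

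For (2)$\Rightarrow$(1), let $f=\sum\lambda_i\mathbb{b}_i$ with all $\lambda_i\ne 0$, and set $m=\min_i\nu(\mathbb{b}_i)$. The inequality $\nu(f)\ge m$ holds by the general remark above. For the reverse inequality, suppose $\nu(f)>m$; the case $\nu(f)=\infty$ only arises if $f=0$, and then the sum is empty. Because $\Gamma$ is discrete, $\nu(f)\ge m'$ where $m'$ is the successor of $m$. By (2), $f$ is then a linear combination of basis elements lying in $F^\nu_{\geq m'}$. Since the expansion of $f$ in $\mathbb{B}$ is unique, every $\mathbb{b}_i$ in the expansion has $\nu(\mathbb{b}_i)\ge m'>m$, which contradicts the choice of $m$.

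One technical point needs care: the argument needs a well-defined successor. If $\Gamma$ is not literally discrete, replace that step as follows. Take $a=\nu(f)$, which is in $\Gamma$ because $f\neq 0$, and apply (2) directly at level $a$. This gives $\nu(\mathbb{b}_i)\ge \nu(f)$ for all $i$, so $m\ge\nu(f)$. This version avoids successors altogether, and it is the form I would write. No step is a real obstacle; the only things to keep track of are uniqueness of basis expansions and the conventions for $0$ and $\infty$.
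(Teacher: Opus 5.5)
Your proof is correct; the paper gives no argument (it calls this "a basic exercise in definitions"), and your direct verification via uniqueness of expansions in $\mathbb{B}$ is exactly that exercise. In particular, the successor-free version of (2)$\Rightarrow$(1), which applies (2) at level $a=\nu(f)$, is the right one to write. Its only extra input, that $\nu(f)=\infty$ forces $f=0$, is the standard convention for quasivaluations: they are defined on $\Aa\setminus\{0\}$ with values in $\Gamma$.
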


\begin{definition}\label{definition: adapted basis of quasivaluation}
In the setting of Lemma~\ref{lemma: equivalent conditions for adapted basis}, if $\mathbb{B}$ is a basis satisfying the equivalent conditions in Lemma~\ref{lemma: equivalent conditions for adapted basis}, we call it an \textbf{adapted basis} for the quasivaluation. 
\end{definition}

The following is a general lemma about detropicalizations that will be useful.  
Recall first that, by the definition of strict duality Definition~\ref{def_dual}, there is a bijective correspondence between the set of charts $\pi(\MM)$ on a polyptych lattice $\MM$, and the set of maximal cones of the mutation fan $\Sigma(\NN)$ of its strict dual $\NN$. For $\alpha \in \pi(\MM)$, let $C_\alpha$ denote the corresponding maximal cone in $\Sigma(\NN)$.

\begin{lemma}\label{lemma: chart additions have to appear}
Let $(\MM,\NN,\v,\w)$ be a strict dual pair of finite polyptych lattices over $\Z$ of rank $r$. Let $(\Aa_\MM, \fv: \Aa_\MM \to S_\MM)$ be a detropicalization of $\MM$ and $\B$ a convex adapted basis for $\fv: \Aa_\MM \to S_\MM$. Let $m, m' \in \MM$ and let $\bb_m, \bb_{m'} \in \B$ denote their associated convex adapted basis elements. Then for every $\alpha \in \pi(\MM)$, the basis element 
$\bb_{m+_\alpha m'}$ has to appear (i.e., with non-zero coefficient) in the product $\bb_m  \bb_{m'}$ in $\Aa_\MM$. 
\end{lemma}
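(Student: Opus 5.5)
We prove that $\bb_{m+_\alpha m'}$ must appear in $\bb_m\bb_{m'}$ by testing the product against a suitable point of $\MM$. We pick a point $p\in\Sp(\MM)$ that is linear on the chart $\alpha$ and that singles out $m+_\alpha m'$ as its unique minimizer on the support of the product.

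\emph{Setup.} By strict duality, $\w$ maps the maximal cone $C_\alpha\subset\NN_\R$ onto $\Sp_\R(\MM,\alpha)$. By Lemma~\ref{lemma: psi alpha properties}(3), restricting points of $\MM$ to the chart $M_\alpha$ identifies $\w(C_\alpha\cap\NN)$ with the lattice points of a full-dimensional cone of linear functionals on $M_\alpha$. For any $n\in C_\alpha\cap\NN$, put $p=\w(n)$. Since $p$ is linear in chart $\alpha$,
\[
p(m+_\alpha m')=p(m)+p(m')=\min_\beta p(m+_\beta m').
\]
Next use the valuation $p\circ\fv$, for which $\B$ is a convex adapted basis. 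Valuation multiplicativity gives $p\circ\fv(\bb_m\bb_{m'})=p(m)+p(m')$. By the adapted-basis property, this value equals the minimum of $p(u)$ over $u\in\supp(\bb_m\bb_{m'})$. So the minimum of $p$ over the support is attained, and equals $p(m+_\alpha m')$.

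\emph{Main step.} It remains to choose $n$ so that $m+_\alpha m'$ is the \emph{unique} minimizer of $p$ on the support. The support is a finite set. By \eqref{eq: mult basis elements} it lies in $\ptconv(\{m+_\beta m'\}_\beta)\cap\MM$, so it lies in the set $\{u: q(u)\ge q(m)+q(m') \text{ for all } q\in\Sp(\MM)\}$. Let $w=\pi_\alpha(m+_\alpha m')$. We need a linear functional $\ell$ on $M_\alpha$, lying in the interior of the cone $\psi_\alpha$-dual to $C_\alpha$, such that $\ell$ attains its minimum over the finite set $\pi_\alpha(\supp)$ only at $w$.

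This is where we expect the difficulty. Every $u$ in the support satisfies $q(u)\ge q(m)+q(m')=q(m+_\alpha m')$ for all $q$ linear on chart $\alpha$. So $\pi_\alpha(u)-w$ lies in the dual cone $(C_\alpha)^\vee$, viewed in $M_\alpha\otimes\R$. That dual cone is strongly convex because $C_\alpha$ is full-dimensional. Hence for any $u\ne m+_\alpha m'$ in the support, $\pi_\alpha(u)-w$ is a nonzero vector of a pointed cone. Any $\ell$ in the interior of $C_\alpha$ is then strictly positive on it. Such an $\ell$ exists with integral values, since $C_\alpha$ is a full-dimensional rational cone. For this $p=\w(n)$ with $n\in\mathrm{int}(C_\alpha)\cap\NN$, the minimum of $p$ on the support is attained only at $m+_\alpha m'$.

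\emph{Conclusion.} Since the minimum is attained somewhere on the support, and $m+_\alpha m'$ is the only element of the support achieving that value, $m+_\alpha m'$ lies in the support. Its coefficient in $\bb_m\bb_{m'}$ is therefore nonzero. One point to verify carefully is the translation of \eqref{eq: mult basis elements} (point-convex hull) into the inequality against all points. This should follow directly from the definition of $\ptconv$ as the intersection of PL half-spaces $\{q\ge c\}$ containing all the $m+_\beta m'$, using $\min_\beta q(m+_\beta m')=q(m)+q(m')$.
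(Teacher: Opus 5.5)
Your proposal is correct. It rests on the same mechanism as the paper's proof: test $\bb_m\bb_{m'}$ against the valuations $\w(n)\circ\fv$ for $n\in C_\alpha$. These points are linear on the chart $\alpha$, so $m+_\alpha m'$ attains $\w(n)(m)+\w(n)(m')$, and $\B$ is adapted to each such valuation.

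Where you differ is in how uniqueness is obtained.

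\emph{The paper's route.} It takes a $\Q$-basis $n_1,\dots,n_r$ of $C_\alpha$ and invokes Lemma~\ref{lemma: injectivity}: an element whose $\w(n_k)$-values all equal $\w(n_k)(m)+\w(n_k)(m')$ must be $m+_\alpha m'$. As written, however, the paper only knows that each $\w(n_k)\circ\fv$ has \emph{some} minimizer in the support. It then asserts that $\bb_{m+_\alpha m'}$ is that minimizer, which presupposes the conclusion. A priori, different $n_k$ could be minimized by different support elements.

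\emph{Your route.} You choose a single lattice point $n$ in the interior of $C_\alpha$ and use pointedness of the dual cone. This closes the gap above: it gives one valuation whose minimum over the support can only be attained at $m+_\alpha m'$. That minimum is attained because $\bb_m\bb_{m'}\neq 0$, since $\Aa_\MM$ is a domain. Equivalently, the paper's version could be repaired by minimizing $\sum_k c_k\,\w(n_k)$ with all $c_k>0$, using linearity of $\v(u)$ on $C_\alpha$, and then applying Lemma~\ref{lemma: injectivity}.

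The paper's formulation has the advantage of reusing Lemma~\ref{lemma: injectivity}, which it needs again later. Yours is self-contained and fully rigorous.

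One simplification: the detour through the point-convex hull is unnecessary. For every $q\in\Sp(\MM)$, $q\circ\fv$ is a valuation for which $\B$ is adapted. This already gives $q(u)\ge q(m)+q(m')$ for every $u$ in the support, exactly as you argued for $p$.
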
 

\begin{proof} 
Let $C_\alpha$ be the maximal cone in $\Sigma(\NN)$ corresponding to the chart $\alpha$. 
We know from Lemma~\ref{lemma: res cone surj} that the map $\MM \to \Sp(\NN) \to \Hom(C_\alpha \cap \NN,\Z)$ is a bijection.  Composing with evaluation against a choice of basis in  $C_\alpha$ yields an injection $\MM \to \Z^{\mathrm{rank} (\MM)}$ by Lemma~\ref{lemma: injectivity}. For every basis element $n$, $m+_\alpha m'$ has to achieve the minimum value $\w(n)(m)+\w(n)(m')$ because $n \in C_\alpha$. Note $\w(n) \circ \fv$ evaluates to $\w(n)(\fv(\bb_m)) = \w(n)(m)$ for any $m \in \MM$.  We saw above that since $\w(n)$ is a point, the composition $\w(n) \circ \fv$ is a valuation and $\mathbb{B}$ is still a convex adapted basis for $\w(n)\circ \fv$. By what we have seen, $\bb_{m+_\alpha m'}$ achieves the minimum for $\w(n)\circ \fv$ in the expression for $\mathbb{b}_m \bb_{m'}$. By the injectivity in~Lemma~\ref{lemma: injectivity}, $\bb_{m+_\alpha m'}$ is in fact the only element that can take these values, so $\bb_{m+_\alpha m'}$ must appear.
\end{proof}

Now we return to the analysis of the algebra $\Aa_{\mathcal{E}}$ defined above. We define 
\begin{equation}\label{eq: def B calE a}
\mathbb{B}_{\mathcal{E},(y,x),a} := \{\mathbb{b}_{(u,v)} \in \mathbb{B}_{\mathcal{E}}: \langle (y,x),(u,v)\rangle = \hat{\nu}_{(y,x)}(\hat{\mathbb{b}}_{(u,v)}) \geq a\}.
\end{equation}

\begin{lemma}\label{lemma: B calE is CAB for nu y x}
Let $\nu_{(y,x)}: \Aa_{\mathcal{E}} \to \Z \cup\{\infty\}$ be the quasivaluation defined by \eqref{eq: definition quotient valuation nu y x} by the filtration spaces $F^{(y,x)}_{\geq a}$ defined in~\eqref{eq: definition quotient filtration F y x geq a} for $a \in \Z$. Then:
\begin{enumerate} 
\item for any $a \in \Z$, the set $\mathbb{B}_{\mathcal{E},(y,x),a}$ in~\eqref{eq: def B calE a} is a basis for $F^{(y,x)}_{\geq a}$, 
\item for $(u,v) \in Y$ we have $\nu_{(y,x)}(\mathbb{b}_{(u,v)}) = \langle (y,x),(u,v)\rangle$, and 
\item $\mathbb{B}_{\mathcal{E}}$ is an adapted basis for $\nu_{(y,x)}$.  
\end{enumerate} 
\end{lemma}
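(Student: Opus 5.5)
The plan is to control the quotient filtration by rewriting elements of $\Aa_{{\PP}_1\times \NN_2}$ modulo $\hh$, as in the proof of Lemma~\ref{lemma: B cal E is a basis}, while checking at each step that $\hat{\nu}_{(y,x)}$ does not decrease. Two basic facts will be used throughout.

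First, since $\hat{\mathbb{B}}_{\PP_1\times\NN_2}$ is a convex adapted basis for $\hat{\nu}$, and $(y,x)$ is a point of $\MM_1\times\NN_2$, the basis $\hat{\mathbb{B}}_{\PP_1\times\NN_2}$ is adapted for $\hat{\nu}_{(y,x)}$. Moreover $\hat{\nu}_{(y,x)}(\hat{\mathbb{b}}_{(u,v)}) = \langle (y,x),(u,v)\rangle$.

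Second, since $(y,x)\in X^\vee$, we have
\[
\hat{\nu}_{(y,x)}(\zeta_{\mathcal{E}}) = \w_1(x)(\mathbb{g}_1) = \Psi_{\PP_2}(y),
\]
and for each $j\in[s_2]$,
\[
\hat{\nu}_{(y,x)}(\hat{\mathbb{b}}_{(0_{\MM_1},n^2_j)}) = \w_2(n^2_j)(y) \ge \Psi_{\PP_2}(y).
\]
Hence $\hat{\nu}_{(y,x)}\bigl(\sum_j \hat{\mathbb{b}}_{(0_{\MM_1},n^2_j)}\bigr) \ge \hat{\nu}_{(y,x)}(\zeta_{\mathcal{E}})$. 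This inequality is the key point.

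\textbf{Step 1: $F^{(y,x)}_{\geq a}$ is spanned by $\mathbb{B}_{\mathcal{E},(y,x),a}$.} Let $f\in F^{(y,x)}_{\geq a}$ be the image of some $\hat f\in \hat F^{(y,x)}_{\geq a}$. Expand $\hat f$ in $\hat{\mathbb{B}}_{\PP_1\times\NN_2}$. Adaptedness gives that every $\hat{\mathbb{b}}_{(u,v)}$ appearing satisfies $\langle (y,x),(u,v)\rangle\ge a$, so it suffices to treat a single such basis element.

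For a single $\hat{\mathbb{b}}_{(u,v)}$, write $k=\Psi_{\PP_1}(u)\ge 0$ and $\hat{\mathbb{b}}_{(u,v)} = c\,\zeta_{\mathcal{E}}^k\,\hat{\mathbb{b}}_{(u-k\mathbb{g}_1,v)}$ as in \eqref{eq: rewrite with tau mathcal E}. Modulo $\hh$ this equals
\[
c\Bigl(\sum_j \hat{\mathbb{b}}_{(0_{\MM_1},n^2_j)}\Bigr)^k\hat{\mathbb{b}}_{(u-k\mathbb{g}_1,v)}.
\]
Because $\hat{\nu}_{(y,x)}$ is a valuation, the displayed element has value at least
\[
k\,\Psi_{\PP_2}(y)+\hat{\nu}_{(y,x)}(\hat{\mathbb{b}}_{(u-k\mathbb{g}_1,v)}) = \hat{\nu}_{(y,x)}(\hat{\mathbb{b}}_{(u,v)}) \ge a.
\]
Adaptedness upstairs then shows that each basis element in its expansion has value $\ge a$. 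The proof of Lemma~\ref{lemma: B cal E is a basis} shows that all these basis elements are indexed by $Y$. Their images therefore lie in $\mathbb{B}_{\mathcal{E},(y,x),a}$.

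Conversely, each element of $\mathbb{B}_{\mathcal{E},(y,x),a}$ is the image of an element of $\hat F^{(y,x)}_{\geq a}$, so it lies in $F^{(y,x)}_{\geq a}$. Linear independence of $\mathbb{B}_{\mathcal{E},(y,x),a}$ is inherited from Lemma~\ref{lemma: B cal E is a basis}. This proves (1).

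\textbf{Step 2: the value of a basis element.} For (2), set $c_0=\langle (y,x),(u,v)\rangle$ for $(u,v)\in Y$. Then $\mathbb{b}_{(u,v)}\in F^{(y,x)}_{\ge c_0}$. If it also lay in $F^{(y,x)}_{\ge c_0+1}$, then by (1) it would be a linear combination of elements of $\mathbb{B}_{\mathcal{E},(y,x),c_0+1}$, none of which is $\mathbb{b}_{(u,v)}$. This contradicts the linear independence of $\mathbb{B}_{\mathcal{E}}$. Hence $\nu_{(y,x)}(\mathbb{b}_{(u,v)})=c_0$.

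\textbf{Step 3: adaptedness.} By (2), $\mathbb{B}_{\mathcal{E}}\cap F^{(y,x)}_{\ge a}=\mathbb{B}_{\mathcal{E},(y,x),a}$. Part (1) then shows that condition (2) of Lemma~\ref{lemma: equivalent conditions for adapted basis} holds, which gives (3).

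The main obstacle is Step 1: verifying that reduction modulo $\hh$ never lowers the value. This rests entirely on the inequality $\w_2(n^2_j)(y)\ge\Psi_{\PP_2}(y)=\w_1(x)(\mathbb{g}_1)$, which is exactly where the hypothesis $(y,x)\in X^\vee$ enters.
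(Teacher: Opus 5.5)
Your proposal is correct and follows essentially the same route as the paper. The paper likewise rewrites each $\hat{\mathbb{b}}_{(u,v)}$ with $(u,v)\notin Y$ as $c\,\zeta_{\mathcal{E}}^{k}\,\hat{\mathbb{b}}_{(u-k\mathbb{g}_1,v)}$, replaces $\zeta_{\mathcal{E}}$ by $\sum_j\hat{\mathbb{b}}_{(0_{\MM_1},n^2_j)}$ modulo $\hh$, and uses $(y,x)\in X^\vee$ to compare values; it records the equality $\hat{\nu}_{(y,x)}(\zeta_{\mathcal{E}})=\Psi_{\PP_2}(y)=\hat{\nu}_{(y,x)}\bigl(\sum_j\hat{\mathbb{b}}_{(0_{\MM_1},n^2_j)}\bigr)$ where your inequality already suffices, it proves parts (2) and (3) exactly as you do, and your explicit check that $\mathbb{B}_{\mathcal{E},(y,x),a}\subseteq F^{(y,x)}_{\geq a}$ is a step the paper leaves implicit.
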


\begin{proof} 
We start with (1). The linear independence of $\mathbb{B}_{\mathcal{E},(y,x),a}$ is immediate since we saw in Lemma~\ref{lemma: B cal E is a basis} that $\mathbb{B}_{\mathcal{E}}$ is linearly independent. It remains to show that $\mathbb{B}_{\mathcal{E},(y,x),a}$ spans $F^{(y,x)}_{\geq a}$. To do this, let $f \in F^{(y,x)}_{\geq a}$.  Choose a lift $\hat{f} \in \hat{F}_{\geq a}^{(y,x)} \subset \Aa_{{\PP}_1 \times \NN_2}$ of $f$. Since $\hat{\mathbb{B}}_{\PP_1 \times \NN_2}$ is an adapted basis for $\hat{\nu}_{(y,x)}$ (it is a subset of an adapted basis of $\Aa_{\MM_1 \times \NN_2}$ which spans the subalgebra $\Aa_{\PP_1 \times \NN_2}$), it follows that $\hat{F}_{\geq a}^{(y,x)}$ is spanned by $\{\hat{\mathbb{b}}_{(u,v)}: (u,v) \in (\PP_1 \cap \MM_1)\times \NN_2, \hat{\nu}_{(y,x)}(\hat{\mathbb{b}}_{(u,v)}) = \langle (y,x),(u,v)\rangle \geq a\}$. Thus we may express $\hat{f}$ as follows: 
\begin{equation*}\label{eq: hat f in hat b_uv}
\hat{f} = \sum c_{(u,v)} \hat{\mathbb{b}}_{(u,v)} 
\end{equation*}
where the sum is finite, $c_{(u,v)} \in \K$, and for each $\hat{\mathbb{b}}_{(u,v)}$ that appears, we have $(u,v) \in ({\PP}_1 \cap \MM_1) \times \NN_2$ and $\langle (y,x),(u,v)\rangle \geq a$. 
By taking the quotient to $\Aa_{\mathcal{E}}$, we conclude that
$$
f = \sum_{(u,v) \in (\PP_1\cap\MM_1) \times \NN_2} c_{(u,v)} \mathbb{b}_{(u,v)} 
$$
where each $\mathbb{b}_{(u,v)}$ has the property $\langle (y,x),(u,v)\rangle \geq a$. 

To finish the argument, it would suffice to show that for any $\mathbb{b}_{(u,v)}$ with $(u,v) \not \in Y$ but $\langle (y,x),(u,v)\rangle \geq a$, we may rewrite $\mathbb{b}_{(u,v)}$ as an element in the span of $\mathbb{B}_{\mathcal{E},(y,x),a}$.  As before, we work with the preimage $\hat{\mathbb{b}}_{(u,v)}$. First recall that $(u,v) \not \in Y$ means that $\Psi_{\PP_1}(u) > 0$. By Lemma~\ref{lemma: tau calE arithmetic} we know 
$\hat{\mathbb{b}}_{(u,v)}$ is a non-zero scalar multiple of $ \zeta_{\mathcal{E}}^{\Psi_{\PP_1}(u)} \cdot \hat{\mathbb{b}}_{(u-\Psi_{\PP_1}(u) \mathbb{g}_1,v)}$ where $\hat{\mathbb{b}}_{(u-\Psi_{\PP_1}(u) \mathbb{g}_1,v)} \in \mathbb{B}_{\mathcal{E}}$.  Thus, as in the proof of Lemma~\ref{lemma: B cal E is a basis}, when we take the quotient to $\Aa_{\mathcal{E}}$ we may write 
\begin{equation*}\label{eq: b uv in F geq a}
\mathbb{b}_{(u,v)} = c \cdot 
\left(\sum_{j \in [s_2]} \mathbb{b}_{(0,n^2_j))}\right)^{\Psi_{\PP_1}(u)} \cdot 
\mathbb{b}_{(u - \Psi_{\PP_1}(u) \cdot \mathbb{g}_1, v)}
\end{equation*}
for $c$ a non-zero constant. 
We showed in that proof that the RHS lies in the span of $\mathbb{B}_{\mathcal{E}}$. Therefore, what remains for us to show is that the RHS lies in the span of $\mathbb{B}_{\mathcal{E},(y,x),a}$. To see this, we may work upstairs; we wish to show that the basis elements $\hat{\mathbb{b}}_{(u',v')}$ that appear in the expansion of 
\begin{equation}\label{eq: upstairs}
\left(\sum_{j \in [s_2]} \hat{\mathbb{b}}_{(0,n^2_j)}\right)^{\Psi_{\PP_1(u)}} \cdot 
\hat{\mathbb{b}}_{(u - \Psi_{\PP_1}(u) \cdot \mathbb{g}_1, v)}
\end{equation}
have the property that $\langle (y,x),(u',v')\rangle = \hat{\nu}_{(y,x)}(\hat{\mathbb{b}}_{(u',v')}) \geq a$. Since $\hat{\mathbb{B}}_{\PP_1 \times \NN_2}$ is an adapted basis for $\hat{\nu}_{(y,x)}$, by~Lemma~\ref{lemma: equivalent conditions for adapted basis} it suffices to show that $\hat{\nu}_{(y,x)}$ evaluates to be $\geq a$ on~\eqref{eq: upstairs}. Recall that $\hat{\mathbb{b}}_{(u,v)}$ is assumed to satisfy $\langle (y,x),(u,v)\rangle \geq a$. By properties of valuations and the definition of $\hat{\nu}_{(y,x)}$ (and using that $\zeta_{\mathcal{E}} := \hat{\mathbb{b}}_{(\mathbb{g}_1, 0_{\NN_2})}$), we also know $\hat{\nu}_{(y,x)}(\zeta_{\mathcal{E}}^{\Psi_{\PP_1}(u)}\cdot \hat{\mathbb{b}}_{(u-\Psi_{\PP_1}(u)\mathbb{g}_1,v)}) = \hat{\nu}_{(y,x)}(\hat{\mathbb{b}}_{(u,v)}) =\langle (y,x),(u,v)\rangle$. Now we claim that 
\begin{equation}\label{eq: comparing nu value}
\hat{\nu}_{(y,x)}(\zeta_{\mathcal{E}}) = \hat{\nu}_{(y,x)}\left(\sum_{j \in [s_2]} \hat{\mathbb{b}}_{(0,n^2_j)}\right).
\end{equation}
 Indeed, may compute the LHS of \eqref{eq: comparing nu value} as 
$$
\hat{\nu}_{(y,x)}(\zeta_{\mathcal{E}}) = \hat{\nu}_{(y,x)}(\hat{\mathbb{b}}_{(\mathbb{g}_1, 0_{\NN_2})}) = \w_1(x)(\mathbb{g}_1). 
$$
Similarly we may compute the RHS of \eqref{eq: comparing nu value} as 
\begin{equation*}
\hat{\nu}_{(y,x)}\left(\sum_{j \in [s_2]} \hat{\mathbb{b}}_{(0,n^2_j)}\right)  = 
\min_{j \in [s_2]} \{ \hat{\nu}_{(y,x)}(\hat{\mathbb{b}}_{(0,n^2_j)}) \} 
 = 
\min_{j \in [s_2]} \{ \langle (y,x), (0, n^2_j)) \rangle \} 
 = \min_{j \in [s_2]} \{ \w_2(n^2_j)(y) \} 
 = \Psi_{\PP_2}(y)
\end{equation*}
by the definition \eqref{eq: def supp tildeP_i} of $\Psi_{\PP_2}$. Since $(y,x) \in X^\vee$, we have $\w_1(x)(\mathbb{g}_1) = \Psi_{\PP_2}(y)$. But this implies that the RHS and LHS of \eqref{eq: comparing nu value} are equal. This in turn implies that $\hat{\nu}_{(y,x)}$ evaluates on~\eqref{eq: upstairs} to be $\langle (y,x),(u,v)\rangle$, which is by assumption $\geq a$. This completes the proof of (1).

We now prove (2). Since $\hat{\nu}_{(y,x)}(\hat{\mathbb{b}}_{(u,v)}) = \langle (y,x),(u,v)\rangle$, it follows from the definition of the quotient quasivaluation that $\nu_{(y,x)}(\mathbb{b}_{(u,v)}) \geq \langle (y,x),(u,v)\rangle$. Now suppose for a contradiction that $\nu_{(y,x)}(\mathbb{b}_{(u,v)}) = c$ for some $c >  \langle (y,x),(u,v)\rangle$. Then $\mathbb{b}_{(u,v)} \in F^{(y,x)}_{\geq c}$. We showed in (1) that $F^{(y,x)}_{\geq c}$ is spanned by the set of $\mathbb{b}_{(u',v')}$ with the property $\langle (y,x),(u',v')\rangle \geq c$. Note that $\mathbb{b}_{(u,v)}$ is not contained in this set. So we have a linear relation 
$$
\mathbb{b}_{(u,v)} =\sum_{(u',v')} c_{u',v'} \mathbb{b}_{(u',v')}
$$
where the finite sum is over $(u',v') \in Y$ with the property $\langle (y,x),(u',v')\rangle \geq c$. This is a non-trivial linear relation in $\mathbb{B}_{\mathcal{E}}$, but by Lemma~\ref{lemma: B cal E is a basis} we know $\mathbb{B}_{\mathcal{E}}$ is a basis, so this is a contradiction. Thus $\nu_{(y,x)}(\mathbb{b}_{(u,v)}) = \langle (y,x),(u,v)\rangle$, as desired.

Lastly, we prove (3). By (2), the set $\mathbb{B}_{\mathcal{E},(y,x),a}$ is equal to the set $\mathbb{B}_{\mathcal{E}} \cap F^{(y,x)}_{\geq a}$. By (1), this set is a basis of $F^{(y,x)}_{\geq a}$. Then by Definition~\ref{definition: adapted basis of quasivaluation} it follows that $\mathbb{B}_{\mathcal{E}}$ is an adapted basis for $\nu_{(y,x)}$, as claimed. 
\end{proof}

Recall that the polyptych lattices $\mathcal{E},\mathcal{F}$ are of rank $r_1+r_2-1$. 
Our next step will be to build a valuation $\nu: \Aa_{\mathcal{E}} \to \Z^{r_1+r_2-1} \cup \{\infty\}$.  We will then use $\nu$ to construct the valuation $\nu_{\mathcal{E}}: \Aa_{\mathcal{E}} \to S_{\mathcal{E}}$. 

 Fix $(C_{\sigma,j} \times C_\tau) \cap X^\vee$ a maximal cone of $\Sigma(\mathcal{F})$. For simplicity of notation, define $$r := r_1+r_2-1$$ for the remainder of this section. Let $\{(y_1,x_1), \cdots, (y_r,x_r)\} \subset (C_{\sigma,j} \times C_\tau) \cap X^\vee$ be a $\Q$-basis of the $\Q$-span of $(C_{\sigma,j} \times C_\tau) \cap X^\vee$. Interpreting $(y_i,x_i)$ as a point on $\MM_1 \times \NN_2$, we may define the associated valuations $\hat{\nu}_{(y_i,x_i)}: \Aa_{{\PP}_1 \times \NN_2} \to \Z \cup \{\infty\}$ as we did above. 
We then define the concatenated high-rank valuation 
\begin{equation}\label{eq: upstairs high rank} 
\hat{\nu} := \hat{\nu}_{(y_1,x_1)} \circledast \cdots \circledast \hat{\nu}_{(y_r,x_r)}: \Aa_{{\PP}_1 \times \NN_2} \to \Z^r \cup \{\infty\}
\end{equation} 
using the method given in \cite[Appendix A]{EscobarHaradaManon-PL}.

The following lemmas are not proven in \cite[Appendix A]{EscobarHaradaManon-PL} but are needed in the arguments to follow. The proofs are straightforward. 

\begin{lemma}\label{lemma: adapted basis for concat val}
In the general setting of \cite[Appendix A]{EscobarHaradaManon-PL},  if $\B$ is simultaneously an adapted basis for each of $\nu_1,\cdots,\nu_r$ (where $\nu_i: \Aa \to \Z$ for each $i$), then $\B$ is also an adapted basis for $\nu_1 \circledast \nu_2 \circledast \cdots \circledast \nu_r$. 
\end{lemma}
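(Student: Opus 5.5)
We need to recall the concatenated valuation from the appendix: $\nu_1\circledast\cdots\circledast\nu_r(f)$ is defined recursively. For $f\neq 0$, take $a_1=\nu_1(f)$ and pass to the associated graded piece $\bar f\in F^{\nu_1}_{\geq a_1}/F^{\nu_1}_{>a_1}$. Then let $\nu_2$ act on that graded piece via the filtration induced by $\nu_2$, and so on. Equivalently, the value is the lexicographic minimum of the tuples $(\nu_1(\mathbb{b}),\dots,\nu_r(\mathbb{b}))$ over basis elements in the support. My plan is to prove the statement in the form of condition (1) of Lemma~\ref{lemma: equivalent conditions for adapted basis}, with $\Gamma=\Z^r$ ordered lexicographically. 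The goal is to show that for $f=\sum\lambda_i\mathbb{b}_i$ with all $\lambda_i\neq0$,
\[
\nu_1\circledast\cdots\circledast\nu_r(f)=\min_{\mathrm{lex}}\{(\nu_1(\mathbb{b}_i),\dots,\nu_r(\mathbb{b}_i))\},
\]
and that the right-hand side equals $\min_{\mathrm{lex}}\{\nu_1\circledast\cdots\circledast\nu_r(\mathbb{b}_i)\}$.

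I will argue by induction on $r$; the case $r=1$ is exactly the hypothesis. For the inductive step, first use that $\B$ is adapted for $\nu_1$. This gives $\nu_1(f)=a_1:=\min_i\nu_1(\mathbb{b}_i)$. The images of $\{\mathbb{b}\in\B:\nu_1(\mathbb{b})=a_1\}$ then form a basis of the graded piece $F^{\nu_1}_{\geq a_1}/F^{\nu_1}_{>a_1}$, by condition (2) of Lemma~\ref{lemma: equivalent conditions for adapted basis} applied at levels $a_1$ and $a_1+1$. The leading form of $f$ is $\bar f=\sum_{\nu_1(\mathbb{b}_i)=a_1}\lambda_i\bar{\mathbb{b}}_i$. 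Next I check that the quasivaluation induced by $\nu_2$ on the graded piece has these images as an adapted basis. By construction in the appendix, the induced filtration is the image of $F^{\nu_2}_{\geq b}\cap F^{\nu_1}_{\geq a_1}$. Since $\B$ is adapted to both $\nu_1$ and $\nu_2$, this intersection is spanned by the basis elements lying in both pieces. Its image is therefore spanned by the $\bar{\mathbb{b}}$ with $\nu_1(\mathbb{b})=a_1$ and $\nu_2(\mathbb{b})\geq b$.

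With that in hand, the remaining coordinates follow by iterating the same argument. At each stage, restrict to the basis elements achieving the lexicographic minimum so far. The value of the induced quasivaluation on the leading form is then the minimum of the next $\nu_k$ over the surviving basis elements. This yields the lexicographic-minimum formula. Applying the formula to a single $\mathbb{b}$ gives $\nu_1\circledast\cdots\circledast\nu_r(\mathbb{b})=(\nu_1(\mathbb{b}),\dots,\nu_r(\mathbb{b}))$, which finishes the proof.

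The main obstacle is the step showing that the intersection of two filtration pieces is spanned by basis elements, together with matching the precise recursive definition in \cite[Appendix A]{EscobarHaradaManon-PL}. That intersection claim is where simultaneous adaptedness is essential. Write an element of the intersection in the basis $\B$. Adaptedness for $\nu_1$ forces every basis element in its support to satisfy $\nu_1\geq a_1$. Adaptedness for $\nu_2$ forces every such basis element to satisfy $\nu_2\geq b$. Hence each support element lies in both pieces, which is the claim.
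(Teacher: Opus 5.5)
Your argument is correct and is the routine verification the paper leaves to the reader; the paper states this appendix lemma without proof, calling it ``straightforward''. The core of it is that simultaneous adaptedness makes each intersection $F^{\nu_1}_{\geq a_1}\cap F^{\nu_2}_{\geq b}$ spanned by basis elements, so the induced filtration on each graded piece is spanned by the surviving basis images, and the lexicographic-minimum formula follows.

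There are two cosmetic points. First, the ``Equivalently'' in your opening paragraph is the conclusion you are proving, not part of the definition. Second, the induction is cleanest as the binary case applied to $\nu_1\circledast\cdots\circledast\nu_{r-1}$ together with $\nu_r$. Here $\nu_1\circledast\cdots\circledast\nu_{r-1}$ is valued in $\Z^{r-1}$, whose lex order is discrete, so $F_{>\bar a}$ is again spanned by basis elements by the inductive hypothesis.
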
 

\begin{lemma}\label{lemma: concat val on CAB}
In the setting of the above lemma, for $\mathbb{b} \in \B$ we have $\nu_1\circledast \cdots \circledast \nu_r(\mathbb{b}) = (\nu_1(\mathbb{b}), \cdots, \nu_r(\mathbb{b}))$.  
\end{lemma}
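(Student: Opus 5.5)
The plan is to argue by induction on $r$, unwinding the definition of the concatenated valuation from \cite[Appendix A]{EscobarHaradaManon-PL} one step at a time. I will use the description of $\circledast$ as an iterated construction. For two quasivaluations, $\nu_1 \circledast \nu_2$ sends $f$ to $(\nu_1(f), \nu_2'(\overline{f}))$ in the lexicographically ordered group $\Z^2$. Here $\overline{f}$ is the initial form (leading term) of $f$ in the associated graded algebra $\mathrm{gr}_{\nu_1}(\Aa) = \bigoplus_a F^{\nu_1}_{\geq a}/F^{\nu_1}_{> a}$, and $\nu_2'$ is the function on the graded pieces induced by $\nu_2$. For general $r$, set $\nu_1 \circledast \cdots \circledast \nu_r := \nu_1 \circledast (\nu_2 \circledast \cdots \circledast \nu_r)$, or the equivalent left-nested form, matching whichever convention the appendix uses. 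If the appendix instead gives a direct filtration-theoretic definition, I will translate it into this form first.

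The base case $r=1$ is trivial. For the inductive step, the key input is that $\B$ is simultaneously adapted to $\nu_1, \ldots, \nu_r$. By Lemma~\ref{lemma: equivalent conditions for adapted basis}(2) applied to $\nu_1$, the sets $\B \cap F^{\nu_1}_{\geq a}$ are bases of the filtration pieces. Hence the images of $\{\mathbb{b} \in \B : \nu_1(\mathbb{b}) = a\}$ form a basis of $F^{\nu_1}_{\geq a}/F^{\nu_1}_{>a}$, and the image $\overline{\B}$ of $\B$ is a basis of $\mathrm{gr}_{\nu_1}(\Aa)$. For $\mathbb{b} \in \B$, adaptedness gives $\nu_1(\mathbb{b})$ directly. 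Moreover the initial form of $\mathbb{b}$ is just its class $\overline{\mathbb{b}}$ in degree $\nu_1(\mathbb{b})$, since $\mathbb{b} \notin F^{\nu_1}_{>\nu_1(\mathbb{b})}$.

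It then remains to show that the quasivaluation induced by $\nu_2$ (and by the later $\nu_i$) on the graded pieces satisfies $\nu_i'(\overline{\mathbb{b}}) = \nu_i(\mathbb{b})$, and that $\overline{\B}$ is again a common adapted basis for the induced $\nu_2', \ldots, \nu_r'$. If the induced function is defined by transporting along the adapted basis, so that $\nu_i'(\sum \lambda \overline{\mathbb{b}}) = \min \nu_i(\mathbb{b})$, both facts hold by definition. If it is defined as a maximum over lifts, $\nu_i'(\overline{f}) = \max\{\nu_i(g) : g \equiv f \bmod F^{\nu_1}_{>a}\}$, I will use that any lift of $\overline{\mathbb{b}}$ has the form $\mathbb{b} + \sum \lambda_k \mathbb{b}_k$ with $\nu_1(\mathbb{b}_k) > a$. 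Adaptedness of $\B$ for $\nu_i$ then bounds $\nu_i$ of this lift above by $\nu_i(\mathbb{b})$, so the maximum is attained at $\mathbb{b}$ itself. With these two facts, the inductive hypothesis applied to $\overline{\B}$ and $\nu_2', \ldots, \nu_r'$ on $\mathrm{gr}_{\nu_1}(\Aa)$ gives $(\nu_2' \circledast \cdots \circledast \nu_r')(\overline{\mathbb{b}}) = (\nu_2(\mathbb{b}), \ldots, \nu_r(\mathbb{b}))$. Combining with the first coordinate yields the claim.

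The main obstacle I expect is the bookkeeping of exactly how the appendix defines the induced quasivaluations on the associated graded. In particular, I need to check that the ``max over lifts'' step really uses only adaptedness of $\B$ for each individual $\nu_i$, which is all Lemma~\ref{lemma: adapted basis for concat val} provides. The rest is a formal consequence of the basis being adapted.
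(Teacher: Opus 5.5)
The paper does not prove this lemma; it calls it straightforward. Your argument is correct in substance. Its decisive step is exactly why the lemma holds: every lift of $\overline{\mathbb{b}}$ has the form $\mathbb{b}+\sum\lambda_k\mathbb{b}_k$ with $\nu_1(\mathbb{b}_k)>\nu_1(\mathbb{b})$, so adaptedness caps $\nu_i$ of any lift at $\nu_i(\mathbb{b})$.

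One step needs tightening. With the right-nested definition you wrote, the tail coordinate is $\mu'(\overline{f})$, where $\mu'$ is induced on $\mathrm{gr}_{\nu_1}(\Aa)$ by the single $\Z^{r-1}$-valued function $\mu:=\nu_2\circledast\cdots\circledast\nu_r$. Your inductive step instead computes $(\nu_2'\circledast\cdots\circledast\nu_r')(\overline{\mathbb{b}})$: it induces each $\nu_i$ first and then concatenates on $\mathrm{gr}_{\nu_1}(\Aa)$. The two agree here, but that is an extra claim. It also obliges you to re-verify the hypotheses on $\mathrm{gr}_{\nu_1}(\Aa)$, in particular that $\overline{\B}$ is adapted. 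In the max-over-lifts version that requires your argument for sums $\sum\lambda_k\overline{\mathbb{b}}_k$, not only for single $\overline{\mathbb{b}}$.

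It is cleaner to induct on $\Aa$ itself and prove Lemma~\ref{lemma: adapted basis for concat val} at the same time. By induction, $\B$ is adapted for $\mu$ with respect to $\leq_{\mathrm{lex}}$, and $\mu(\mathbb{b})=(\nu_2(\mathbb{b}),\dots,\nu_r(\mathbb{b}))$. Your lift argument then applies verbatim with $\mu$ in place of $\nu_i$: $\mu(\mathbb{b}+\sum\lambda_k\mathbb{b}_k)\leq_{\mathrm{lex}}\mu(\mathbb{b})$. Running the same argument on $\sum_{\nu_1(\mathbb{b}_k)=a}\lambda_k\mathbb{b}_k$ gives adaptedness of $\B$ for $\nu_1\circledast\mu$, which closes the induction.

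A shorter route avoids induction and associated graded algebras entirely. Describe the concatenation by the filtration $F_{\geq\bar a}$ spanned by all $f$ with $(\nu_1(f),\dots,\nu_r(f))\geq_{\mathrm{lex}}\bar a$. For $\bar a=(a,b)$ this is $F^{\nu_1}_{>a}+F^{\nu_1}_{\geq a}\cap F^{\nu_2}_{\geq b}$, whose associated quasivaluation is exactly your max-over-lifts one. Write $f=\sum\lambda_k\mathbb{b}_k$ with all $\lambda_k\neq 0$. Adaptedness gives $\nu_i(\mathbb{b}_k)\geq\nu_i(f)$ for every $i$ and $k$, and componentwise inequality implies lexicographic inequality. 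Hence $F_{\geq\bar a}$ is spanned by $\{\mathbb{b}\in\B:(\nu_1(\mathbb{b}),\dots,\nu_r(\mathbb{b}))\geq_{\mathrm{lex}}\bar a\}$, and both this lemma and Lemma~\ref{lemma: adapted basis for concat val} follow at once. This is presumably the argument the authors have in mind. Your approach costs more bookkeeping, but it is the natural one if the appendix defines $\circledast$ through initial forms.
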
 

\begin{lemma}\label{lemma: upstairs concat is a valuation}
In the setting of the above lemma, the quasivaluation $\nu_1 \circledast \nu_2 \circledast \cdots \circledast \nu_r$ is a valuation. 
\end{lemma}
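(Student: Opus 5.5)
The approach is to reduce the statement to a computation on the common adapted basis $\B$. I would first recall the definition of $\nu_1 \circledast \cdots \circledast \nu_r$ from \cite[Appendix A]{EscobarHaradaManon-PL}: inductively, $\nu_1 \circledast \nu_2$ records $\nu_1(f)$ in the first coordinate and then evaluates $\nu_2$ on the $\nu_1$-initial part of $f$. By Lemma~\ref{lemma: adapted basis for concat val} and Lemma~\ref{lemma: concat val on CAB}, for $f = \sum_{\bb} \lambda_{\bb} \bb$ (finite sum, $\lambda_{\bb}\neq 0$) this becomes
\[
\nu_1 \circledast \cdots \circledast \nu_r(f) = \min\nolimits_{\mathrm{lex}} \{(\nu_1(\bb),\dots,\nu_r(\bb)) \,:\, \lambda_{\bb} \neq 0\}.
\]
So the concatenation is a quasivaluation with adapted basis $\B$. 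The properties $\nu(f+g)\geq \min\{\nu(f),\nu(g)\}$, $\nu(fg) \geq \nu(f)+\nu(g)$ and $\nu(\lambda f)=\nu(f)$ are already part of the quasivaluation statement. What remains is the reverse inequality $\nu(fg) \leq \nu(f)+\nu(g)$ for $f,g \neq 0$.

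For the reverse inequality, I would write $f = f_{\mathrm{in}} + f'$, where $f_{\mathrm{in}}$ collects the basis terms of $f$ achieving the lex-minimum $\underline{a} := \nu(f)$. Every term of $f'$ then has $\nu$-value strictly lex-greater than $\underline{a}$, and similarly $g = g_{\mathrm{in}} + g'$ with minimum $\underline{b}$. By the quasivaluation inequality, $f'g$, $f_{\mathrm{in}}g'$ and $f'g'$ all have $\nu$-value strictly lex-greater than $\underline{a}+\underline{b}$. It therefore suffices to show that $\nu(f_{\mathrm{in}}g_{\mathrm{in}}) = \underline{a}+\underline{b}$, i.e.\ that the product of the two initial parts has a nonzero basis coefficient at some basis element with value exactly $\underline{a}+\underline{b}$.

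This last step is the one I expect to be the main obstacle. I would handle it coordinate by coordinate, by induction on $k$. Since $\nu_1$ is a valuation and every term of $f_{\mathrm{in}}$ (resp.\ $g_{\mathrm{in}}$) has $\nu_1$-value $a_1$ (resp.\ $b_1$), we get $\nu_1(f_{\mathrm{in}}g_{\mathrm{in}}) = a_1+b_1$. Hence the part $h_1$ of $f_{\mathrm{in}}g_{\mathrm{in}}$ supported on basis elements of $\nu_1$-value $a_1+b_1$ is nonzero. Next I would pass to the associated graded algebra $\mathrm{gr}_{\nu_1}\Aa$, which is a domain because $\nu_1$ is a valuation, and in which $h_1$ is the product of the classes of $f_{\mathrm{in}}$ and $g_{\mathrm{in}}$. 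The induction then requires knowing that $\nu_2$ descends to a valuation on $\mathrm{gr}_{\nu_1}\Aa$, with the images of $\B$ as an adapted basis. The adapted-basis hypothesis gives the descent as a quasivaluation, and multiplicativity there is precisely the compatibility built into the Appendix A construction. Iterating through $\nu_3,\dots,\nu_r$ would produce a term of $f_{\mathrm{in}}g_{\mathrm{in}}$ with value exactly $\underline{a}+\underline{b}$.

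If the general descent argument proves delicate, I would fall back on the situation in which the lemma is actually used. There the $\nu_i = \hat{\nu}_{(y_i,x_i)}$ come from points $(y_i,x_i)$ lying in a single maximal cone of the relevant mutation fan. For basis elements $\bb_m,\bb_{m'}$, Lemma~\ref{lemma: chart additions have to appear} then guarantees that the single element $\bb_{m+_\alpha m'}$ occurs in $\bb_m\bb_{m'}$ and attains $\nu_i(\bb_m)+\nu_i(\bb_{m'})$ simultaneously for every $i$. This resolves the lexicographic leading term directly and completes the reverse inequality.
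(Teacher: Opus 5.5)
\textbf{The main route fails at the descent step.} Your reduction to proving $\nu(f_{\mathrm{in}}g_{\mathrm{in}})=\underline a+\underline b$ is sound. The problem is the next step: you assume $\nu_2$ descends to a valuation on $\mathrm{gr}_{\nu_1}\Aa$. A common adapted basis does not force this. In fact, under only that hypothesis (all your main route uses), the conclusion is false.

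Take the case $d=r=2$ of Example~\ref{example: MM d r}: $\Aa=\K[x_1,x_2,t_1^{\pm1},t_2^{\pm1}]/\langle x_1x_2-t_1-t_2\rangle$, with $\B$ the images of the monomials $x_1^ax_2^bt_1^ct_2^e$ with $\min(a,b)=0$. Let $\nu_1,\nu_2$ be the weight valuations for weights $(0,0,0,1)$ and $(0,0,1,0)$ on $(x_1,x_2,t_1,t_2)$. These are $\nu_{(y,x)}$ for $(y,x)=((0,1),(0,0))$ and $((1,0),(0,0))$ in $X^\vee$, so $\B$ is adapted to both by Lemma~\ref{lemma: B calE is CAB for nu y x}. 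Both are valuations, since their initial ideals $\langle x_1x_2-t_1\rangle$ and $\langle x_1x_2-t_2\rangle$ are prime.

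Your lex-min formula then gives $\nu(x_1)=\nu(x_2)=(0,0)$ but $\nu(x_1x_2)=\nu(t_1+t_2)=\min_{\mathrm{lex}}\{(0,1),(1,0)\}=(0,1)$. Equivalently, in $\mathrm{gr}_{\nu_1}\Aa$ one has $\bar x_1\bar x_2=\bar t_1$, where the descended $\nu_2$ takes value $1$, not $\nu_2(\bar x_1)+\nu_2(\bar x_2)=0$. So the ``compatibility built into Appendix A'' is exactly the missing ingredient, and it fails in general. It breaks here because the two points lie in different maximal cones of $\Sigma(\mathcal{F})$.

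\textbf{The fallback has the right idea but is incomplete.} It names the extra hypothesis that makes the statement true. This is also the only situation in which the paper uses the lemma (it offers no proof beyond ``straightforward''): the points $(y_i,x_i)$ lie in one maximal cone $C_\sigma\times C_\tau$. That hypothesis must be made part of the statement. Beyond that, as written the fallback only handles a product of two basis elements.

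Your reduction needs $\nu(f_{\mathrm{in}}g_{\mathrm{in}})=\underline a+\underline b$ when $f_{\mathrm{in}},g_{\mathrm{in}}$ have several terms. Then the elements $\bb_{m+_\alpha m'}$ from different pairs $(m,m')$ can coincide and cancel. This really occurs in the application. Upstairs, $\Aa_{\PP_1\times\NN_2}$ has rank $r_1+r_2$ but there are only $r=r_1+r_2-1$ points, and e.g.\ $\hat\nu(\zeta_{\mathcal{E}})=\hat\nu(\hat\bb_{(0_{\MM_1},n^2_j)})$. So initial parts need not be single basis elements.

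One way to close the gap:
\begin{enumerate}
\item Add a point $p_{r+1}$ in the same maximal cone so that $p_1,\dots,p_{r+1}$ span. By Lemma~\ref{lemma: injectivity}, the $(r+1)$-fold concatenation $\tilde\nu$ is injective on $\B$, so initial parts are single basis elements $\bb_m,\bb_{m'}$.
\item Every term of $\bb_{m_1}\bb_{m_1'}$ has $\tilde\nu$-value coordinatewise at least $\tilde\nu(\bb_{m_1})+\tilde\nu(\bb_{m_1'})$. Hence a term of value $\tilde\nu(\bb_m)+\tilde\nu(\bb_{m'})$ forces $(m_1,m_1')=(m,m')$, and by injectivity that term is $\bb_{m+_\alpha m'}$. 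This element does occur by Lemma~\ref{lemma: chart additions have to appear}, so there is no cancellation and $\tilde\nu$ is a valuation.
\item Since $\B$ is adapted to both, $\nu_1\circledast\cdots\circledast\nu_r$ is the truncation of $\tilde\nu$ to its first $r$ coordinates. This truncation is a valuation by the argument of Proposition~\ref{lemma: consequences of additivity}(4).
\end{enumerate}
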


As we saw above, for each $(y_i,x_i)$ we can use pushforward filtrations to define a quasivaluation, denoted $\nu_{(y_i,x_i)}$ on $\Aa_{\mathcal{E}}$. This suggests we can define a quasivaluation on 
$\Aa_{\mathcal{E}}$ in two different ways: we can pushforward and then concatenate, or, concatenate and then pushforward.  
The following lemma states that these two operations commute. This follows from examining the definitions, so we omit the proof. 

\begin{lemma}\label{lemma: two definitions agree}
Let 
$\nu_{pf}: \Aa_{\mathcal{E}} \to \Z^r \cup \{\infty\}$ be the quasivaluation obtained from the pushforward filtration in $\Aa_{\mathcal{E}}$ obtained by pushing forward the filtration obtained from the valuation $\hat{\nu}$ defined in~\eqref{eq: upstairs high rank}. Let $\nu_c$ denote the concatenated quasivaluation $\nu_{(y_1,x_1)} \circledast \cdots \circledast \nu_{(y_r,x_r)}$. Then: $\nu_{pf} = \nu_c$.  
\end{lemma}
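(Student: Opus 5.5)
The identity $\nu_{pf}=\nu_c$ follows by working through an adapted basis. I will show that $\mathbb{B}_{\mathcal{E}}$ is an adapted basis for both quasivaluations, and that the two agree on every element of $\mathbb{B}_{\mathcal{E}}$. An adapted basis determines its quasivaluation: by Lemma~\ref{lemma: equivalent conditions for adapted basis}(1), $\nu(\sum\lambda_i\mathbb{b}_i)=\min\{\nu(\mathbb{b}_i)\}$. So two quasivaluations sharing an adapted basis and agreeing on it are equal.

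\textbf{The concatenated quasivaluation $\nu_c$.} By Lemma~\ref{lemma: B calE is CAB for nu y x}(3), $\mathbb{B}_{\mathcal{E}}$ is simultaneously an adapted basis for every $\nu_{(y_i,x_i)}$. Lemma~\ref{lemma: adapted basis for concat val} then makes $\mathbb{B}_{\mathcal{E}}$ adapted for $\nu_c$. Lemma~\ref{lemma: concat val on CAB} together with Lemma~\ref{lemma: B calE is CAB for nu y x}(2) gives
\[
\nu_c(\mathbb{b}_{(u,v)})=\bigl(\langle (y_1,x_1),(u,v)\rangle,\dots,\langle (y_r,x_r),(u,v)\rangle\bigr)
\qquad\text{for } (u,v)\in Y.
\]

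\textbf{The pushforward quasivaluation $\nu_{pf}$.} Upstairs, $\hat{\mathbb{B}}_{\PP_1\times\NN_2}$ is adapted for each $\hat\nu_{(y_i,x_i)}$, hence for $\hat\nu$, and $\hat\nu(\hat{\mathbb{b}}_{(u,v)})$ is the vector of pairings. I will then redo the proof of Lemma~\ref{lemma: B calE is CAB for nu y x} with $\Z^r$ (lexicographic order) in place of $\Z$: for each $a\in\Z^r$, show that the image of $\hat F_{\geq a}$ is spanned by those $\mathbb{b}_{(u,v)}$ with $(u,v)\in Y$ and $\hat\nu(\hat{\mathbb{b}}_{(u,v)})\geq a$.

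The key input is the rewriting
\[
\hat{\mathbb{b}}_{(u,v)}\ \sim\ \zeta_{\mathcal{E}}^{k}\,\hat{\mathbb{b}}_{(u-k\mathbb{g}_1,v)},\qquad k=\Psi_{\PP_1}(u),
\]
followed by replacing $\zeta_{\mathcal{E}}$ with $\sum_j\hat{\mathbb{b}}_{(0,n^2_j)}$. This replacement preserves $\hat\nu$, because equation~\eqref{eq: comparing nu value} holds for each $(y_i,x_i)\in X^\vee$ separately. Hence the $\hat\nu$-values of $\zeta_{\mathcal{E}}$ and of $\sum_j\hat{\mathbb{b}}_{(0,n^2_j)}$ agree coordinatewise, and so agree as vectors.

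With the spanning statement in hand, the argument of part (2) of Lemma~\ref{lemma: B calE is CAB for nu y x} goes through verbatim. It uses the linear independence of $\mathbb{B}_{\mathcal{E}}$ from Lemma~\ref{lemma: B cal E is a basis}, and gives $\nu_{pf}(\mathbb{b}_{(u,v)})=\hat\nu(\hat{\mathbb{b}}_{(u,v)})$. Consequently $\mathbb{B}_{\mathcal{E}}$ is adapted for $\nu_{pf}$, with the same values as for $\nu_c$, and the two quasivaluations coincide.

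\textbf{Main obstacle.} The delicate step is the vector-valued replacement of $\zeta_{\mathcal{E}}$ under the lexicographic order. Coordinatewise equality of $\hat\nu_{(y_i,x_i)}$-values does give equality of the concatenated value on a product of the form \eqref{eq: upstairs}, but only because $\hat\nu$ is a valuation (Lemma~\ref{lemma: upstairs concat is a valuation}), which makes it multiplicative. Once that is established, the minimum over the adapted basis then bounds every basis element appearing in the expansion from below by $a$.
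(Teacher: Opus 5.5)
Your strategy is sound, and it is a reasonable way to fill in what the paper omits as ``examining the definitions'': you show that $\mathbb{B}_{\mathcal{E}}$ is an adapted basis for both $\nu_c$ and $\nu_{pf}$, with the same values on it. The $\nu_c$ half is fine as written.

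\textbf{The gap.} The problem is in the $\nu_{pf}$ half. Write $h_0:=\sum_{t\in[s_2]}\hat{\mathbb{b}}_{(0,n^2_t)}$. You infer $\hat\nu(\zeta_{\mathcal{E}})=\hat\nu(h_0)$ from the fact that \eqref{eq: comparing nu value} holds for each $(y_i,x_i)$ separately, and that inference is not valid.
\begin{itemize}
\item $h_0$ is a sum of basis elements and $\hat\nu$ is the lexicographic concatenation. Adaptedness therefore gives $\hat\nu(h_0)=\min_{\mathrm{lex}}\{\omega_t\}_{t\in[s_2]}$, where $\omega_t:=(\w_2(n^2_t)(y_1),\dots,\w_2(n^2_t)(y_r))$.
\item By contrast, \eqref{eq: comparing nu value} only identifies $\hat\nu(\zeta_{\mathcal{E}})$ with the vector of coordinatewise minima $(\Psi_{\PP_2}(y_1),\dots,\Psi_{\PP_2}(y_r))$.
\item These differ in general. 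For $\omega_1=(0,5)$ and $\omega_2=(1,0)$, the lex-minimum is $(0,5)$ but the coordinatewise minimum is $(0,0)$.
\end{itemize}
So the concatenated value of a non-basis element is not determined by the individual values $\hat\nu_{(y_i,x_i)}$, and ``agree coordinatewise, hence as vectors'' is a non sequitur. Your ``main obstacle'' paragraph also puts the difficulty in the wrong place: multiplicativity of $\hat\nu$ is not what is at stake; this lex-versus-coordinatewise issue is.

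\textbf{Two repairs.} Either one fixes the step.
\begin{itemize}
\item \emph{Use the common index.} All $(y_i,x_i)$ lie in one cone $C_{\sigma,j}\times C_\tau$ with a \emph{fixed} $j$, so $\w_2(n^2_j)(y_i)=\Psi_{\PP_2}(y_i)$ for every $i$. Then $\omega_j$ is coordinatewise, hence lexicographically, below every $\omega_t$. This gives $\hat\nu(h_0)=\omega_j=\hat\nu(\zeta_{\mathcal{E}})$, and your multiplicativity argument via Lemma~\ref{lemma: upstairs concat is a valuation} then goes through.
\item \emph{Avoid vector-valued comparisons of non-basis elements.} Run the proof of Lemma~\ref{lemma: B calE is CAB for nu y x}(1) for each $(y_i,x_i)\in X^\vee$ separately; each $\hat\nu_{(y_i,x_i)}$ is a genuine valuation with $\hat{\mathbb{B}}_{\PP_1\times\NN_2}$ adapted. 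This shows that every $\hat{\mathbb{b}}_{(u',v')}$ occurring in $h_0^k\,\hat{\mathbb{b}}_{(u-k\mathbb{g}_1,v)}$ satisfies $\langle (y_i,x_i),(u',v')\rangle\geq\langle (y_i,x_i),(u,v)\rangle$ for all $i$. A coordinatewise inequality implies the lexicographic one, so $\hat\nu(\hat{\mathbb{b}}_{(u',v')})\geq\hat\nu(\hat{\mathbb{b}}_{(u,v)})\geq a$. This version needs neither the common index $j$ nor Lemma~\ref{lemma: upstairs concat is a valuation}.
\end{itemize}

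With either repair the spanning statement holds. Linear independence of $\mathbb{B}_{\mathcal{E}}$ (Lemma~\ref{lemma: B cal E is a basis}) then gives $\nu_{pf}(\mathbb{b}_{(u,v)})=\hat\nu(\hat{\mathbb{b}}_{(u,v)})$; in particular, the maximum defining $\nu_{pf}$ exists in $\Z^r$. The rest of your argument stands.
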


Since Lemma~\ref{lemma: two definitions agree} shows that the two definitions agree, we let $$\nu := \nu_{pf}=\nu_c$$ denote the unique quasivaluation obtained by either of the operations.

\begin{lemma}\label{lemma: bijection B epsilon to Zr}
Let $\nu: \Aa_{\mathcal{E}} \to \Z^r \cup \{\infty\}$ the quasivaluation defined above. Then
$\nu$ induces an injection from $\B_{\mathcal{E}}$ to $\Z^r$.  
\end{lemma}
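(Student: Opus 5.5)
We first compute $\nu$ on the basis $\B_{\mathcal{E}}$ and then show the resulting values determine the basis element. For $(u,v)\in Y$, Lemma~\ref{lemma: B calE is CAB for nu y x}(2) gives $\nu_{(y_i,x_i)}(\mathbb{b}_{(u,v)}) = \langle (y_i,x_i),(u,v)\rangle$ for each $i$. By Lemma~\ref{lemma: B calE is CAB for nu y x}(3), $\B_{\mathcal{E}}$ is simultaneously an adapted basis for every $\nu_{(y_i,x_i)}$. Using $\nu=\nu_c$ (Lemma~\ref{lemma: two definitions agree}) together with Lemma~\ref{lemma: concat val on CAB}, we get
\[
\nu(\mathbb{b}_{(u,v)}) = \bigl(\langle (y_1,x_1),(u,v)\rangle, \dots, \langle (y_r,x_r),(u,v)\rangle\bigr)\in\Z^r .
\]
So the claim reduces to one statement: a point $(u,v)\in Y$ is determined by its pairings against $(y_1,x_1),\dots,(y_r,x_r)$.

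To prove this, suppose $(u,v),(u',v')\in Y$ have the same pairings with every $(y_i,x_i)$. Pass to $X$ via $\psi$ and use Lemma~\ref{lem-pairing} to rewrite $\langle (u,v),(y_i,x_i)\rangle$ as $\langle \psi(u,v),\phi^\vee(y_i,x_i)\rangle$. The points $\phi^\vee(y_i,x_i)$ lie in $(\PP_2(j)\cap C_\sigma)\times C_\tau$ by Proposition~\ref{prop-conescover}(1). Since $\phi^\vee$ is linear on $(C_{\sigma,j}\times C_\tau)\cap X^\vee_\R$ (Proposition~\ref{prop-conescover}(2)) and is a bijection, these images span the $\Q$-span of that codimension-$1$ cone (Proposition~\ref{prop-conescover}(3)).

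The pairing of $\psi(u,v)$ against the cone $(\PP_2(j)\cap C_\sigma)\times C_\tau$ is exactly the homomorphism $\kappa_{\sigma,\tau,j}(\psi(u,v))\in E_{\sigma,\tau,j}$. This homomorphism is linear on the cone, so it extends to a $\Q$-linear functional on its span. Two such homomorphisms that agree on a $\Q$-basis of the span therefore agree on the whole cone; this is the same reasoning as Lemma~\ref{lemma: injectivity}. Hence $\kappa_{\sigma,\tau,j}(\psi(u,v))=\kappa_{\sigma,\tau,j}(\psi(u',v'))$. Since $\kappa_{\sigma,\tau,j}$ is injective (Proposition~\ref{proposition: phi sigma tau j bijection}) and $\psi|_Y$ is injective (Lemma~\ref{lem-2sets}(5)), we conclude $(u,v)=(u',v')$.

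The main obstacle is the bookkeeping between the two pictures, $Y$ paired with $X^\vee$ and $X$ paired with $Y^\vee$, and making sure the basis $\{(y_i,x_i)\}$ really maps to a spanning set of the chart cone. If the transfer through $\phi^\vee$ turns out awkward, the alternative is to argue directly: $\Theta$-type pairings of $Y$ against $(C_{\sigma,j}\times C_\tau)\cap X^\vee$ are linear there, since the chart addition formulas identify this cone with the cone in $\Sigma(\mathcal{F})$ (Proposition~\ref{proposition: mutation fans of E and F}). Injectivity would then follow from the analogue of Proposition~\ref{proposition: dual map E to Sp calF}, namely that $Y\to\Sp(\mathcal{F})$ is a bijection, combined with Lemma~\ref{lemma: res cone surj} and Lemma~\ref{lemma: injectivity} applied to the strict dual pair $(\mathcal{E},\mathcal{F})$ from Theorem~\ref{theorem: E and F strict duals}.
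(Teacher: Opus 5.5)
Your proof is correct. Its first step, computing $\nu(\mathbb{b}_{(u,v)})$ as the tuple $\bigl(\langle (y_i,x_i),(u,v)\rangle\bigr)_i$ from Lemma~\ref{lemma: B calE is CAB for nu y x}, Lemma~\ref{lemma: concat val on CAB} and Lemma~\ref{lemma: two definitions agree}, is exactly what the paper does.

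You differ from the paper in how you show that these $r$ pairings determine $(u,v)\in Y$. The paper's argument is essentially your fallback. It views $(u,v)$, through the pairing, as an element of $\Hom((C_{\sigma,j}\times C_\tau)\cap X^\vee,\Z)$, i.e.\ as the restriction of the point $(u,v)\in\Sp(\mathcal{F})$ to a maximal cone of $\Sigma(\mathcal{F})$, and then cites Lemma~\ref{lemma: injectivity}. The injectivity of $Y\to\Hom((C_{\sigma,j}\times C_\tau)\cap X^\vee,\Z)$ is left implicit there. It rests on Proposition~\ref{proposition: dual map E to Sp calF}, Proposition~\ref{proposition: mutation fans of E and F}, Theorem~\ref{theorem: E and F strict duals} and Lemma~\ref{lemma: res cone surj}.

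Your main route works as follows:
\begin{itemize}
\item You transport the question to $X$ via $\psi$ and Lemma~\ref{lem-pairing}.
\item You push the basis forward by $\phi^\vee$. By Proposition~\ref{prop-conescover}(1)--(3), this is an integral linear bijection between rational cones of the same dimension $r$. Its linear extension is injective on the span, since it is injective on a cone with nonempty relative interior, so it carries a basis of one span to a basis of the other.
\item You conclude using the injectivity of $\kappa_{\sigma,\tau,j}$ (Proposition~\ref{proposition: phi sigma tau j bijection}) and of $\psi|_Y$ (Lemma~\ref{lem-2sets}).
\end{itemize}

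This is more self-contained. It uses only the chart maps and the elementary $\phi,\psi$ lemmas, not the full strict-duality theorem for $(\mathcal{E},\mathcal{F})$. It also makes explicit the injectivity step that the paper's one-line appeal to Lemma~\ref{lemma: injectivity} passes over. The paper's version is shorter and conceptually cleaner once $(\mathcal{E},\mathcal{F})$ is known to be a strict dual pair.
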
 

\begin{proof} 
From previous arguments, we know we can view $(u,v)$ as an element of $\Hom(C_{\sigma,j} \times C_\tau \cap X^\vee, \Z)$ via the pairing ~\eqref{eq: explicit pairing}.
By Lemma~\ref{lemma: B calE is CAB for nu y x}, Lemma~\ref{lemma: adapted basis for concat val}, Lemma~\ref{lemma: concat val on CAB} and Lemma~\ref{lemma: two definitions agree}, we also know that $\mathbb{B}_{\mathcal{E}}$ is an adapted basis for $\nu$. Consider the map $\Hom((C_{\sigma,j}\times C_\tau) \cap X^\vee, \Z) \to \Z^r$ given by evaluating against $(y_1,x_1),\cdots,(y_r,x_r)$.  From the above it follows that the value of $\nu$ on $\bb_{(u,v)}$ is the image of $(u,v)$ under this map. The result follows from Lemma~\ref{lemma: injectivity}. 
\end{proof}

The next result, on multiplicative properties of $\nu$, is the main technical statement needed to construct the detropicalization.

\begin{proposition}\label{proposition: nu on product}
Following the notation already established, let $\mathbb{b}_{(u,v)}, \mathbb{b}_{(u',v')} \in \B_{\mathcal{E}}$. Then 
\begin{equation}\label{eq: val on A cal E multiplicative}
\nu(\mathbb{b}_{(u,v)}\mathbb{b}_{(u',v')}) = \nu(\mathbb{b}_{(u,v)}) + \nu(\mathbb{b}_{(u',v')}).
\end{equation} 
\end{proposition}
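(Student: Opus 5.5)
We propose to compute both sides of \eqref{eq: val on A cal E multiplicative} coordinatewise. By Lemma~\ref{lemma: two definitions agree} and Lemma~\ref{lemma: concat val on CAB}, together with the fact that $\mathbb{B}_{\mathcal{E}}$ is an adapted basis for each $\nu_{(y_i,x_i)}$ (Lemma~\ref{lemma: B calE is CAB for nu y x}(3)) and hence for $\nu$ (Lemma~\ref{lemma: adapted basis for concat val}), it suffices to expand the product $\mathbb{b}_{(u,v)}\mathbb{b}_{(u',v')}$ in the basis $\mathbb{B}_{\mathcal{E}}$. We then show two things: (a) every basis element $\mathbb{b}_{(u'',v'')}$ that appears satisfies $\langle (y_i,x_i),(u'',v'')\rangle \geq \langle (y_i,x_i),(u,v)\rangle + \langle (y_i,x_i),(u',v')\rangle$ for all $i$; (b) a distinguished basis element attains equality in all coordinates simultaneously and appears with non-zero coefficient. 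Since $\nu$ is a lexicographic concatenation and $\mathbb{B}_{\mathcal{E}}$ is adapted, $\nu$ of the product is the lex-minimum of the values of the appearing basis elements. Part (a) shows each such value is $\geq$ the sum in every coordinate, and part (b) provides an element realizing the sum, which gives the equality.

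For (a), we work upstairs in $\Aa_{\PP_1\times\NN_2}$. We have $\hat{\nu}_{(y_i,x_i)}(\hat{\mathbb{b}}_{(u,v)}\hat{\mathbb{b}}_{(u',v')}) = \langle (y_i,x_i),(u,v)\rangle + \langle (y_i,x_i),(u',v')\rangle$ because $\hat\nu_{(y_i,x_i)}$ is a valuation. Since the quotient quasivaluation $\nu_{(y_i,x_i)}$ is at least the upstairs value of any lift, we get $\nu_{(y_i,x_i)}(\mathbb{b}_{(u,v)}\mathbb{b}_{(u',v')}) \geq$ the sum. Adaptedness (Lemma~\ref{lemma: B calE is CAB for nu y x}) then gives (a) for each basis element in the expansion.

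For (b), we take the distinguished element to be the chart sum $\phi$-transported to $Y$, namely $(u,v)+_{\sigma,\tau,j}(u',v')$ in the $Y$-model of Lemma~\ref{lemma: chart addition in Y}, with $(\sigma,\tau,j)$ the indices of the fixed cone $(C_{\sigma,j}\times C_\tau)\cap X^\vee$. That is, $w:=(u+_\tau u' - \Psi_{\PP_1}(u+_\tau u')\mathbb{g}_1,\ v+_\sigma v' +_\sigma \Psi_{\PP_1}(u+_\tau u') n^2_j)$. There are two checks.
\begin{enumerate}
\item Equality of pairings. For $(y_i,x_i)\in (C_{\sigma,j}\times C_\tau)\cap X^\vee$ we have $\langle (y_i,x_i), w\rangle = \langle(y_i,x_i),(u,v)\rangle+\langle(y_i,x_i),(u',v')\rangle$. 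This is a computation of the type in \eqref{eq: RHS}: $\w_1(x_i)$ is additive for $+_\tau$ and $\v_2(y_i)$ is additive for $+_\sigma$. The correction terms $-\Psi_{\PP_1}(\cdot)\w_1(x_i)(\mathbb{g}_1)$ and $+\Psi_{\PP_1}(\cdot)\w_2(n^2_j)(y_i)$ cancel, because $\w_1(x_i)(\mathbb{g}_1)=\Psi_{\PP_2}(y_i)=\w_2(n^2_j)(y_i)$ on $X^\vee\cap (C_{\sigma,j}\times C_\tau)$.
\item Non-zero coefficient. Expanding upstairs, Lemma~\ref{lemma: chart additions have to appear} applied to $\MM_1\times\NN_2$ shows that $\hat{\mathbb{b}}_{(u+_\tau u',\, v+_\sigma v')}$ appears in $\hat{\mathbb{b}}_{(u,v)}\hat{\mathbb{b}}_{(u',v')}$ with non-zero coefficient. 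Setting $k=\Psi_{\PP_1}(u+_\tau u')$, we reduce it modulo $\hh$ via $\hat{\mathbb{b}}=c\,\zeta_{\mathcal{E}}^k\hat{\mathbb{b}}_{\phi(\cdot)}$ and $\zeta_{\mathcal{E}}\equiv \sum_t \hat{\mathbb{b}}_{(0,n^2_t)}$. In $(\sum_t\hat{\mathbb{b}}_{(0,n^2_t)})^k\hat{\mathbb{b}}_{(\ast,\,v+_\sigma v')}$, the term $w$ should arise from choosing $t=j$ every time, and this again uses Lemma~\ref{lemma: chart additions have to appear} in $\NN_2$ for the chart $\sigma$.
\end{enumerate}

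The main obstacle is certifying that the coefficient of $\mathbb{b}_w$ in the full reduced expansion is non-zero, i.e.\ that no cancellation occurs. Such cancellation could come from other products, or from other terms of $\hat{\mathbb{b}}_{(u,v)}\hat{\mathbb{b}}_{(u',v')}$ with larger $\Psi_{\PP_1}$ whose reductions also produce $w$. We would handle this by a valuative argument rather than tracking coefficients. Consider the lattice map $\Hom\bigl(((C_{\sigma,j}\times C_\tau)\cap X^\vee), \Z\bigr)\to\Z^r$ from Lemma~\ref{lemma: bijection B epsilon to Zr}. By (1), $w$ is the unique element of $Y$ (via Lemma~\ref{lemma: injectivity}) whose pairing vector equals the sum. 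Every other contribution has strictly larger pairing against some element of the interior of the cone. So we choose a single generic point $(y_*,x_*)$ in the relative interior of $(C_{\sigma,j}\times C_\tau)\cap X^\vee$. Upstairs, the only monomials attaining the minimal value $\hat\nu_{(y_*,x_*)}$ of the product are those reducing to $w$. The contribution $\zeta_{\mathcal{E}}^k$ versus $\sum_t \hat{\mathbb{b}}_{(0,n^2_t)}$ has equal value by \eqref{eq: comparing nu value}, with the minimum attained only by $t=j$ for generic $y_*\in\mathrm{int}\,C_{\sigma,j}$. Hence the leading (initial) form of the product is a single non-zero multiple of $\mathbb{b}_w$. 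This identifies $\nu_{(y_*,x_*)}$ of the product with the sum and, by adaptedness, forces $\mathbb{b}_w$ to appear. Combined with (a), this gives \eqref{eq: val on A cal E multiplicative}.
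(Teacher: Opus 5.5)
Your argument is correct. It differs from the paper's in how it rules out cancellation.

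The paper works throughout with the concatenated valuation $\hat\nu$ and proceeds in three steps:
\begin{enumerate}
\item For every summand $\hat{\mathbb{b}}_{(u'',v'')}$ of the upstairs product, it identifies the lex-minimal element $\hat{\mathbb{b}}_{(u''-\Psi_{\PP_1}(u'')\mathbb{g}_1,\, v''+_\sigma \Psi_{\PP_1}(u'')n^2_j)}$ of its reduction modulo $\hh$.
\item In the special case $v=v'=0$, it shows that distinct $u''$ give distinct such elements, by comparing the $\NN_2$-coordinates $k n^2_j$.
\item It reduces the general case to this one by factoring $\mathbb{b}_{(u,v)}=\mathbb{b}_{(u,0)}\mathbb{b}_{(0,v)}$.
\end{enumerate}
You instead split the claim into two parts. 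The first is a componentwise lower bound at each basis point $(y_i,x_i)$, which needs only multiplicativity of $\hat\nu_{(y_i,x_i)}$ and adaptedness of $\mathbb{B}_{\mathcal{E}}$. The second is attainment, which you check with a single rank-one valuation at a relative interior point $(y_*,x_*)$.

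Your route buys two things. It avoids the factorization step. It is also insensitive to basis points $(y_i,x_i)$ lying on walls $\{\w_2(n^2_t)=\w_2(n^2_j)\}$ of $C_{\sigma,j}$, where ties make the paper's ``unique minimum'' claims more delicate. The paper's route, in exchange, gives an explicit lex-leading term for every reduced summand.

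One step needs to be stated precisely. As you phrase it (``the only monomials \ldots\ are those reducing to $w$''), you leave room for two distinct upstairs monomials that both reduce to $\hat{\mathbb{b}}_w$ at leading order and cancel. Here is how to close it.
\begin{itemize}
\item By Lemma~\ref{lemma: A sigma equivalence}, the hyperplane cutting out $X^\vee_\R$ meets the interior of $C_{\sigma,j}\times C_\tau$. So every relative interior point $(y_*,x_*)$ of $(C_{\sigma,j}\times C_\tau)\cap X^\vee_\R$ lies in $\mathrm{int}(C_{\sigma,j})\times\mathrm{int}(C_\tau)$. No genericity is needed.
\item For any summand $(u'',v'')$ of $\hat{\mathbb{b}}_{(u,v)}\hat{\mathbb{b}}_{(u',v')}$, the function $(y,x)\mapsto\langle (y,x),(u'',v'')\rangle-\langle (y,x),(u,v)\rangle-\langle (y,x),(u',v')\rangle$ is a non-negative linear functional on the full-dimensional cone $C_\sigma\times C_\tau$.
\item By Lemma~\ref{lemma: res cone surj} and Lemma~\ref{lemma: M alpha addition formula}, this functional vanishes identically only when $(u'',v'')=(u+_\tau u',v+_\sigma v')$. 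A non-negative linear functional vanishing at an interior point vanishes identically, so this chart sum is the unique upstairs monomial of minimal $\hat\nu_{(y_*,x_*)}$-value.
\item Likewise, $t=j$ is the unique minimizer of $\w_2(n^2_t)(y_*)$.
\item Reduction modulo $\hh$ preserves $\hat\nu_{(y_*,x_*)}$ summand by summand. This follows from multiplicativity together with $\hat\nu_{(y_*,x_*)}(\zeta_{\mathcal{E}})=\Psi_{\PP_2}(y_*)=\hat\nu_{(y_*,x_*)}(\sum_t\hat{\mathbb{b}}_{(0,n^2_t)})$.
\end{itemize}
Hence every other summand contributes only strictly above level $S_*:=\langle (y_*,x_*),(u,v)\rangle+\langle (y_*,x_*),(u',v')\rangle$. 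The coefficient of $\hat{\mathbb{b}}_w$ in the reduced product is therefore non-zero, and your lex-minimum conclusion follows.
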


\begin{proof} 
As in the proof of Lemma~\ref{lemma: bijection B epsilon to Zr}, we know $\nu(\mathbb{b}_{(u'',v'')})= (\langle (y_1,x_1),(u'',v'')\rangle, \cdots, \langle (y_r,x_r),(u'',v'')\rangle)$ for any $(u'',v'') \in Y$. We also know $\mathbb{B}_{\mathcal{E}}$ is an adapted basis for $\nu$. 
Thus, in order to compute the LHS of~\eqref{eq: val on A cal E multiplicative}, we need to express $\mathbb{b}_{(u,v)}\mathbb{b}_{(u',v')}$ as a linear combination of basis elements in $\mathbb{B}_{\mathcal{E}}$, and then take the minimum of the $\nu$-values of the elements that appear.

To accomplish this, we work upstairs. Let $S = \ptconv(\{(u,v)+_{\tau',\sigma'}(u',v')\}_{\tau' \in \pi(\MM_1),\sigma'\in \pi(\NN_2)}) \cap \MM_1 \times \NN_2$. We know that $\hat{\mathbb{b}}_{(u,v)} \hat{\mathbb{b}}_{(u',v')}$ projects to $\mathbb{b}_{(u,v)}\mathbb{b}_{(u',v')}$, and from \cite[Lemma 7.3]{EscobarHaradaManon-PL} we also know that 
\begin{equation}\label{eq: prod buv}
    \hat{\mathbb{b}}_{(u,v)} \hat{\mathbb{b}}_{(u',v')} = \sum_{(u'',v'') \in S} c_{(u'',v'')} \hat{\mathbb{b}}_{(u'',v'')}. 
\end{equation}
Moreover, we also know from Lemma~\ref{lemma: chart additions have to appear} that for any $\sigma \in \pi(\NN_2), \tau \in \pi(\MM_1)$, the basis element $\hat{\mathbb{b}}_{(u+_\tau u', v+_\sigma v')}$ must appear (i.e. the corresponding coefficient is non-zero) in the sum~\eqref{eq: prod buv}. The issue with the expression~\eqref{eq: prod buv} is that the $\hat{\mathbb{b}}_{(u'',v'')}$ may not satisfy the condition $(u'',v'') \in Y$. Therefore, the next step is to find a different element in $\Aa_{\PP_1 \times \NN_2}$ which also maps to $\mathbb{b}_{(u,v)}\mathbb{b}_{(u',v')}$ but is a linear combination of basis elements whose subscripts lie in $Y$. We follow the same line of reasoning as in the proof of Lemma~\ref{lemma: B cal E is a basis}, and in particular~\eqref{eq: rewrite with tau mathcal E} and~\eqref{eq: b_uv in quotient}. Specifically, replacing the index $(u,v)$ in~\eqref{eq: b_uv in quotient} with $(u'',v'')$ in $S$, we obtain 
\begin{equation}\label{eq: b_uvprime in quotient}
\mathbb{b}_{(u'',v'')} = c \cdot 
\left(\sum_{j \in [s_2]} \mathbb{b}_{(0_{\MM_1},n^2_j)}\right)^{\Psi_{\PP_1}(u'')} \cdot 
\mathbb{b}_{(u'' - \Psi_{\PP_1}(u'') \cdot \mathbb{g}_1, v'')}.
\end{equation}
Now consider the sum upstairs which corresponds to the RHS of~\eqref{eq: b_uvprime in quotient}.  
We now proceed to show that, among all  $\hat{\mathbb{b}}_{(u''',v''')}$ that appear in the upstairs sum corresponding to ~\eqref{eq: b_uvprime in quotient} for varying $(u'',v'') \in S$, that the minimum $\hat{\nu}$-value achieved among them is equal to $\nu(\mathbb{b}_{(u,v)})+\nu(\mathbb{b}_{(u',v')})$. Moreover, we will show that the basis element that achieves this minimum occurs exactly once, as $(u'',v'')$ vary in $S$, i.e., there is no cancellation. This would prove that the RHS of~\eqref{eq: val on A cal E multiplicative} is equal to the LHS.

We proceed in two steps. We first examine the expression (the upstairs analogue of) ~\eqref{eq: b_uvprime in quotient} for each $(u'',v'')$ separately. We claim that, in this expression (when expanded in basis elements with indices in $Y$), a basis element that achieves the minimum is 
$$
\hat{\mathbb{b}}_{min, (u'',v'')} := \hat{\mathbb{b}}_{(u'' - \Psi_{\PP_1}(u'') \cdot \mathbb{g}_1, v'' +_\sigma \Psi_{\PP_1}(u'')\cdot n^2_j)}.
$$
 This claim follows from Lemma~\ref{lemma: upstairs concat is a valuation}, Lemma~\ref{lemma: adapted basis for concat val}, Lemma~\ref{lemma: concat val on CAB}, and the fact that $(y_i,x_i)$ is in $C_{\sigma,j} \times C_\tau$, so we know $\w_2(n^2_t)(y_i)$ achieves its minimum at $t=j$ and $\w_2(n^2_t)(y_i) \geq \w_2(n^2_j)(y_i)$ for $t \neq j$. 
Recall from~Lemma~\ref{lemma: bijection B epsilon to Zr} that $\nu$ induces an injection from $\mathbb{B}_{\mathcal{E}}$ to $\Z^r$. Equivalently, evaluation against $(y_1,x_1),(y_2,x_2),\cdots,(y_r,x_r)$ is an injection from $Y$ and $\Z^r$. From this it follows that $\hat{\mathbb{b}}_{min, (u'',v'')}$ is the unique basis element appearing in~\eqref{eq: b_uvprime in quotient} that achieves this minimum.

Now consider the case $(u'',v'')= (u+_\tau u', v+_\sigma v')$. Define 
$$
\hat{\mathbb{b}}_{min} := \hat{\mathbb{b}}_{min, (u+_\tau u',v+_\sigma v')} = \hat{\mathbb{b}}_{(u+_\tau u' - \Psi_{\PP_1}(u+_\tau u') \cdot \mathbb{g}_1, v+_\sigma v' +_\sigma \Psi_{\PP_1}(u+_\tau u')\cdot n^2_j)}. 
$$
We claim that, among all $\hat{\mathbb{b}}_{min,(u'',v'')}$ for all possible $(u'',v'') \in S$, $\hat{\mathbb{b}}_{min}$ achieves the minimum possible $\hat{\nu}$-value.  This follows from the fact that $(y_i,x_i)$ are in $C_{\sigma,j} \times C_\tau$ and from the properties of points. Moreover, the $\hat{\nu}$-value is precisely $\hat{\nu}(\mathbb{b}_{(u,v)}) + \hat{\nu}(\mathbb{b}_{(u',v')}) = \nu(\mathbb{b}_{(u,v)})+\nu(\mathbb{b}_{(u',v')})$, as a computation shows.

To complete the proof, it suffices to show that $\hat{\mathbb{b}}_{min}$ cannot be cancelled by any other $\hat{\mathbb{b}}_{min,(u'',v'')}$. Recall that $\tau$ and $\sigma$ are fixed. 
We will make this argument in two steps. We first consider the special case when $v=v'=0$, so the two original basis elements are of the form $\mathbb{b}_{(u,0)}, \mathbb{b}_{(u',0)}$. 
Then we have $\hat{\mathbb{b}}_{(u,0)}\hat{\mathbb{b}}_{(u',0)} = \sum c_{u''} \hat{\mathbb{b}}_{(u'',0)}$ where the sum is over distinct $u''$ that appear in $\ptconv(\{u+_{\tau'} u'\})$. 
In this case, we have $\hat{\mathbb{b}}_{min,(u'',0)} = \hat{\mathbb{b}}_{(u''-k\mathbb{g}_1, k n^2_j)}$ where $k=\Psi_{\PP_1(u'')}$. Moreover, 
we also have $\hat{\mathbb{b}}_{min}=\hat{\mathbb{b}}_{(u+_\tau u'- k\mathbb{g}_1, k n^2_j)}$ where $k=\Psi_{\PP_1}(u+_\tau u')$. Now we need to show that if $\hat{\mathbb{b}}_{(u''-k'\mathbb{g}_1, k'n^2_j)} = \hat{\mathbb{b}}_{(u+_\tau u'- k\mathbb{g}_1, k n^2_j)}$ then $u'' = u+_\tau u'$. 
Indeed we know that $\hat{\mathbb{b}}_{(u''-k'\mathbb{g}_1, k'n^2_j)} = \hat{\mathbb{b}}_{(u+_\tau u'- k\mathbb{g}_1, k n^2_j)}$ means $(u''-k'\mathbb{g}_1, k'n^2_j) = (u+_\tau u'- k\mathbb{g}_1, k n^2_j)$ which in turn implies $k=k'$, but this yields that $u'' = u+_\tau u'$, as desired. Moreover, by Lemma~\ref{lemma: chart additions have to appear} we know that $\hat{\mathbb{b}}_{min}$ must appear. This proves the $v=v'=0$ case.

Finally, we must show that we can reduce to the above special case. Suppose we have $\mathbb{b}_{(u,v)}$ and $\mathbb{b}_{(u',v')}$, where $(u,v),(u',v') \in Y$. Then $\mathbb{b}_{(u,v)}=\mathbb{b}_{(u,0)}\mathbb{b}_{(0,v)}$ because chart addition with $0$ is well-defined in both $\MM_1$ and $\NN_2$ and similarly for $(u',v')$. We can write $\mathbb{b}_{(u,v)}\mathbb{b}_{(u',v')} = \mathbb{b}_{(u,0)}\mathbb{b}_{(u',0)}\mathbb{b}_{(0,v)}\mathbb{b}_{(0,v')}$. Now working upstairs, we already saw above that, after rewriting the product expression of $\hat{\mathbb{b}}_{(u,0)}\hat{\mathbb{b}}_{(u',0)}$ modulo $\mathbb{h}$, there is a unique basis element $\hat{\mathbb{b}}_{(u+_\tau u'- k\mathbb{g}_1, k n^2_j)}$ (where $k=\Psi_{\PP_1}(u+_\tau u')$) that achieves the minimum. Similar arguments also show that in the product of $\hat{\mathbb{b}}_{(0,v)}\hat{\mathbb{b}}_{(0,v')}$, the unique basis element achieving the minimum is $\hat{\mathbb{b}}_{(0,v+_\sigma v')}$. It remains to argue that in the product  $\hat{\mathbb{b}}_{(u+_\tau u'- k\mathbb{g}_1,  k n^2_j)}\hat{\mathbb{b}}_{(0,v+_\sigma v')}$, the unique element achieving the minimum (i.e. all others have a strictly greater value) is $\hat{\mathbb{b}}_{(u+_\tau u' - k\mathbb{g}_1, v+_\sigma v' +_\sigma k n^2_j)}$. As in the previous arguments, this element appears by Lemma~\ref{lemma: chart additions have to appear} and it is unique because the $\{(y_i,x_i)\}$ form a basis of $C_{\sigma,j} \times C_\tau$. This concludes the proof. 

\end{proof}

From Proposition~\ref{proposition: nu on product} we can obtain the following.  

\begin{proposition}\label{lemma: consequences of additivity}
Let the notation and assumptions be as above. Let $\mathcal{F}$ denote the filtration associated to the quasivaluation $\nu: \Aa_{\mathcal{E}} \to \Z^r \cup \{\infty\}$ constructed above. We have 
\begin{enumerate} 
\item $\nu: \Aa_{\mathcal{E}} \to \Z^r \cup \{\infty\}$ has $1$-dimensional leaves. 
\item $\Aa_{\mathcal{E}}$ is an integral domain. 
\item $\nu: \Aa_{\mathcal{E}} \to \Z^r \cup \{\infty\}$ is a valuation. 
\item The function $\nu_1: \Aa_{\mathcal{E}}$ that reads off the first coordinate of $\nu$, i.e. $\nu_1  := \pi_1 \circ \nu$ where $\pi_1: \Z^r \to \Z$ reads the first coordinate, is a $\Z$-valuation on $\Aa_{\mathcal{E}}$. 
\end{enumerate} 
\end{proposition}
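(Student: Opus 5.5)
We derive all four statements from three facts already in hand: Proposition~\ref{proposition: nu on product}, the injectivity of $\nu$ on $\B_{\mathcal{E}}$ from Lemma~\ref{lemma: bijection B epsilon to Zr}, and the fact that $\B_{\mathcal{E}}$ is an adapted basis for $\nu$. The last is Lemma~\ref{lemma: B calE is CAB for nu y x}(3) combined with Lemma~\ref{lemma: adapted basis for concat val} and Lemma~\ref{lemma: two definitions agree}.

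For (1), the leaf at $a\in\Z^r$ is $F_{\geq a}/F_{>a}$. By Lemma~\ref{lemma: equivalent conditions for adapted basis}, $F_{\geq a}$ is spanned by the $\bb_{(u,v)}\in\B_{\mathcal{E}}$ with $\nu(\bb_{(u,v)})\geq a$. Hence the leaf is spanned by the images of those basis elements with $\nu$-value exactly $a$. By Lemma~\ref{lemma: bijection B epsilon to Zr} there is at most one such element, so every leaf has dimension at most one.

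For (2) and (3), take nonzero $f,g\in\Aa_{\mathcal{E}}$ and expand them in $\B_{\mathcal{E}}$. Because the basis is adapted, $\nu(f)$ is attained by a unique basis element $\bb$ (using injectivity again), and $\nu(g)$ by a unique $\bb'$. Write $f=c\bb+f'$ and $g=c'\bb'+g'$ with $c,c'\neq 0$, where every basis element appearing in $f'$ or $g'$ has strictly larger $\nu$-value.

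Expand the product $fg$ into products of pairs of basis elements. By Proposition~\ref{proposition: nu on product}, the product $\bb\bb'$ has $\nu$-value $\nu(\bb)+\nu(\bb')$. Its expansion therefore contains, with nonzero coefficient, the unique basis element of that value, and all its other terms have strictly larger value.

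Every other cross term has the form $\bb_1\bb_2$, with at least one factor strictly larger in value. By Proposition~\ref{proposition: nu on product} and the adaptedness of the basis, every basis element occurring in such a cross term has value strictly greater than $\nu(f)+\nu(g)$. Here we use that $\Z^r$ with the lexicographic order used in \cite[Appendix A]{EscobarHaradaManon-PL} is a totally ordered group, so strict inequalities add.

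It follows that the coefficient of the minimal basis element in $fg$ equals $cc'$ times a nonzero scalar, so it does not cancel. Thus $fg\neq 0$, which gives (2), and $\nu(fg)=\nu(f)+\nu(g)$. Together with the ultrametric inequality, which holds for any filtration quasivaluation, this shows that $\nu$ is a valuation, proving (3).

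The main obstacle is the ordering point hidden in the cross-term step. The paper's version of Proposition~\ref{proposition: nu on product} assumes that the products of non-leading terms produce only values strictly above the minimum. This requires $\nu$ to be subadditive in the appropriate sense, namely $\nu(\bb_1\bb_2)\geq$ the minimum attained by the product, which equals $\nu(\bb_1)+\nu(\bb_2)$. We get this directly from Proposition~\ref{proposition: nu on product} applied to each pair, so the argument goes through.

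For (4), $\nu_1=\nu_{(y_1,x_1)}$ by Lemma~\ref{lemma: concat val on CAB} and the adaptedness of the basis. The inequality $\nu_1(f+g)\geq\min$ holds because $\nu_1$ is a filtration quasivaluation. For multiplicativity, evaluate $\nu_1$ on the leading terms as above. The pushforward-from-a-valuation structure gives $\nu_1(fg)\geq\nu_1(f)+\nu_1(g)$. Equality follows by taking the basis elements $\bb,\bb'$ that achieve $\nu_1(f)$ and $\nu_1(g)$ and are lexicographically minimal among those. Their product's minimal term, guaranteed by (3), then has first coordinate $\nu_1(f)+\nu_1(g)$ and survives in $fg$, so $\nu_1(fg)=\nu_1(f)+\nu_1(g)$. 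Finally, $\nu_1(f)=\infty$ only when $f=0$, since each basis element takes a finite value.
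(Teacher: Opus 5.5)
Your proposal is correct and takes essentially the same route as the paper. Part (1) is identical. Your explicit leading-term computation for (2)--(3) is the unpacked form of the paper's argument that the associated graded ring is a domain: its leaves are one-dimensional, each spanned by a single basis element, and these multiply without vanishing by Proposition~\ref{proposition: nu on product}. Part (4), as in the paper, comes down to projecting the lexicographic identity $\nu(fg)=\nu(f)+\nu(g)$ onto its first coordinate.

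The ``obstacle'' paragraph is not a real issue. Proposition~\ref{proposition: nu on product} holds for every pair of basis elements, and together with adaptedness this is exactly what the cross-term bound needs. The property in play there is superadditivity, not subadditivity.
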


\begin{proof}
Let $\mathsf{F}^\nu$ be the filtration associated to $\nu$ on $\Aa_{\mathcal{E}}$. Lemma~\ref{lemma: B calE is CAB for nu y x}(2) and Lemma~\ref{lemma: concat val on CAB} and Lemma~\ref{lemma: bijection B epsilon to Zr} together imply that for each $\bar{a} \in \Z^r$ we have $\mathsf{F}^\nu_{\geq \bar{a}}/\mathsf{F}^\nu_{> \bar{a}}$ is at most $1$-dimensional, spanned by $\mathbb{b}_{(u,v)}$ where $(u,v)$ is the unique element such that $\nu(\mathbb{b}_{(u,v)}) = \bar{a}$ (if such $(u,v)$ exists). This shows Claim (1).  

It follows from (1) and Proposition~\ref{proposition: nu on product} that $gr_{\mathcal{F}}(\Aa_{\mathcal{E}})$ is an integral domain. In general, if $\mathsf{F}$ is a multiplicative filtration on $\Aa$ with $\cap_{a} \mathsf{F}^\nu_a = \{0\}$ and $gr_{\mathsf{F}}(\Aa)$ is an integral domain, then $\Aa$ is an integral domain. Since $\Aa$ has an adapted basis with respect to $\nu$ we know $\cap_a \mathsf{F}^\nu_a = \{0\}$, so this shows (2). Since the associated graded $gr_{\mathsf{F}^\nu}(\Aa_{\mathcal{E}})$ is an integral domain, $\nu$ is  a valuation, which is claim (3). 

It remains to show (4). Given a $\Z^r$-valuation $\nu$ we have $\nu(fg) = \nu(f)+\nu(g)$ (where both $f,g \neq 0$) so it follows immediately that $\nu_1(fg)=\nu_1(f)+\nu_1(g)$. Also we know $\nu(f+g) \geq \min\{\nu(f),\nu(g)\}$ (where $f,g,f+g \neq 0$). Now if $\bar{c} \geq \min\{\bar{a},\bar{b}\}$ in lex order, then $c_1 \geq \min\{a_1,b_1\}$ in $\Z$ because otherwise $\bar{c}=(c_1,\cdots,c_r) < \bar{a}=(a_1,\cdots,a_r)$ and similarly $\bar{c}<\bar{b}$, yielding a contradiction. Hence $\nu_1$ is also a valuation, as desired. 

\end{proof}

Using the high-rank valuation $\nu$ of Lemma~\ref{lemma: consequences of additivity}, we now construct a valuation $\nu_{\mathcal{E}}: \Aa_{\mathcal{E}} \to S_{\mathcal{E}}$ which gives us a detropicalization of $\Aa_{\mathcal{E}}$. In fact, by strict duality and \cite[Proposition 4.10]{EscobarHaradaManon-PL} we know $S_{\mathcal{E}} \cong P_{\mathcal{F}}$ as idempotent semialgebras, so it is equivalent to construct a valuation $\nu_{\mathcal{E}}: \Aa_{\mathcal{E}} \to P_{\mathcal{F}}$. 
The definition is as follows. Given $\mathbb{b}_{(u,v)} \in \mathbb{B}_{\mathcal{E}}$, we may associate to it the element $(u,v)$, which by Proposition~\ref{proposition: dual map E to Sp calF} we may view as an element of $\Sp(\mathcal{F})$. 
With this said, we may define $\nu_{\mathcal{E}}$ as follows. Recall that since $\mathbb{B}_{\mathcal{E}}$ is a basis for $\Aa_{\mathcal{E}}$, any element $h \neq 0$ in $\Aa_{\mathcal{E}}$ can be expressed uniquely as a finite linear combination $h = \sum_{(u,v)} c_{(u,v)} \mathbb{b}_{(u,v)}$, $c_{(u,v)} \in \K$. We may now define 
\begin{equation}\label{eq: def valuation calE}
\nu_{\mathcal{E}}: \Aa_{\mathcal{E}} \to P_{\mathcal{F}}, \quad \mathbb{b}_{(u,v)} \mapsto (u,v) \in \Sp(\mathcal{F}) \subset P_{\mathcal{F}}, \quad 0 \neq h =\sum_{(u,v)} c_{(u,v)} \mathbb{b}_{(u,v)} \mapsto \min_{c_{(u,v)}\neq 0} \{(u,v)\}
\end{equation} 
where the last expression is interpreted as a min-combination of the piecewise-linear functions on $\mathcal{F}$ given by $(u,v) \in Y \cong \Sp(\mathcal{F})$. As we noted above, using the isomorphism $S_{\mathcal{E}} \cong P_{\mathcal{F}}$ we may view~\eqref{eq: def valuation calE} as a map $\nu_{\mathcal{E}}: \Aa_{\mathcal{E}} \to S_{\mathcal{E}}$. We have the following.

\begin{theorem}\label{theorem: main}
Let $(\MM_1,\NN_1),(\MM_2,\NN_2)$ be strict dual pairs of finite polyptych lattices, of ranks $r_1$ and $r_2$ respectively. Let $\PP_1,\PP_2$ be strongly chart-convex Gorenstein PL cones in $\MM_1,\MM_2$ respectively in the sense of Definition~\ref{definition: strongly convex} and Definition~\ref{definition: Gorenstein PL cone}. Assume that $\MM_i$ and $\NN_i$ have detropicalizations $(\Aa_{\MM_i}, \nu_{\MM_i})$ and $(\Aa_{\NN_i}, \nu_{\NN_i})$ for $i=1,2$, and are also equipped with convex adapted bases $\mathbb{B}_{\MM_i}$ and $\mathbb{B}_{\NN_i}$ for $i=1,2$. 
Let $\mathcal{E},\mathcal{F}$ be the strict dual pair of polyptych lattices of rank $r := r_1+r_2-1$ constructed from the data $(\MM_i,\NN_i,\PP_i)$ for $i=1,2$ as in Section~\ref{sec: cone extensions}. Let $\Aa_{\mathcal{E}}$ denote the algebra defined in~\eqref{eq: E detrop formula} and let $\nu_{\mathcal{E}}: \Aa_{\mathcal{E}}  \to S_{\mathcal{E}} \cong P_{\mathcal{F}}$ be as defined in~\eqref{eq: def valuation calE}. Then 
\begin{enumerate} 
\item $\nu_{\mathcal{E}}$ 
is a valuation, and
\item $(\Aa_{\mathcal{E}}, \nu_{\mathcal{E}})$ is a detropicalization of $\mathcal{E}$, and
\item $\mathbb{B}_{\mathcal{E}}$ is a convex adapted basis for $\nu_{\mathcal{E}}$. 
\end{enumerate} 
The analogous statements also hold for $\mathcal{F}$, $\Aa_{\mathcal{F}}$ and $\nu_{\mathcal{F}}$. 
\end{theorem}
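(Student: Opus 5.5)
The plan is to reduce each claim to the high-rank valuations $\nu$ built from the maximal cones of $\Sigma(\mathcal{F})$, using the facts already established: $\mathbb{B}_{\mathcal{E}}$ is a basis (Lemma~\ref{lemma: B cal E is a basis}) and is adapted for each $\nu_{(y,x)}$ (Lemma~\ref{lemma: B calE is CAB for nu y x}). Recall that an element of $P_{\mathcal{F}}$ is determined by its values at points of $\mathcal{F}$; by strict duality these values are given by evaluation at elements $(y,x)\in X^\vee$. So the first step is to observe that, for every $(y,x)\in X^\vee$, the composite $(y,x)\circ \nu_{\mathcal{E}}$ coincides with the quasivaluation $\nu_{(y,x)}$. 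Indeed, on a basis element $\mathbb{b}_{(u,v)}$ both sides equal $\langle (y,x),(u,v)\rangle$ by Lemma~\ref{lemma: B calE is CAB for nu y x}(2). On a general element $h=\sum c_{(u,v)}\mathbb{b}_{(u,v)}$, the left side is $\min\{\langle (y,x),(u,v)\rangle\}$ by the definition \eqref{eq: def valuation calE} of the min-combination. The right side is the same minimum, because $\mathbb{B}_{\mathcal{E}}$ is adapted for $\nu_{(y,x)}$, via Lemma~\ref{lemma: equivalent conditions for adapted basis}.

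For (1), I check the valuation axioms pointwise, since the semialgebra operations on $P_{\mathcal{F}}$ are pointwise min and $+$. The inequality $\nu_{\mathcal{E}}(f+g)\ge \nu_{\mathcal{E}}(f)\oplus\nu_{\mathcal{E}}(g)$ is immediate from the definition via the basis. Multiplicativity is the substantive part: it reduces to $\nu_{(y,x)}(fg)=\nu_{(y,x)}(f)+\nu_{(y,x)}(g)$ for every $(y,x)\in X^\vee$. To get this, I choose a maximal cone $(C_{\sigma,j}\times C_\tau)\cap X^\vee$ of $\Sigma(\mathcal{F})$ containing $(y,x)$, which exists by Proposition~\ref{prop-conescover}(5). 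Inside that cone I pick a $\Q$-basis $(y_1,x_1),\dots,(y_r,x_r)$ with $(y_1,x_1)$ a positive multiple of $(y,x)$, after rescaling to a lattice point, or with $(y,x)$ in the interior if needed. Then Proposition~\ref{lemma: consequences of additivity}(4) shows that the first coordinate $\nu_1=\nu_{(y_1,x_1)}$ is a $\Z$-valuation, and homogeneity gives the claim for $(y,x)$. Real points, if they are required, follow by continuity or density.

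The main obstacle I expect is choosing $(y,x)$ as the first element of a basis of the cone while keeping the concatenation of Lemma~\ref{lemma: two definitions agree} valid. This works for any rational point of the cone, since a nonzero vector extends to a basis and Proposition~\ref{proposition: nu on product} only used that the $(y_i,x_i)$ lie in the cone and form a basis. Boundary points are handled because they lie in the cone. The same step also gives that $\Aa_{\mathcal{E}}$ is a domain, by Proposition~\ref{lemma: consequences of additivity}(2).

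For (2), I need three things. First, every element of $\mathcal{E}\cong Y$ is attained, since $\nu_{\mathcal{E}}(\mathbb{b}_{(u,v)})=(u,v)$. Second, $\Aa_{\mathcal{E}}$ is Noetherian, because it is a quotient of the subalgebra $\Aa_{\PP_1\times\NN_2}$; I will argue that this subalgebra is finitely generated over $\Aa_{\MM_1}\otimes\Aa_{\NN_2}$-type data, or more directly that the associated graded with respect to $\nu$ is a semigroup algebra of a finitely generated (cone) semigroup, which lifts to finite generation. Third, the Krull dimension is $r$: either $\Aa_{\mathcal{E}}$ is the quotient of an $(r_1+r_2)$-dimensional domain by the nonzero nonunit $\hh$, or, more directly, the value semigroup of $\nu$ has rank $r$ with one-dimensional leaves. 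For (3), condition (1) of Definition~\ref{definition: recall CAB} is exactly the definition of $\nu_{\mathcal{E}}$ as a min-combination, combined with the pointwise identification above. Condition (2) holds since $\nu_{\mathcal{E}}(\mathbb{b}_{(u,v)})=(u,v)\in\mathcal{E}$. The statements for $\mathcal{F}$ follow by swapping indices.
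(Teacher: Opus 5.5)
Your proposal follows the paper's proof essentially step for step. The shared steps are the identification $\nu_{\mathcal{E}}(h)(y,x)=\nu_{(y,x)}(h)$, then reducing multiplicativity to Proposition~\ref{lemma: consequences of additivity}(4) by completing $(y,x)$ to a basis of a maximal cone of $\Sigma(\mathcal{F})$, then the domain property, surjectivity via $\nu_{\mathcal{E}}(\mathbb{b}_{(u,v)})=(u,v)$, and finally the Krull dimension count via the nonzero nonunit $\hh$. You complete every nonzero $(y,x)$ to such a basis, which is if anything cleaner than the paper's appeal to PL functions agreeing on bases of cones.

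The one caveat is Noetherianity. There the paper simply asserts that $\Aa_{\mathcal{E}}$ is a finitely generated $\K$-algebra, and you should use that route rather than lifting generators from $\mathrm{gr}_\nu$. The lex $\Z^r$-filtration is not well-ordered, and its value set is a full rank-$r$ group, so $\mathrm{gr}_\nu$ is a Laurent-type ring. The usual argument transferring finite generation back from the associated graded therefore does not go through.
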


\begin{proof} 
We begin with (1). It is clear from the definition that $\nu_{\mathcal{E}}(f+g) \geq \min\{\nu_{\mathcal{E}}(f), \nu_{\mathcal{E}}(g)\}$, and that $\nu_{\mathcal{E}}(cf)=\nu_{\mathcal{E}}(f)$ for $c \neq 0$ a constant, and $f\neq 0$ in $\Aa_{\mathcal{E}}$. Hence the only remaining condition to check is the multiplicativity, i.e., for $f \neq 0, g \neq 0$, we must show 
\begin{equation}\label{eq: detrop multiplicativity}
\nu_{\mathcal{E}}(fg) = \nu_{\mathcal{E}}(f) + \nu_{\mathcal{E}}(g).
\end{equation} 

To see this, note that the image of $\nu_{\mathcal{E}}$ is in $P_{\mathcal{F}}$ by definition, so they are piecewise-linear functions on $\mathcal{F}$, and have finitely many cones of linearity. Such functions agree if they agree on a basis of each such cone. In order to prove~\eqref{eq: detrop multiplicativity}, it therefore suffices to check that the LHS and RHS agree on the set of elements which could be extended to a basis of a cone in $\Sigma(\mathcal{F})$. Let $(y,x) \in X^\vee$ be such an element. Suppose $h\neq 0, \in \Aa_{\mathcal{E}}$ and $h=\sum c_{(u,v)} \mathbb{b}_{(u,v)}$ as above. We observe
\begin{equation}\label{eq: val calE and val y x}
    \begin{split}
        \nu_{\mathcal{E}}(h)(y,x) & = \nu_{\mathcal{E}}(\sum c_{(u,v)} \mathbb{b}_{(u,v)})(y,x) \\
        & = (\min \{(u,v): c_{(u,v)} \neq 0\})(y,x) \\
        &= \min \{\langle (y,x),(u,v) \rangle: c_{(u,v)} \neq 0\}  \\
        &= \min\{ \nu_{(y,x)}(\mathbb{b}_{(u,v)}): c_{(u,v)} \neq 0\} \\
        & = \nu_{(y,x)}(\sum c_{(u,v)} \mathbb{b}_{(u,v)}) \\
        & = \nu_{(y,x)}(h).
    \end{split}
\end{equation}
Thus the equality~\eqref{eq: detrop multiplicativity} evaluated at $(y,x)$ is equivalent by~\eqref{eq: val calE and val y x} to
$$
\nu_{(y,x)}(fg) =\nu_{(y,x)}(f) + \nu_{(y,x)}(g). 
$$
If $(y,x)$ can be completed to a basis of a cone in $\Sigma(\mathcal{F})$ then the above equality follows from Lemma~\ref{lemma: consequences of additivity}(4). This is what we wished to show, so we obtain claim (1).

To prove (2) we need that $\Aa_{\mathcal{E}}$ is Noetherian, an integral domain, $\nu_{\mathcal{E}}$ is surjective on $\mathcal{E}$, and the Krull dimension of $\Aa_{\mathcal{E}}$ is equal to the rank of $\mathcal{E}$. We showed in Lemma~\ref{lemma: consequences of additivity}(2) that $\Aa_{\mathcal{E}}$ is an integral domain, and it is Noetherian since it is a finitely generated $\K$-algebra. 
The valuation $\nu_{\mathcal{E}}$ is surjective onto $\mathcal{E}$ by definition since each $(u,v) \in \mathcal{E} \cong Y$ is the image of the basis element $\mathbb{b}_{(u,v)}$. Finally, we compute the Krull dimension of $\Aa_{\mathcal{E}}$. First, $\Aa_{\PP_1 \times \NN_2}$ has Krull dimension $r_1+r_2$ because $\Aa_{\PP_1 \times \NN_2}[\zeta_{\mathcal{E}}^{-1}] = \Aa_{\MM_1} \otimes \Aa_{\NN_2}$. Then $\Aa_{\mathcal{E}} = \Aa_{\PP_1 \times \NN_2}/\langle \mathbb{h}\rangle$ is obtained by taking a quotient by a non-zero non-unit, so the result follows.  

The last claim (3) follows from the definition of $\nu_{\mathcal{E}}$, and the analogous statements for $\mathcal{F}$ hold by swapping indices in the arguments.   
\end{proof}

Our next result shows that, if the original Gorenstein PL cones $\PP_1$ and $\PP_2$ have dual PL cones which are also Gorenstein, then the cone extensions $\mathcal{E}$ and $\mathcal{F}$ can also be naturally equipped with Gorenstein PL cones. 
We discuss this further in Section~\ref{section: examples}.

\begin{proposition}\label{proposition: iteration} 
In the setting of Theorem~\ref{theorem: main}, suppose in addition that the PL dual cones $\PP_1^\vee \subset (\NN_1)_\R$ and $\PP_2^\vee \subset (\NN_2)_\R$ are also Gorenstein PL cones in the sense of Definition~\ref{definition: Gorenstein PL cone}. Then $X_\R \cap ((\MM_1)_\R \times \PP_2^\vee)$ and $X^\vee_\R \cap ((\MM_2)_\R \times \PP_1^\vee)$ are also strongly convex Gorenstein PL cones in $\mathcal{E}_\R$ and $\mathcal{F}_\R$ respectively.  
\end{proposition}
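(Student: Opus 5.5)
The plan is to write down explicit defining data for $\mathcal{Q}:=X_\R\cap((\MM_1)_\R\times\PP_2^\vee)$ and check (P-1), (P-2) and strong chart-convexity in turn. The case of $X^\vee_\R\cap((\MM_2)_\R\times\PP_1^\vee)$ is the same argument with indices swapped.

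\emph{Defining normals.} By Lemma~\ref{lemma: dual of PL cone is a PL cone}, $\PP_2^\vee$ is cut out by PL half-spaces $\HH_{\v_2(m_k),0}$. Here the $m_k\in\MM_2$ are the (primitive) generators of $\pi_{\alpha_0}(\PP_2)$ pulled back along $\pi_{\alpha_0}^{-1}$, or the Gorenstein normals of $\PP_2^\vee$. Since $\PP_2$ is strongly chart-convex of dimension $r_2\ge 2$, these ray generators lie on $\partial\PP_2$, so $\Psi_{\PP_2}(m_k)=0$ and $(m_k,0_{\NN_1})\in Y^\vee$. Under the identification $Y^\vee\cong\Sp(\mathcal{E})$ of Proposition~\ref{proposition: dual map F to Sp of calE}, these give points of $\mathcal{E}$. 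Using \eqref{eq: explicit pairing}, for $(u,v)\in X$ we get
\[
\langle (u,v),(m_k,0)\rangle=\w_2(v)(m_k)=\v_2(m_k)(v).
\]
Hence $\mathcal{Q}=\bigcap_k\HH_{\Theta(m_k,0),0}$, so $\mathcal{Q}$ is a PL cone in $\mathcal{E}_\R$ of the form \eqref{eq: def PPi}.

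\emph{Condition (P-2).} Let $\mathbb{g}_2^\vee\in\NN_2$ be the Gorenstein element of $\PP_2^\vee$, and set $c:=\w_2(\mathbb{g}_2^\vee)(\mathbb{g}_2)\in\Z$. The candidate is $\mathbb{G}:=(c\,\mathbb{g}_1,\mathbb{g}_2^\vee)$. By Lemma~\ref{lemma: psi of a sum}, $\Psi_{\PP_1}(c\mathbb{g}_1)=c$, so $\mathbb{G}\in X$. Its pairing with each normal is $\v_2(m_k)(\mathbb{g}_2^\vee)=1$. For the lineality condition I use Proposition~\ref{proposition: mutation fans of E and F}, which says the maximal cones of $\Sigma(\mathcal{E})$ are the cones $(C_{\alpha,i}\times C_\beta)\cap X_\R$.
\begin{itemize}
\item $\pm c\,\mathbb{g}_1$ lies in every $C_\alpha$, and all $\w_1(n^1_t)$ take the same value on it, so it lies in every $C_{\alpha,i}$.
\item $\pm\mathbb{g}_2^\vee$ lies in every $C_\beta$, since it is in the lineality space of $\NN_2$.
\end{itemize}
So $\pm\mathbb{G}$ lies in every maximal cone of $\Sigma(\mathcal{E})$, which gives (P-2)(a) and (b).

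\emph{Condition (P-1).} The facet of $\mathcal{Q}$ for $(m_k,0)$ is the preimage, under the coordinate $v$, of the facet of $\PP_2^\vee$ for $m_k$. The projection $X_\R\to(\NN_2)_\R$, $(u,v)\mapsto v$, is surjective and open: for a given $v$ the fibre is $\{u:\Psi_{\PP_1}(u)=\w_2(v)(\mathbb{g}_2)\}$, which is a translate of $\partial\PP_1$ by a multiple of $\mathbb{g}_1$, and this has dimension $r_1-1$. So the relative interior of each facet of $\mathcal{Q}$ is the preimage of the relative interior of the corresponding facet of $\PP_2^\vee$. That set is nonempty and codim-$1$ by (P-1) for $\PP_2^\vee$, and the dimension count gives codim $1$ in $\mathcal{E}_\R$.

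\emph{Strong chart-convexity (expected main obstacle).} The charts $\kappa_{\sigma,\tau,j}$ are only piecewise-linear in the coordinates $(u,v)$, so a line in a chart image does not obviously transport back. I would work in a fixed chart $E_{\sigma,\tau,j}$ and use the explicit chart addition of Lemma~\ref{lemma: chart addition formula for calE}. Its $\NN_2$-component is $v+_\sigma v'$ plus a nonnegative multiple of $n^2_j$, and its $\MM_1$-component is $u+_\tau u'$.
\begin{enumerate}
\item Suppose $\kappa_{\sigma,\tau,j}(\mathcal{Q})$ contains a line spanned by $\pm w$, and let $(u,v)=\kappa^{-1}_{\sigma,\tau,j}(w)$ and $(u',v')=\kappa^{-1}_{\sigma,\tau,j}(-w)$.
\item Their chart sum is $0$. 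Evaluating against the normals $(m_k,0)$, which are superadditive as points, gives $\v_2(m_k)(v)+\v_2(m_k)(v')\le 0$. Both terms are $\ge0$, so both vanish for every $k$.
\item By strong chart-convexity of $\PP_2^\vee$ (or, failing that, by using $\mathbb{g}_2$ in the interior of $\PP_2$), this forces $v=v'=0$. The plan is to check this through the $\sigma$-chart of $\NN_2$.
\item Then $\Psi_{\PP_1}(u)=\Psi_{\PP_1}(u')=0$ and $u+_\tau u'=0$. This says $\pi_\tau(\partial\PP_1)\subset\pi_\tau(\PP_1)$ contains the line through $\pi_\tau(u)$, contradicting strong chart-convexity of $\PP_1$ unless $u=0$.
\end{enumerate}
The step I expect to need the most care is step 3: justifying $v=0$ when only the pairings with the finitely many $m_k$ vanish, and making sure the correction term $A\,n^2_j$ in the chart addition does not spoil this.
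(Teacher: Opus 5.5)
Your outline follows the paper's proof: normals $(m_k,0_{\NN_1})\in Y^\vee$, a Gorenstein element built from $\mathbb{g}_1$ and the Gorenstein element of $\PP_2^\vee$, (P-1) via the projection (equivalently $\phi$), and strong convexity via a zero chart sum. However, two steps are not justified. Both are filled by the same missing lemma: \emph{if $v\in\PP_2^\vee$ and $\w_2(v)(\mathbb{g}_2)=0$, then $v=0$.} To prove it, take any $m\in(\MM_2)_\R$ and $N\ge-\Psi_{\PP_2}(m)$. Then $m+N\mathbb{g}_2\in\PP_2$ by Lemma~\ref{lemma: psi of a sum}, so $\w_2(v)(m)=\w_2(v)(m+N\mathbb{g}_2)\ge 0$ by Lemma~\ref{lemma: properties of lineality space}. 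A minimum of linear functions that is nonnegative on a whole chart vanishes identically, so $\w_2(v)=0$ and $v=0$.

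The first gap is your claim $\Psi_{\PP_2}(m_k)=0$. Your reason (strong chart-convexity puts rays on $\partial\PP_2$) applies to ray generators of $\pi_{\alpha_0}(\PP_2)$. The normals you must use, however, are the Gorenstein defining data of $\PP_2^\vee$: only for those do you know $\v_2(m_k)(\mathbb{g}_2^\vee)=1$ and (P-1). In the PL setting there is no reason for them to coincide with the rays of a fixed chart image. The paper argues as follows. First, $m_k\in(\PP_2^\vee)^\vee=\PP_2$. If $c:=\Psi_{\PP_2}(m_k)>0$, write $m_k=c\,\mathbb{g}_2+m'$ with $m'\in\PP_2$. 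Then for $x\in\PP_2^\vee$, $\w_2(x)(m_k)=c\,\w_2(x)(\mathbb{g}_2)+\w_2(x)(m')$ is a sum of nonnegative terms. So the facet of $\PP_2^\vee$ cut out by $m_k$ lies in $\{x\in\PP_2^\vee:\w_2(x)(\mathbb{g}_2)=0\}=\{0\}$, which contradicts (P-1) since $r_2\ge 2$.

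The second gap is your step 3, which you rightly flagged. Knowing $\v_2(m_k)(v)=0$ for all $k$ only places $v$ in the intersection of all facets of $\PP_2^\vee$. Strong chart-convexity of $\PP_2^\vee$ is not a hypothesis, and nothing you wrote turns this vanishing into $v=0$. Pairing with the normals is the wrong test; use the $\MM_1$-component of the zero chart sum instead. Lemma~\ref{lemma: chart addition formula for calE} gives $u+_\tau u'=0$. Combining superadditivity of $\Psi_{\PP_1}$ (shown in that lemma's proof) with the defining equation of $X$,
\[
\w_2(v)(\mathbb{g}_2)+\w_2(v')(\mathbb{g}_2)=\Psi_{\PP_1}(u)+\Psi_{\PP_1}(u')\le\Psi_{\PP_1}(u+_\tau u')=0.
\]
Each term on the left is $\ge 0$ because $v,v'\in\PP_2^\vee$ and $\mathbb{g}_2\in\PP_2$. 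Hence $\w_2(v)(\mathbb{g}_2)=\w_2(v')(\mathbb{g}_2)=0$, the lemma gives $v=v'=0$, and your step 4 finishes via strong chart-convexity of $\PP_1$. This sign squeeze is exactly how the paper proves strong convexity (for $\mathcal{Q}^\vee$, with $\lambda=\w_1(x)(\mathbb{g}_1)+\w_1(x')(\mathbb{g}_1)$ squeezed to $0$).
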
 

\begin{proof} 
By our assumption 
we may describe $\PP_1^\vee$ and $\PP_2^\vee$ as  
\begin{equation}\label{eq: def PP1 vee PP2 vee}
\PP_1^\vee = \cap_{a=1}^{\ell_1} \mathcal{H}_{m^1_a, 0}, \quad 
\PP_2^\vee = \cap_{k=1}^{\ell_2} \mathcal{H}_{m^2_k,0}
\end{equation} 
where $\{m^1_1,\cdots, m^1_{\ell_1}\} \subset \MM_1, \{m^2_1,\cdots,m^2_{\ell_2}\} \subset \MM_2$, and (from the proof of Lemma~\ref{lemma: dual of PL cone is a PL cone}(2) it follows that) we may take the $m^1_a$ to be contained in $\PP_1$ and the $m^2_k$ in $\PP_2$. In particular, note that $\Psi_{\PP_1}(m^1_a) \geq 0$ for all $a$. Furthermore, by hypothesis, the assumptions of~Definition~\ref{definition: Gorenstein PL cone} are satisfied.   
In particular, we assume that in there exists $\mathbb{f}_i$, $i=1,2$, such that $\mathbb{f}_i$ is in the lineality space of $\NN_i$ and that for any $j \in [\ell_i]$ we have $\v_i(m^i_j)(\mathbb{f}_i)=1$. 
Below, we will show that $X^\vee_\R \cap ((\MM_2)_\R \times \PP_1^\vee)$ is a Gorenstein PL cone in $\mathcal{F}_\R$ (where we identify the set of elements of $\mathcal{F}_\R$ with $X^\vee_\R$). The analogous statement within $\mathcal{E}_\R$ for $X_\R \cap ((\MM_1)_\R \times \PP_2^\vee)$ is analogous. 

We begin with some preliminaries. We first claim that if $x \in \PP_1^\vee$ and $\w_1(x)(\mathbb{g}_1)=0$, then $x=0$. Suppose such an $x$ is given. For any $u\in(\MM_1)_\R$, choose $N \in \R$ with $N \ge-\Psi_{\PP_1}(u)$. By Lemma~\ref{lemma: psi of a sum}, we know that $u+N\mathbb g_1\in\PP_1$. Since $x \in\PP_1^\vee$, this means $\w_1(x)(u+N\mathbb{g}_1) \geq 0$. Hence we have 
$$0\le\w_1(x)(u+N\mathbb g_1)=\w_1(x)(u)+N \cdot \w_1(x)(\mathbb g_1)=\w_1(x)(u)$$
where we have used Lemma~\ref{lemma: properties of lineality space}(2).
This implies that $\w_1(x)$, which is a minimum of finitely many linear functions on any chart, is $\geq 0$ on any chart. This means $\w_1(x)$ must be identically zero. Since $\w_1$ is a bijection, we conclude $x=0$ as claimed. 

Next we claim that every $m^1_a$ appearing in 
\eqref{eq: def PP1 vee PP2 vee} lies in $\partial \PP_1$, or equivalently, $\Psi_{\PP_1}(m^1_a)=0$. First, since $\PP^\vee \subset \mathcal{H}_{m^1_a,0}$ it is immediate that $m^1_a \in (\PP_1^\vee)^\vee = \PP_1$ by Lemma~\ref{lemma: dual of PL cone is a PL cone}. Thus $c:=\Psi_{\PP_1}(m^1_a)\ge0$. If $c=0$ then we are done. Suppose $c>0$ and define $m':=m^1_a-c \cdot \mathbb g_1$. Then by Lemma~\ref{lemma: psi of a sum} we have $\Psi_{\PP_1}(m')=0$ and hence $m'\in\PP_1$. By assumption on the original cone $\PP_1$, we know $\mathbb{g}_1 \in \PP_1$. Since $\mathbb{g}_1$ is in the lineality space, it follows that for $x\in\PP_1^\vee$, we have
$$\w_1(x)(m^1_a)=c,\w_1(x)(\mathbb g_1)+\w_1(x)(m')$$
where both summands on the RHS are $\ge0$. Define $G_a :=\{x\in\PP_1^\vee:\w_1(x)(m^1_a)=0\}$ to be the facet in $\PP_1$ corresponding to $m^1_a$. By the above displayed equation and since $c>0$, in order for the RHS to be $0$ we must have $\w_1(x)(\mathbb{g}_1)=0$ and hence $G_a$ is contained in $\{x\in\PP_1^\vee:\w_1(x)(\mathbb g_1)=0\}$, but we have seen above that this is $\{0\}$. We have assumed that the dimension $\dim_\R((\NN_1)_\R) = \mathrm{rank}(\NN_1)=r_1$ is $\geq 2$, so this means $G_a$ is not codimension $1$. This contradicts the Gorenstein hypothesis for $\PP_1^\vee$, so $c>0$ cannot occur and $m^1_a \in \partial \PP_1$ as claimed. 

Now consider the set
$$
\mathcal{S}_{\mathcal{F}} := \{(m^1_a, 0_{\NN_2})\}_{a \in [\ell_1]} \subset Y \cong \Sp(\mathcal{F}).
$$
Using $\mathcal{S}_{\mathcal{F}}$ as the set of inner normal vectors we may define a PL cone. (Here we use the bijection of $Y$ with $\Sp(\mathcal{F})$ established in Proposition~\ref{proposition: dual map E to Sp calF}.) The fact that the set $\mathcal{S}_{\mathcal{F}}$ defines a PL cone is immediate, since it is a finite set of points in $\Sp(\mathcal{F})$.  We claim that this PL cone is equal (as a set) to $X^\vee \cap (\MM_2 \times \PP_1^\vee)$, and that it is Gorenstein. 

We first show that the PL cone in $\mathcal{F}$ defined by $\mathcal{S}_{\mathcal{F}}$ is $X^\vee \cap (\MM_2 \times \PP_1^\vee)$.  For $(y,x) \in X^\vee$ we have by definition of the pairing
\begin{equation*}
\langle (y,x),(m^1_a, 0)\rangle \geq 0  \Leftrightarrow \w_1(x)(m^1_a)\geq 0 
\end{equation*}
Since we consider all possible values of $a$, this precisely means that $x \in \PP_1^\vee$, as desired. Thus $\mathcal{S}_{\mathcal{F}}$ cuts out $X^\vee \cap (\MM_2 \times \PP_1^\vee)$, as claimed. 

Next we prove that there exists an element in the lineality space of $\mathcal{F}$ with the property that it pairs to $1$ against all the defining normal vectors. Consider the element $\mathbb{g}_{\mathcal{F}} := (\w_1(\mathbb{f}_1)(\mathbb{g}_1)\cdot \mathbb{g}_2, \mathbb{f}_1)$. This is in the lineality space since $\mathbb{g}_2, \mathbb{f}_1$ are in the lineality spaces of $\MM_2$ and $\NN_1$ respectively. It is in $X^\vee$ since $\Psi_{\PP_2}(\mathbb{g}_2)=1$. We have 
\begin{equation*}
    \begin{split} 
    \langle \mathbb{g}_{\mathcal{F}}, (m^1_a - \Psi_{\PP_1}(m^1_a)\cdot \mathbb{g}_1, \Psi_{\PP_1}(m^1_a)\cdot n^2_j)\rangle & = \langle (\w_1(\mathbb{f}_1)(\mathbb{g}_1)\cdot \mathbb{g}_2, \mathbb{f}_1), (m^1_a - \Psi_{\PP_1}(m^1_a)\cdot \mathbb{g}_1, \Psi_{\PP_1}(m^1_a)\cdot n^2_j)\rangle \\
    & = \v_2(\w_1(\mathbb{f}_1)(\mathbb{g}_1)\cdot \mathbb{g}_2)(\Psi_{\PP_1}(m^1_a)n^2_j) + \w_1(\mathbb{f}_1)(m^1_a -\Psi_{\PP_1}(m^1_a)\cdot \mathbb{g}_1) \\
    & = \Psi_{\PP_1}(m^1_a)\cdot \w_1(\mathbb{f}_1)(\mathbb{g}_1) \cdot \v_2(\mathbb{g}_2)(n^2_j) + \w_1(\mathbb{f}_1)(m^1_a) - \Psi_{\PP_1}(m^1_1) \cdot \w_1(\mathbb{f}_1)(\mathbb{g}_1) \\
    & = \Psi_{\PP_1}(m^1_a)\cdot \w_1(\mathbb{f}_1)(\mathbb{g}_1) \cdot \w_2(n^2_j)(\mathbb{g}_2) + \w_1(\mathbb{f}_1)(m^1_a) - \Psi_{\PP_1}(m^1_a) \cdot \w_1(\mathbb{f}_1)(\mathbb{g}_1) \\
    & = \Psi_{\PP_1}(m^1_a)\cdot \w_1(\mathbb{f}_1)(\mathbb{g}_1) + \w_1(\mathbb{f}_1)(m^1_a) - \Psi_{\PP_1}(m^1_a) \cdot \w_1(\mathbb{f}_1)(\mathbb{g}_1) \\
    & = \w_1(\mathbb{f}_1)(m^1_a)  \\
    & = 1 \quad \textup{ by assumption on the $m^1_a$ and $\mathbb{f}_1$} 
    \end{split} 
\end{equation*}
where the third-to-last equality is because $\w_2(n^2_j)(\mathbb{g}_2)=1$ by assumption.

Now we check that the relative interiors of the facets defined by the elements $(m^1_a,0_{\NN_2})$ are non-empty and codimension $1$. By definition, the relative interior of the facet corresponding to $(m^1_a,0)$ is given by 
$$
G'_a := \{(y,x)\in X^\vee_\R \, \mid \,  \w_1(x)(m^1_a)=0,\ \w_1(x)(m^1_b)>0\,  \textup{ for } \, b\ne a)\}=X^\vee_\R\cap\big((\MM_2)_\R\times\relint(G_a)\big).
$$
We know from the assumptions on $\PP_1^\vee$ that $\relint(G_a)$ is non-empty, so let $x_0 \in \relint(G_a)$ and choose any $y_0\in(\MM_2)\R$. Using Lemma~\ref{lemma: phivee psivee bijections}(4), we have that $\psi^\vee(y_0,x_0)=(y_0+(\w_1(x_0)(\mathbb g_1)-\Psi_{\PP_2}(y_0))\mathbb g_2,,x_0)$ lies in $X^\vee_\R$, and its $\NN_1$-coordinate is still $x_0$, so it also lies in $G'_a$. In particular $G'_a$ is non-empty. To see that it is codimension $1$, note that $\phi^\vee:X^\vee_\R\to Y^\vee_\R=\partial\PP_2\times(\NN_1)_\R$ is a piecewise-linear homeomorphism which is the identity on the $\NN_1$ factor. Hence
$\phi^\vee(G'_a)=\partial\PP_2\times\relint(G_a).$
We know that $\partial\PP_2$ is a finite union of codimension-$1$ cones in $(\MM_2)_\R$, so $\dim_\R(\partial \PP_2) = r_2-1$, and $\dim(\relint(G_a))=r_1-1$ by the Gorenstein hypothesis on $\PP_1^\vee$. Therefore $\dim G'_a=r_1+r_2-2=\dim(\mathcal{F}_\R)-1$, as claimed. 

Finally, we claim that the two cones defined above, namely 
\begin{equation*}\label{eq: def Q vee and Q} 
\mathcal{Q}^\vee:=X^\vee_\R\cap\big((\MM_2)_\R\times\PP_1^\vee\big)\subset\mathcal{F}_\R,\qquad \mathcal{Q}:=X_\R\cap\big((\MM_1)_\R\times\PP_2^\vee\big)\subset\mathcal{E}_\R, 
\end{equation*} 
are strongly chart-convex. We will give details for $\mathcal{Q}^\vee$.  To prove the claim, fix a chart map $\kappa_{\alpha,\beta,i}$ of $\mathcal{F}$. We must show the chart image $\kappa_{\alpha,\beta,i}(\mathcal{Q}^\vee)$ is storngly convex in the classical sense. This is equivalent to showing that if there exist $(y,x),(y',x')\in\mathcal Q^\vee$ such that $\kappa_{\alpha,\beta,i}(y,x)+\kappa_{\alpha,\beta,i}(y',x')=0$, then $\kappa_{\alpha,\beta,i}(y,x)=\kappa_{\alpha,\beta,i}(y',x')=0$. To see this, recall that $\kappa_{\alpha,\beta,i}(y,x)$ is, by definition, the restriction to $(\PP_1(i)\cap C_\alpha)\times C_\beta$ of the functional
$$(u,v)\longmapsto \w_1(x)(u)+\v_2(y)(v).$$
With this in mind, the statement $\kappa_{\alpha,\beta,i}(y,x)+\kappa_{\alpha,\beta,i}(y',x')=0$ means that 
the linear functional $L_{C_\alpha}(\w_1(x)+\w_1(x')) + L_{C_\beta}(\v_2(y)+\v_2(y'))$ on $C_\alpha \times C_\beta$ is identically zero on $(\PP_1(i)\cap C_\alpha)\times C_\beta$. This is a codimension-$1$ subcone of the full-dimensional cone $C_\alpha\times C_\beta$, and it is contained in the codimesion-$1$ plane $\{\w_1(n^1_i)=0\}\times(\NN_2)_\R$. This implies that $L_{C_\alpha}(\w_1(x)+\w_1(x')) + L_{C_\beta}(\v_2(y)+\v_2(y'))$ is a multiple of the defining equation of $\{\w_1(n^1_i)=0\}\times(\NN_2)_\R$. This means there exists $\lambda \in \R$ such that (i) $L_{C_\beta}(\v_2(y)+\v_2(y')) = 0$ and (ii) $L_{C_\alpha}(\w_1(x)+\w_1(x')) = \lambda \cdot L_{C_\alpha}(\w_1(n^1_i))$. By~Lemma~\ref{lemma: psi alpha properties} this implies $\pi_\beta(y')+\pi_\beta(y)=0$ and $\pi_\alpha(x)+\pi_\alpha(x')=\lambda \pi_\alpha(n^1_i)$. Evaluating equation (ii) at $\mathbb{g}_1$ and using the hypothesis on $\PP_1$ and the defining equation for $X^\vee$ we obtain $\lambda = \w_1(x)(\mathbb{g}_1)+\w_1(x')(\mathbb{g}_1) = \Psi_{\PP_1}(y)+\Psi_{\PP_1}(y')$. 
Since $\mathbb g_1\in\PP_1$ and $x,x'\in\PP_1^\vee$, both terms in the middle are $\ge 0$, which implies $\lambda\ge0$. On the other hand, since $\pi_\beta(y')+\pi_\beta(y)=0$, we know $y+_\beta y'=0$. Hence 
$$\w_2(n^2_j)(y)+\w_2(n^2_j)(y') \le\w_2(n^2_j)(y+_\beta y')=\w_2(n^2_j)(0)=0$$
by properties of points. 
We conclude $\Psi_{\PP_2}(y)+\Psi_{\PP_2}(y')\le\w_2(n^2_j)(y)+\w_2(n^2_j)(y')\le 0$, i.e. $\lambda\le 0$. We have shown both inequalities $\lambda \geq 0$ and $\lambda \leq 0$, so $\lambda=0$. As we saw, both terms $\w_1(x)(\mathbb{g}_1)$ and $\w_1(x')(\mathbb{g}_1)$ are $\geq 0$, so this implies 
$\w_1(x)(\mathbb g_1)=\w_1(x')(\mathbb g_1)=0$ and hence also $\Psi_{\PP_2}(y)=\Psi_{\PP_2}(y')=0$.

We have just seen that $y, y'$ lie in $\partial \PP_2$, which means that $\pi_\beta(y), \pi_\beta(y')$ lie in $\partial \pi_\beta(\PP_2)$. On the other hand, we know from above that $\pi_\beta(y)+\pi_\beta(y')=0$. By strong chart convexity of $\PP_2$, this implies that $\pi_\beta(y)=\pi_\beta(y')=0$, hence $y=y'=0$.  Recall that we know $x,x'\in \PP_1^\vee$ and $\w_1(x')(\mathbb{g}_1) = \w_1(x)(\mathbb{g}_1)=0$. We showed above that this implies that $x,x'=0$. This concludes the proof that $(y,x)=(y',x')=0$, and therefore, $\kappa_{\alpha,\beta,i}(\mathcal{Q}^\vee)$ is strongly convex. Since $(\alpha,\beta,i)$ was arbitrary, we conclude $\mathcal{Q}^\vee$ is strongly convex. The argument for $\mathcal{Q}$ is similar with a change of indices. 
\end{proof}

\section{Examples and Further Questions}\label{section: examples}

The Gorenstein PL cone extension construction in Sections~\ref{sec: cone extensions} and~\ref{sec: E and F detrop} gives rise to a rich array of new examples of polyptych lattices. With no claim to being exhaustive, we mention some interesting classes in this section. We also document some open questions which we intend to pursue in future work.

We begin by noting that a class of examples studied in \cite[Section 8]{EscobarHaradaManon-PL} arise as a special case of the Gorenstein PL cone construction given in this paper. 

\begin{example}\label{example: MM d r} 
Fix $r,d$ positive integers. The pair of polyptych lattices 
     $\MM_{d,r}$ and $\MM_{r,d}$, which were introduced and shown to be a strict dual pair in \cite[Section 8]{EscobarHaradaManon-PL} are an example of a strict dual pair arising from the Gorenstein cone construction of Section~\ref{sec: cone extensions}. We take a moment to explain the connection. Let $\MM_1 \cong \NN_1 \cong \Z^d$, the trivial polyptych lattice of rank $d$. Similarly take $\MM_2 \cong \NN_2 \cong \Z^r$. Since both pairs of polyptych lattices are trivial in this case, the entire polyptych lattice is the lineality space, and there is only one chart addition. Thus, the discussions in Section~\ref{sec: cone extensions} and Section~\ref{sec: E and F detrop} are much simplified in this case. 
     
     Now let $\PP_i$ for $i=1,2$ be the positive orthant in their respective vector spaces, so $\PP_1 \cong \R^d_{\geq 0} = \mathrm{Cone}(\{e_1,\cdots,e_d\})$ where $\{e_1,\cdots,e_d\}$ denotes the standard basis for $(\MM_1)_\R \cong \R^d$. Similarly $\PP_2 \cong \R^r_{\geq 0} = \mathrm{Cone}(\{f_1,\cdots,f_r\})$ where $\{f_1,\cdots,f_r\}$ is the standard basis for $(\MM_2)_\R \cong \R^r$. In particular, the inner normal vectors $n^1_i$ defining $\PP_1$ are precisely the standard basis vectors $e_1,\cdots,e_d$, and similarly the $n^2_j$ are $f_1,\cdots,f_r$ for $\PP_2$. For the sake of this example let $\langle \cdot, \cdot \rangle$ denote the standard inner product on either $\R^d$ or $\R^r$; recall that this inner product is the strict dual pairing between $\MM_i$ and $\NN_i$, $i=1,2$, in this trivial-PL case. It can be checked that both cones satisfy the condition of being Gorenstein PL cones by the choices $\mathbb{g}_1 = \sum_i e_i, \mathbb{g}_2 = \sum_j f_j$. The corresponding support functions are $\Psi_{\PP_1} = \min_i \{\langle e_i, \cdot \rangle\}$ and $\Psi_{\PP_2} = \min_j \{\langle f_j, \cdot \rangle \}$.  
     With this in mind, we can provide a dictionary between the discussion in \cite[Section 8]{EscobarHaradaManon-PL} and that in Section~\ref{sec: cone extensions}. The sets $\mathbb{M}_{d,r}$ \cite[near Lemma 8.1]{EscobarHaradaManon-PL} and $T_{d,r}$ \cite[Section 8.2]{EscobarHaradaManon-PL} are our sets $Y$ and $X^\vee$ respectively. In Section~\ref{sec: cone extensions} we needed the cones $C_{\sigma,j} \times C_\tau$. In our case, the coordinates charts $\sigma,\tau$ do not appear since our initial polyptych lattices are trivial, but for a fixed $j \in [r]$, the cone $C_{\sigma,j} \times C_\tau = C_j$ corresponds to the set $\mathbb{M}_{d,r}(j)$ of \cite[near Lemma 8.1]{EscobarHaradaManon-PL}.
     The space $\mathrm{Hom}(C_j,\Z)$ may be identified with the set labelled $M_{d,r}^{(j)}$ \cite[Section 8.1, first displayed equation]{EscobarHaradaManon-PL}, and a computation shows that the mutation maps obtained in Section~\ref{sec: cone extensions} agree with those given between the $M_{d,r}^{(j)}$ in \cite[Section 8.1]{EscobarHaradaManon-PL}.  Finally, it is straightforward to see that the detropicalization constructed in Section~\ref{sec: E and F detrop} also agrees. Indeed, since $\PP_1$ is the positive orthant, and the detropicalization of the trivial polyptych lattice $\Z^d$ is the Laurent polynomial ring $\C[x_1^{\pm}, \cdots, x_d^{\pm}]$ (and similarly for $\Z^r$ and $\C[t_1^{\pm},\cdots, t_r^{\pm}]$), it follows that the subring $\Aa_{\PP_1}$ is isomorphic to $\C[x_1,\cdots,x_d]$,  the usual polynomial ring.  Consequently we have $\Aa_{\PP_1 \times \NN_2} \cong \C[x_1,\cdots,x_d, t_1^{\pm}, \cdots, t_r^{\pm}]$. The formula given for the detropicalization of $\MM_{d,r}$ \cite[Section 8.5]{EscobarHaradaManon-PL} takes this ring and mods out by the relation $(\prod_i x_i) - (t_1+\cdots+t_r)$. It is straightforward to see that $\prod_i x_i$ is precisely the element $\hat{\mathbb{b}}_{(\mathbb{g}_1,0)}$ in this case since $\mathbb{g}_1=(1,1,1\cdots,1) \in \Z^d$ (so its corresponding monomial is indeed $\prod_i x_i$). Finally, since the inner normal vectors for $\PP_2$ are $f_1,\cdots,f_r$, it follows that $\sum_j \hat{\mathbb{b}}_{(0,n^2_j)}$ is precisely $t_1+\cdots+t_r$. The analogous discussion holds for $\Aa_{\PP_2 \times \NN_1}$.

\end{example}

\begin{example}\label{example: classical GF pairs}
    For any classical Gorenstein-Fano polytope $\Delta$ of dimension $n$, it is not hard to see that the cone over $\Delta$ in $n+1$ dimensions is a Gorenstein PL cone (in the trivial polyptych lattice of rank $n+1$), temporarily denoted $\PP(\Delta)$. (Note that since the polyptych lattice is trivial, the PL cone is in fact a classical Gorenstein cone.) Given two such Gorenstein-Fano polytopes $\Delta, \Delta'$ of dimensions $n, n'$ respectively, we may consider their corresponding Gorenstein PL cones $\PP = \PP(\Delta), \PP' = \PP(\Delta')$ in trivial polyptych lattices of rank $n+1, n'+1$ respectively. We may then denote by $\mathcal{E}(\Delta,\Delta'), \mathcal{F}(\Delta,\Delta')$ the strict dual pair of polyptych lattices obtained by applying our Gorenstein PL cone extension construction. 

    We sketch a simple example below with $n=n'=2$. 

\begin{figure}[ht]\label{fig-GFpolygons}
\centering

\begin{tikzpicture}[
    scale=1.35,
    polygon/.style={
        draw=black,
        line width=0.9pt,
        fill=black!5,
        line join=round
    },
    vertex/.style={
        circle,
        fill=black,
        draw=black,
        inner sep=1.7pt
    },
    origin/.style={
        circle,
        fill=white,
        draw=black,
        line width=0.9pt,
        inner sep=2.1pt
    },
    vlabel/.style={
        font=\large
    }
]

\begin{scope}[xshift=-2.7cm]

    \draw[polygon]
        (-1,1)
        -- (0,1)
        -- (1,0)
        -- (0,-1)
        -- (-1,0)
        -- cycle;

    \node[vertex] at (-1,1) {};
    \node[vertex] at ( 0,1) {};
    \node[vertex] at ( 1,0) {};
    \node[vertex] at ( 0,-1) {};
    \node[vertex] at (-1,0) {};

    \node[vlabel, above]       at (-1,1) {$x_1$};
    \node[vlabel, above]       at ( 0,1) {$x_2$};
    \node[vlabel, right]       at ( 1,0) {$x_3$};
    \node[vlabel, below]       at ( 0,-1) {$x_4$};
    \node[vlabel, left]        at (-1,0) {$x_5$};

    \node[origin] at (0,0) {};
    \node[vlabel, below=5pt] at (0,0) {$z$};

    \node[font=\Large] at (0,-1.65) {$\Delta$};

\end{scope}

\begin{scope}[xshift=2.7cm]

    \draw[polygon]
        (-1,1)
        -- (0,1)
        -- (1,0)
        -- (1,-1)
        -- (0,-1)
        -- (-1,0)
        -- cycle;

    \node[vertex] at (-1,1) {};
    \node[vertex] at ( 0,1) {};
    \node[vertex] at ( 1,0) {};
    \node[vertex] at ( 1,-1) {};
    \node[vertex] at ( 0,-1) {};
    \node[vertex] at (-1,0) {};

    \node[vlabel, above]       at (-1,1) {$y_1$};
    \node[vlabel, above]       at ( 0,1) {$y_2$};
    \node[vlabel, right]       at ( 1,0) {$y_3$};
    \node[vlabel, right] at ( 1,-1) {$y_4$};
    \node[vlabel, below]  at ( 0,-1) {$y_5$};
    \node[vlabel, left]        at (-1,0) {$y_6$};

    \node[origin] at (0,0) {};
    \node[vlabel, below=5pt] at (0,0) {$w$};

    \node[font=\Large] at (0,-1.65) {$\Delta'$};

\end{scope}

\end{tikzpicture}

\end{figure}

The inner normals of the cone $\PP(\Delta)$ are $(1,1,1), (1,0,1), (0,-1,1),(-1,-1,1),(-1,1,1)$; these are the vertices of $\Delta^\vee$ placed at height $1$. Similarly, the inner normals of the cone $\PP(\Delta')$ are $(1, 1, 1), (1,0,1),\\ (0,-1,1), (-1,-1,1), (-1,0, 1), (0, 1, 1)$.
Following the construction given in Section~\ref{sec: cone extensions} and Section~\ref{sec: E and F detrop}, one obtains that a detropicalization of $\mathcal{E}(\Delta,\Delta')$ is given as follows: set 
\[I
=
\bigl(
x_1x_3-x_2z,\,
x_1x_4-x_5z,\,
x_1z-x_2x_5,\,
x_2x_4-z^2,\,
x_3x_5-z^2,\]

\[z - t_1t_2s- t_1s- t_2^{-1}s- t_1^{-1}t_2^{-1}s- t_1^{-1}s- t_2s
\bigr).\]
Then the detropicalization $\Aa_{\mathcal{E}(\Delta,\Delta')}$ from Section~\ref{sec: E and F detrop} is given by 
\[
\Aa_{\mathcal{E}(\Delta,\Delta')} 
\cong
\frac{\K[x_1,\ldots,x_5,z, t_1^\pm, t_2^\pm, s^\pm]}{I}.\]
Similarly, if we set 
\[J
=
\bigl(
y_1y_3-y_2w,\,
y_1y_4-w^2,\,
y_1y_5-y_6w,\,
y_1w-y_2y_6,\\
y_2y_4-y_3w,\,
y_2y_5-w^2,\,
y_3y_5-y_4w,\,
y_3y_6-w^2,\,
y_4y_6-y_5w,\]

\[w - u_1u_2v-u_1v-u_2^{-1}v-u_1^{-1}u_2^{-1}v-u_1^{-1}u_2v
\bigr),\]
then $\Aa_{\mathcal{F}(\Delta,\Delta')}$ is given by 
\[
\Aa_{\mathcal{F}(\Delta,\Delta')}
\cong
\frac{\K[y_1,\ldots,y_6,z,u_1^\pm,u_2^\pm,v^\pm]}{J}.\]

\end{example}

\begin{example}\label{example: cluster}
A different family of examples may be obtained by using cluster data to build the initial inputs to our Gorenstein PL cone extension. We only sketch the construction here and leave detailed explanation to forthcoming work \cite{FMEHMM}.

Let $\MM$ be the polyptych lattice associated to a planar finite type cluster algebra. In forthcoming work \cite{FMEHMM} we show that this lattice is in fact self-dual. We let $\Delta$ denote the point-convex hull of the ray generators of $\Sigma(\MM)$. We let $\PP$ denote the cone over $\Delta$ in the polyptych lattice $\MM\times \Z$, where $\Z$ denotes the trivial polyptych lattice of rank $1$.

We may form the Gorenstein PL cone extension using $\MM$, $\PP$, and the Gorenstein PL cone $\Q_{\geq 0}^2$ in the trivial polyptych lattice $\Z^2$. This data yields a rank $4$ polyptych lattice.   Alternatively, we could form the cone extensions associated to two choices $\MM_1,\MM_2$ of polyptych lattices coming from planar cluster algebras and their polytopes $\Delta_1,\Delta_2$; this would yield examples of rank $5$. Below, we record the detropicalizations for the case of $\A_2$ and $\Z^2$, and then $\A_2$ and $\B_2$. The corresponding polytopes of $\A_2$ and $\B_2$ are displayed in the figure. 

\begin{figure}[ht]\label{fig-clustercanonical}
\centering

\begin{tikzpicture}[
    scale=1.35,
    vertex/.style={
        circle,
        fill=black,
        inner sep=1.9pt
    },
    origin/.style={
        circle,
        draw=black,
        fill=white,
        line width=0.8pt,
        inner sep=1.9pt
    },
    ray/.style={
        dotted,
        line width=0.85pt,
        ->,
        >=stealth
    },
    poly/.style={
        draw=black,
        line width=0.8pt,
        fill=black!7,
        line join=round
    },
    vlabel/.style={
        font=\large
    },
    figlabel/.style={
        font=\Large
    }
]

% ============================================================
% LEFT FIGURE: A_2
%
% x_1 = ( 1, 0)
% x_2 = ( 1, 1)
% x_3 = ( 0, 1)
% x_4 = (-1, 0)
% x_5 = ( 0,-1)
% z   = ( 0, 0)
% ============================================================

\begin{scope}[xshift=-2.7cm]

% Convex hull
\draw[poly]
    (1,0)
    -- (1,1)
    -- (0,1)
    -- (-1,0)
    -- (0,-1)
    -- cycle;

% Dotted rays
\draw[ray] (0,0) -- (1.55,0);
\draw[ray] (0,0) -- (1.45,1.45);
\draw[ray] (0,0) -- (0,1.55);
\draw[ray] (0,0) -- (-1.55,0);
\draw[ray] (0,0) -- (0,-1.55);

% Vertices
\node[vertex] at (1,0) {};
\node[vertex] at (1,1) {};
\node[vertex] at (0,1) {};
\node[vertex] at (-1,0) {};
\node[vertex] at (0,-1) {};

% Vertex labels
\node[vlabel, above right=2pt]        at (1,0)  {$x_1$};
\node[vlabel, right=2pt]   at (1,1)  {$x_2$};
\node[vlabel, above left=2pt]   at (0,1)  {$x_3$};
\node[vlabel, above left=2pt]   at (-1,0) {$x_4$};
\node[vlabel, below right=2pt]  at (0,-1) {$x_5$};

% Origin
\node[origin] at (0,0) {};
\node[vlabel, below left=2pt] at (0,0) {$z$};

% Figure label
\node[figlabel] at (0,-1.95) {$\Delta(\A_2)$};

\end{scope}

% ============================================================
% RIGHT FIGURE: B_2
%
% y_1 = ( 1, 0)
% y_2 = ( 1, 1)
% y_3 = ( 1, 2)
% y_4 = ( 0, 1)
% y_5 = (-1, 0)
% y_6 = ( 0,-1)
% w   = ( 0, 0)
% ============================================================

\begin{scope}[xshift=2.7cm]

% Convex hull
\draw[poly]
    (1,0)
    -- (1,2)
    -- (0,1)
    -- (-1,0)
    -- (0,-1)
    -- cycle;

% Dotted rays
\draw[ray] (0,0) -- (1.55,0);
\draw[ray] (0,0) -- (1.55,1.55);
\draw[ray] (0,0) -- (1.25,2.50);
\draw[ray] (0,0) -- (0,2.45);
\draw[ray] (0,0) -- (-1.55,0);
\draw[ray] (0,0) -- (0,-1.55);

% Points
\node[vertex] at (1,0) {};
\node[vertex] at (1,1) {};
\node[vertex] at (1,2) {};
\node[vertex] at (0,1) {};
\node[vertex] at (-1,0) {};
\node[vertex] at (0,-1) {};

% Vertex labels
\node[vlabel, above right=2pt]        at (1,0)  {$y_1$};
\node[vlabel, right=2pt]        at (1,1)  {$y_2$};
\node[vlabel, right=2pt]  at (1,2)  {$y_3$};
\node[vlabel, above left=2pt]   at (0,1)  {$y_4$};
\node[vlabel, above left=2pt]   at (-1,0) {$y_5$};
\node[vlabel, below right=2pt]  at (0,-1) {$y_6$};

% Origin
\node[origin] at (0,0) {};
\node[vlabel, below left=2pt] at (0,0) {$w$};

% Figure label
\node[figlabel] at (0,-1.95) {$\Delta(\B_2)$};

\end{scope}

\end{tikzpicture}
\caption{The canonical polytopes of $\A_2$ and $\B_2$.}
\end{figure}

For the equations below, we let $\PP(\A_2)$ denote the Gorenstein PL cone obtained by taking the cone over $\Delta(\A_2)$. Taking the Gorenstein PL cone extension of $\A_2$ with the trivial lattice $\Z^2$, using the Gorenstein PL cones $\PP(\A_2)$ and $\Q_{\geq 0}^2$, and applying the methods of Section~\ref{sec: E and F detrop} yields the following pair of algebras as the detropicalizations of the associated Gorenstein PL cone extensions.\\

\[
\Aa_{\E(P_{can}(\A_2),\Z_{\geq 0}^2)}
\cong
\frac{
\K[x_1,x_2,x_3,x_4,x_5,z^{\pm},Y_1,Y_2]
}{\begin{gathered}
x_1x_3-x_2-1,\\
x_2x_4-x_3-1,\\
x_3x_5-x_4-1,\\
x_4x_1-x_5-1,\\
x_5x_2-x_1-1,\\
Y_1Y_2z^{-1}-(x_1+x_2+x_3+x_4+x_5)
\end{gathered}}.
\]

\vspace{.5in}

\[
\Aa_{\mathcal{F}(\PP(\A_2),\Z_{\geq 0}^2)}
\cong
\frac{
\K[x_1,x_2,x_3,x_4,x_5,z,Y_1,Y_2]
}{\begin{aligned}
x_1x_3 &- x_2z - z^2,\\
x_2x_4 &- x_3z - z^2,\\
x_3x_5 &- x_4z - z^2,\\
x_4x_1 &- x_5z - z^2,\\
x_5x_2 &- x_1z - z^2,\\
z &- Y_1-Y_2
\end{aligned}}.
\]

If we pair $\A_2$ and $\B_2$ using the Gorenstein PL cones $\PP(\A_2)$ and $\PP(\B_2)$, then we get the following pair of algebras. 

\[
\Aa_{\E(P_{can}(\A_2),P_{can}(\B_2))}
\cong
\frac{
\K[x_1,\ldots,x_5,z,y_1,\ldots,y_6,w^{\pm}]
}{\begin{gathered}
x_1x_3-x_2z-z^2,\\
x_2x_4-x_3z-z^2,\\
x_3x_5-x_4z-z^2,\\
x_4x_1-x_5z-z^2,\\
x_5x_2-x_1z-z^2,\\
y_1y_3-y_2-1,\\
y_2y_4-y_3^2-1,\\
y_3y_5-y_4-1,\\
y_4y_6-y_5^2-1,\\
y_5y_1-y_6-1,\\
y_6y_2-y_1^2-1,\\
zw^{-1}-(y_1+\cdots+y_6)
\end{gathered}}.
\]

\vspace{.5in}

\[
\Aa_{\mathcal{F}(P_{can}(\A_2),P_{can}(\B_2))}
\cong
\frac{
\K[x_1,\ldots,x_5,z^{\pm},y_1,\ldots,y_6,w]
}{\begin{gathered}
x_1x_3-x_2-1,\\
x_2x_4-x_3-1,\\
x_3x_5-x_4-1,\\
x_4x_1-x_5-1,\\
x_5x_2-x_1-1,\\
y_1y_3-y_2w-w^2,\\
y_2y_4-y_3^2-w^2,\\
y_3y_5-y_4w-w^2,\\
y_4y_6-y_5^2-w^2,\\
y_5y_1-y_6w-w^2,\\
y_6y_2-y_1^2-w^2,\\
wz^{-1}-(x_1+\cdots+x_5)
\end{gathered}}.
\]\\

\end{example}

\begin{example}\label{example: iterating}
Proposition~\ref{proposition: iteration} shows that when the cones $\PP_1, \PP_2$ used as the initial data for the cone extension satisfy the additional hypothesis that $\PP_1^\vee, \PP_2^\vee$ are also Gorenstein, then we obtain PL cones in the cone extensions that are also Gorenstein. This opens the possibility that we can iterate the extension. This implies that the family of examples mentioned in Example~\ref{example: classical GF pairs} may be extended iteratively, and a larger family can be obtained.  Relatedly, it may be interesting to ask which polyptych lattices can be obtained by this process. 
\end{example}

We conclude this section with several open-ended questions which we leave for future work. 

\begin{question} 
Among the examples mentioned in Example~\ref{example: classical GF pairs} to Example~\ref{example: iterating} which ones are smooth (i.e. the $\Spec$ of the relevant detropicalizations are smooth)? Which ones are unique factorization domains? Are there combinatorial criteria for smoothness or UFD-ness? 
\end{question}

\begin{question} 
We have produced many examples of (strict dual pairs of) polyptych lattices, but we do not know an efficient way to classify them, or more specifically, we do not know combinatorial (or geometric) methods to determine when two examples are isomorphic in a suitable sense. Do the examples obtained from our constructions yield any hints toward a general classification theory? 
\end{question} 

\begin{question} 
In Example~\ref{example: iterating}, we noted that we can start with the initial data of a Gorenstein PL cone whose dual is also Gorenstein, and iterate the process of Gorenstein PL cone extensions to obtain a potentially infinite family of strict dual polyptych lattices. What properties do all members of this family have in common? 
\end{question} 

\begin{question}
In the theory of cluster algebras, a theme that arises is to realize cluster algebras as decategorifications of certain categories arising from representation theory. See e.g. \cite{Keller} and references therein; a sample of recent developments of which we are aware are \cite{NC, Pressland}.  We ask whether polyptych lattices and their associated detropicalizations can be given a similar interpretation. With this in mind, we additionally ask if there is a natural categorical operation that corresponds to our Gorenstein PL cone extension construction. 
\end{question}

%%%%%%%%%%%%%%%%%%%

%%%%%%%%%%%%%%%%%%%%%%
%\bibliographystyle{amsplain}
%\bibliography{biblio}

\end{document}